

\documentclass[preprint,12pt]{elsarticle}




\usepackage{amssymb}
\usepackage[
colorlinks=true,
linkcolor=black, 
anchorcolor=black,
citecolor=black, 
urlcolor=black, 
]{hyperref}
\usepackage{import}
\usepackage{amsmath}
\usepackage{amsthm}
\usepackage{mathabx}
\usepackage[english]{babel}
\usepackage{url}
\usepackage{amsfonts}
\usepackage{geometry}
\usepackage[small,nohug,heads=vee]{diagrams}
\diagramstyle[labelstyle=\scriptstyle]
\newarrow{Mono}{C}{-}{-}{-}{>}





\def\acts{\mathrel{\reflectbox{$\righttoleftarrow$}}}
\newcommand{\vin}{\rotatebox[origin=c]{-90}{$\acts$}}

\newcommand{\bigslant}[2]{{\raisebox{.2em}{$#1$}\left/\raisebox{-.2em}{$#2$}\right.}}

\newcommand{\leftsub}[2]{{\vphantom{#2}}_{#1}{#2}}
\newcommand{\leftexp}[2]{{\vphantom{#2}}^{#1}{#2}}
\newcommand{\leftexpsub}[3]{{\vphantom{#3}}^{#1}_{#2}{#3}}

\newcommand{\lmod}[1]{#1\text{-}\mathbf{Mod}}
\newcommand{\rmod}[1]{\mathbf{Mod}\text{-}#1}
\newcommand{\lcomod}[1]{#1\text{-}\mathbf{CoMod}}
\newcommand{\rcomod}[1]{\mathbf{CoMod}\text{-}#1}

\newcommand{\un}[1]{\underline{#1}}
\newcommand{\ov}[1]{\overline{#1}}


\newcommand{\Ann}{\operatorname{Ann}}

\newcommand{\coev}{\operatorname{coev}}

\newcommand{\Drin}{\operatorname{Drin}}
\newcommand{\ev}{\operatorname{ev}}

\newcommand{\Heis}{\operatorname{Heis}}

\newcommand{\Hom}{\operatorname{Hom}}
\newcommand{\ide}{\operatorname{Id}}

\newcommand{\Ind}{\operatorname{Ind}}

\newcommand{\Isom}{\operatorname{Isom}}

\newcommand{\Nat}{\operatorname{Nat}}

\newcommand{\coop}{\overline{\operatorname{cop}}}
\newcommand{\oop}{\overline{\operatorname{op}}}

\newcommand{\Res}{\operatorname{Res}}

\newcommand{\triv}{\operatorname{triv}}


\newcommand{\Alg}{\mathbf{Alg}}
\newcommand{\BiAlg}{\mathbf{BiAlg}}
\newcommand{\BiMod}{\mathbf{BiMod}}

\newcommand{\Cat}{\mathbf{Cat}}
\newcommand{\CoAlg}{\mathbf{CoAlg}}

\newcommand{\Hopf}{\mathbf{Hopf}}
\newcommand{\Mod}{\mathbf{Mod}}

\newcommand{\Vect}{\mathbf{Vect}_k}
\newcommand{\fVect}{\mathbf{Vect}_k^\text{fd}}
\newcommand{\Set}{\mathbf{Set}}


\newcommand{\g}{\mathfrak{g}}

\newcommand{\Ug}{U_q(\mathfrak{g})}
\newcommand{\Uq}{U_q(\mathfrak{sl}_2)}


\providecommand{\cal}[1]{\mathcal{#1}}

\providecommand{\fr}[1]{\mathfrak{#1}}

\providecommand{\op}[1]{\operatorname{#1}}

\newcommand{\mC}{\mathbb{C}}

\newcommand{\mZ}{\mathbb{Z}}
\newcommand{\mN}{\mathbb{N}}

\newcommand{\mA}{\mathbb{A}}

\newcommand{\cC}{\mathcal{C}}
\newcommand{\cD}{\mathcal{D}}
\newcommand{\cB}{\mathcal{B}}

\newcommand{\cH}{\mathcal{H}}
\newcommand{\cO}{\mathcal{O}}
\newcommand{\cI}{\mathcal{I}}

\newcommand{\cV}{\mathcal{V}}
\newcommand{\cW}{\mathcal{W}}
\newcommand{\cM}{\mathcal{M}}

\newcommand{\cZ}{\mathcal{Z}}


\numberwithin{equation}{section}
\numberwithin{figure}{section}

\newtheorem{theorem}{Theorem}[subsection]

\newtheorem{proposition}[theorem]{Proposition}
\newtheorem{corollary}[theorem]{Corollary}
\newtheorem{lemma}[theorem]{Lemma}

\newdefinition{definition}[theorem]{Definition}
\newdefinition{notation}[theorem]{Notation}
\newdefinition{setting}[theorem]{Setting}
\newdefinition{convention}[theorem]{Convention}

\newdefinition{example}[theorem]{Example}
\newdefinition{remark}[theorem]{Remark}
\newdefinition{exercise}[theorem]{Exercise}
\newdefinition{warning}[theorem]{Warning}
\newdefinition{question}[theorem]{Question}


\journal{Journal of Pure and Applied Algebra}

\begin{document}

\begin{frontmatter}


\title{Braided Drinfeld and Heisenberg Doubles}
\author{Robert Laugwitz\corref{cor1}}
\ead{laugwitz@maths.ox.ac.uk}
\ead[url]{http://people.maths.ox.ac.uk/~laugwitz/}
\cortext[cor1]{Corresponding Author}
\address{Mathematical Institute, University of Oxford, Andrew Wiles Building, Radcliffe Observatory Quarter, Woodstock Road, Oxford, OX2 6GG}


\begin{abstract}
In this paper, the Drinfeld center of a monoidal category is generalized to a class of mixed Drinfeld centers. This gives a unified picture for the Drinfeld center and a natural Heisenberg analogue. Further, there is an action of the former on the latter. This picture is translated to a description in terms of Yetter-Drinfeld and Hopf modules over quasi-bialgebras in a braided monoidal category. Via braided reconstruction theory, intrinsic definitions of braided Drinfeld and Heisenberg doubles are obtained, together with a generalization of the result of Lu (1994) that the Heisenberg double is a 2-cocycle twist of the Drinfeld double for general braided Hopf algebras.
\end{abstract}

\begin{keyword}
Drinfeld double \sep Heisenberg double  \sep Categorical actions  \sep Cocycle twists  \sep Quantum groups \sep Categorical Hochschild cohomology

\MSC[2010] 16TXX\sep 18D10
\end{keyword}

\end{frontmatter}




\section{Introduction}

\subsection{Motivation}

The Drinfeld double was originally introduced as the quantum double by Drinfeld in \cite{Dri}. The construction was generalized to quasi-Hopf algebras in \cite{Maj3}, and to braided Hopf algebras in \cite{Maj2}. 

There are different ways to motivate the introduction of the Drinfeld double. For example, it gives a way to construct morphism $\Psi\colon V\otimes V\to V\otimes V$ satisfying the \emph{Yang-Baxter equation}
\begin{equation}\label{yangbaxter}
(\Psi\otimes \ide_V)(\ide_V\otimes \Psi)(\Psi\otimes \ide_V)=(\ide_V\otimes \Psi)(\Psi\otimes \ide_V)(\ide_V\otimes \Psi).
\end{equation}
It also gives a natural way of associating to a Hopf algebra a quasitriangular Hopf algebra (i.e. one that is almost cocommutative).

The Heisenberg double can be given a similar interpretation, where the Yang-Baxter equation is replaced by the \emph{Pentagon equation} (see \cite{Kas})
\begin{equation}\label{pentagon}
(\Phi\otimes \ide_V)(\ide_V\otimes \Phi)(\Phi\otimes \ide_V)=(\Phi\otimes \ide_V)(\ide_V\otimes \Phi).
\end{equation}
Every module $V$ over the Heisenberg double comes with a map $\Phi\colon V\otimes V\to V\otimes V$ satisfying this equation.

In \cite{Kas} it was also shown that solutions for (\ref{yangbaxter}) can be obtained from solutions for Eq. (\ref{pentagon}). One application of the results of this paper is to show that given a solution $(V,\Psi)$ to (\ref{yangbaxter}) and $(W,\Phi)$ to (\ref{pentagon}), $(V\otimes W,\Psi^{(1)}\otimes \Phi^{(1)}\otimes \Psi^{(2)}\otimes \Phi^{(2)})$ again has the structure of a solution to the pentagon equation. For finite-dimensional Hopf algebras, this follows from a twisting result of \cite{Lu} which is generalized to the context of braided Hopf algebras (so-called \emph{braided groups} in the braided cocommutative case in \cite{Maj6,Maj8} and other papers), in \ref{twistaction} and, more generally, monoidal categories in Theorem \ref{cataction}.

To argue why it is beneficial to define a braided version of the Drinfeld and Heisenberg double, it is helpful to consider an example. Let $B$ is the coordinate ring $\cO_X$ for $X=\mA^n$. In this case, $\Heis(B)$ is the ring of differential operators on $X$, $D_X$. However, if we compute the Drinfeld double of $B$, then this simply gives $\cO_X[\partial_1,\ldots,\partial_n]=\cO_{T^*X}$. This is a commutative and cocommutative Hopf algebra. A more interesting object is obtained by considering $B$ as a Hopf algebra in the category of YD-modules over the group $C_2$. This can be seen as a \emph{super} algebra version of the coordinate ring. Computing the braided Drinfeld double $\Drin_{\Drin{C_2}}(B)$ gives a non-commutative Hopf algebra in which the commutator relation
\begin{equation}
[\partial_i,x_j]=(1-\delta_1-\delta_{-1})\delta_{i,j}
\end{equation}
holds (see \ref{demonstrationexpl}). From the point of view that the Drinfeld double gives examples of ``Quantum groups" it is more natural to have a non-commutative and non-cocommutative Hopf algebra. Note that computing the Heisenberg double over $\Drin(C_2)$ gives $D_X\otimes kC_2$, so the answer for the Heisenberg double is essentially not changed as the commutator relation in the braided Heisenberg double remains
\begin{equation}
[\partial_i,x_j]=\delta_{i,j}.
\end{equation}

Another application of the braided version is to give a clean description of the quantum groups $\Ug$ as Drinfeld doubles (see \cite{Maj2}), while it was already observed in \cite{Dri} that $\Ug$ is a quotient of the Drinfeld double of $U_q(\fr{n}_+)\rtimes U_q(\fr{t}).$ In our unified picture, we now have a natural Heisenberg analogue for the quantum groups, for which the commutator relation is 
\begin{equation}
[E_i,F_j]=\frac{K^{-\tfrac{i\cdot i}{2}}}{q_i^{-1}-q_i}.
\end{equation}
This algebra has no finite-dimensional representations (see \ref{quantumgroupaction}).

Another generalization included in the constructions of this paper is to allow quasi-Hopf algebras (introduced in \cite{Dri2}), which we consider in a braided monoidal category. Including this direction of generalization will enable us to consider examples such as the twisted Drinfeld double $\Drin^\omega(G)$  of a group $G$ which is of relevance in mathematical physics as it occurs as data associated to particular orbifolds in Rational Conformal Field theory (see \cite{DPR}). In \cite{Maj3}, a construction of the Drinfeld double of a quasi-Hopf algebra is given via reconstruction theory. We add a Heisenberg analogue to this picture and generalize it to quasi-Hopf algebras in a braided monoidal category, thus combining the two directions of generalization (braiding and twisting).

In a derived setting (see e.g. \cite{BFN}), the Drinfeld center gives a categorical version of \emph{Hochschild cohomology} which is defined as
\[
\mathbf{HH}^\bullet(\cM)=\mathbf{HH}^\bullet(\cM,\cM):=\cZ(\cM)=\BiMod_\cM(\cM^{\op{reg}},\cM^{\op{reg}}).
\]
From this point of view, in Section \ref{catback}, we consider a more general form of Drinfeld centers, which can be viewed as categorical Hochschild cohomology with values in other bimodule categories $\cV$ over $\cM$,
\[
\mathbf{HH}^\bullet(\cM,\cV):=\BiMod_\cM(\cM,\cV).
\]
Even though the constructions are given in a non-derived setting in the (2,1)-meta category $\textbf{Cat}$ of categories, because of the intrinsic nature of the definitions in terms of bimodule categories, it is possible to obtain higher-categorical analogues working in the meta-category $(\infty,1)$-category of $(\infty,1)$-categories $\mathbf{Cat}_\infty$ instead using an appropriate formalism of bimodule categories (such as e.g. \cite{Lur2}). Hochschild cohomology with coefficients in any $\cM$-bimodule $\cV$ will give module categories over $\mathbf{HH}^\bullet(\cM)$, and relative versions of these with respect to $\cM$ living over a braided monoidal category $\cB$. In particular, the Hopf center from Definition \ref{hopfcenter} -- which is the Heisenberg analogue of the Drinfeld center -- can be interpreted as $\mathbf{HH}^\bullet_\cB(\cM,\leftexp{\op{reg}}{\cM}^{\triv})$ in a derived setting. Here, coefficients are taken in the $\cM$-bimodule $\leftexp{\op{reg}}{\cM}^{\triv}$ which has the regular action on the left and the trivial action (given by the underlying tensor-product in $\cB$) on the right.

The original motivation for the author to write this paper lies in the applications of the categorical action of $\lmod{\Drin_H(C,B)}$ on $\lmod{\Heis_H(C,B)}$ to the \emph{rational Cherednik algebras} $H_{t,c}(G)$ of \cite{EG}. For this, the morphisms from \cite{BB}
\[
M_c\colon H_{0,c}(G)\longrightarrow \Heis_{\mC G}(U(\fr{yb}^*_G),U(\fr{yb}_G)),
\]
where $U(\fr{yb}_G)$ is a generalization of the algebra $U(\fr{tr}_n)$ from \cite{BEER} to general complex reflection groups (cf. \cite{Lau4} for the notation), are used. Analogues also exist for parameters $t\neq 0$, or the restricted rational Cherednik algebras. The morphisms $M_c$ can be used to restrict the categorical action studied in this paper to gives actions
\[
\triangleright_{t,c} \colon \lmod{\Drin_{\mC G}(U(\fr{yb}^*_G),U(\fr{yb}_G))}\otimes \lmod{H_{t,c}(G)}\longrightarrow \lmod{H_{t,c}(G)}.
\]
It is work in progress to study these actions using category $\cO$ techniques as in \cite{GGOR}.

\subsection{Summary}

The discussion of Drinfeld and Heisenberg doubles in this paper is done at different levels of generality which are structured by the sections:
\begin{diagram}
&&{\text{Section~\ref{catback}:}\atop  {{\text{$B$ quasi-Hopf algebra}}\atop {\text{in $\cB$}}}}&&\\
&\ldTo&&\rdTo&\\
{\text{Section~\ref{doubles}:}\atop {\text{$B$ Hopf algebra in $\lmod{H}$}\atop \text{(or $\lcomod{A^{\oop}}$})}}&&&&{\text{Section~\ref{twistedchapter}:}\atop {\text{$B$ quasi-Hopf algebra}\atop \text{in $\Vect$.}}}
\end{diagram}
In Section \ref{catback} we work on the level of a monoidal category $\cM$ living over a braided monoidal category $\cB$. That is, there exist functors $F\colon \cM\to \cB$ and $P\colon \cB\to \cM$ such that $FP\cong \ide_{\cB}$. It is not necessary to make further assumptions on the braided monoidal category, such as a $k$-linear structure, although in typical examples, the categories will have fiber functors to the category of $k$-vector spaces for a field $k$.

We start by giving the most general definition of a \emph{mixed relative Drinfeld center} in Section \ref{relativecentersection}. This is done by translating the data of morphisms of $\cM$-bimodules $\cM^{\op{reg}}\to \leftexp{G_1}{\cV}^{G_2}$ to pairs $(V,c)$, where $V$ is an object of $\cV$ and $c\in \Nat^{\otimes}(G_1\otimes V\to V\otimes G_2)$. Here, $\leftexp{G_1}{\cV}^{G_2}$ is the $\cM$-bimodule where the left module structure is induced by pulling the regular action back along $G_1$, and the right one along $G_2$. We will focus on two classes of examples: the relative Drinfeld center $\cZ_{\cB}(\cM)$, and its ``Heisenberg analogue" (called the Hopf center) $\cH_{\cB}(\cM)$. The Drinfeld center corresponds to the pair of functors $(G_1,G_2)=(\ide_\cM,\ide_\cM)$, the Hopf center to $(G_1,G_2)=(\triv,\ide_\cM)$, where $\triv:=PF$. The main observation is a natural action of the Drinfeld center on the Hopf center.

For the purposes of this paper, we introduce a slightly more general version of Majid's braided reconstruction theory in Section \ref{reconstruction}, working with quasi-Hopf algebra objects in $\cB$. This generalizes work of \cite{Hae}. We further give a categorical interpretation of the concept of quasitriangularity in Section \ref{quasitriangularity}.

In Section \ref{ydsection}, we consider monoidal categories of the form $\cM=\lmod{B}(\cB)$ for a quasi-bialgebra (or quasi-Hopf algebra) object $B$ in $\cB$. For such $\cM$, the Drinfeld center can be reformulated as the category of Yetter-Drinfeld modules, while the Hopf center consists of Hopf modules. In the case of the Drinfeld center, this is well-known for Hopf algebras. A version for braided Hopf algebras is due to \cite{Maj2}, and a version for quasi-Hopf algebras in $\Vect$ can be found in \cite{Maj3}. Working with strict Hopf algebras (trivial 3-cycles) many formulas simplify as summarized in Section \ref{strictsection}.

As preparation for the definition of the braided Drinfeld and Heisenberg double requires working with two dually paired braided Hopf algebra $C,B$ (see Section \ref{duallypaired} for the conventions used). We embed the Drinfeld and Hopf center into larger categories of left $C$ and right $B$-modules which satisfy compatibility conditions resembling those of Yetter-Drinfeld (respectively Hopf) modules (see Section \ref{pairedydsect}). We also discuss a reformulation of Majid's concept of weak quasitriangularity in \ref{BCquasitriangular}. This concept is needed to obtain the quantum groups as examples of braided Drinfeld doubles as in \cite{Maj2}.

To summarize, we give five formulations of the action of the relative Drinfeld center on the relative Hopf center, for $C,B$ dually paired Hopf algebras in $\cB$:
\begin{diagram}
\BiMod_\cM^\cB(\cM^{\op{reg}},\cM^{\op{reg}})&\cong& \cZ_\cB(\cM)&\cong & \leftsub{B^{\oop}}{\cal{YD}}^B(\cB)&\hookrightarrow& \leftexp{C}{\cal{YD}}^B(\cB)&\cong &\lmod{\Drin_H(C,B)}\\
\vin&&\vin&&\vin&&\vin&&\vin\\
\BiMod_\cM^\cB(\cM^{\op{reg}},\leftexp{\op{reg}}{\cM}^{\triv})&\cong&\cH_\cB(\cM)&\cong &\leftsub{B^{\oop}}{\cal{H}}^B(\cB)&\hookrightarrow&\leftexp{C}{\cal{H}}^B(\cB)&\cong &\lmod{\Heis_H(C,B)}
\end{diagram}

In Section \ref{quantumgroupaction}, we consider the example of the quantum groups $\Ug$. In this example, our result will give a categorical action of the category of $\Ug$-modules on the category of $D_q(\fr{g})$-modules, which can be interpreted as a category of quantum differential operators.

We now have a machinery to also define twisted braided Drinfeld and Heisenberg doubles. There exist different concepts of twist in the literature which will appear at different places in this exposition. To provide an overview:
\begin{itemize}
\item A (right) 2-cocycle twist of a Hopf algebra is a way of obtaining new algebras $H_\sigma$ from the datum of a (braided) Hopf algebra $H$ together with a 2-cocycle $\sigma$. This is used in \ref{cocycletwists} to twist the braided Drinfeld double, giving the braided Heisenberg double.
\item The \emph{Drinfeld twist} of a (quasi-)Hopf algebra goes back to \cite{Dri3}. It provides a way to change the monoidal structure of a category of representations over a quasi-bialgebra in an equivalent way by means of conjugation by an element $F$ of $B\otimes B$ (see e.g. \cite{GM,Maj1}). We include this concept in \ref{drinfeldtwist}.
\item A twisted version of the Drinfeld double of a group algebra can be found in the literature (see e.g. \cite{DPR,Maj3,Wil}). Underlying its notion of twist is the idea that a commutative bialgebra can be viewed as a quasi-bialgebra with respect to any 3-cycle. From this point of view, twisted versions of commutative Hopf algebras can be introduced in larger generality (see \ref{drinfeldtwist}).
\end{itemize}

Applying the categorical action to the case of the twisted Hopf algebra $k^\omega[G]$ of functions on a group, there is a categorical action of modules over the twisted Drinfeld double $\Drin^\omega(G)$, which is the category of $G^{\op{ad}}$-equivariant $\omega$-twisted vector bundles on the category of $\omega$-twisted $G^{\op{reg}}$-equivariant twisted vector bundles on $G$. This is the topic of Section~\ref{twistedgroups} which concludes this paper.

\subsection{Hints on Reading this Paper}

The basic structure of this paper is a transgression from category theory (Section~\ref{catback}) to representation theory of algebras (Sections \ref{doubles} and \ref{twistedchapter}). The link is given by braided reconstruction theory (Section \ref{reconstruction}).

The exposition is significantly easier if one works with strict monoidal categories $\cM$ (i.e. Hopf algebras via reconstruction theory). For the purpose of considering twisted Drinfeld doubles, we include the formulas for the more general case of non-strict monoidal categories and quasi-Hopf algebras. The readers only interested in the strict case can safely skip to Section~\ref{doubles} and when looking up the relevant proofs in Section~\ref{catback} treat associativity and rigidity isomorphisms as identities.

Throughout Section~\ref{catback}, we find it most effective to do the proofs using graphical calculus. This however requires to work with a strict monoidal base categories $\cB$ (while $\cM$ may still be non-strict). We refer to Mac Lane's coherence theorem to justifying giving many proofs on this level. Often, in the proofs the computations are not given in detail. It is an essential standing exercise in reading this paper to always draw diagrams for all statements and proofs that come up.

If the reader is only interested in the Drinfeld and Heisenberg doubles as (Hopf) algebras, Section \ref{presentations} is a good point to start. In this section, concrete examples are provided as well.

\subsection{Some Notational Conventions}
In this paper, $k$ always denotes a field. The category of finite-dimensional $k$-vector spaces is denoted by $\fVect$, the category of possibly infinite-dimensional $k$-vector spaces by $\Vect$. We denote the symmetric monoidal category of (co)algebras in $\Vect$ by $\Alg$ (respectively $\CoAlg$).

More generally, the concept of an algebra, $\Alg(\cM)$, and coalgebra, $\CoAlg(\cM)$, and their modules can be defined in any monoidal category $\cM$. To illustrate the idea, the multiplication is a morphism $m\colon A\otimes A\to A$ in $\cM$. It satisfies associativity and unitarity with respect to $1\colon I\to A$ which is a morphism in $\cM$. These properties are commutative squares and can be expressed in $\cM$. For more details on this approach see e.g. \cite[9.2.11ff.]{Maj1} or \cite{Maj6}. Given an algebra (or coalgebra) object in a monoidal category $\cM$, we denote the category of left $A$-modules (respectively comodules) in $\cM$ by $\lmod{A}(\cM)$ (respectively $\lcomod{A}(\cM)$) and right $A$-modules by $\rmod{A}(\cM)$. If $\cM=\Vect$, we omit mentioning the category $\cM$ and simply write $\lmod{A}$ (respectively $\lcomod{A}$).

Functors of monoidal categories are always strong monoidal (sometimes strict monoidal). For compositions of morphisms or functors, we write $f\circ g$ simply as $fg$. In the whole paper, $\cB$ will denote a strict monoidal braided category\footnote{For an introduction to braided monoidal categories see e.g. \cite{JS} or \cite{Maj1}.}, with braiding $\Psi$. The monoidal category $\cM$ typically lives over $\cB$. That is, there exists a monoidal (fiber) functor $\cM\to \cB$. We do not require $\cM$ to be strict monoidal itself.

\subsection{Bialgebra and Hopf Algebra Objects}

In order to define bialgebras and Hopf algebras, one needs a braided monoidal category $\cB$ with braiding $\Psi$. We will always treat the base category $\cB$ as strict monoidal. The categories $\Alg(\cB)$ and $\CoAlg(\cB)$ of algebra and coalgebra objects then have a monoidal structure fibered over $\cB$. We denote the product on $A\otimes B$ by $m_{A\otimes B}$ for two algebra objects $A$, $B$ in $\cB$. That is, $m_{A\otimes B}=(m_A\otimes m_B)(\ide_A\otimes \Psi_{A,B}\otimes \ide_B)$. Inductively, denote by $m_{B^{\otimes n}}$ the product on $B^{\otimes n}$. Dually, we denote the coalgebra structure on $C\otimes D$ for two coalgebras $C$, $D$ in $\cB$ by $\Delta_{C\otimes D}$. We will occasionally use the notation $m^k$, for the map $m(m\otimes \ide)\ldots (m\otimes \ide)\colon B^{\otimes k+1}\to B$ obtained by applying $m$ $k$ times.

In $\cB$, we can define bialgebra objects as simultaneous algebras and coalgebras satisfying the bialgebra  condition
\begin{equation}
\Delta m= (m\otimes m)(\ide\otimes \Psi\otimes \ide)(\Delta\otimes \Delta).
\end{equation}
We call a bialgebra (resp. Hopf algebra) object in $\cB$ a \emph{braided} bialgebra (or Hopf algebra). In order for $\BiAlg(\cB)$ to be monoidal, a braiding is not sufficient, but a symmetric monoidal structure is. However, the category $\lmod{B}(\cB)$ is monoidal using the comultiplication. It is important to use this more general definition to study main examples such as the quantum groups (or more generally, Nichols algebras) later. It is also important that we do not restrict ourselves to finite-dimensional Hopf algebras over $k$.

\subsection{Bialgebras vs. Quasi-Bialgebras}\label{quasihopf}

Let $B$ be a bialgebra in $\cB$. Then the category $\lmod{B}(\cB)$ is strict monoidal with fiber functor over $\Vect$. That is, the underlying morphisms in $\cB$ of the associativity transformation $\alpha\colon \otimes(\otimes \times \ide )\to \otimes (\ide \times \otimes)$ are identity morphisms. In some cases, one requires a higher level of generality (for example, when working with twists of Hopf algebras as in \cite{Dri2}) and wants to drop the assumption of $\cM$ being strict. The natural notion arising via reconstruction theory (see \ref{reconstruction}) is that of a \emph{quasi}-bialgebra. Following \cite{Maj3}, we require that there exists an invertible element $\phi\in B\otimes B\otimes B$ (the \emph{coassociator}) such that
\begin{equation}
m_{B\otimes B\otimes B}(\ide_B\otimes \Delta\otimes \phi)\Delta=m_{B\otimes B\otimes B}(\phi\otimes \Delta\otimes \ide_B)\Delta.
\end{equation}
Such an element $\phi$ needs to satisfy the 3-cycle condition of a non-abelian homology theory (see e.g. \cite[Section~6]{Maj4}, \cite[2.3]{Maj1}):
\begin{equation}\label{3cocycle}
\begin{split}&m_{B\otimes B\otimes B}(m_{B\otimes B\otimes B}\otimes \ide_{B\otimes B\otimes B})(1\otimes\phi\otimes \ide_B\otimes \Delta\otimes \ide_B\otimes\phi\otimes 1) \phi\\&=m_{B\otimes B\otimes B}(\ide_{B\otimes B}\otimes \Delta\otimes \Delta \otimes \ide_{B\otimes B})(\phi\otimes \phi)
\end{split}
\end{equation}
The counitary property still holds as in the bialgebra case, given that $(\ide\otimes \varepsilon\otimes \ide)\phi=1\otimes 1$. For a quasi-bialgebra $B$, the categories $\lmod{B}(\cB)$ (and $\rmod{B}(\cB)$, $\lcomod{B}(\cB)$, $\rcomod{B}(\cB)$) are monoidal.

If $B$ is a (quasi)-Hopf algebra object in $\cB$, then $\lmod{B}(\cB)$ is rigid given that $\cB$ is rigid. That is, left dual objects exist\footnote{See e.g. \cite[Section~9.3]{Maj1}.} and are denoted by $V^*$ for $V\in \cB$. That is for example the case if $\cB=\fVect$. For infinite-dimensional modules $V$, we can still give the finite dual $V^\circ= \lbrace \delta_v\mid v\in V\rbrace$ a module structure, but there is no coevaluation map. We observe that left dual objects are unique up to canonical isomorphism.

When working with quasi-Hopf algebras, the antipode axioms are valid only up to elements $a, b \in H$ (cf. e.g. \cite[XV.5]{K}, or  Section \ref{reconstruction} in the braided setting):
\begin{equation}\label{quasiantipode}
m^2(S\otimes a\otimes \ide)\Delta=a\epsilon,\text{  and  }m^2(\ide\otimes b\otimes S)\Delta=b\epsilon.
\end{equation}
This requires the compatibility conditions
\begin{equation}\label{antipodecyclecond}
m^5(\ide\otimes b\otimes S\otimes a\otimes\ide)\phi=1,\text{  and  } m^5(S\otimes a\otimes \ide\otimes b\otimes S)\phi^{-1}=1,
\end{equation}
with the coassociators. The formulas may be more clear when drawn as diagrams (using graphical calculus) or using generalized Sweedler's notation\footnote{We will stick to \cite[Preliminaries]{Maj3}) for conventions about Sweedler's notation (from \cite{Swe}). We will use these conventions, including the Einstein sum convention from Section \ref{doubles} onward.}. The notion of a quasitriangular quasi-Hopf algebra is also spelled out in \cite[2.4]{Maj1}. We also include a version for braided quasi-Hopf algebras in \ref{reconstruction}.

\subsection{Dually Paired Hopf Algebras}\label{duallypaired}

Let $\cB$ be a braided monoidal category with braiding $\Psi$ (recall $\cB$ is always treated as strict monoidal in this paper). In this section, we want to discuss what notion of dually paired Hopf algebras is suitable for our purposes in the remainder of the paper. Unlike working in the category of finite dimensional vector spaces $\Vect$, a dual may not necessarily exist in this more general setting. We assume that $C,B$ are braided Hopf algebras in $\cB$ with a pairing, in the sense that there exists an evaluation map $\ev\colon C\otimes B \to I$ to the unit $I$ in $\cB$ compatible with the structure. This displays $C$ as the left (categorical) dual of $B$. That is, using graphical calculus\footnote{We stick to the conventions of \cite{Maj1} about graphical calculus of Hopf algebra objects in $\cB$. The drawings are created using inkscape.} the conditions from Figure~\ref{dualityaxiomshopf} hold. If $C,B$ are Hopf algebras, then we further assume that the antipodes are invertible and the duality $\ev(S\otimes \ide)=\ev(\ide\otimes S)$ holds. 

\begin{figure}
\begin{center}
\small\import{Graphics/}{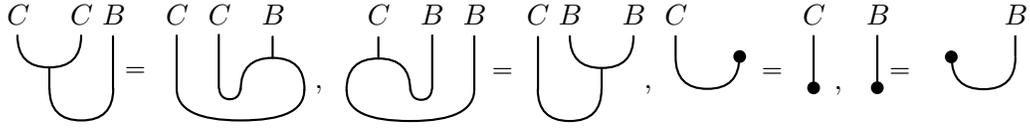}
\caption{Dually paired Hopf algebras}
\label{dualityaxiomshopf}
\end{center}
\end{figure}

\begin{remark} Note that we do not restrict ourselves to treating finite-dimensional Hopf algebras here. In particular, a coevaluation map $\coev\colon I\to B\otimes C$ may not exist as a morphism in $\cB$. In later applications, we use infinite-dimensional Hopf algebras and their restricted duals, where the coevaluation map exists as a formal power series rather than a linear map given that the pairing is perfect and dual bases exist (\ref{rmatricessect}). The restricted dual of a Hopf algebra $H$ is denoted by $H^\circ$ and consists of those functions that vanish on a (two-sided) ideal of $H$ of finite codimension. This is not necessarily equal to the finite dual of an infinite-dimensional vector space and the pairing is not necessarily perfect.
\end{remark}

\begin{remark}
The situation for quasi-bialgebras is asymmetric. The dual of a quasi-bialgebra has a multiplication which is not strictly associative.
\end{remark}

Let us denote the braided category $\cB$ with \emph{inverse} braiding $\Psi^{-1}$ by $\overline{\cB}$. In $\cB$, the categories $\lmod{B}(\cB)$, $\lcomod{B}(\cB)$ of left (co)modules (and their right versions) are monoidal. Given a dual pair $B, C$ as above, we further observe that there exists functors of monoidal categories
\[
\leftsub{B}{\Phi}\colon \lcomod{B}(\cB)\to \lmod{\leftexp{cop}{C}}(\overline{\cB}),\qquad
\Phi_{C}\colon \rcomod{C}(\cB)\to \rmod{\leftexp{cop}{B}}(\overline{\cB}).
\]
Here, $\leftexp{cop}{C}$ denotes $C$ with co-opposite coproduct ${\Psi}^{-1}\Delta$. This is a bialgebra (resp. Hopf algebra) in the braided monoidal category $\overline{\cB}$ (with antipode $S^{-1}$). The functor $\leftsub{B}{\Phi}$ maps a comodule with coaction $\delta\colon V\to B\otimes V$ to $V$ with action $(\ev\otimes \ide)(\ide\otimes \delta)$, and $\Phi_C$ is defined analogously. We find it helpful to check such statements using graphical calculus, in which the braiding $\Psi$ and its inverse $\Psi^{-1}$ are denoted by 
\[
\begin{array}{ccc}
\Psi\quad=\vcenter{\hbox{\small

\begingroup
  \makeatletter
  \providecommand\color[2][]{%
    \errmessage{(Inkscape) Color is used for the text in Inkscape, but the package 'color.sty' is not loaded}
    \renewcommand\color[2][]{}%
  }
  \providecommand\transparent[1]{%
    \errmessage{(Inkscape) Transparency is used (non-zero) for the text in Inkscape, but the package 'transparent.sty' is not loaded}
    \renewcommand\transparent[1]{}%
  }
  \providecommand\rotatebox[2]{#2}
  \ifx\svgwidth\undefined
    \setlength{\unitlength}{24.80101968pt}
  \else
    \setlength{\unitlength}{\svgwidth}
  \fi
  \global\let\svgwidth\undefined
  \makeatother
  \begin{picture}(1,1.00305813)%
    \put(0,0){\includegraphics[width=\unitlength]{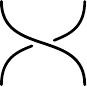}}%
  \end{picture}%
\endgroup
}}&\quad\text{and}&\Psi^{-1}\quad=\vcenter{\hbox{\small

\begingroup
  \makeatletter
  \providecommand\color[2][]{%
    \errmessage{(Inkscape) Color is used for the text in Inkscape, but the package 'color.sty' is not loaded}
    \renewcommand\color[2][]{}%
  }
  \providecommand\transparent[1]{%
    \errmessage{(Inkscape) Transparency is used (non-zero) for the text in Inkscape, but the package 'transparent.sty' is not loaded}
    \renewcommand\transparent[1]{}%
  }
  \providecommand\rotatebox[2]{#2}
  \ifx\svgwidth\undefined
    \setlength{\unitlength}{24.80101968pt}
  \else
    \setlength{\unitlength}{\svgwidth}
  \fi
  \global\let\svgwidth\undefined
  \makeatother
  \begin{picture}(1,1.00305813)%
    \put(0,0){\includegraphics[width=\unitlength]{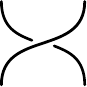}}%
  \end{picture}%
\endgroup
}}.
\end{array}
\]
Note that if $C,B$ are finite-dimensional, the functors $\leftsub{B}{\Phi}$ and $\Phi_C$ are part of equivalences of categories. In general, this is not the case. If we restrict to situations in which the braided monoidal category $\cB$ admits a monoidal functor $\cB\to \Vect$ to vector spaces over a field $k$, then we can talk about the pairing $\ev$ being perfect. If that is the case, the above functors will be fully faithful. In the general situation we \emph{define} the pairing to be perfect if the functors $\leftsub{B}{\Phi}$ and $\Phi_C$ are fully faithful.

\begin{remark}
The bialgebras $\leftexp{cop}{C}$ and $\leftexp{cop}{B}$ are dually paired in a different way in $\cB$ (see Figure \ref{strangepairing}). Here, it is important to distinguish whether we express a functional identity in $\cB$ or in $\overline{\cB}$. In this example, the second term is expressed using symbols in $\ov{\cB}$, while the third term is the same expression written in $\cB$.
\end{remark}

\begin{figure}
\begin{center}
\small\import{Graphics/}{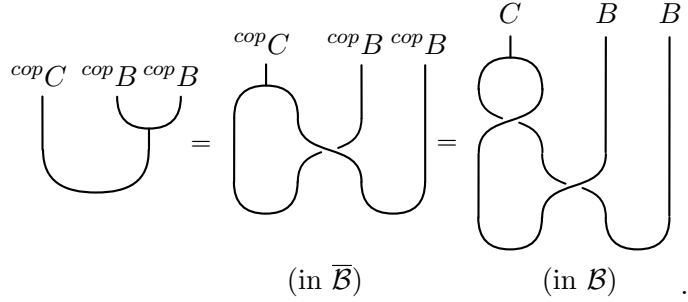}.
\caption{Pairing of co-opposite Hopf algebras}
\label{strangepairing}
\end{center}
\end{figure}

\subsection{Acknowledgement}

I am grateful to my PhD advisor Dr Kobi Kremnizer for his guidance and help. I further like to thank Dr Yuri Bazlov and Prof Christoph Schweigert for teaching me the subject, and Prof Arkady Berenstein for helpful conversations. Diagrams are created using the package available at \url{http://www.paultaylor.eu/diagrams/}.

This research is supported by the EPSRC grant EP/I033343/1 \emph{Motivic Invariants and Categorification}\footnote{\url{http://gow.epsrc.ac.uk/NGBOViewGrant.aspx?GrantRef=EP/I033343/1}}.


\section{The Categorical Picture}\label{catback}

In this section, we introduce the two main categories of interest in this paper in purely categorical terms. The first one, the \emph{Drinfeld center} $\cZ(\cM)$ of a monoidal category, is well known. The other one, the \emph{Hopf center} $\cH(\cM)$ is well-known in the case where $\cM=\lmod{H}$ is the category of modules over an ordinary Hopf algebra where it can be described as the category of Hopf modules over $H$. For the more general case $\cM=\lmod{H}(\cB)$ where $\cB$ is a braided monoidal category and $H$ a bialgebra object in it, see e.g. \cite{Bes}. We present a new description of  $\cH(\cM)$ as a special case of a \emph{mixed} Drinfeld center construction.

In the strict monoidal case, applying techniques from reconstruction theory, which is a generalization of Tannaka-duality, one can recover certain categories $\cC\to \cV$,  where $\cV$ is monoidal, as module (or comodule) categories $\lmod{H}(\cV)\to \cV$ in the category $\cV$. This is used later to describe the categories $\cZ(\cM)$ and $\cH(\cM)$ in the case where $\cM=\rmod{B}(\lmod{H})$ for a bialgebra (or Hopf algebra) $B$ in the braided monoidal category $\lmod{H}$ as module categories leading to the definition of the \emph{braided} Drinfeld and Heisenberg doubles. For this, we need another description of the categories $\cZ(\cM)$ and $\cH(\cM)$ for $\cM=\lmod{B}(\cB)$ in terms of simultaneous modules and comodules over $B$ satisfying certain compatibility conditions, leading to Yetter-Drinfeld and Hopf modules in \ref{ydsection}. Finally, we give a categorical explanation of different concepts of quasitriangularity of a braided Hopf algebra in \ref{quasitriangularity}.


\subsection{Bimodules over Monoidal Categories}

In this section, we discuss the 2-category of $\cM$-bimodule morphisms over a monoidal category. This will later serve us to define relative Drinfeld and Heisenberg centers in \ref{relativecentersection}. One application is a natural strictification of a monoidal category (see Corollary \ref{strictification}).

We now work in the meta-2-category of categories $\Cat$ under suitable locally smallness assumptions. This category is symmetric monoidal with respect to the Cartesian product of categories denoted by $\times$. Let $(\cM$, $\otimes$) be a monoidal category (not necessarily strict, but strictly unital with unit object $I$). Such categories can be thought of as unitary monoid objects in $\Cat$. We denote the associativity isomorphism by $\alpha \colon \otimes(\otimes \times \ide)\Longrightarrow \otimes (\ide \times \otimes)$.

Recall that a \emph{braided} monoidal category $\cV$ is a monoidal category with a natural isomorphism $\Psi\colon \otimes\to \otimes^{op}$ which satisfies the $\otimes$-compatibilities
\begin{align}
\Psi_{V\otimes W, X}&=\alpha_{X,V,W}(\Psi_{V,X}\otimes \ide_W)\alpha^{-1}_{V,X,W}(\ide_V\otimes \Psi_{W,X})\alpha_{V,W,X},\\
\Psi_{V,W\otimes X}&=\alpha^{-1}_{W,X,V}(\ide_W\otimes \Psi_{V,X})\alpha_{W,V,X}(\Psi_{V,W}\otimes \ide_X)\alpha^{-1}_{V,W,X}.
\end{align}

Let $\cM$ and $\cV$ be monoidal categories. We further recall that a monoidal functor (or fiber functor) $G\colon \cM\to \cV$ is a functor such that there exists a natural isomorphism
\[
\mu=\mu_G\colon G\otimes \stackrel{\sim}{\Longrightarrow} \otimes(G\times G),
\]
which is compatible with the associativity isomorphism of $\cM$ and $\cV$, i.e. for any three objects $X$, $Y$ and $Z$ in $\cM$ we have
\begin{equation}
\alpha_{G(X),G(Y),G(Z)}(\mu_{X,Y}\times \ide_{G(Z)})\mu_{X\otimes Y,Z}=(\ide_{G(X)\times\mu_{Y,Z}})\mu_{X,Y\otimes Z}G(\alpha_{X,Y,Z}).
\end{equation}

Similarly to defining bimodules of unitary monoids (or rings), we can consider the category $\BiMod_\cM$ of $\cM$-bimodule objects in $\Cat$. Objects are categories which have a left and a right $\cM$-action which commute up to a coherent natural isomorphism. We usually denote the left action by $\triangleright \colon \cM \times \cV \to \cV$ and the right action by $\triangleleft \colon \cV\times \cM\to \cV$. The action coherences are natural isomorphisms
\begin{align*}
\chi\colon \triangleright(\ide_\cM\times \triangleright)&\stackrel{\sim}{\Longrightarrow}\triangleright(\otimes\times  \ide_\cV),\qquad \triangleright(I\times \ide_\cV)=\ide_\cV,\\
\xi \colon\triangleleft(\triangleleft \times \ide_\cM)&\stackrel{\sim}{\Longrightarrow}\triangleleft(\ide_\cV\times \otimes),\qquad \triangleleft(\ide_\cV\times I)=\ide_\cV.
\end{align*}
Observe that the modules are considered to be strictly unital mainly to simplify the exposition.
The coherence commuting the left and right action is a natural isomorphism
\[
\zeta\colon \triangleleft(\triangleright \times \ide_\cM)\stackrel{\sim}{\Longrightarrow} \triangleright(\ide_\cM \times \triangleleft).
\]
We require compatibilities between the coherences $\chi, \xi$ and $\zeta$. Without spelling them out in detail, the idea is that whenever two combinations of the functors $\otimes$, $\triangleright$ and $\triangleleft$ can be transformed into one another using different combinations of the transformations $\alpha, \chi, \xi$ and $\zeta$, then these different combinations have to be equal. This uses Mac Lane's coherence theorem \cite[VII.2]{Mac} which shows that elementary coherence axioms of minimal tensor order are sufficient to explain all coherences.

Morphisms in the category $\BiMod_\cM$ commute with the left and right actions up to coherent natural isomorphism. That is, a functor $F\colon \cV\to \cW$ is a morphism of bimodules if there exist natural isomorphisms
\[
\rho_F \colon F\triangleleft\stackrel{\sim}{\Longrightarrow}\triangleleft (F\times \ide_\cM),\qquad \lambda_F\colon F\triangleright \stackrel{\sim}{\Longrightarrow} \triangleright (\ide_\cM\times F).
\]
We often just write $\rho$, $\lambda$ if only one morphism $F$ is considered. These again have to be compatible with the natural isomorphisms $\alpha, \chi, \xi$ and $\zeta$.

The category $\BiMod_\cM$ has the structure of a 2-category. 2-morphisms are natural transformations $\tau\colon F\Rightarrow G$ of module morphisms which commute with the bimodule coherences, i.e. $\rho_G \tau_{\triangleleft}= (\triangleleft(\tau\times \ide_{\ide_\cM}))\rho_F\colon F\triangleleft \Rightarrow \triangleleft(G\times \ide_\cM)$ and $\lambda_G \tau_{\triangleright}= (\triangleright(\ide_{\ide_\cM}\times \tau))\lambda_F\colon F\triangleright \Rightarrow \triangleright (\ide_\cM\times G)$. It is helpful to write these conditions as commutative diagrams for any object $(X,M)\in \cV\times \cM$:
\[
\begin{array}{ccc}
\begin{diagram}
F(X\triangleleft M)& \rTo^{\tau_{X\triangleleft M}} &G(X\triangleleft M)\\
\dTo^{(\rho_F)_{X,M}}&&\dTo^{(\rho_G)_{X,M}}\\
F(X)\triangleleft M &\rTo^{\triangleleft(\tau_X\times \ide_M)}&G(X)\triangleleft M,
\end{diagram}
&\qquad&
\begin{diagram}
F(M\triangleright X)& \rTo^{\tau_{M\triangleright X}} &G(M\triangleright X)\\
\dTo^{(\lambda_F)_{M,X}}&&\dTo^{(\lambda_G)_{M,X}}\\
M\triangleright F(X) &\rTo^{\triangleright(\ide_M\times\tau_X)}&M\triangleright G(X).
\end{diagram}
\end{array}
\]

\begin{example}\label{bimoduleexamples}
$~$\nopagebreak
\begin{enumerate}
\item[(i)]
The regular $\cM$-bimodule $\cM^{\text{reg}}$ is defined using $\otimes\colon \cM\otimes \cM\to \cM$ as module structure (both left and right), and $\xi=\alpha$, $\chi=\alpha^{-1}$, $\zeta=\alpha$ as structure maps.
\item[(ii)]
The trivial $\cM$-bimodule on $\cM$ is given by $X\triangleright Y=Y$ and $X\triangleleft Y=Y$, and trivial action on morphisms too. We say that this bimodule is obtained by pulling the regular bimodule structure back along the functor $I\colon \cM\to \cM$ (factoring through the terminal and initial monoidal category $\cI$ with one element and morphism). Here, $\xi=\chi=\zeta=\ide$.
\item[(iii)]
More generally, for any pair of functors $G_1, G_2\colon \cM\to \cV$, we can give $\cV$ a $\cM$-bimodule structure $\leftexp{G_1}{\cV}^{G_2}$ where the left action is induced by pulling the regular bimodule structure on $\cV$ back along $G_1$, i.e.
$X\triangleright V=G_1(X)\otimes V,$
and the right action is induced by $G_2$ in the same way. For $\mu_i\colon G_i(\otimes)\Rightarrow \otimes (G_i\times G_i)$, we have
\begin{align*}
\chi&:= (\mu_{G_1}^{-1}\otimes \ide)\alpha^{\cV},
&\xi:&= (\ide\otimes \mu_{G_2}^{-1})(\alpha^{\cV})^{-1},
&\zeta&:= \alpha^{\cV}.
\end{align*}
\end{enumerate}
\end{example}

\begin{lemma}\label{monoidallemma}
Let $\cV$ be an $\cM$-bimodule. Then $\BiMod_\cM(\cV,\cV)$ is a strict monoidal category via composition of functors and composition of structure maps, i.e.
\begin{align*}
\rho^{\psi\phi}_{A,B}&=\rho^{\psi}_{\phi(A),B}\psi(\rho_{A,B}^{\phi}),
&\lambda^{\psi\phi}_{A,B}&=\lambda^{\psi}_{\phi(A),B}\psi(\lambda_{A,B}^{\phi}).
\end{align*}
\end{lemma}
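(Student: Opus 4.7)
The plan is to verify the four things that make \(\BiMod_\cM(\cV,\cV)\) into a strict monoidal category: (a) that the formulas given for \(\rho^{\psi\phi}\) and \(\lambda^{\psi\phi}\) genuinely define bimodule-morphism data on the composite functor \(\psi\phi\); (b) that they satisfy the coherence axioms with \(\xi, \chi, \zeta\); (c) that the identity functor is a strict bimodule morphism serving as two-sided unit; and (d) that composition is strictly associative and strictly unital, so that no further coherence isomorphism between \((\psi_1\psi_2)\psi_3\) and \(\psi_1(\psi_2\psi_3)\) is needed.

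First I would define \(\rho^{\psi\phi}\) as the pasting \((\triangleleft(\rho^\psi_{-,-}\times \ide)) \circ \psi(\rho^\phi_{-,-})\), i.e.\ evaluate \(\rho^\phi\) and then transport the result along \(\psi\), composing with \(\rho^\psi\) evaluated on \((\phi(A),B)\); this is a natural isomorphism because both \(\rho^\phi\) and \(\rho^\psi\) are, and functors preserve natural isomorphisms. The analogous definition applies to \(\lambda^{\psi\phi}\). This is just the usual horizontal/vertical pasting of 2-cells in \(\Cat\), reinterpreted via the stated formulas.

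Second comes the coherence check, which I expect to be the main obstacle. I need to show that \(\rho^{\psi\phi}\) is compatible with \(\xi\), meaning the outer hexagon built from \(\rho^{\psi\phi}_{A,M\otimes N}\), \(\xi\), and \(\triangleleft(\rho^{\psi\phi}\times \ide)\) commutes. The strategy is to split this hexagon into two sub-diagrams: the upper half commutes by the \(\xi\)-compatibility of \(\rho^\phi\) together with naturality under \(\psi\); the lower half commutes by the \(\xi\)-compatibility of \(\rho^\psi\) evaluated at \((\phi(A),M,N)\). The analogous decomposition handles compatibility of \(\lambda^{\psi\phi}\) with \(\chi\), and the mixed \(\zeta\)-coherence is treated by combining the \(\zeta\)-axioms for \(\phi\) and \(\psi\) with the naturality of \(\lambda^\psi\) in the \(\phi\)-component. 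In each case the key move is the interchange law for horizontal and vertical composition in \(\Cat\), which lets one reshuffle \(\psi\)-applied squares past \(\rho^\psi\)- or \(\lambda^\psi\)-squares. I would draw each hexagon explicitly once; after that the remaining two are formally dual.

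Third, the identity \(\ide_\cV\) is tautologically a bimodule morphism with \(\rho^{\ide}=\lambda^{\ide}=\ide\), and the formulas for \(\rho^{\ide\,\phi}\) and \(\rho^{\phi\,\ide}\) reduce strictly to \(\rho^\phi\), giving the strict unit laws. Strict associativity of composition follows because functor composition is strictly associative and pasting of natural transformations is strictly associative; one only needs to observe that \(((\chi\psi)\phi)(\rho_{A,B}^\tau)=\chi((\psi\phi)(\rho_{A,B}^\tau))\) on the nose, which is a standard fact about whiskering. Hence no further associator is needed, and \(\BiMod_\cM(\cV,\cV)\) is \emph{strict} monoidal as claimed.
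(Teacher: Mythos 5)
Your verification is correct: the paper states this lemma without proof, and your plan -- defining $\rho^{\psi\phi}$, $\lambda^{\psi\phi}$ by whiskering/pasting, splitting each coherence diagram into the $\phi$-coherence transported along $\psi$ plus the $\psi$-coherence at $(\phi(A),\dots)$, and noting strict unitality and strict associativity of functor composition and whiskering -- is exactly the routine check the paper leaves implicit. The only item you do not mention is that the tensor product of \emph{morphisms} of $\BiMod_\cM(\cV,\cV)$ (horizontal composition of the 2-cells, i.e.\ natural transformations compatible with $\rho$ and $\lambda$) must also be seen to satisfy the 2-morphism compatibility with the composite structure maps, but this follows by the same interchange-law argument you already invoke.
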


\begin{lemma}\label{braidedlemma}
The category $\BiMod_{\cM}(\cM^{\op{reg}},\cM^{\op{reg}})$ is braided monoidal with braiding $\Psi_{\psi,\phi}$ given for an object $A$ of $\cM$ by
\[
\psi\phi(A)=\psi\phi(A\otimes I)\stackrel{\psi(\lambda_{A,I}^{\phi})}{\longrightarrow}\psi(A\otimes \phi(I))\stackrel{\rho^{\psi}_{A,\phi(I)}}{\longrightarrow}\psi(A)\otimes \phi(I)\stackrel{\lambda^{\phi}_{\psi(A),I}}{\longrightarrow}\phi(\psi(A)\otimes I)=\phi\psi(A).
\]
\end{lemma}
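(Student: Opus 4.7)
The plan is to verify in sequence: (i) that $\Psi_{\psi,\phi}$ is a well-defined bimodule natural isomorphism, (ii) naturality of the assignment $(\psi,\phi)\mapsto \Psi_{\psi,\phi}$ in both variables, and (iii) the two hexagon identities. Throughout I read the label $\lambda^{\phi}_{\psi(A),I}$ in the third arrow as its inverse, so that the whole composite is a genuine arrow $\psi\phi(A)\to\phi\psi(A)$.

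For step (i), invertibility of each component is immediate since $\lambda^\phi$ and $\rho^\psi$ are natural isomorphisms, and naturality of $\Psi_{\psi,\phi}(A)$ in $A$ follows from naturality of $\lambda^\phi,\rho^\psi$ together with functoriality of $\psi,\phi$. The subtler point is that $\Psi_{\psi,\phi}$ must be a $2$-morphism in $\BiMod_\cM$, that is, compatible with the composed left/right coherences $\rho^{\psi\phi},\lambda^{\psi\phi},\rho^{\phi\psi},\lambda^{\phi\psi}$ of Lemma~\ref{monoidallemma}. After unwinding these formulas on the regular bimodule (where $\chi,\xi,\zeta$ are controlled by the associator $\alpha$), the required compatibility reduces to the hexagonal coherence axioms between $\lambda,\rho$ on the one hand and $\chi,\xi,\zeta$ on the other, together with the unit triangle. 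Graphically this is a routine pasting computation once one fixes the orientation conventions.

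Step (ii) is straightforward: if $\tau\colon \psi\Rightarrow \psi'$ and $\sigma\colon \phi\Rightarrow \phi'$ are $2$-morphisms, the naturality square
\[
\Psi_{\psi',\phi'}\circ(\sigma\ast\tau)_{\psi\phi}=(\tau\ast\sigma)_{\phi'\psi'}\circ \Psi_{\psi,\phi}
\]
commutes componentwise because, by definition, 2-morphisms in $\BiMod_\cM$ intertwine the coherences $\lambda,\rho$ appearing in the three factors of $\Psi_{\psi,\phi}$.

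The main obstacle is step (iii), the two hexagon axioms. Given a third bimodule endofunctor $\eta$ of $\cM^{\op{reg}}$, the hexagons say roughly that $\Psi_{\psi\eta,\phi}=(\Psi_{\psi,\phi}\ast\ide_\eta)\circ(\ide_\psi\ast\Psi_{\eta,\phi})$ and dually for the other composite. The strategy is to expand each side using the explicit formulas for $\lambda^{\psi\eta},\rho^{\psi\eta}$ from Lemma~\ref{monoidallemma}, insert the unit $I$ in the appropriate tensor slots, and use naturality of $\lambda^\phi,\rho^\psi$ applied at the distinguished objects $\eta(I),\phi(I),\psi(I)$ to rearrange the resulting string of coherences into the claimed composite. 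As in step (i), what ultimately closes the diagram is the compatibility of $\lambda,\rho$ with the bimodule coherences on $\cM^{\op{reg}}$, which is where $\alpha$ enters.

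A cleaner alternative, which I would run in parallel as a check, is to establish the equivalence of categories $\phi\mapsto(\phi(I),\gamma^\phi)$ with $\gamma^\phi_A:=\rho^\phi_{I,A}\circ(\lambda^\phi_{A,I})^{-1}\colon A\otimes\phi(I)\to\phi(I)\otimes A$ between $\BiMod_\cM(\cM^{\op{reg}},\cM^{\op{reg}})$ and a (weak) Drinfeld center of $\cM$, and to verify that under this equivalence the composition of functors corresponds to the tensor product of half-braidings and that the proposed $\Psi_{\psi,\phi}$ reproduces the standard half-braiding braiding. The braided axioms on the Drinfeld-center side are then classical, and the hexagons on the bimodule side follow by transport of structure.
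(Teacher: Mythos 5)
You are correct, and your reading of the third arrow as $(\lambda^{\phi}_{\psi(A),I})^{-1}$ is forced by the sources and targets, so that interpretation is the intended one. Note, however, that the paper contains no proof of Lemma \ref{braidedlemma} at all: the braided structure on $\BiMod_{\cM}(\cM^{\op{reg}},\cM^{\op{reg}})$ is only justified a posteriori, via the dictionary of Proposition \ref{centerdata}, the monoidal and braided structures on the center (Propositions \ref{monoidalcenter} and \ref{braidedcenter}), and Theorem \ref{fundamentalequivalence}, where the comparison of braidings is left as ``a similar calculation''. So your ``cleaner alternative'' is in effect the paper's route, while your primary route --- checking directly that $\Psi_{\psi,\phi}$ is a $2$-morphism, is natural in both arguments, and satisfies the hexagons for the strict composition product, all of which reduce to naturality of $\lambda^{\phi},\rho^{\psi}$ applied to components of the other functor's coherences together with the compatibilities of $\lambda,\rho$ with $\chi,\xi,\zeta$ (i.e.\ with $\alpha$ on $\cM^{\op{reg}}$, via Lemma \ref{monoidallemma}) --- is a sound self-contained argument that the paper never writes out; what the direct route buys is independence from the center formalism, what the transport route buys is that the hexagons come for free from Proposition \ref{braidedcenter}. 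The one place where genuine care is needed is the convention matching in the transport route, which you rightly list as something to verify rather than assume: your $\gamma^{\phi}_{A}=\rho^{\phi}_{I,A}(\lambda^{\phi}_{A,I})^{-1}$ is the inverse of the paper's centralizing isomorphism $c^{\phi}_{A}=\lambda^{\phi}_{A,I}(\rho^{\phi}_{I,A})^{-1}$ from Proposition \ref{centerdata}, and in the test case of a braided $\cM$ with $\phi=(-)\otimes V$, $\psi=(-)\otimes W$, $\lambda$ trivial and $\rho$ given by the braiding of $\cM$, the displayed composite evaluates to $\ide\otimes\Psi^{\cM}_{V,W}$ while the associated half-braidings $c^{\phi},c^{\psi}$ are inverse braidings; hence whether the lemma's braiding transports to the braiding of Proposition \ref{braidedcenter} or to its reverse depends on how one identifies $\psi\phi(I)$ with a tensor product, and that bookkeeping is exactly where the content of the transport argument lies rather than being a formality.
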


\begin{lemma}\label{forgetfulbimodule}
Consider an $\cM$-bimodule structure on $\cM$ itself where either the left or the right action is given by the regular action and denote this bimodule by $\cM'$. Then the functor
\[
F\colon \BiMod_{\cM}(\cM',\cM')\to \cM, \quad \phi\mapsto \phi(I)
\]
is monoidal.
\end{lemma}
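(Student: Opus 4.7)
The plan is to extract the monoidal coherence of $F$ from the bimodule structure map associated to the regular side of $\cM'$, and to verify the axioms by reducing everything to strict unitality plus a single application of naturality.

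Assume for definiteness that the right action of $\cM'$ is the regular one, so that $X \triangleleft M = X \otimes M$; the left-regular case is handled by a fully analogous argument using $\lambda^{\phi}$ in place of $\rho^{\phi}$. For any bimodule morphism $\phi \in \BiMod_\cM(\cM',\cM')$ the structure isomorphism $\rho^{\phi}_{X,M} \colon \phi(X \otimes M) \to \phi(X) \otimes M$, specialised at $X = I$, collapses by strict unitality to a natural isomorphism $\phi(-) \cong \phi(I) \otimes (-)$. I would then define the monoidal coherence data of $F$ by
\[
F_{\psi,\phi} \;:=\; \rho^{\psi}_{I,\phi(I)} \colon F(\psi\phi) = \psi(\phi(I)) \longrightarrow \psi(I) \otimes \phi(I) = F(\psi) \otimes F(\phi),
\]
with unit $F_{0} = \ide_{I}$ (the identity functor is a strict bimodule morphism, $\rho^{\ide}=\ide$). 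Naturality of $F_{\psi,\phi}$ in the two slots follows from the 2-morphism compatibility $\rho^{G}\,\tau_{\triangleleft} = \triangleleft(\tau\times\ide)\,\rho^{F}$ specialised at $X=I$.

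The central step is the pentagon axiom for a monoidal functor. Since $\BiMod_\cM(\cM',\cM')$ is strict monoidal by Lemma \ref{monoidallemma}, this reduces to the identity
\[
\alpha_{\pi(I),\psi(I),\phi(I)} \circ (\rho^{\pi}_{I,\psi(I)} \otimes \ide_{\phi(I)}) \circ \rho^{\pi\psi}_{I,\phi(I)} \;=\; (\ide_{\pi(I)} \otimes \rho^{\psi}_{I,\phi(I)}) \circ \rho^{\pi}_{I,\psi\phi(I)}.
\]
The strategy is first to expand $\rho^{\pi\psi}_{I,\phi(I)} = \rho^{\pi}_{\psi(I),\phi(I)} \circ \pi(\rho^{\psi}_{I,\phi(I)})$ via Lemma \ref{monoidallemma}, and then to invoke the coherence of $\rho^{\pi}$ with the right-module associator $\xi = \alpha$ of $\cM'$ at $(I,\psi(I),\phi(I))$. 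Strict unitality trivialises $\alpha_{I,\psi(I),\phi(I)}$, so that coherence rewrites the left-hand side as $\rho^{\pi}_{I,\psi(I)\otimes\phi(I)} \circ \pi(\rho^{\psi}_{I,\phi(I)})$. A final application of the naturality of $\rho^{\pi}$ in its second variable, applied to the morphism $\rho^{\psi}_{I,\phi(I)} \colon \psi(\phi(I)) \to \psi(I)\otimes\phi(I)$, converts this into the right-hand side.

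The unit triangle axioms collapse to identities, since $\rho^{\ide}_{I,M} = \ide$ and $\rho^{\psi}_{I,I} = \ide_{\psi(I)}$ by strict unitality. The only genuine subtlety is bookkeeping: one has to distinguish the ambient associator of $\cM$ from the associator $\xi$ of the right $\cM$-action on $\cM'$, and track how the composition rule of Lemma \ref{monoidallemma} threads through them. However, every instance of an associator in the pentagon has an $I$ in a position that trivialises it, so the whole computation reduces to one naturality square, which I expect to be the only genuinely essential input.
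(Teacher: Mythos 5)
Your proof is correct and takes essentially the same route as the paper: the paper also obtains the monoidal coherence of $F$ by evaluating the bimodule structure map of the regular side at the unit object (it treats the left-regular case and uses $\lambda^{\psi}_{\phi(I),I}$, giving $\psi\phi(I)\cong\phi(I)\otimes\psi(I)$, the mirror image of your right-regular choice of $\rho^{\psi}_{I,\phi(I)}$). The only difference is that you go on to verify the associativity and unit coherence axioms via Lemma \ref{monoidallemma}, the $\rho$--$\xi$ compatibility and naturality, which the paper's one-line proof leaves implicit.
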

\begin{proof}
Assume that the left $\cM$-action of $\cM'$ is regular. Then for two morphisms of bimodules $\phi, \psi \colon \cM' \to \cM'$,
\[
\psi\phi(I)=\psi(\phi(I)\otimes I)=\psi(\phi(I)\triangleright I)\stackrel{\lambda^{\psi}_{\phi(I),I}}{\longrightarrow} \phi(I)\triangleright \psi(I)=\phi(I)\otimes \psi(I)
\]
is an isomorphism showing that $F$ is monoidal.
\end{proof}

\begin{lemma}\label{bimodulesmitself}
For any monoidal category where $I$ is a terminal object, the functor $F$ from Lemma \ref{forgetfulbimodule} is part of an equivalence of categories
\[
\BiMod_\cM(\leftexp{\ide_\cM}{\cM}^I,\leftexp{\ide_\cM}{\cM}^I)\cong \cM.
\]
\end{lemma}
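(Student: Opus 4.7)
The plan is to exhibit an explicit quasi-inverse $G\colon \cM \to \BiMod_\cM(\leftexp{\ide_\cM}{\cM}^I,\leftexp{\ide_\cM}{\cM}^I)$ to $F$. Define $G$ on objects by sending $A$ to the endofunctor $G(A):= (-)\otimes A$, equipped with left structure map $\lambda^{G(A)}_{M,X}:=\alpha_{M,X,A}$ and trivial right structure map $\rho^{G(A)}:=\ide$, and on morphisms $f\colon A\to B$ by $G(f):=\ide_{(-)}\otimes f$. Well-definedness on objects amounts to the compatibility of $(\lambda^{G(A)},\rho^{G(A)})$ with the bimodule coherences $\chi,\xi,\zeta$ of $\leftexp{\ide_\cM}{\cM}^I$ (as described in Example \ref{bimoduleexamples}(iii)), which reduce to the pentagon and triangle identities of $\cM$ together with the triviality $\xi=\ide$; well-definedness on morphisms is immediate from naturality of $\alpha$. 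Evaluating $G(A)$ at the unit gives $FG(A)=I\otimes A=A$ by strict unitality, so $FG=\ide_\cM$ on the nose.

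For the reverse direction, I would construct a natural isomorphism $\eta\colon\ide\Rightarrow GF$ whose component at a bimodule endofunctor $\phi$ is
\[
\eta^\phi_X \;:=\; \lambda^\phi_{X,I}\colon\phi(X)=\phi(X\otimes I)\stackrel{\sim}{\longrightarrow} X\otimes\phi(I)=GF(\phi)(X).
\]
This is a natural isomorphism of functors because $\lambda^\phi$ is. To promote $\eta^\phi$ to a 2-morphism in $\BiMod_\cM$ I need its compatibility with $\lambda$ and with $\rho$. The $\lambda$-compatibility is the coherence axiom relating $\lambda^\phi$ to $\chi=\alpha^{-1}$, evaluated at the third slot $Y=I$: after using strict unitality $\alpha_{M,X,I}=\ide$, this collapses precisely to the required identity $\alpha_{M,X,\phi(I)}\circ \lambda^\phi_{M\otimes X,I}=(\ide_M\otimes \lambda^\phi_{X,I})\circ \lambda^\phi_{M,X}$. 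The $\rho$-compatibility reduces to showing that $\rho^\phi$ is the identity natural transformation; since the right action on $\leftexp{\ide_\cM}{\cM}^I$ is trivial, $\rho^\phi$ is a natural family of automorphisms of $\phi$ indexed by $M\in\cM$, and the hypothesis that $I$ is terminal, combined with naturality of $\rho^\phi$ in the $M$-slot applied to the unique morphism $M\to I$, forces $\rho^\phi_{X,M}=\rho^\phi_{X,I}=\ide_{\phi(X)}$. Finally, 2-naturality of $\eta$ in $\phi$ follows from the $\lambda$-compatibility that any 2-morphism $\tau\colon\phi\Rightarrow\psi$ must satisfy, evaluated at $X=I$ in the second variable.

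The main obstacle is the verification that $\rho^\phi$ is forced to be trivial, for this is the unique step in the proof that genuinely invokes the terminal-object hypothesis on $I$; every other step is coherence bookkeeping that is ultimately justified by Mac Lane's coherence theorem and the definition of the bimodule $\leftexp{\ide_\cM}{\cM}^I$. Once both compatibilities are in hand, the pair $(F,G,\eta,\ide)$ constitutes the desired equivalence of categories.
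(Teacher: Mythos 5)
Your construction is essentially the paper's: the quasi-inverse $G(A)=(-)\otimes A$ with $\lambda^{G(A)}=\alpha$ (i.e. $\chi^{-1}$) and $\rho^{G(A)}=\ide$ is exactly the functor $\Ind$ in the paper's proof, and your unit $\eta^\phi=\lambda^\phi_{-,I}$ and its $\lambda$-compatibility via the $\chi$--$\lambda$ coherence at third slot $I$ are the paper's final square. However, there is a genuine gap precisely at the step you single out as the main obstacle. From naturality of $\rho^\phi$ in the second slot (where the right action is trivial) together with terminality of $I$ you only get that $\rho^\phi_{X,M}=\rho^\phi_{X,I}$ for every $M$, i.e. that $\rho^\phi_{X,-}$ is a \emph{constant} natural automorphism of $\phi$; nothing in that argument forces $\rho^\phi_{X,I}=\ide_{\phi(X)}$. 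In the paper's framework there is no strict-unit axiom imposed on the structure maps of a bimodule morphism, so this identity has to be proved, and an arbitrary natural automorphism of $\phi$ is not excluded by naturality alone.

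The missing ingredient is the coherence of $\rho^\phi$ with $\xi$ (which is the identity for $\leftexp{\ide_\cM}{\cM}^I$): it yields the multiplicativity
\[
\rho^\phi_{X,M\otimes N}\;=\;\rho^\phi_{X,M}\circ\rho^\phi_{X,N}
\]
(as automorphisms of $\phi(X)$, the right action being trivial). Evaluating at $M=N=I$ and using strict unitality $I\otimes I=I$ gives $\rho^\phi_{X,I}=\rho^\phi_{X,I}\circ\rho^\phi_{X,I}$, whence $\rho^\phi_{X,I}=\ide_{\phi(X)}$ because $\rho^\phi$ is invertible; combined with your naturality-plus-terminality argument this gives $\rho^\phi=\ide$ and closes the $\rho$-compatibility of $\eta^\phi$. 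This is exactly how the paper argues, so your proof is repaired by inserting this one coherence computation; the remaining steps (well-definedness of $G$, $FG=\ide_\cM$ on the nose, $\lambda$-compatibility and 2-naturality of $\eta$) are correct as written.
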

\begin{proof}
We define the inverse $\Ind\colon \cM\to \BiMod_\cM(\leftexp{\ide_\cM}{\cM}^I,\leftexp{\ide_\cM}{\cM}^I)$ by mapping an object $X$ to the functor $\Ind(X)$ given by
\[
Y\mapsto \Ind(X)(Y):=Y\otimes X=Y\triangleright X.
\]
For morphisms, we use the transformation $\Ind(F)_X:=\ide_X\otimes f$. The structure transformations are
\begin{align*}
\lambda^{\Ind(X)}_{A,B}&:=\chi^{-1}_{A,B,X}\colon \Ind(X)(A\triangleright B)\to A\triangleright \Ind(X)(B), \\
\rho^{\Ind(X)}_{A,B}&:=\ide_{A\otimes X}\colon \Ind(X)(A\triangleleft B)\to \Ind(X)(A)\triangleleft B.
\end{align*}

In order to show that the two constructions are mutually inverse to each other, we first show that for any $\phi \colon \leftexp{\ide_\cM}{\cM}^I \to \leftexp{\ide_\cM}{\cM}^I$, $\rho_{X,Y}=\ide_{\phi(X)}$. For this, consider the square
\begin{diagram}
\phi((X\triangleleft Y)\triangleleft X)&\rTo^{\rho_{X\triangleleft Y,Z}}&\phi(X\triangleleft Y)\triangleleft Z&\rTo^{\rho_{X,Y\triangleleft Z}}&(\phi(X)\triangleleft Y)\triangleleft Z\\
\dTo^{\ide}&&&&\dTo^{\ide}\\
\phi(X\triangleleft (Y\otimes Z))&&\rTo^{\rho_{X,Y\otimes Z}}&&\phi(X)\triangleleft (Y\otimes Z).
\end{diagram}
This square commutes by the coherence between $\rho$ and $\xi$ (which is $\ide$ in this case). Thus we obtain that
\begin{equation}
\rho_{X,Z}\rho_{X,Y}=\rho_{X\triangleleft Y,Z}\rho_{X,Y\triangleleft Z}=\rho_{X,Y\otimes Z}.
\end{equation}
Hence, as these are isomorphisms, $\rho_{X,I}=\ide_{\phi(X)}$.
Further, using the naturality square of $\rho$ in the second component, we obtain that $\rho_{X,Y}=\rho_{X,Y'}$ whenever there exists a morphism $f\colon Y\to Y'$. Under the assumption that $I$ is terminal, we always have an isomorphism $X\to I$ and hence $\lambda=\ide$.

Returning to the proof of the equivalence, it is clear that $F\Ind=\ide_{\cM}$. We claim that $\lambda_{-,I} \colon \ide\to \Ind F$ is a natural isomorphism. As $\lambda=\ide$, it remains to check that the square
\begin{diagram}
\phi(X\triangleright Y)&\rTo^{\lambda_{X,Y}}&X\triangleright \phi(Y)\\
\dTo^{\lambda_{X\otimes Y,I}}&&\dTo^{X\otimes \lambda_{Y,I}}\\
&&\\
{\Ind(\phi(I))(X\triangleright Y)\atop =(X\otimes Y)\triangleright \phi(I)}&\rTo^{\chi_{X,Y,\phi(I)}^{-1}}&{X\triangleright \Ind(\phi(I))(Y)\atop =X\triangleright (Y\triangleright \phi(I)}
\end{diagram}
commutes. But this follows from the coherence of $\chi$ and $\lambda$.
\end{proof}

It is not strictly necessary to assume that $I$ is terminal. It is sufficient that the graph of the category (objects as vertices, morphisms as edges) is connected. If, for example, $\cM=\lmod{B}(\cB)$ for $B$ a quasi-Hopf algebra object in $\cB$, then $\cM$ has $I$ as terminal object provided that $B$ does. If $\cM$ is additive, then the graph of the category is connected (via the zero morphisms). Note that
\[
\BiMod_\cM(\leftexp{\ide_\cM}{\cM}^I,\leftexp{\ide_\cM}{\cM}^I)\cong \Mod_{\cM}(\cM^{reg}, \cM^{reg})\cong \cM
\]
by the Lemma. The result can be interpreted as the following strictification:

\begin{corollary}\label{strictification}
Any monoidal category $\cM$ with connected graph is equivalent to a strict monoidal category.
\end{corollary}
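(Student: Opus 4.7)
The plan is to reduce the Corollary to the preceding lemmas, since the real content has already been built up. By Lemma~\ref{monoidallemma}, the category $\BiMod_\cM(\leftexp{\ide_\cM}{\cM}^I,\leftexp{\ide_\cM}{\cM}^I)$ is strict monoidal, its tensor product being given by composition of functors and componentwise composition of the structure transformations $\lambda,\rho$. This associativity is strict because composition of functors is strictly associative on the nose. So once we have an equivalence of monoidal categories between $\cM$ and this bimodule endomorphism category, the Corollary follows immediately.

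The first step is therefore to invoke Lemma~\ref{bimodulesmitself}, which exhibits a mutually inverse pair $F\dashv \Ind$ giving the equivalence of categories. The lemma was stated under the hypothesis that $I$ is terminal, but the remark immediately preceding the Corollary points out that this was only used to force $\lambda_{X,Y}=\ide$ via the existence of a morphism $X\to I$ to apply naturality of $\rho$. So I will replicate that argument assuming only connectedness of the graph of $\cM$: for any $X$ there is a zig-zag of morphisms $X\to \cdot \leftarrow \cdot \to \ldots \leftarrow I$, and the naturality square of $\rho$ in the second variable, already shown to satisfy $\rho_{X,I}=\ide_{\phi(X)}$, propagates the identity along each edge of the zig-zag. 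This gives $\rho=\ide$, hence $\lambda=\ide$ by the coherence with $\chi$, and the equivalence proceeds verbatim as in Lemma~\ref{bimodulesmitself}.

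Next I need to upgrade this equivalence of categories to one of monoidal categories. Lemma~\ref{forgetfulbimodule} already supplies the monoidal structure on $F(\phi)=\phi(I)$, with coherence isomorphism $\psi\phi(I)\cong \phi(I)\otimes \psi(I)$ built from $\lambda^\psi_{\phi(I),I}$. Since $F$ is an equivalence and is monoidal, its quasi-inverse $\Ind$ automatically inherits a monoidal structure making the equivalence monoidal (transport of structure through the unit and counit of the adjoint equivalence). The only non-routine check is that the coherence pentagon for the induced structure on $\Ind$ commutes; this follows formally from the pentagon for $F$ together with the triangle identities of the equivalence.

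The main obstacle, and the only point requiring actual work, is the extension of Lemma~\ref{bimodulesmitself} from the terminal-object hypothesis to mere connectedness of the graph; everything else is a formal consequence of facts already in place in Section~\ref{catback}. Once the strict monoidal structure on $\BiMod_\cM(\leftexp{\ide_\cM}{\cM}^I,\leftexp{\ide_\cM}{\cM}^I)$ and the monoidal equivalence $\cM\simeq \BiMod_\cM(\leftexp{\ide_\cM}{\cM}^I,\leftexp{\ide_\cM}{\cM}^I)$ are in hand, the Corollary is immediate.
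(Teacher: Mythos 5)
Your proposal is correct and follows essentially the same route as the paper: the corollary is obtained by combining the equivalence of Lemma~\ref{bimodulesmitself} --- extended from the terminal-unit hypothesis to mere connectedness exactly via the zig-zag/naturality-of-$\rho$ argument you describe, which is what the remark preceding the corollary intends --- with the strictness of $\BiMod_\cM(\leftexp{\ide_\cM}{\cM}^I,\leftexp{\ide_\cM}{\cM}^I)$ from Lemma~\ref{monoidallemma}, your extra care about monoidality of the equivalence (via Lemma~\ref{forgetfulbimodule}) being a welcome refinement the paper leaves implicit. One small slip: the conclusion of the zig-zag argument is $\rho=\ide$, not $\lambda=\ide$; $\lambda$ is not the identity in general and is only dealt with through the $\chi$-coherence square in the lemma's proof, to which you in any case defer.
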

\begin{proof}
We use the equivalence of Lemma \ref{bimodulesmitself}. By Lemma \ref{monoidallemma}, the monoidal category $\BiMod_\cM(\leftexp{\ide_\cM}{\cM}^I,\leftexp{\ide_\cM}{\cM}^I)$ is strict.
\end{proof}

\begin{example}
If $\cM$ is a monoidal category with connected graph, then
\[
\BiMod_{\cM}(\leftexp{I}{\cM}^I,\leftexp{I}{\cM}^I)\cong\mathbf{Fun}(\cM,\cM).
\]
This can be seen using the observation in the proof of Lemma \ref{bimodulesmitself} showing that under the  given assumption on $\cM$, $\rho=\lambda=\ide$. Hence any functor is a bimodule morphism for the trivial bimodule.
\end{example}

Let us fix an $\cM$-bimodules $\cV$ and consider the category of bimodule morphisms $\BiMod_\cM(\cM^{\text{reg}},\cV)$. To represent the data of a morphism $G\colon \cM^{\text{reg}} \to \cV$ in a more compact way, denote the image $G(I)$ of the $\otimes$-unit $I$ by $V$. Then it is sufficient -- by Proposition \ref{centerdata} below -- to consider the composite coherence (called \emph{centralizing isomorphism})
\[
c_X:=V\triangleleft X\stackrel{\rho^{-1}_{I,X}}{\to}G(I\otimes X)=G(X)=G(X\otimes I)\stackrel{\lambda_{X,I}}{\to}G(X)\otimes V,
\]
for any object $X$ of $\cM$. The natural isomorphism obtained this way will be denoted by $c\colon V \triangleleft\ide_\cM \to \ide_\cM\triangleright V$.
Note that $(V,c)$ is \emph{monoidal} in the sense that
\begin{equation}\label{monadicityc}
c_{X\otimes Y}=\chi_{X,Y}(c_X\otimes \ide_Y)\zeta_{X,V,Y}(c_X\otimes \ide_Y)\xi^{-1}_{X,Y}.
\end{equation}

A morphism of $\cM$-bimodules, $\vartheta\colon (V,c_V)\to (W,c_W)$ gives a morphism $\vartheta\colon V\to W$ satisfying that the square
\begin{diagram}
V\otimes G(X)&\rTo^{\vartheta\otimes \ide}&W\otimes G(X)\\
\dTo^{c_{V,X}}&&\dTo^{c_{W,X}}\\
G(X)\otimes V&\rTo^{\ide\otimes \vartheta}&G(X)\otimes W
\end{diagram}
commutes for any object $X$ of $\cM$.

We denote the set of such natural isomorphisms $c\colon V\triangleleft \ide_\cM\to \ide_\cM\triangleright V$ obeying the $\otimes$-compatibility (\ref{monadicityc}) by $\Isom^\otimes(V\triangleleft \ide_{\cM}, \ide_\cM\triangleright V)$. We use the notation
\begin{equation}\label{badnotation}
\Isom^{\otimes}(\cV\triangleleft \ide_\cM,\ide_\cM\triangleright \cV)
\end{equation}
to denote the category of pairs $(V,c)$, where $V$ varies over the objects $V\in \cV$, introduced above.

\begin{proposition}\label{centerdata}
There is an equivalence of categories
\[
\Isom^{\otimes}(\cV\triangleleft \ide_\cM,\ide_\cM\triangleright \cV)\cong \BiMod_\cM(\cM^{\text{reg}},\cV)
\]
\end{proposition}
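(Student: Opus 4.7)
The plan is to exhibit mutually quasi-inverse functors $\Phi$ and $\Psi$ between the two categories. The forward functor $\Phi$ is dictated by the recipe given immediately before the statement: on objects send $G$ to $(G(I),c)$, with $c_X$ the specified composite $\lambda_{X,I}\circ\rho^{-1}_{I,X}$, and on 2-morphisms send $\tau$ to $\tau_I$. First I would verify that $c$ is natural in $X$ (immediate from naturality of $\rho$ and $\lambda$) and that it satisfies the monoidality identity (\ref{monadicityc}); the latter is a diagram chase fed by the hexagon axioms relating the natural isomorphisms $\lambda,\rho$ of $G$ to the bimodule coherences $\chi,\xi,\zeta$ of $\cV$.

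For the inverse $\Psi$, given a pair $(V,c)$ I would set $\Psi(V,c):=G_V$ with $G_V(X):=X\triangleright V$, with left structure isomorphism $\lambda^{G_V}_{A,X}:=\chi^{-1}_{A,X,V}$, and with right structure isomorphism $\rho^{G_V}_{X,B}$ given by the composite
\[
(X\otimes B)\triangleright V \stackrel{\chi^{-1}_{X,B,V}}{\longrightarrow} X\triangleright (B\triangleright V)\stackrel{\ide_X\triangleright c^{-1}_B}{\longrightarrow} X\triangleright (V\triangleleft B)\stackrel{\zeta^{-1}_{X,V,B}}{\longrightarrow} (X\triangleright V)\triangleleft B.
\]
On a morphism $\vartheta\colon(V,c_V)\to(W,c_W)$ set $(\Psi\vartheta)_X:=\ide_X\triangleright \vartheta$, which is a 2-morphism precisely because of the commuting square characterizing $\vartheta$. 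I would then check that these data satisfy the coherence axioms of a bimodule morphism; by Mac Lane's coherence theorem applied in the bimodule context, this reduces to a finite diagram check in which the only non-structural input is the monoidality identity (\ref{monadicityc}) on $c$, needed exactly where $\rho^{G_V}$ is compared with the associator $\xi$ of the right action.

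Finally I would produce the unit and counit of the equivalence. For a bimodule morphism $G$, the unit $G\Rightarrow G_{G(I)}$ has components
\[
\eta_G(X):=\lambda_{X,I}\colon G(X)=G(X\triangleright I)\longrightarrow X\triangleright G(I)=G_{G(I)}(X),
\]
which are isomorphisms; compatibility with $\lambda$ and $\rho$ follows from the bimodule coherences of $G$. On the other side, $\Phi\Psi(V,c)$ equals $(I\triangleright V,c')=(V,c')$ where $c'$ coincides with $c$ after strict unitality collapses the $\chi^{-1}_{I,X,V}$ and $\zeta^{-1}_{I,V,X}$ appearing in $\rho^{G_V}_{I,X}$, so the counit is the identity. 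The main technical obstacle is the verification that $G_V$ is a bimodule morphism: informally, (\ref{monadicityc}) is precisely the cocycle-type condition required to extend the centralizing datum $c$ on $\ide_\cM$ coherently to the two-sided action functors, and once this is secured the remaining checks reduce to coherence.
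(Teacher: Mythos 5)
Your proposal is correct and follows essentially the same route as the paper: evaluate a bimodule morphism at $I$ to obtain $(V,c)$ with $c_X=\lambda_{X,I}\rho^{-1}_{I,X}$, and rebuild the functor as $X\mapsto X\triangleright V$ with $\lambda=\chi^{-1}$ and $\rho$ assembled from $\chi$, $\zeta$ and the centralizing isomorphism, the monoidality condition (\ref{monadicityc}) supplying the required bimodule compatibilities. Your write-up is a bit more careful than the paper's sketch (you use $c^{-1}_B$ in $\rho^{G_V}$ where the paper writes $c_Y$, and you make the unit $\lambda_{X,I}$ and the identity counit explicit), but these are refinements of the same argument.
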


\begin{proof}
First, we show that given a morphism of $\cM$-bimodules $G\colon \cM^{\text{reg}}\to \cV$, we can recover the data of $G$, $\rho_G$ and $\lambda_G$ from the pair $(V,c)$. We can set $G':= X\triangleright V$. Then $G'\cong G$ via $\rho_{X,I}^{-1}$. We can recover $\lambda_{X,Y}$ as
\[
\chi^{-1}_{X,Y,V}\colon G'(X\otimes Y)=(X\otimes Y)\triangleright V\to X\triangleright (Y\triangleright V)=X\triangleright G'(X).
\]
The natural isomorphism $\rho_{X,Y}$ can be recovered from $(V,c)$ by considering the composite $\zeta^{-1}_{X,Y}(X\triangleright c_Y)\chi_{X,Y}^{-1}$.

One checks that, given any pair $(V,c)$ as above, the procedure described in the first part gives an $\cM$-bimodule morphism $G_{(V,c)}$. Clearly, $G_{(V,c)}(I)=V$ and if we apply the above procedure to define the centralizing isomorphism, we recover $c$. The requirement of $(V,c)$ being monoidal implies the required compatibilities of $\rho$ and $\lambda$.
\end{proof}

\begin{remark}
If $\cB$ and $\cM$ are additive, abelian or $k$-linear, then these properties are inherited by the categories $\BiMod_\cM(\cV,\cW)$.
\end{remark}
In the next section, bimodule categories will be used to define different kinds of centers of monoidal categories.


\subsection{Mixed Relative Drinfeld Centers}\label{relativecentersection}

The Drinfeld center is a canonical way of associating a braided monoidal category to a monoidal category $\cM$. It can be defined, using the work of the previous subsection, via Hom-sets of $\cM$-bimodules. 

\begin{definition}\label{drinfeldcenter}
Let $G\colon \cM\to \cV$ be a monoidal functor. Then $\cV$ is a $\cM$-bimodule using the functor $G$, i.e. for objects $M\in \cM$, $V\in \cV$, we have $M\triangleright V:= G(M)\otimes V$. The right action is defined analogously and the resulting $\cM$-bimodule is denoted by $\cV^{G}$. We say that the action on $\cV^{G}$ is \emph{induced} by pullback of the regular bimodule structure on $\cV$ along $G$. This is $\leftexp{G}{\cV}^G$ from Example~\ref{bimoduleexamples}.

The \emph{Drinfeld center} of $\cM$ with respect to $G$ is defined as
\[
\cZ^G(\cM):=\Isom^{\otimes}(\cV\otimes G,G\otimes \cV).
\]
The special case $G=\ide\colon \cM\to \cM$ is denoted by $\cZ(\cM)$ and referred to as the \emph{Drinfeld center of $\cM$}. That is, $\cZ(\cM)=\Isom^{\otimes}(\cM\otimes \ide_\cM,\ide_\cM\otimes \cM)$.
\end{definition}

We want to emphasize two special cases that will be the main categories of interest in this paper: One is the Drinfeld center $\cZ(\cM)$ which is equivalent to $\BiMod_{\cM}(\cM^{\op{reg}},\cM^{\op{reg}})$ by \ref{centerdata}.
To define the other one, the Hopf center $\cH(\cM)$, we have to generalize the definition of the Drinfeld center in two different ways. On one hand, we will provide an appropriate relative setting with respect to a fiber functor $F\colon \cM\to \cB$. On the other hand, we will allow to use two functors $G_1$ and $G_2$ instead of just one. This however leads to the loss of monoidacity.

\begin{setting}\label{FPsetting}
First, we generalize to the setting over a braided monoidal category. Namely, we will always consider the monoidal category $\cM$ together with a fiber functor $F\colon \cM \to \cB$ where $\cB$ is a braided monoidal category, which has a left section $P \colon \cB \to \cM$ such that there exists $\tau\colon FP\stackrel{\sim}{\Longrightarrow} \ide_\cB$. We assume that $\tau$ is a monoidal natural transformation.

Further, we will consider \emph{mixed} centers. For this we assume given two monoidal functors $G_1,G_2\colon \cM\to \cV$ which factor through the fiber functor $F$, i.e. for $i=1,2$ we have commutative diagrams of functors
\begin{diagram}
&&\cM&&\\
&\ldTo^{G_i}&&\rdTo^{F}&\\
\cV&&\rTo^{F'}&&\cB.
\end{diagram}
\end{setting}

Note that for any monoidal category, we can always consider it over the trivial braided monoidal category $\cI$ with one element and one morphism. As this is both terminal and initial, we have unique functors $T\colon \cM\to \cI$ and $I\colon \cI\to \cM$, and $\tau=\ide_\cI\colon PF\cong IT$. 

\begin{definition}\label{mixedrelativedef}
In the same setting as above, the \emph{mixed relative Drinfeld center} of $\cM$ over $\cB$ w.r.t. $G_1$, $G_2$ is defined to be
\[
\leftexp{G_1}{\cZ}^{G_2}_{\cB}(\cM):=\Isom_\cB^\otimes(\cV\otimes G_2,G_1\otimes \cV),
\]
where $\Isom_\cB^\otimes(\cV\otimes G_2,G_1\otimes \cV)$ is the full subcategory of $\Isom^\otimes(\cV\otimes G_2,G_1\otimes \cV)$ on objects $(V,c)$ of $\Isom^\otimes(V\otimes G_2,G_1\otimes V)$ which are \emph{$(F,P)$-admissible}. That is,
\[
\Psi_{F'(V),X}=(\tau_{X}\times\ide_{F'(V)})\mu^{F'}_{G_1P(X),V}F'(c_{P(X)})(\mu^{F'}_{V,G_2P(X)})^{-1}(\ide_{F'(V)}\times \tau^{-1}_X),
\]
for each object $X$ of $\cB$, where $\Psi$ is the braiding on $\cB$. Note that we use $F'G_i=F$ for $i=1,2$ for these compositions to be well-defined.

If $G=G_1=G_2$, then we denote $\cZ^{G}_{\cB}(\cM):=\leftexp{G}{\cZ}^G_{\cB}(\cM)$ and refer to it as the \emph{relative Drinfeld center} of $\cM$ over $\cB$. If even $G_1=G_2=\ide_\cM$, then we denote
\[
\cZ_{\cB}(\cM):=\cZ^{\ide_\cM}_{\cB}(\cM)
\]
and refer to it as the \emph{Drinfeld center} of $\cM$ over $\cB$.
\end{definition}

We are mainly interested in the case where $G_1=G_2=\ide_{\cM}$ and $F=F'$ strictly monadic. In this situation, $(F,P)$-admissible means that the underlying morphism of the braiding with an object in the image of the functor $P$ is the same as the braiding in $\cB$. As special cases, we recover $\cZ^{G}(\cM)$ as defined in \ref{drinfeldcenter} as $\cZ_{(I,I)}^{G}(\cM)$. 

\begin{example}
Let $\cM=\rmod{B}(\cB)$ with $P=\triv$ the functor giving an object of $\cB$ the trivial module structure, $F$ the forgetful functor. Consider the category $\cZ^{\triv}_\cB(\cM)$. This category consists of objects $(V,\delta)$, where $V$ is a right $B$-module and $\delta$ is a left $B^{\oop}$-comodule, such that the action and coaction commute. Here, $\delta$ is obtained as $c_B(\ide_B\otimes 1)$ from the commutativity isomorphism.

If, to specify further, $B$ is a finite-dimensional bialgebra over $k$, then $\cZ^{\triv}_\cB(\cM)$ is equivalent to the category of modules over the bialgebra $B^*\otimes B$.
\end{example}

\begin{remark} If $\cB=\Vect$ for a quasitriangular Hopf algebra, many authors do not impose the admissibility condition as under certain representability conditions (e.g. $\cM=\lmod{B}$), any object in the center will be admissible for $F$ being the forgetful functor and $P=\triv$ the functor mapping a vector space to the trivial $B$-module on it. See e.g. \cite[Lemma 2.1]{Maj3} for such a proof, which relies on the existence of elements in a vector space. In our general setting, elements of objects do not exist, hence the admissibility assumption. This condition is a generalization of the assumption used in \cite[Section 3.6]{Bes}.

At the general level, admissibility will be crucial in the proof of Proposition~\ref{equivalence} which is the main result of Section \ref{ydsection} where we describe the centers in terms of Yetter-Drinfeld and Hopf modules.\end{remark}

We can apply this more general definition to the pair of functors $G_1=\ide_\cM$, and $G_2=PF$. The second center of interest in this paper can now be defined.

\begin{definition}\label{hopfcenter}
We define the \emph{relative Hopf center} for $G\colon\cM\to \cV$ to be
\[
\cH^G_{\cB}(\cM):=\leftexp{G}{\cZ}^{GPF}_{\cB}(\cM).
\]
In particular, the \emph{Hopf center} of $\cM$ over $\cB$ is
\[
\cH_{\cB}(\cM):=\cH^{\ide_\cM}_{\cB}(\cM).
\]
\end{definition}

In Section~\ref{doubles}, we explore the relationship between the categories $\cZ(\cM)$ and $\cH(\cM)$ in the case where $\cM=\rmod{B}(\lmod{H})$ is a category of right modules over a bialgebra (or Hopf algebra) $B\in \lmod{H}$, where $\lmod{H}$ is braided monoidal, using techniques from reconstruction theory. In this case, we have a fiber functor to $\Vect$, and $P=\triv$ is the functor mapping a vector space to the trivial $B$-module. We can proof the structural results about the Drinfeld and Hopf center at the level of generality of this section which is often easier. For instance, we show that the relative Drinfeld center has a natural monoidal structure\footnote{One can give the category $\cH^G_{\cB}(\cM)$ a monoidal structure which is not compatible with the forgetful functor, see e.g. \cite{Bes} in the case $\cM=\rmod{B}(\cB)$ by taking relative tensor products $\otimes_B$.} and that $\cZ_{\cB}(\cM)$ is braided.

\begin{proposition}\label{monoidalcenter}
For any monoidal functor $G\colon \cM \to \cV$, we can give $\cZ^G_{\cB}(\cM)$ a monoidal structure by setting $(V,c^V)\otimes (W,c^W):=(V\otimes W,c^{V\otimes W})$, where
\begin{equation}
c^{V\otimes W}:=\alpha_{G(X),V,W}(c^V\times \ide_W)\alpha^{-1}_{V,G(X),W}(\ide_V\times c^W)\alpha_{V,W,G(X)}.
\end{equation}
By construction, there is a strict monoidal functor $\cZ^G_{\cB}(\cM)\to \cM$.
\end{proposition}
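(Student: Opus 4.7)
The plan is to verify that the prescribed formula assembles a well-defined bifunctor $\otimes$ on $\cZ^G_{\cB}(\cM)$, with a unit object coming from $I \in \cV$, and that the monoidal axioms lift verbatim from $\cV$ through a forgetful functor which is tautologically strict monoidal.

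First I would check that $(V\otimes W, c^{V\otimes W})$ actually lies in $\cZ^G_{\cB}(\cM)$. Naturality of $c^{V\otimes W}$ in the $X$-variable is immediate from the naturality of $c^V$, $c^W$ and the associator. The $\otimes$-compatibility condition (\ref{monadicityc}) is the core of the argument: expanding $c^{V\otimes W}_{X\otimes Y}$ by the defining formula and substituting the $\otimes$-compatibility of $c^V$ and $c^W$ produces a long composite of associators, which can be rearranged into the required form on the right-hand side of (\ref{monadicityc}) by repeated application of Mac Lane's pentagon coherence; this step is best executed by a diagram chase rather than written out in symbols. For $(F,P)$-admissibility, I would apply the monoidal functor $F'$ to $c^{V\otimes W}_{P(X)}$, invoke the admissibility of $c^V$ and $c^W$ to replace each $F'(c^V_{P(X)})$ and $F'(c^W_{P(X)})$ with the braidings $\Psi_{F'(V),X}$ and $\Psi_{F'(W),X}$ modulo the coherences $\mu^{F'}$ and $\tau$, and then use the hexagon axiom for $\Psi$ in $\cB$ to recover the formula defining admissibility of the composite.

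Next I would take the monoidal unit to be $(I,c^I)$ with $c^I_X$ the identity (or the canonical unit coherence when $\cV$ is not strictly unital); both (\ref{monadicityc}) and admissibility are trivial in this case since $\Psi_{I,X}$ is the appropriate unitor. For the bifunctor structure on morphisms, given $\vartheta\colon(V,c^V)\to(V',c^{V'})$ and $\varphi\colon(W,c^W)\to(W',c^{W'})$, the square expressing compatibility of $\vartheta\otimes\varphi$ with $c^{V\otimes W}$ and $c^{V'\otimes W'}$ decomposes, via the defining formula, into the two compatibility squares for $\vartheta$ and $\varphi$ interleaved with naturality of the associators; so the tensor product of morphisms in the center is again a morphism in the center.

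Finally, I would lift the associator $\alpha_{V,W,U}$ and the unitors of $\cV$ to morphisms in $\cZ^G_{\cB}(\cM)$ by checking that they intertwine the centralizing isomorphisms; this is another coherence diagram chase that reduces to Mac Lane's pentagon and triangle. The pentagon and triangle axioms themselves hold in $\cZ^G_{\cB}(\cM)$ because they already hold in $\cV$ and the forgetful functor $(V,c)\mapsto V$ is faithful. By construction this forgetful functor sends $(V,c^V)\otimes(W,c^W)$ to $V\otimes W$ and the unit to $I$ on the nose, hence is strict monoidal. The only real obstacle is the pentagon chase in the first step verifying (\ref{monadicityc}) for $c^{V\otimes W}$; everything else is a formal consequence of Mac Lane coherence together with properties already assumed for $c^V$, $c^W$, $F$, $P$, and $\Psi$.
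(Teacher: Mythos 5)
Your proposal is correct and follows essentially the same route as the paper's own proof: a direct verification that $c^{V\otimes W}$ is natural and satisfies the $\otimes$-compatibility (\ref{monadicityc}) via a coherence/naturality diagram chase (the paper invokes naturality of $c^V$, $c^W$, naturality of $\alpha$ with respect to them, and the coherence axioms for $\alpha$, which is the same chase you describe), with the associativity constraint simply inherited from the underlying category and the forgetful functor strict monoidal by construction. You are in fact slightly more explicit than the paper about the unit object, bifunctoriality on morphisms, and the $(F,P)$-admissibility of the tensor product (which the paper only addresses in passing in Proposition \ref{braidedcenter}), so no gap to report.
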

\begin{proof}
This is shown by straightforward but lengthy checking of the axioms and compatibilities. The associativity isomorphism in $\cZ^G_{\cB}(\cM)$ is just the associativity isomorphism of $\cM$, which can be checked to be compatible with the centralizing isomorphisms of threefold tensor products.

Next, we have to check that $C^{V\otimes W}$ is monoidal in the sense of \ref{monadicityc}. The proof of this requires repeated application of naturality of $c^V$, $c^W$ to $\Delta$, the hexagonal axioms for $\alpha$, and naturality of $\alpha$ with respect to $c^V$ and $c^V$. \end{proof}

The main advantage of the Drinfeld center is that it is braided.

\begin{proposition}\label{braidedcenter}
The category $\cZ_{\cB}(\cM)$ has a braiding defined by
\[
\Psi=\Psi_{(V,c^V),(W,c^W)}:=c^V_W\colon (V\otimes W,c^{V\otimes W})\to (W\otimes V,c^{W\otimes V}),
\]
for objects $(V, c^V)$ and $(W, c^W)$ in $\cZ_{\cB}(\cM)$.
\end{proposition}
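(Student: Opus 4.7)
The plan is to verify four things in order: (i) that $c^V_W$ is indeed a morphism in $\cZ_{\cB}(\cM)$; (ii) that $\Psi$ is natural in both arguments; (iii) that the two hexagon axioms hold; and (iv) invertibility. Throughout, I will treat $\cM$ as strict monoidal by invoking Corollary \ref{strictification}, so the associators $\alpha$, together with $\chi$, $\xi$ and $\zeta$ on $\cM^{\op{reg}}$, may be suppressed; rigorously one carries them along but the proof is structurally identical.

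First I would check (i): for every object $X$ of $\cM$, the square
\begin{diagram}
V\otimes W\otimes X &\rTo^{c^V_W\otimes \ide_X}& W\otimes V\otimes X\\
\dTo^{c^{V\otimes W}_X}&&\dTo^{c^{W\otimes V}_X}\\
X\otimes V\otimes W &\rTo^{\ide_X\otimes c^V_W}& X\otimes W\otimes V
\end{diagram}
commutes. Using the formula for $c^{V\otimes W}_X$ from Proposition \ref{monoidalcenter}, this reduces to a diagram built from $c^V$ and $c^W$ in three tensor slots; it commutes after two applications of the naturality of $c^V$ (with respect to the morphism $c^W_X\colon W\otimes X\to X\otimes W$ viewed in $\cM$) and $c^W$. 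Since admissibility is a property of objects and $c^V_W$ is manufactured from the data of objects already in $\cZ_\cB(\cM)$, no extra check is needed for $(F,P)$-admissibility at the level of morphisms.

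Next, for (ii), naturality in $V$ and $W$ with respect to morphisms $f\colon (V,c^V)\to (V',c^{V'})$ and $g\colon (W,c^W)\to (W',c^{W'})$ in $\cZ_\cB(\cM)$ follows by composing two commutative squares: one expressing that $f$ is a morphism in the center (applied at the object $W'$ of $\cM$), and one expressing ordinary naturality of $c^{V'}$ with respect to $g$. For (iii), the two hexagon axioms are almost tautological: the first hexagon
\[
\Psi_{V\otimes W,X}\;=\;(\Psi_{V,X}\otimes\ide_W)(\ide_V\otimes \Psi_{W,X})
\]
is exactly the formula defining $c^{V\otimes W}_X$ in Proposition \ref{monoidalcenter}, while the second hexagon
\[
\Psi_{V,W\otimes X}\;=\;(\ide_W\otimes \Psi_{V,X})(\Psi_{V,W}\otimes \ide_X)
\]
is the $\otimes$-monoidality condition \eqref{monadicityc} on $c^V$, applied with $Y=X$ and the understanding that $\chi,\xi,\zeta$ on $\cM^{\op{reg}}$ are instances of $\alpha$. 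For (iv), $c^V_W$ is invertible because $c^V$ was required to be a natural \emph{iso}morphism in the definition of $\Isom^\otimes(\cV\triangleleft\ide_\cM,\ide_\cM\triangleright\cV)$, so $\Psi$ automatically has an inverse $(c^V_W)^{-1}$.

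The step I expect to need the most care is (i), since in the non-strict case one must thread the diagram through multiple associators and verify the interaction between the formula for $c^{V\otimes W}$ and the pentagon; however, once strictification is invoked this becomes bookkeeping and the essential content is a two-square naturality argument. No genuine obstacle arises because the bimodule coherences of $\cM^{\op{reg}}$ are all instances of $\alpha$, and the monoidality condition \eqref{monadicityc} is precisely tailored so that the second hexagon holds.
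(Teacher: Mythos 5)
Your proposal is correct and follows essentially the paper's route: the real content is showing $c^V_W$ is a morphism in $\cZ_{\cB}(\cM)$ via the three-slot (Yang--Baxter) square, with the hexagons, naturality and invertibility being the routine checks the paper leaves implicit, and with admissibility handled exactly as you say. One caveat on your step (i): after expanding $c^{V\otimes W}_X$ and $c^{W\otimes V}_X$ by Proposition \ref{monoidalcenter}, the square is closed by a single application of naturality of $c^V$ with respect to the morphism $c^W_X$ together with \emph{two} applications of the monoidality condition (\ref{monadicityc}) for $c^V$ (to recombine $(\ide_W\otimes c^V_X)(c^V_W\otimes\ide_X)$ into $c^V_{W\otimes X}$ and likewise for $c^V_{X\otimes W}$) --- which is precisely the paper's argument --- whereas naturality of $c^W$ plays no role and naturality alone would not suffice.
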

\begin{proof}
This follows by applying naturality of $c^V$ to the morphisms $c^W_X$ for any object $X$ of $\cM$. Indeed, this gives that $c^V_{W\otimes X}(\ide_V\times c_X^W)=(c^W_X\times \ide_V)c^V_{W\otimes X}$. Now we use the monadicity of $c$ as in (\ref{monadicityc}) twice giving the required commutative square. It is clear that the tensor product of two $(F,P)$-admissible objects is again $(F,P)$-admissible.
\end{proof}

We define the category $\BiMod^\cB_\cM(\cM^{\op{reg}},\leftexp{G_1}{\cV}^{G_2})$ as the full subcategory of bimodules $\cM^{\op{reg}}\to\leftexp{G_1}{\cV}^{G_2}$ that corresponds to $\Isom^\otimes(\cV\otimes G_2,G_1\otimes \cV)$ under the equivalence of Proposition \ref{centerdata}.

\begin{theorem}\label{fundamentalequivalence}
There is an equivalence of monoidal categories
\[
\BiMod^\cB_\cM(\cM^{\op{reg}},\cV^G)\cong \cZ^G_\cB(\cM),
\]
where $\BiMod^\cB_\cM(\cM^{\op{reg}},\cV^G)$ is a monoidal category via composition of functors. If $\cV^G=\cM^{\op{reg}}$, this is an equivalence of braided monoidal categories.
\end{theorem}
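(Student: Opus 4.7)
The plan is to leverage Proposition~\ref{centerdata} applied to the specific bimodule $\cV^G=\leftexp{G}{\cV}^{G}$ from Example~\ref{bimoduleexamples}(iii), then cut down to the $(F,P)$-admissible subcategories. First I would observe that Proposition~\ref{centerdata} gives an equivalence of categories $\BiMod_\cM(\cM^{\op{reg}},\cV^G)\simeq\Isom^\otimes(\cV\triangleleft \ide_\cM, \ide_\cM\triangleright \cV)$, and specializing to the bimodule structure on $\cV^G$, where $M\triangleright V=G(M)\otimes V$ and $V\triangleleft M=V\otimes G(M)$, this identifies with $\Isom^\otimes(\cV\otimes G, G\otimes \cV)$.

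Next I would restrict to the full subcategory on both sides cut out by the $(F,P)$-admissibility condition of Definition~\ref{mixedrelativedef}. On the image side this produces $\cZ^G_\cB(\cM)$ by definition; on the source side this produces $\BiMod^\cB_\cM(\cM^{\op{reg}},\cV^G)$, which is how that category was defined in the paragraph preceding the theorem statement. Since the equivalence of Proposition~\ref{centerdata} was realized object-wise by $G\mapsto(G(I),c_G)$ with $c_G$ reconstructed from $\lambda_G$ and $\rho_G$, and since admissibility is a condition phrased entirely in terms of $c$, the equivalence restricts.

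For the monoidal structure, I would compare the tensor product $(V,c^V)\otimes(W,c^W)=(V\otimes W, c^{V\otimes W})$ on $\cZ^G_\cB(\cM)$ defined in Proposition~\ref{monoidalcenter} with the composition structure on $\BiMod^\cB_\cM(\cM^{\op{reg}},\cV^G)$. Given two bimodule morphisms $\phi,\psi\colon\cM^{\op{reg}}\to \cV^G$ corresponding to $(V,c^V)$ and $(W,c^W)$ under the equivalence, one extends $\psi$ to a bimodule endofunctor $\tilde\psi$ of $\cV^G$ using the left $\cM$-action (in direct analogy with the $\Ind$ construction in the proof of Lemma~\ref{bimodulesmitself}); the composite $\tilde\psi\circ\phi$ sends $X$ to $\phi(X)\otimes W$, and its structure maps $\lambda,\rho$ are recovered by stacking those of $\phi$ and the induced structure of $\tilde\psi$. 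I would then verify that reconstructing the centralizing isomorphism of this composite from Proposition~\ref{centerdata} yields exactly the formula
\[
c^{V\otimes W}=\alpha_{G(X),V,W}(c^V\times \ide_W)\alpha^{-1}_{V,G(X),W}(\ide_V\times c^W)\alpha_{V,W,G(X)}
\]
by direct unwinding of the coherence conditions between $\chi,\xi,\zeta,\lambda$ and $\rho$. Monoidality of the equivalence then follows.

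Finally, in the case $\cV^G=\cM^{\op{reg}}$, composition of bimodule functors makes literal sense and Lemma~\ref{braidedlemma} endows $\BiMod_\cM(\cM^{\op{reg}},\cM^{\op{reg}})$ with a braiding defined through $\lambda$ and $\rho$. Applying this formula to $\psi,\phi$ corresponding to $(V,c^V),(W,c^W)$ under the equivalence and simplifying using the reconstruction $c=\lambda_{-,I}\rho^{-1}_{I,-}$ from Proposition~\ref{centerdata}, one recovers precisely $\Psi_{(V,c^V),(W,c^W)}=c^V_W$ from Proposition~\ref{braidedcenter}. The main obstacle will be the bookkeeping in the monoidal compatibility: the tensor formula for $c^{V\otimes W}$ only emerges after carefully threading the structural isomorphisms through the composition, and one must keep track of how the pullback structure maps on $\cV^G$ interact with the associator $\alpha$ of $\cV$ to produce the stated expression. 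Every other step is a direct restriction or identification.
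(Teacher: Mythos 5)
Your proposal is correct and takes essentially the same route as the paper: restrict the equivalence of Proposition~\ref{centerdata} to the $(F,P)$-admissible subcategories, then identify composition with the tensor product of Proposition~\ref{monoidalcenter} by unwinding the structure maps $\lambda$ and $\rho$ of the composite (your $\Ind$-style extension of $\psi$ just makes explicit the endofunctor composition on $\cV^G$ that the paper computes directly), and handle the braided case by the analogous computation with Lemma~\ref{braidedlemma}. The only deviation is a harmless bookkeeping convention: your right-handed extension realizes $(V,c^V)\otimes(W,c^W)$ as the composite taken in the opposite order from the paper's $\rho$-based identification, which does not affect the statement.
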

\begin{proof}
Recall that by Proposition \ref{centerdata} there is an equivalence of categories for the larger categories without the $(F,P)$-admissibility requirement. The left hand side is by definition the subcategory corresponding to $(F,P)$-admissible objects under this equivalence. To show that the monoidal structure defined in \ref{monoidalcenter} corresponds to the monoidal structure of composition of functors on the left hand side, observe that for $\phi, \psi \colon \cV^{G}\to \cV^{G}$, the commutativity isomorphism of the composition $\phi\psi$ is given by
\begin{align}
c^{\phi\psi}_X&=(\lambda_{\phi})_{X,\psi(I)}\psi((\lambda_{\psi})_{X,I})\psi((\rho^{-1}_{\psi})_{I,X})(\rho^{-1}_\phi)_{\psi(I),X}.
\end{align}
In \ref{centerdata} we saw that for $\phi$ corresponding to $(V,c)$ and $\psi$ corresponding to $(W,d)$, we have $\phi(\psi(I))=\phi(W)=V\otimes W$, and $\rho^{-1}$ is given by $\alpha$ working with the regular action for simplicity. Using these equalities, we find
\begin{align}
c^{\phi\psi}_X&=(\lambda_{\phi})_{X,\psi(I)}(\phi(\ide_X\otimes c^{\psi}_X))(\rho^{-1}_\phi)_{\psi(I),X}=c^{V\otimes W}_X,
\end{align}
comparing with the definition of the monoidal structure in $\cZ^G_\cB(\cM)$ from \ref{monoidalcenter}. The fact that the braidings are related in the special case $\cV^G=\cM^{\op{reg}}$ can be checked by a similar calculation.
\end{proof}

Note that the monoidal category $\BiMod^\cB_{\cM}(\cM^{\op{reg}}, \cM^{\op{reg}})$ is strict as composition of functors is strictly associative, and the additional datum of compositions of the transformations $\rho$ and $\lambda$ is strictly associative too. However, the reinterpretation of the data of bimodule morphisms as pairs $(V,c)$ yields a non-strict monoidal category if $\cM$ is not strict.

Theorem \ref{fundamentalequivalence} gives an easy way to find module categories over $\cZ_\cB(\cM)$ from bimodule categories over $\cM$. The resulting categorical actions is the main topic of this section and will be reinterpreted in various reformulations of the braided Drinfeld and Heisenberg double in the course of this paper. The general statement is:

\begin{corollary}\label{bimoduleaction}
Let $\cV$ be an $M$-bimodule. Then there exists a natural action by composition of functors
\[
\triangleright\colon \cZ_\cB(\cM)\times \BiMod_\cM(\cM^{\op{reg}},\cV)\longrightarrow \BiMod_\cM(\cM^{\op{reg}},\cV).
\]
In particular, for $\cV$, $G_i$ as in \ref{FPsetting}, this restricts to an action
\[
\triangleright\colon \cZ_\cB(\cM)\times \leftexp{G_1}{\cZ}^{G_2}_\cB(\cM)\longrightarrow \leftexp{G_1}{\cZ}^{G_2}_\cB(\cM).
\]
\end{corollary}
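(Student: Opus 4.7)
The plan is to derive both statements from Theorem~\ref{fundamentalequivalence}, which identifies $\cZ_\cB(\cM)$ with the monoidal subcategory $\BiMod^\cB_\cM(\cM^{\op{reg}},\cM^{\op{reg}}) \subseteq \BiMod_\cM(\cM^{\op{reg}},\cM^{\op{reg}})$, the latter being strict monoidal under functor composition by Lemma~\ref{monoidallemma}. Composition of bimodule morphisms in $\Cat$ provides a strictly associative and unital bifunctor
\[
\BiMod_\cM(\cM^{\op{reg}},\cM^{\op{reg}}) \times \BiMod_\cM(\cM^{\op{reg}},\cV) \longrightarrow \BiMod_\cM(\cM^{\op{reg}},\cV), \quad (Z,F) \longmapsto F \circ Z,
\]
whose structure transformations $\rho^{F \circ Z}, \lambda^{F \circ Z}$ are obtained by pasting those of $F$ and $Z$ exactly as in Lemma~\ref{monoidallemma}. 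Restricting the first factor along the inclusion $\cZ_\cB(\cM) \cong \BiMod^\cB_\cM(\cM^{\op{reg}},\cM^{\op{reg}}) \hookrightarrow \BiMod_\cM(\cM^{\op{reg}},\cM^{\op{reg}})$ yields the first action $\triangleright$. The action axioms are immediate from strict associativity and unitality of functor composition, together with the matching of the monoidal product in $\cZ_\cB(\cM)$ with composition under the equivalence (cf.\ Lemmas~\ref{forgetfulbimodule} and \ref{bimodulesmitself}).

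For the second statement, I translate back through Proposition~\ref{centerdata}: the category $\leftexp{G_1}{\cZ}^{G_2}_\cB(\cM)$ corresponds to the full subcategory of $\BiMod_\cM(\cM^{\op{reg}}, \leftexp{G_1}{\cV}^{G_2})$ cut out by the $(F,P)$-admissibility condition. It therefore suffices to verify that if $F$ is admissible and $Z$ lies in $\cZ_\cB(\cM)$ (hence is admissible in the sense of Definition~\ref{mixedrelativedef}), then the composite $F \circ Z$ is again admissible. The centralizing isomorphism of $F \circ Z$ at $X \in \cM$ is a pasting of $F(c^Z_X)$ with $c^F_{Z(I)}$ mediated by the structure transformations $\rho^F, \lambda^F$; evaluating at $X = P(Y)$ for $Y \in \cB$ and applying the functor $F'$, admissibility of $Z$ causes $F'F(c^Z_{P(Y)})$ to collapse to the braiding $\Psi_{Y,\cdot}$ on objects of $\cB$, while admissibility of $F$ converts $F'(c^F_{Z(I),P(Y)})$ to the braiding $\Psi_{\cdot, Y}$. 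Combining these via the hexagonal coherences for $\Psi$ and the factorisation $F' G_i = F$ gives precisely the single braiding required by admissibility of $F \circ Z$.

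The main obstacle will be the bookkeeping in this last verification: the composite centralizing isomorphism involves several associators, the transformations $\rho^F, \lambda^F$, the monoidal coherences $\mu^{F'}$, and the transformation $\tau \colon FP \cong \ide_\cB$, and one must check that after application of $F'$ these combine to a single instance of $\Psi$. In the strict monoidal setting emphasised by the paper, this is a one-step diagram chase; in the general setting it reduces, via Mac~Lane's coherence theorem, to naturality of $\Psi$ together with the hexagon axioms. Once admissibility of the composite is established, naturality and functoriality of $\triangleright$ in both variables are inherited from the corresponding properties of composition in $\Cat$, completing the construction.
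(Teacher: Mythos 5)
Your proposal is correct and follows essentially the same route as the paper: the first action is obtained from composition of bimodule morphisms together with the identification of Theorem~\ref{fundamentalequivalence}, and the second statement follows by checking that composition preserves the $(F,P)$-admissible subcategory. The only difference is that you spell out the admissibility-of-the-composite verification (via admissibility of each factor and the hexagon axiom for $\Psi$), which the paper simply records as "easy to check".
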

\begin{proof}
The first statement follows by using the action of $\BiMod^{\cB}_{\cM}(\cM^{\op{reg}}, \cM^{\op{reg}})$ on $\BiMod_\cM(\cM^{\op{reg}},\cV)$ by composition and relating it to the Drinfeld center by \ref{fundamentalequivalence}. Note that there is no dependence on $\cB$ in this statement. For the second part, it is easy to check that the action obtained as a special case of the first statement, restricts to the $(F,P)$-admissible subcategory $\BiMod^{\cB}_{\cM}(\cM^{\op{op}}, \leftexp{G_1}{\cV}^{G_2})$.
\end{proof}

Given that $\cV$ has (left) duals (that, is the category $\cV$ is \emph{rigid}), we can show that the relative Drinfeld centers inherit the same structure.

\begin{proposition}\label{generalduals}
Let $\cV$ be a rigid category. Then the center $\cZ^{G}_{\cB}(\cM)$ is rigid with the dual of an object $(V,c)$ given by $V^*$ with commutativity isomorphism
\begin{equation}
c^*_{X}:=(\ev_V\times \ide)(\alpha^{-1}\times \ide_{V^*})(\ide\times (c^{V}_X)^{-1}\times\ide)(\alpha\times \ide)\alpha^{-1}(\ide_{V^*\otimes G(X)}\times \coev_V).
\end{equation}
The coevaluation and evaluation morphisms are given by the respective maps in $\cM$.
\end{proposition}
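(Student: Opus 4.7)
The plan is to verify that the formula given for $c^*_X$ defines an $(F,P)$-admissible, $\otimes$-monoidal natural isomorphism $V^*\otimes G(\ide_\cM)\Rightarrow G(\ide_\cM)\otimes V^*$, and that the standard evaluation and coevaluation morphisms of $\cV$ lift to morphisms in $\cZ^G_\cB(\cM)$ between $(V,c)\otimes(V^*,c^*)$, $(V^*,c^*)\otimes(V,c)$ and the unit. The triangle identities then hold automatically because they are inherited from $\cV$, and the forgetful functor to $\cV$ reflects identities.

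First, by Mac Lane coherence and the strictification in Corollary \ref{strictification}, I would carry out the verification treating $\cV$ as strict; the associators in the displayed formula then disappear, and $c^*_X$ is recognised as the standard ``lasso'' construction that bends the wire carrying $G(X)$ around $\coev_V$ and $\ev_V$ using $(c^V_X)^{-1}$. Naturality of $c^*$ in $X$ is then immediate from naturality of $c^V$, together with naturality of $\ev_V$ and $\coev_V$ in the duals of $\cV$. Invertibility of $c^*_X$ follows either by writing down the mirror lasso involving $c^V_X$ itself or, more intrinsically, by checking that the standard snake identities in $\cV$ make the two composites equal to identities.

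The $\otimes$-monoidality condition (\ref{monadicityc}) for $c^*$ is the main structural verification. I would prove it by graphical calculus: expand $c^*_{X\otimes Y}$ using the lasso definition and a single coevaluation--evaluation pair, insert an auxiliary $\coev_V\circ\ev_V$ (equivalent to an identity by a snake identity) so as to split the lasso into two nested lassoes, and then apply monoidality of $c^V$ to identify the result with the composite appearing on the right hand side of (\ref{monadicityc}). This is the step where the most care is required; all associativity, $\chi$, $\xi$ and $\zeta$ coherences in $\cV^G$ are accounted for by applying Proposition~\ref{centerdata} and the compatibilities built into Setting~\ref{FPsetting}.

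For $(F,P)$-admissibility, I would use that $F'$ is monoidal and hence carries the lasso expression for $c^*$ to the corresponding lasso for $\Psi_{F'(V^*),-}$ in $\cB$; since $F'(V^*)$ is a dual of $F'(V)$ in $\cB$ and the braiding of a braided rigid category satisfies the analogous duality formula on dual objects, admissibility of $(V,c)$ transports to admissibility of $(V^*,c^*)$. Finally, to check that $\ev_V\colon(V^*,c^*)\otimes(V,c)\to(I,\ide)$ and $\coev_V\colon(I,\ide)\to(V,c)\otimes(V^*,c^*)$ are morphisms in $\cZ^G_\cB(\cM)$, I would write out the required squares with the tensor centralizing isomorphisms from Proposition~\ref{monoidalcenter}, substitute the definition of $c^*$, and verify commutativity via the snake identities of $\cV$. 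The triangle identities for $\ev_V, \coev_V$ in $\cZ^G_\cB(\cM)$ then hold because they hold in $\cV$ and the forgetful functor $\cZ^G_\cB(\cM)\to\cV$ is faithful on the underlying morphisms.
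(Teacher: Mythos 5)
Your proposal is correct and takes essentially the same route as the paper's own proof: both verify that the lasso formula defines a $\otimes$-compatible natural isomorphism, show that $\ev_V$ and $\coev_V$ are morphisms in $\cZ^G_\cB(\cM)$ by naturality and the snake identities (the paper isolates this as the key identity (\ref{keystepduals})), and deduce admissibility of $(V^*,c^*)$ from admissibility of $(V,c)$ together with compatibility of the braiding of $\cB$ with dual objects, with the triangle identities inherited from $\cV$. The only slip is notational: the auxiliary expression you insert to split the lasso into two nested lassoes should be the snake zig-zag $(\ide_V\otimes \ev_V)(\coev_V\otimes \ide_V)=\ide_V$ (or its mirror on $V^*$), not the composite $\coev_V\circ\ev_V$, which is not an identity.
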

\begin{proof}
Note first that a monoidal structure is necessary to talk about rigidity of a category. Hence the restriction $G_1=G_2$. Now $c^*$ is a well defined natural transformation $V^*\otimes G\to G\otimes V^*$. It is easy to see that $c^*$ is $\otimes$-compatible and thus $(V^*,c^*)$ gives an element of $\cZ^{G}(\cM)$. Moreover $\ev_V$ and $\coev_V$ commute with $c\otimes c^*$ and $c^*\otimes c$. We will sketch in more detail how commutativity with $\ev_V$ can be proved. This will be straightforward after proving that
\begin{equation}\label{keystepduals}
(\ev_V\otimes \ide_X)\alpha^{-1}_{V^*,V,X}(\ide_{V^*}\otimes c_X^{-1})\alpha_{V^*,X,V}=(\ide_X\otimes \ev_V)\alpha_{X,V^*,V}(c^*_X\otimes \ide_V).
\end{equation}
Starting from the right hand side of the equation, using the definition of $c^*$, we extract the expression $(\ide_V\otimes \ev_V)\alpha^{-1}(\coev_V\otimes \ide_V)$. This is done using naturality of $\alpha^{-1}$ in $\coev$, $\ev$ or $c^{-1}_X$ and the hexagonal axioms. This expression equals $\ide_V$ by rigidity of $\cM$.

It remains to check that the dual (according to Proposition~\ref{generalduals}) of an admissible object is admissible again. For this, note that for any $X$ of $\cB$ and $(F,P)$-admissible $(V,c)$ in the center, we have that $F(c^{-1}_{P(X)})$ can be expressed in terms of the inverse braiding and structural isomorphisms. From this we can conclude admissibility of $(V^*,c^*)$.
\end{proof}

The next lemma shows how to extract the inverse of the centralizing isomorphism from the definition of the dual.

\begin{lemma}\label{centerinvertible}
If $\cM$ has duals, then the inverse can be described using the definition of the dual as
\begin{equation}
c^{-1}_X=(\ide\otimes (\ide\otimes \ev))(\ide_V\otimes\alpha)(\ide\otimes c^*_X\otimes \ide)(\ide_V\otimes \alpha^{-1})\alpha_{V,V^*,X\otimes V}(\coev \otimes \ide_{X\otimes V}).
\end{equation}
\end{lemma}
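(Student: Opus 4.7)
The plan is to recognise the stated formula as the classical ``zig-zag'' recovery of $c^{-1}_X$ from its dual $c^*_X$, which is a direct consequence of the triangle identities for the coevaluation and evaluation in $\cM$. Since Proposition~\ref{generalduals} defines $c^*_X$ by conjugating $c^{-1}_X$ by one $\coev_V$ and one $\ev_V$ (with the appropriate associators), one expects that a second pair of $\coev_V$ and $\ev_V$ — applied on the opposite side — will undo that conjugation and expose $c^{-1}_X$.

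Concretely, I would proceed as follows. First, substitute the explicit formula for $c^*_X$ given in Proposition~\ref{generalduals} into the right-hand side of the asserted identity; this yields one long composition containing a single copy of $c^{-1}_X$, \emph{two} copies of $\coev_V$, \emph{two} copies of $\ev_V$, and a number of associativity isomorphisms. Next, use naturality of $\alpha$ (and $\alpha^{-1}$) in each of its three slots, together with the pentagon axiom, to migrate the associators past the (co)evaluation morphisms so that the composition takes the shape
\[
(\mathrm{snake~on~}V^*)\;\circ\;c^{-1}_X\;\circ\;(\mathrm{snake~on~}V),
\]
where the left and right ``snake'' pieces are, respectively, the triangle compositions $(\ev_V\otimes\ide_{V^*})(\ide_{V^*}\otimes\coev_V)$ and $(\ide_V\otimes\ev_V)(\coev_V\otimes\ide_V)$ (each embellished with identity tensor factors and canonical associators). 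Finally, applying the two triangle identities of the adjunction $V\dashv V^*$ collapses both snake pieces to identities, leaving precisely $c^{-1}_X$.

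The main technical obstacle is bookkeeping the associativity isomorphisms in the non-strict setting: after the substitution, the raw expression contains seven or eight instances of $\alpha^{\pm1}$ in various bracketings, and one needs to repeatedly apply naturality (in $\coev_V$, in $\ev_V$, and in $c^{-1}_X$) together with the pentagon axiom to arrange them into the two standard triangle compositions. The cleanest way to organise this is to verify the identity in graphical calculus for strict monoidal categories (where both sides manifestly collapse to $c^{-1}_X$ by two zig-zag moves), and then invoke Mac Lane's coherence theorem — as the authors do throughout Section~\ref{catback} — to deduce the general case. Alternatively, one can check the identity directly by composing both sides with $c_X$: using the same naturality-plus-triangle argument the composite reduces to $\ide_{X\otimes V}$, which suffices since $c_X$ is an isomorphism.
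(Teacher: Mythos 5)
Your proposal is correct and follows essentially the paper's own route: the paper merely packages the first unfolding of $c^*_X$ as the identity (\ref{keystepduals}) established inside the proof of Proposition \ref{generalduals}, and then, just as you describe, uses naturality of $\alpha$, coherence, and the rigidity zig-zag identities to collapse the right-hand side to $c^{-1}_X$ (your alternative of composing with $c_X$ is also legitimate since $c_X$ is invertible). One minor bookkeeping point: when the diagram is actually traced, both collapses are instances of the triangle identity $(\ide_V\otimes \ev_V)(\coev_V\otimes \ide_V)=\ide_V$ applied to the two $V$-legs of $c_X^{-1}$, rather than one $V$-snake and one $V^*$-snake as you state, but this does not affect the argument.
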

\begin{proof}
Key in the proof is to use (\ref{keystepduals}). We first extract the right hand side of this equation in the right hand side of the claim. After applying (\ref{keystepduals}), we use naturality of $\alpha^{-1}$ in $c_X^{-1}$ and $\coev$ as well as the hexagonal axioms to transform the resulting composition of maps into
\[
((\ide_V\otimes \ev_V)\otimes \ide_X)(\alpha\otimes \ide_X)((\coev\otimes \ide_V)\otimes \ide_X)c_X^{-1}=c_X^{-1}.\qedhere
\]
\end{proof}

In fact, one can show that if $\cM$ has right duals, then these can be used to show that every natural transformation $V\otimes \ide_\cM\to \ide_\cM$ is automatically invertible. This fact will be used in Section \ref{ydsection}. For this, recall that the right dual $\leftexp{*}{V}$ for an object $V$ of $\cM$ is an object together with morphisms $\ev_V'\colon V\otimes \leftexp{*}{V}\to I$ and $\coev'_V\colon I \to \leftexp{*}{V}\otimes V$, such the axioms
\begin{align}\label{rightdualaxioms}
(\ev_V'\otimes \ide_V)\alpha^{-1}_{V,\leftexp{*}{V},V}(\ide_V\otimes\coev'_V)&=\ide_V,\\
(\ide_{\leftexp{*}{V}}\otimes \ev'_V)\alpha_{\leftexp{*}{V},V,\leftexp{*}{V}}(\coev'_V\otimes \ide_{\leftexp{*}{V}})&=\ide_{\leftexp{*}{V}},
\end{align}
are satisfied.

\begin{lemma}\label{cinversebyduals}
Let $\cM$ have right duals. Then for $(V,c)\in \cZ_\cB(\cM)$,
\begin{equation}
c_X^{-1}=(\ev'\otimes \ide_{V\otimes X})\alpha^{-1}(\ide\otimes\alpha)(\ide_X\otimes c_{\leftexp{*}{X}}\otimes \ide_{\leftexp{*}{X}})(\ide\otimes \alpha^{-1})(\ide_{X}\otimes (V\otimes \coev'_X)).
\end{equation}
\end{lemma}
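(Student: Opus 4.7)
The approach I would take mirrors the proof of Lemma~\ref{centerinvertible}, except that instead of exploiting left duals on $V$ we use right duals on $X$. By Mac Lane's coherence theorem (as noted in the reading hints), we may suppress all associativity isomorphisms and argue as if $\cM$ were strict; the appearance of $\alpha, \alpha^{-1}$ in the stated formula is then just bookkeeping. Let $R_X\colon X\otimes V\to V\otimes X$ denote the right-hand side of the claimed identity, i.e.\ schematically
\[
R_X = (\ev'_X\otimes \ide_{V\otimes X})\circ (\ide_X\otimes c_{\leftexp{*}{X}}\otimes \ide_X)\circ (\ide_{X\otimes V}\otimes \coev'_X).
\]
The plan is to show that $R_X\circ c_X = \ide_{V\otimes X}$; since $c_X$ is already assumed to be an isomorphism in $\cZ_\cB(\cM)$, this forces $R_X = c_X^{-1}$.

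The first step is to apply naturality of $c$ to the morphism $\ev'_X\colon X\otimes \leftexp{*}{X}\to I$ in $\cM$, which yields
\[
(\ev'_X\otimes \ide_V)\circ c_{X\otimes \leftexp{*}{X}} = c_I\circ (\ide_V\otimes \ev'_X) = \ide_V\otimes \ev'_X,
\]
using that $c_I=\ide_V$ (this follows by specializing the $\otimes$-compatibility (\ref{monadicityc}) at $X=Y=I$ and using strict unitality). Next I would expand $c_{X\otimes \leftexp{*}{X}}$ using the monoidal compatibility (\ref{monadicityc}), so that
\[
c_{X\otimes \leftexp{*}{X}} = (\ide_X\otimes c_{\leftexp{*}{X}})\circ (c_X\otimes \ide_{\leftexp{*}{X}}),
\]
again suppressing associators.

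Combining the two displays above and tensoring with $\ide_X$ on the right gives an identity of maps $V\otimes X\otimes \leftexp{*}{X}\otimes X\to V\otimes X$. I would then precompose both sides with $\ide_{V\otimes X}\otimes \coev'_X$. On the left-hand side the result collapses to $\ide_V\otimes \bigl[(\ev'_X\otimes \ide_X)(\ide_X\otimes \coev'_X)\bigr] = \ide_{V\otimes X}$ by the right-duality axiom (\ref{rightdualaxioms}). On the right-hand side, because $c_X$ acts on the $V\otimes X$ tensor factors while $\coev'_X$ inserts the new factors $\leftexp{*}{X}\otimes X$ disjointly, interchange of tensor and composition gives
\[
(c_X\otimes \ide_{\leftexp{*}{X}\otimes X})\circ (\ide_{V\otimes X}\otimes \coev'_X) = (\ide_{X\otimes V}\otimes \coev'_X)\circ c_X,
\]
so that the right-hand side becomes precisely $R_X\circ c_X$. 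Equating both sides yields the desired identity.

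The calculation itself is essentially a graphical-calculus manipulation; the only substantive step is identifying $c_{X\otimes \leftexp{*}{X}}$ via monoidality and applying naturality with respect to $\ev'_X$. The main obstacle to writing this out rigorously in the non-strict case is bookkeeping of the associativity isomorphisms $\alpha$ appearing in the definition of $R_X$, and verifying that the coherences $\chi,\xi,\zeta$ for the regular bimodule agree with the $\alpha$'s used in (\ref{monadicityc}). Coherence reduces this to the strict verification above.
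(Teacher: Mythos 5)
Your proof is correct and takes essentially the same approach as the paper's: the paper verifies the mirror-image composite $c_X\circ R_X=\ide_{X\otimes V}$ by merging $c_{\leftexp{*}{X}}$ and $c_X$ into $c_{\leftexp{*}{X}\otimes X}$ via (\ref{monadicityc}) and applying naturality at $\coev'_X$, whereas you verify $R_X\circ c_X=\ide_{V\otimes X}$ by splitting $c_{X\otimes\leftexp{*}{X}}$ and applying naturality at $\ev'_X$, both then closing with the same right-duality zig-zag, coherence for the associators, and the invertibility of $c_X$. This is the same toolkit run in reverse order, so there is no substantive difference.
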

\begin{proof}
Applying the hexagonal axiom and naturality of $\alpha^{-1}$ in $\coev'$ and $c_X$ we obtain
\begin{align*}
c_X&(\ev'\otimes \ide_{V\otimes X})(\alpha^{-1})(\ide\otimes\alpha)(\ide_X\otimes c_{\leftexp{*}{X}}\otimes \ide_{\leftexp{*}{X}})(\ide\otimes \alpha^{-1})(\ide_{X}\otimes (V\otimes \coev'_X))\\
=&(\ev'\otimes \ide)(\alpha^{-1}\otimes \ide)\alpha^{-1}(\ide\otimes \alpha^{-1})(\ide_{X\otimes \leftexp{*}{X}}\otimes c_X)\\&(\ide\otimes\alpha)(\ide_X\otimes c_{\leftexp{*}{X}}\otimes \ide_{\leftexp{*}{X}})(\ide\otimes \alpha^{-1})(\ide_{X}\otimes (V\otimes \coev'_X)).
\end{align*}
Applying first the definition of the monoidal rule (\ref{monadicityc}) and naturality of $c_{\leftexp{*}{X}}$ in $\coev'$, followed by the right dual axioms this expression becomes
\begin{align*}
(\ev'\otimes \ide)&(\alpha^{-1}\otimes \ide)\alpha^{-1}(\ide_X\otimes c_{\leftexp{*}{X}\otimes X})(\ide_{X\otimes V}\otimes \coev')\\
=&((\ev_X'\otimes \ide_X)\otimes \ide_V)(\alpha^{-1}\otimes \ide_V)((\ide_X\otimes \coev'_X)\otimes \ide_V)=\ide_{X\otimes V}.\qedhere
\end{align*}
\end{proof}

At this general level, we can show that the relative Drinfeld center acts on the relative Hopf center. We will later use this result to obtain twisting results on the level of algebras (cf. \ref{cocycletwists}).

\begin{theorem}\label{cataction}
There is a left action of the monoidal category $\cZ^{G}_{\cB}(\cM)$ on $\cH^{G}_{\cB}(\cM)$ defined by $(V,c)\triangleright (W,d)=(V\otimes W, d^{V\triangleright W})$, where
\begin{align}
c^{V\triangleright W}:&=\alpha_{G(X),V,W}(c_X\times \ide_W)\alpha^{-1}_{V,G(X),W}(\ide_V\times d_X),
\end{align}
for $(V,c)\in \cZ^{G}(\cM)$ and $(W,d)\in \cH^{G}_\cV(\cM)$.
\end{theorem}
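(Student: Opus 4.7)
My plan is to mimic the argument for the monoidal structure on $\cZ^G_\cB(\cM)$ from Proposition~\ref{monoidalcenter}, and then to check the two extra points special to this setting: that the output lands in $\cH^G_\cB(\cM)$ rather than merely in $\cZ^G_\cB(\cM)$, and that the assignment respects composition so that it is genuinely an action of the monoidal category $\cZ^G_\cB(\cM)$.

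First I would verify that $c^{V\triangleright W}_X$ is a natural isomorphism of the correct source and target. Since $(V,c)\in \cZ^G_\cB(\cM)$ and $(W,d)\in \cH^G_\cB(\cM)=\leftexp{G}{\cZ}^{GPF}_\cB(\cM)$, we have $c_X\colon V\otimes G(X)\to G(X)\otimes V$ and $d_X\colon W\otimes GPF(X)\to G(X)\otimes W$. Composing with the associators in the order prescribed by the statement yields a natural isomorphism from $(V\otimes W)\otimes GPF(X)$ to $G(X)\otimes (V\otimes W)$, which is exactly what is required of a candidate object of $\cH^G_\cB(\cM)$.

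Next I would check that $c^{V\triangleright W}$ is monoidal in the sense of equation~(\ref{monadicityc}). Applying the monoidal identities for $c$ and $d$ separately on a twofold tensor product and then reassembling the resulting string of associators using naturality of $\alpha$ in $c_X$ and $d_X$, together with repeated use of the pentagon axiom, yields the claim; the calculation runs essentially as in the monoidal-structure half of the proof of Proposition~\ref{monoidalcenter}, with $GPF$ playing the role of $G$ on the right side. Monoidality of $GPF$ as a composite of monoidal functors is what makes the source side match up. The $(F,P)$-admissibility of $(V\otimes W,c^{V\triangleright W})$ is then obtained by specializing to an object of the form $P(X)$ and invoking the admissibility of $(V,c)$ and of $(W,d)$: each of the underlying expressions $F(c_{P(X)})$ and $F(d_{P(X)})$ is rewritten in terms of the braiding $\Psi$ of $\cB$ via $\tau$, and the hexagon axioms and naturality of $\Psi$ recover exactly the admissibility clause required for $c^{V\triangleright W}$.

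Finally I would verify that the operation is functorial in both arguments and respects the monoidal structure of $\cZ^G_\cB(\cM)$. Functoriality in morphisms $(V,c)\to (V',c')$ and $(W,d)\to (W',d')$ is a single naturality square chased through the defining formula. For the action axiom, a direct comparison of centralizing isomorphisms shows that $((V_1,c_1)\otimes (V_2,c_2))\triangleright (W,d)$ and $(V_1,c_1)\triangleright((V_2,c_2)\triangleright (W,d))$ agree up to the associator of $\cM$, the matching compatibility reducing to the pentagon axiom applied to $G(X),V_1,V_2,W$ together with naturality of $\alpha$ against $c_1$, $c_2$, and $d$; the unit $(I,\ide)$ acts trivially by inspection. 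The main obstacle throughout is the coherence bookkeeping, which becomes transparent after strictifying via Corollary~\ref{strictification}: the formula collapses to $(c_X\otimes \ide_W)(\ide_V\otimes d_X)$, and all remaining verifications are then essentially automatic.
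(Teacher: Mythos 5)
Your proof is correct, but it takes the direct route rather than the one the paper actually uses. The paper obtains the theorem as a special case of Corollary~\ref{bimoduleaction}: the Drinfeld center, identified with $\BiMod^\cB_\cM(\cM^{\op{reg}},\cM^{\op{reg}})$ via Proposition~\ref{centerdata} and Theorem~\ref{fundamentalequivalence}, acts on $\BiMod_\cM(\cM^{\op{reg}},\leftexp{G}{\cV}^{GPF})$ simply by composition of bimodule morphisms, and the explicit formula for $c^{V\triangleright W}$ then falls out of the translation of functor composition into pairs $(V,c)$; the only thing checked by hand is that composition preserves $(F,P)$-admissibility, and the action coherence $\chi$ is just $\alpha$. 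You instead verify everything at the level of the pairs $(V,c)$: well-definedness of the composite isomorphism (note that, read literally, the stated formula is missing a trailing $\alpha_{V,W,GPF(X)}$, which you implicitly supply, as in Proposition~\ref{monoidalcenter}), the monoidality condition~(\ref{monadicityc}), admissibility via the hexagon for $\Psi$, functoriality, and the pentagon-based compatibility with the tensor structure of $\cZ^G_\cB(\cM)$. This is exactly the alternative the paper merely alludes to (``can also easily be seen directly using the monoidal structure introduced in~(\ref{monoidalcenter})''), so nothing is missing; the trade-off is that the paper's composition-of-functors argument makes associativity of the action strict and sidesteps the coherence bookkeeping, whereas your direct computation is self-contained and makes the formula transparent, at the cost of the $\alpha$-chases you discharge via strictification (Corollary~\ref{strictification} and Mac Lane coherence), which is consistent with the paper's own conventions.
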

\begin{proof}
This is a special case of Corollary \ref{bimoduleaction}, using the translation under Proposition \ref{centerdata} and the monoidal equivalence Theorem \ref{fundamentalequivalence}.

The result can also easily be seen directly using the monoidal structure introduced in (\ref{monoidalcenter}). The left action isomorphism $\chi$ will simply be the associativity isomorphism $\alpha$ in $\cV$. It is clear that with this action, the resulting object will again be $(F,P)$-admissible.
\end{proof}

In particular, there is a natural action $\triangleright \colon \cZ_{\cB}(\cM)\times \cH_{\cB}(\cM)\to \cH_{\cB}(\cM)$.

\begin{remark}
If $\cM$, $\cV$, $\cB$ and all functors carry additional structure such as being additive, abelian (with left or right exact functors), or $k$-linear, then these structures are inherited by the mixed relative Drinfeld centers. The constructions discussing in this section however work in the generality of braided monoidal category without such structures. In Section~\ref{doubles} a $k$-linear setting is used.
\end{remark}


\subsection{Braided Reconstruction Theory}\label{reconstruction}

We now recall a categorical reconstruction theorem which generalizes Tannaka-Krein duality -- which is classically stated in the setting of a monoidal category $\cC$ over $\Vect$ -- to the setting of a monoidal functor $F\colon \cC\to \cB$ where $\cB$ is any braided monoidal category which we tread as strictly monoidal (identifying all ways of setting the brackets), but keep track of the associativity isomorphisms in $\cC$. We assume that the functor $F$ is strictly monoidal\footnote{Dropping this assumption leads to weak quasi-Hopf algebras. A reconstruction theorem for such objects can be found in \cite{Hae}.}. More details about this can be found in \cite[Section~9.4.2]{Maj1} (or \cite[3.2]{Maj6} for a comodule version) for the braided Hopf algebra case, and  for the quasi-Hopf algebra case (but not braided) see \cite[Section~2]{Maj5} for a comodule version. 

In order to state the reconstruction theorem, we need to assume certain representability conditions on the functor $F\colon \cC\to \cB$. The basic assumption is that the functor $\Nat_\cC(-\otimes F,F)\colon \cB^{\op{op}}\to \Set$ is representable. This means there exists an object $B\in \cB$ such that
\begin{equation}\label{representability}
\Nat_\cC(V\otimes F,F)\cong \Hom_\cB(V,B), \quad \forall V\in \cB.
\end{equation}
For example, if $\cB=\Vect$, $\cC=\lmod{B}$ and $F$ the forgetful functor, we have
\[
\Nat_{\lmod{B}}(F,F)\cong\Nat_{\lmod{B}}(k\otimes F,F)\cong \Hom_{\Vect}(k,B)\cong B,
\]
recovering $B$. To recover classical Tannaka-Krein duality, one considers the vector space $\Nat_{\lmod{B}}(F,F)$ for $\cC\to \fVect$ under suitable assumptions (see e.g. \cite{Del}).

\begin{theorem}
Let $F\colon \cC\to \cB$ be a functor satisfying (\ref{representability}) with respect to some object $B$ in $\cB$. Then $B$ is an algebra object in $\cB$ and $F$ factors as $\cC\to B\text{-}\Mod(\cB)\stackrel{F}{\to} \cB$ where $F$ is the forgetful functor.

The object $B$ is universal in the following sense: If $B$ is another such algebra object in $\cB$, then there exists a unique morphism $B'\to B$ such that the induced pullback functor $\lmod{B}(\cB)\to \lmod{B'}(\cB)$ makes the diagram
\begin{equation}\label{universalrec}
\begin{diagram}
&&\cC &&\\
&\ldTo &&\rdTo &\\
\lmod{B}(\cB)&\rTo&&&\lmod{B'}(\cB)\\
&\rdTo &&\ldTo &\\
&&\cB &&
\end{diagram}
\end{equation}
commute.
\end{theorem}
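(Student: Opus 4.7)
The plan is to carry out the standard Tannakian reconstruction argument, using the representability (\ref{representability}) in a Yoneda-style form: setting $V=B$ and tracing $\ide_B\in\Hom_\cB(B,B)$ through the isomorphism produces a universal natural transformation $e\colon B\otimes F\Rightarrow F$, and every $\psi\in\Nat_\cC(V\otimes F,F)$ then takes the form $\psi_X=e_X\circ(f\otimes\ide_{F(X)})$ for a unique $f\colon V\to B$. Every construction below is an instance of this uniqueness principle, and, crucially, no use of the braiding on $\cB$ is needed for this statement; the braiding will only enter later when one upgrades $B$ to a bialgebra or quasi-bialgebra via analogous representability of $\Nat_\cC(V\otimes F\otimes F,F\otimes F)$.

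To equip $B$ with an algebra structure, I would take the composite $e\circ(\ide_B\otimes e)\colon B\otimes B\otimes F\Rightarrow F$, which, regarded as an element of $\Nat_\cC(B\otimes B\otimes F,F)$, corresponds under (\ref{representability}) with $V=B\otimes B$ to a unique morphism $m\colon B\otimes B\to B$. The unit $\eta\colon I\to B$ is the morphism associated to $\ide_F\in\Nat_\cC(I\otimes F,F)$. Associativity of $m$ then follows because $m\circ(m\otimes\ide_B)$ and $m\circ(\ide_B\otimes m)$ both correspond under (\ref{representability}) with $V=B^{\otimes 3}$ to the iterated action $e\circ(\ide_B\otimes e)\circ(\ide_B\otimes\ide_B\otimes e)$, and unitality is checked the same way. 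The component $e_X\colon B\otimes F(X)\to F(X)$ now turns each $F(X)$ into a $B$-module, and naturality of $e$ in $X$ makes $F(f)$ into a $B$-linear map for every morphism $f$ of $\cC$, yielding the desired lift $\widetilde F\colon\cC\to\lmod{B}(\cB)$ whose composite with the forgetful functor is $F$.

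For the universal property, given another algebra $(B',m',\eta')$ with a factorization $\cC\to\lmod{B'}(\cB)\to\cB$ of $F$, the associated $B'$-actions on the $F(X)$ assemble into a natural transformation $e'\colon B'\otimes F\Rightarrow F$ and hence, by (\ref{representability}), into a unique morphism $\phi\colon B'\to B$. That $\phi$ is an algebra map is again a one-line Yoneda comparison: the pairs $(\phi\circ m',\,m\circ(\phi\otimes\phi))$ and $(\phi\circ\eta',\,\eta)$ correspond under (\ref{representability}) to the same natural transformations $e'\circ(\ide_{B'}\otimes e')$ and $\ide_F$, respectively, so must agree by uniqueness. The induced pullback functor $\lmod{B}(\cB)\to\lmod{B'}(\cB)$ then makes (\ref{universalrec}) commute by construction of $\widetilde F$ and its analogue $\widetilde{F'}$. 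The main obstacle is purely bookkeeping: one must verify that the bijection in (\ref{representability}) is natural in $V$, so that pasting instances of $e$ on the left of a natural transformation $V\otimes F\Rightarrow F$ really corresponds to precomposing the associated morphism $V\to B$ with a map out of $V$; once this naturality is spelled out, every identification above collapses to a single application of the universal property.
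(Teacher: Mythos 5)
Your proposal is correct and follows essentially the same route as the paper's (sketched) proof: the universal action $\sigma$ corresponding to $\ide_B$ under (\ref{representability}), the product defined by $\sigma(\ide_B\otimes\sigma)$, the unit from $\ide_F$, and the lift of $F$ via the componentwise actions, with associativity, unitality and the universal property all reduced to the uniqueness part of the representing bijection (whose naturality in $V$ is part of the representability hypothesis, so your ``bookkeeping'' step is automatic). You also correctly note that the braiding and the higher representability of $\Nat(V\otimes F^{\otimes n},F^{\otimes n})$ are only needed for the later (quasi-)bialgebra upgrade, and you supply the algebra-map verification for the universal property that the paper's sketch leaves implicit.
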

\begin{proof}[Proof (Sketch)]
Indeed, the natural transformation $\sigma\colon B\otimes F\to F$ corresponding to $\ide_B$ under (\ref{representability}) gives an action of $H$ on each object $F(X)$ for $X\in \cC$. The product map $B\otimes B\to B$ corresponds to the natural transformation given by
\[
\sigma_X(\ide_B\otimes \sigma_X) \colon B\otimes B\otimes F(X)\to F(X),
\]
for $X\in \cC$. The unit is given by the natural transformation $I\otimes F\to F$ of the unit of the monoidal structure of $\cB$.
The morphism $\sigma_X$ gives any object $X$ of $\cC$ a $B$-modules structure, which we will sometimes denote by $\triangleright$.
\end{proof}

In order to obtain more structure on $B$, we need to assume more structure on $\cC$ and that this structure is preserved by the functor $F$. In addition, we will need \emph{higher representabilities}. Given a morphism $\beta\colon V\to B^{\otimes n}$ we can define a natural transformation $\theta_V^n(\beta)$ as the composition
\begin{equation}
\begin{diagram}
V\otimes F(X_1)\otimes\ldots\otimes F(X_n)&\rTo^{\beta\otimes \ide}&B^{\otimes n}\otimes F(X_1)\otimes\ldots\otimes F(X_n)\\\dTo^{\theta_V^n(\beta)}&&\dTo\\F(X_1)\otimes\ldots\otimes F(X_n)&\lTo^{\sigma_{X_1}\otimes\ldots\otimes \sigma_{X_n}}&B\otimes F(X_1)\otimes \ldots\otimes B\otimes F(X_n),
\end{diagram}
\end{equation}
where the right vertical arrow is obtained by the braiding in $\cB$.

\begin{definition}\label{higherrep}
We say that a functor $F\colon \cC\to \cB$ is \emph{higher representable} if the functors $\Nat(V\otimes F^{\otimes n},F^{\otimes n})$ are representable for $n\geq 0$ by the object $B^{\otimes n}$ such that a morphism $\beta$ corresponds to $\theta_V^n(\beta)$.
\end{definition}

This condition says that the representing object $B$ for $\Nat(V\otimes F,F)$ induces representability of $\Nat(V\otimes F^{\otimes n},F^{\otimes n})$ by $B^{\otimes n}$. This condition is not automatic in general. In a classical Tannaka-Krein duality setting, working with $\cB=\fVect$, it will be automatically satisfied.

\begin{theorem}\label{bialgebrareconstruction}
Let $F\colon \cM\to \cB$ be a monoidal functor (not necessarily strict) satisfying the higher representability conditions for some object $B\in \cB$. Then $B$ is a universal quasi-bialgebra object in $\cB$ such that $F$ factors as $\cM\to B\text{-}\Mod(\cB)\stackrel{F}{\to} \cB$.
\end{theorem}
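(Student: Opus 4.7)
The plan is to upgrade the algebra structure on $B$ from the previous reconstruction theorem to a quasi-bialgebra structure, where the non-triviality of the associativity isomorphism $\alpha$ in $\cM$ (together with the non-strictness of the monoidal functor $F$) is absorbed into a coassociator element $\phi \in B^{\otimes 3}$.

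First I would define the coproduct. Given the monoidal structure isomorphism $\mu_{X,Y}\colon F(X)\otimes F(Y) \stackrel{\sim}{\to} F(X\otimes Y)$ and the $B$-action $\sigma_{X\otimes Y}$ produced by the previous theorem, the composition
\[
\sigma^{(2)}_{X,Y} := \mu_{X,Y}^{-1}\,\sigma_{X\otimes Y}\,(\ide_B\otimes \mu_{X,Y})\colon B\otimes F(X)\otimes F(Y)\longrightarrow F(X)\otimes F(Y)
\]
is natural in $(X,Y)\in \cM\times \cM$. By higher representability (Definition~\ref{higherrep}) applied at $n=2$ with $V=B$, there is a unique morphism $\Delta\colon B\to B\otimes B$ with $\theta^2_B(\Delta)=\sigma^{(2)}$. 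Analogously, the $B$-action on $F(I)\cong I$ yields a natural transformation $B\otimes F(I)\to F(I)$ corresponding to a unique counit $\varepsilon\colon B\to I$. The fact that $\Delta$ is an algebra map and that $\varepsilon$ is counital reduce, via the uniqueness clause of higher representability, to the observation that both sides of the bialgebra identity and of counitarity induce the same natural transformation of $F^{\otimes 2}$.

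The main subtlety lies in coassociativity. For three objects $X,Y,Z\in \cM$, the two iterated coproducts $(\Delta\otimes \ide)\Delta$ and $(\ide\otimes \Delta)\Delta$ correspond under $\theta^3$ to the two natural transformations obtained by transporting $\sigma_{(X\otimes Y)\otimes Z}$ and $\sigma_{X\otimes(Y\otimes Z)}$ back to $F(X)\otimes F(Y)\otimes F(Z)$ through the two compositions of $\mu$. These natural transformations differ by conjugation by the automorphism of $F^{\otimes 3}$ induced by $F(\alpha_{X,Y,Z})$ and the compatibility constraint relating $\mu$ with $\alpha$. Representability at $V=I$ (in the form $\Nat(F^{\otimes 3},F^{\otimes 3})\cong \Hom_\cB(I,B^{\otimes 3})$) then produces an invertible element $\phi\in B^{\otimes 3}$ realizing this discrepancy, i.e.\ satisfying $(\ide\otimes \Delta)\Delta = \phi\,((\Delta\otimes \ide)\Delta)\,\phi^{-1}$ as in the defining axiom of Section~\ref{quasihopf}. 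The pentagon axiom for $\alpha$ in $\cM$, transported through $\mu$ on $F^{\otimes 4}$ and interpreted via $\theta^4$, becomes exactly the 3-cocycle identity (\ref{3cocycle}); the triangle/unit coherence of $\mu$ yields the normalization $(\ide\otimes \varepsilon\otimes \ide)\phi=1\otimes 1$. Universality of $B$ is inherited: any competing quasi-bialgebra $B'$ through which $F$ factors gives a natural transformation $B'\otimes F\Rightarrow F$, hence by (\ref{representability}) a unique algebra map $B'\to B$, and higher representability forces this map to intertwine $\Delta$, $\varepsilon$, and $\phi$, yielding the diagram (\ref{universalrec}).

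The step I expect to be the main obstacle is the identification of the 3-cocycle condition as the precise image of the pentagon axiom under $\theta^4$: it requires carefully tracking how the monoidal compatibility constraints of $F$ interact with the four-fold iterated $B$-action, so that the two ways of rebracketing $F(X_1)\otimes\cdots\otimes F(X_4)$ via the $\alpha$'s match exactly the two sides of (\ref{3cocycle}) after applying the uniqueness in higher representability. Everything else -- multiplicativity of $\Delta$, counitality, and universality -- is a direct translation of the corresponding categorical identity through the representability isomorphism, in the same spirit as the proof sketch of the algebra-level reconstruction theorem preceding Definition~\ref{higherrep}.
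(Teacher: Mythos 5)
Your proposal follows essentially the same route as the paper's proof: $\Delta$ is reconstructed from the action $\sigma_{X\otimes Y}$ on tensor products, the counit from the action on the unit, the coassociator $\phi$ from the natural automorphism $F(\alpha_{X,Y,Z})$ of $F^{\otimes 3}$, quasi-coassociativity from naturality of the action with respect to $F(\alpha)$, and the 3-cocycle identity (\ref{3cocycle}) from the pentagon coherence (which the paper calls the ``hexagonal axiom''), with universality inherited as in the algebra-level theorem. The only cosmetic differences are that you carry an explicit monoidal structure map $\mu$ where the paper takes $F$ strictly monoidal (so the coassociator is literally the reconstruction of $F(\alpha)$), and that you do not name the naturality of the braiding $\Psi$, which is what the paper invokes to check that $\Delta$ is an algebra map for the braided product on $B\otimes B$.
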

\begin{proof}
The coproduct $\Delta$ is the morphism $B\to B\otimes B$ corresponding to the natural transformation $\delta \colon B\otimes F^2\to F^2$ defined by
$\delta_{X,Y}= \sigma_{X\otimes Y}.$
For the counit, we define $F^{\otimes 0}$ to be the constant functor $I$ with image the unit object $I$ in $\cB$. The data of a natural transformation $V\otimes F^{\otimes 0}\to F^{\otimes 0}$ consists of only one morphism $V\to I$. The counit is defined to be the morphism $a_I\colon B\to I$. 
The 3-cycle $\phi\colon I \to B^{\otimes 3}$ corresponds to the natural transformation
$F(\alpha_{X,Y,Z})$,
coming from the associativity isomorphism in $\cM$. The quasi-coassociativity of $\Delta$ now follows (under translation with use of the higher representability condition) from the commutativity of the square
\begin{equation}
\begin{diagram}
B\otimes F((X\otimes Y)\otimes Z)&\rTo^{\ide_B\otimes F(\alpha_{X,Y,Z})}&B\otimes F(X\otimes (Y\otimes Z))\\
\dTo^{\sigma_{(X\otimes Y)\otimes Z}}&&\dTo^{\sigma_{X\otimes (Y\otimes Z)}}\\
F((X\otimes Y)\otimes Z)&\rTo^{F(\alpha_{X,Y,Z})}&F(X\otimes (Y\otimes Z)),
\end{diagram}
\end{equation}
for object $X,Y,Z$ of $\cM$, which uses naturality of $\sigma$.
Moreover, the hexagonal axiom translates to the 3-cycle condition. 
The proof that $\Delta$ is an algebra homomorphism in $\cB$ uses naturality of the braiding (see \cite[Figure~9.16(b)]{Maj1}). Note that also for quasi-bialgebras, $\Delta$ is an algebra homomorphism, i.e. the bialgebra condition holds strictly (not up to isomorphism).
\end{proof}

If we assume even more structure on the category $\cM$ and $\cB$, we obtain more structure on the representing bialgebra $B$. We will need the following preliminary observation regarding the interplay of left and right duals in $\cB$:

\begin{lemma}\label{rightdualslemma}
If $\cB$ is a braided monoidal category (with associativity isomorphism $\alpha$) which is rigid (i.e. left duals exist). Then the left dual of an object $V$ is also a right dual.
\end{lemma}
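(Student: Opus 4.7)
The plan is to construct a right-dual structure on $V^*$ explicitly from the given left-dual structure $(\ev_V, \coev_V)$, by using the braiding $\Psi$ of $\cB$ to ``rotate'' the cap and cup. Concretely, set
\[
\coev'_V := \Psi_{V, V^*} \circ \coev_V \colon I \longrightarrow V^* \otimes V, \qquad
\ev'_V := \ev_V \circ \Psi_{V, V^*} \colon V \otimes V^* \longrightarrow I,
\]
and verify that $(V^*, \ev'_V, \coev'_V)$ satisfies the right-dual axioms (\ref{rightdualaxioms}).

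The verification proceeds by expanding each triangle composite, combining the two inserted occurrences of $\Psi_{V, V^*}$ into a single braiding via the hexagon axioms (yielding $\Psi_{V \otimes V, V^*}$ for the first identity and $\Psi_{V, V^* \otimes V^*}$ for the second), and then using naturality of $\Psi$ applied to $\ev_V$ and $\coev_V$, together with $\Psi_{A,I} = \ide_A = \Psi_{I,A}$, to bring the expression into a form where the left-dual zigzag identities for $(\ev_V, \coev_V)$ apply and collapse the composite to an identity. Graphically, this is the standard Reidemeister-type move that straightens a zigzag carrying two crossings (one above, one below) into a straight strand in any braided category.

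The main obstacle is book-keeping the associators $\alpha$ in the non-strict case; however, since the paper treats $\cB$ as strict monoidal throughout (invoking Mac Lane's coherence theorem), the calculation reduces to routine manipulations of hexagons, naturality squares, and the zigzag identity. A more conceptual formulation is to observe that the left-dual structure corresponds to an adjunction $V^* \otimes (-) \dashv V \otimes (-)$ in $\cB$, which the natural isomorphisms $V \otimes (-) \cong (-) \otimes V$ and $V^* \otimes (-) \cong (-) \otimes V^*$ (supplied by the braiding) transport to an adjunction $(-) \otimes V^* \dashv (-) \otimes V$; this exhibits $V^*$ as a right dual, and evaluating the resulting unit and counit at $I$ recovers the explicit formulas for $(\coev'_V, \ev'_V)$ above.
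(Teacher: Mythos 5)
Your explicit formulas fail in a genuinely braided category, and this is exactly the subtle point of the lemma. Taking $\coev'_V=\Psi_{V,V^*}\circ\coev_V$ \emph{and} $\ev'_V=\ev_V\circ\Psi_{V,V^*}$ inserts two crossings of the same handedness, and the resulting snake $(\ev'_V\otimes\ide_V)(\ide_V\otimes\coev'_V)$ is not the identity but a double twist. Concretely, in $\lmod{H}$ for a quasitriangular Hopf algebra $H$ with the standard conventions ($\Psi_{V,W}(v\otimes w)=R^{(2)}\triangleright w\otimes R^{(1)}\triangleright v$, dual action via $S$), your first snake acts on $V$ by the element $S(u)u$, where $u=S(R^{(2)})R^{(1)}$ is the Drinfeld element, and your second by $uS(u)$; these are nontrivial central elements in general (for $\Uq$ at generic $q$ they act on every nontrivial simple by a nonzero power of $q$). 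An even smaller check: in $\mZ/n$-graded vector spaces with braiding $\Psi(x\otimes y)=\zeta^{|x||y|}\,y\otimes x$, $\zeta$ a primitive $n$-th root of unity, and $V$ one-dimensional of degree $1$, your snake equals $\zeta^{-2}\ide_V\neq\ide_V$ for $n\geq 3$. So the verification you sketch (hexagon, naturality, zigzag) cannot be completed: a zigzag carrying two crossings straightens only when the crossings have opposite sign. The paper's proof makes precisely the mixed choice $\ev'_V:=\ev_V\Psi^{-1}$ and $\coev'_V:=\Psi\,\coev_V$, for which the two kinks cancel and both right-dual axioms hold; your choice only works in the symmetric/triangular (or suitably twist-trivial) case.

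Your closing conceptual argument is sound: left duality gives the adjunction $V^*\otimes(-)\dashv V\otimes(-)$, and transporting it along the braiding isomorphisms $V\otimes(-)\cong(-)\otimes V$ and $V^*\otimes(-)\cong(-)\otimes V^*$ does exhibit $V^*$ as a right dual, so the lemma itself is safe. But transporting an adjunction along natural isomorphisms uses each isomorphism once and its inverse once, so unwinding the transported unit and counit at $I$ yields one occurrence of $\Psi$ and one of $\Psi^{-1}$ — i.e.\ the paper's formulas — not the two copies of $\Psi$ you wrote down; the claim that this ``recovers the explicit formulas above'' is therefore also incorrect. Keep the adjunction argument (or the mixed formulas it produces) and discard the same-handed ones, and the proof is complete.
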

\begin{proof}
Recall that a right dual $\leftexp{*}{V}$ for an object $V$ of $\cB$ is an object together with morphisms $\ev_V'\colon V\otimes \leftexp{*}{V}\to I$ and $\coev'_V\colon I \to \leftexp{*}{V}\otimes V$, such that (\ref{rightdualaxioms}) is satisfied.
Using the braiding on $\cB$, we define $\ev_V':=\ev_V\Psi^{-1}$, and $\coev'_V:=\Psi\coev_V$. This gives $V^*$ the structure of a right dual.
\end{proof}

In order to recover the antipode of $B$, we need to assume the existence of a natural \emph{duality isomorphism} $d_X\colon F(X)^*\to F(X^*)$. If $\cM$ is strict ($\alpha=\ide$), then $d_X=(\ev_{F(X)}\otimes \ide_{F(X^*)})(\ide_{F(X)^*}\otimes F(\coev_{X}))$. 
We require that the compatibility condition $d_{X\otimes Y}=d_Y\otimes d_X$ of the monoidal structure with the duality holds.

We say the functor $F$ is rigid if $d$ exists, and for the evaluation and coevaluation morphisms in $\cM$, the conditions
\begin{equation}
\ev_{\cM}=\ev_{F(X)}(d_X\otimes \ide), \qquad \coev_{\cM}=(\ide\otimes d_X^{-1})\coev_{F(X)},
\end{equation}
are satisfied for any object $X$ of $\cM$.

\begin{theorem}\label{majidreconstruction}$~$\nopagebreak
\begin{enumerate}
\item[(a)]Let $\cB$ be rigid. Then $\cM$ and the functor $F$ are rigid if and only if the representing object $B$ is a quasi-Hopf algebra object in $\cB$.\label{hopfreconstruction} 
\item[(b)] \label{rightdualreconstruction} Let $\cB$ be a braided monoidal and rigid. Then $\cM$ has left and right duals on the same object, and $F$ is rigid, if and only if $S$ has an invertible antipode.
\item[(c)]\label{braidedreconstruction} Let $\cB$ be braided monoidal. Then $\cM$ is braided monoidal if and only if we can define a second coproduct $\Delta^{\op{cop}}$ (see Definition~\ref{oppositecoproduct}) and a universal $R$-matrix turning $B$ into a quasitriangular quasi-bialgebra (respectively quasi-Hopf algebra if we are in case (a)) in $\cB$.
\end{enumerate}
\end{theorem}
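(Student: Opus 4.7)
The plan is to use the higher representability conditions (Definition~\ref{higherrep}) to translate each additional piece of structure on $\cM$ and $F$ into a corresponding structure on the representing object $B$ in $\cB$. This parallels Majid's braided reconstruction (see~\cite{Maj1}, \cite{Maj6}), but one has to track the associativity isomorphism $\alpha$ of $\cM$ carefully, since it is precisely $\alpha$ that produces the coassociator $\phi$ (via Theorem~\ref{bialgebrareconstruction}) and the auxiliary elements $a,b$ in the quasi-antipode axioms (\ref{quasiantipode}).

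For part~(a), I would construct $S$ as follows. Given any object $X$ of $\cM$ with dual $X^*$, the action $\sigma_{X^*} \colon B \otimes F(X^*) \to F(X^*)$ can be transported along the duality isomorphism $d_X \colon F(X)^* \to F(X^*)$ to yield a natural transformation $B \otimes F(-)^* \to F(-)^*$. Composing with $\ev$ and $\coev$ in $\cB$ to move back to $F$, higher representability with $n=1$ and $V=B$ yields a morphism $S \colon B \to B$. Writing out the defining axioms for the dual object in $\cM$ (with their associators $\alpha$) and translating via representability, one obtains (\ref{quasiantipode}) with $a,b$ given by contracting the outer legs of $\phi$ and $\phi^{-1}$ through $\varepsilon$, and (\ref{antipodecyclecond}) then follows from the 3-cycle condition (\ref{3cocycle}). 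For the converse, one defines left duals on $\lmod{B}(\cB)$ using $S$ together with $a,b$ acting on the $\cB$-dual, and checks that $F$ is rigid by construction.

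For part~(b), Lemma~\ref{rightdualslemma} already provides right duals in $\cB$ from left duals via the braiding $\Psi^{\cB}$. If $\cM$ has right duals on the same objects and $F$ is rigid with respect to both, then repeating the argument of (a) with the right-dual evaluation/coevaluation yields a morphism $\overline{S} \colon B \to B$. The compatibility between the two types of duality axioms, translated via higher representability, gives $\overline{S}\,S = \ide_B = S\,\overline{S}$ (up to the analogue of $a,b$ for right duals), so $\overline{S} = S^{-1}$. Conversely, $S^{-1}$ and the right-dual elements allow one to build both types of duals on $\lmod{B}(\cB)$.

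For part~(c), the composite $(\Psi^{\cB}_{F(Y),F(X)})^{-1} \circ F(\Psi^{\cM}_{X,Y}) \colon F(X) \otimes F(Y) \to F(X) \otimes F(Y)$ is natural in $X,Y$ of $\cM$. By higher representability with $n=2$ and $V=I$, it corresponds to an element $R \colon I \to B\otimes B$; the opposite coproduct is then defined as $\Delta^{\op{cop}} := \Psi^{\cB}_{B,B} \circ \Delta$ in $\cB$, and naturality of $\Psi^{\cM}$ translates to the quasi-cocommutativity $R \cdot \Delta = \Delta^{\op{cop}} \cdot R$. The two hexagonal axioms for $\Psi^{\cM}$, combined with the fact that $F(\alpha_{X,Y,Z})$ represents $\phi$, translate via higher representability with $n=3$ to the two quasitriangularity identities for $R$ in which $\phi$ appears in the expected places. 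The converse constructs $\Psi^{\cM}_{X,Y}$ on $\lmod{B}(\cB)$ as the action of $R$ followed by $\Psi^{\cB}$.

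The main obstacle throughout is the bookkeeping of $\phi$: in the non-strict setting every translation between a diagram in $\cM$ and a morphism involving $B$ picks up explicit insertions of $\phi$ or $\phi^{-1}$ at the tensor re-bracketings, and one must verify that these combine correctly to give the precise quasi-Hopf and quasitriangularity axioms (including the compatibilities (\ref{antipodecyclecond})). The universality clause in each case follows as in Theorem~\ref{bialgebrareconstruction} from the universal property of $B$ established there.
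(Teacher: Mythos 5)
Your overall route is the paper's: reconstruct $S$ by transporting the action $\sigma_{X^*}$ along the duality isomorphism $d_X$ and invoking higher representability, obtain the inverse antipode from the right-dual structure in (b), and reconstruct $R$ in (c) from $F(\Psi^{\cM})$ composed with the inverse braiding of $\cB$. However, one concrete step in (a) fails: you take $a,b$ to be the $\varepsilon$-contractions of the outer legs of $\phi$ and $\phi^{-1}$. In the reconstruction these elements are not determined by $\phi$ at all; they are the morphisms $I\to B$ representing the natural transformations $(\ide\otimes F(\ev_X))(\ide\otimes d_X\otimes\ide)(\coev_{F(X)}\otimes\ide)$ and $(\ide\otimes\ev_{F(X)})(\ide\otimes d_X^{-1}\otimes\ide)(F(\coev_X)\otimes\ide)$, i.e.\ they record how $F$ applied to the (co)evaluations of $\cM$ differs from the (co)evaluations of $\cB$ after transport by $d_X$. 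Since the coassociator is counital, your prescription collapses to $a=b=1$, turning (\ref{quasiantipode}) into the strict antipode axioms, which fail for genuinely non-strict $\cM$: for a Drinfeld twist $B_F$ one has $a=S(F^{-(1)})F^{-(2)}$ and $b=F^{(1)}S(F^{(2)})$, both nontrivial, while the counit contraction of the coboundary coassociator is $1$. Without the correct $a,b$ you cannot verify (\ref{antipodecyclecond}) — which in the paper is precisely the translation of the duality axioms of $\cM$ — and part (b) also breaks down, because the right (co)evaluation morphisms there are built explicitly from $S^{-1}a$ and $S^{-1}b$, not from a vague ``analogue of $a,b$''.

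In (c) there is a second genuine error: you set $\Delta^{\op{cop}}:=\Psi^{\cB}_{B,B}\Delta$. The theorem deliberately points to Definition~\ref{oppositecoproduct}, where $\Delta^{\op{cop}}$ is reconstructed from $\tau_{X,Y}=\Psi^{-1}_{F(X),F(Y)}\sigma_{Y\otimes X}(\ide\otimes\Psi_{F(X),F(Y)})$; the paper explicitly remarks that this coincides with $\Psi\Delta$ only when $\cB$ is symmetric. With $\Psi\Delta$ in its place, the relation (\ref{braidingaxiom1}) you want to extract from naturality of $\Psi^{\cM}$ is not the identity that reconstruction actually yields, and the translations of the two hexagons require the twisted coassociators $\phi^{\ov{\sigma}}$ (as in (\ref{braidingaxiom2})--(\ref{braidingaxiom3})), which your ``$\phi$ in the expected places'' does not supply, since the braiding conjugates the legs of $\phi$. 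A smaller issue of the same kind occurs in your construction of $S$: the defining transformation (\ref{antipotetrans}) contains an explicit braiding $\Psi$ moving $B$ past $F(X)$ before it acts on the dual, and ``composing with $\ev$ and $\coev$ in $\cB$'' leaves this choice, which matters in a braided category, unspecified.
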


In the following, we will explore these structures more concretely and sketch the proofs. For part (a), we first define the map $a$ as the map $I\to B$ corresponding to the natural isomorphism
\[
(\ide\otimes F(\ev_X))(\ide\otimes d_X\otimes \ide)(\coev_{F(X)}\otimes \ide),
\]
while $b$ is defined using
\[
(\ide\otimes \ev_{F(X)})(\ide\otimes d_X^{-1}\otimes \ide)(F(\coev_X)\otimes \ide).
\]
This implies that
\begin{align}
F(\ev_{F(X)})&=\ev_{F(X)}(\ide\otimes \sigma_{X})(\ide\otimes a \otimes \ide),\\
F(\coev_{F(X)})&=\sigma_X\otimes \ide(b\otimes \ide_{F(X)\otimes F(X)^*})\coev_{F(X)}.
\end{align}
The dual action is defined as $\sigma^*_X:=d_X^{-1}\sigma_{X^*}(\ide\otimes d_X)$.
We then define the antipode as the morphism $S\colon B\to B$ corresponding to the natural transformation $B\otimes F\to F$ defined for an object $X$ of $\cM$ as
\begin{equation}\label{antipotetrans}
(\ide\otimes \ev_{F(X)})(\ide\otimes d_X^{-1})(\ide_{F(X)}\otimes \sigma_{X^*}\otimes \ide)(\Psi\otimes d_X\otimes \ide)(\ide_B\otimes\coev_{F(X)}\otimes \ide_{F(X)}).
\end{equation}
That is, translating the action $\sigma_{X^*}$ on the dual to an action on $F(X)$ using conjugation by $d_X$. We directly derive a formula for translating between the action on $F(X)$ and the dual action:
\begin{equation}\label{dualactionev}
\ev_{F(X)}(\triangleright^*\otimes \ide_{F(X)})=\ev_{F(X)}(\ide\otimes \triangleright)(\Psi\otimes \ide_{F(X)})(S\otimes \ide_{F(X^*)\otimes F(X)}).
\end{equation}

We can now easily proof the antipode axioms (\ref{quasiantipode}).
We also check directly that the duality conditions in $\lmod{H}(\cB)$ are equivalent to the conditions (\ref{antipodecyclecond}). This completes the proof that having an antipode for $B$ is equivalent to the existence of left duals in $\cM$ via reconstruction.

For part (b), we need the following Lemma regarding the antialgebra and coalgebra morphism properties of the antipode.

\begin{lemma}\label{antipodelemma}
The antipode $S$ satisfies
\begin{equation}
S m = m\Psi(S\otimes S), \qquad \Delta S= (S\otimes S)\Psi\Delta.
\end{equation}
If an inverse $S^{-1}$ exists, then it satisfies
\begin{equation}
S^{-1}m = m\Psi^{-1}(S^{-1}\otimes S^{-1}), \qquad \Delta S^{-1}= (S^{-1}\otimes S^{-1})\Psi^{-1}\Delta.
\end{equation}
\end{lemma}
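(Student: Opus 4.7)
The plan is to use the universal characterization of $S$ given by the higher representability of $F$ (Definition \ref{higherrep}). Both claimed identities are morphisms in $\cB$ (of the form $B^{\otimes 2}\to B$ or $B\to B^{\otimes 2}$), so by higher representability they are uniquely determined by the natural transformations on $F$ that they induce via $\theta^n_V$. Therefore it suffices to check equality on the level of these natural transformations evaluated on arbitrary $F(X)$, or $F(X)\otimes F(Y)$, for $X,Y\in \cM$, where the argument reduces to a diagrammatic manipulation using the duality isomorphism $d$, the action $\sigma$, and the braiding $\Psi$.

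First I would prove $Sm=m\Psi(S\otimes S)$. Under representability with $V=B\otimes B$, $n=1$, the left-hand side $Sm$ corresponds to the natural transformation on $F(X)$ given by: multiply the two copies of $B$, then act on $F(X)$ via the formula (\ref{antipotetrans}) defining $S$. By construction of $S$ as the transport of the dual action along $d_X$, this equals the dual action of the product on $F(X^*)$, conjugated back by $d_X$. The right-hand side, on the other hand, is the composition: apply $S$ to each copy of $B$ (acting on $F(X)$), but swap the factors first via $\Psi$. Using that the dual action $\sigma^*_X=d_X^{-1}\sigma_{X^*}(\ide\otimes d_X)$ is itself a $B$-action on $F(X^*)$, and that the tensor duality $(X\otimes Y)^*\cong Y^*\otimes X^*$ reverses order, the two expressions agree once we account for this reversal by the braiding $\Psi$ in $\cB$. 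This is most cleanly verified by graphical calculus, unbending the loops coming from $\coev$ and $\ev$ and applying naturality of $\sigma$ and of the braiding.

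Next I would prove $\Delta S=(S\otimes S)\Psi\Delta$ using higher representability with $n=2$. Morphisms $B\to B^{\otimes 2}$ correspond to natural transformations on $F(X)\otimes F(Y)=F(X\otimes Y)$ via $\theta^2$. For $\Delta S$, the associated transformation first acts via $S$ on $F(X\otimes Y)$ and then splits it using the definition of $\Delta$ as $\sigma_{X\otimes Y}$. For $(S\otimes S)\Psi\Delta$ the transformation instead splits first via $\Delta$, applies $S$ to each tensor factor, and then permutes the two resulting $B$-actions via $\Psi$. Once one identifies $F(X\otimes Y)^*\cong F(Y)^*\otimes F(X)^*$ using the braiding (via Lemma \ref{rightdualslemma}) and pushes the action through the duality isomorphism $d_{X\otimes Y}$, these two transformations coincide, giving the desired identity.

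For the inverse statements, I would run the same arguments with right duals in place of left duals, which introduces $\Psi^{-1}$ in place of $\Psi$ through the definition $\ev'=\ev\Psi^{-1}$, $\coev'=\Psi\coev$ of Lemma \ref{rightdualslemma}. Equivalently, one may apply $S^{-1}\otimes S^{-1}$ to both sides of the already proven identities and use that $S^{-1}$ is characterized analogously via right-dual evaluation and coevaluation. The main obstacle throughout is keeping the interplay of $\Psi$, $d_X$, the associator (in the quasi-case), and the duality axioms organized: as the text recommends, the cleanest presentation is via graphical calculus in the strict base $\cB$, where naturality of $\sigma$, the defining formula (\ref{antipotetrans}) for $S$, and the coherence of $d$ with $\otimes$ make all the moves transparent.
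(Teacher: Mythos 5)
Your route is genuinely different from the paper's. The paper proves the lemma intrinsically, by adapting the graphical, convolution-style argument of \cite[Figure~9.14]{Maj1} for braided Hopf algebras to the quasi-Hopf setting, where the antipode axioms (\ref{quasiantipode}) and, crucially, the compatibility (\ref{antipodecyclecond}) applied to $\phi^{-1}$ stand in for strict coassociativity. You instead argue on the module side: by higher representability (Definition \ref{higherrep}) it suffices to compare the induced natural transformations on $F(X)$, resp.\ $F(X)\otimes F(Y)$, and you compare them using the defining transformation (\ref{antipotetrans}) of $S$, the dual action $\sigma^*$, and (\ref{dualactionev}). In the strict case ($\phi=1$) this is a perfectly good alternative, and arguably cleaner: anti-multiplicativity follows just from the fact that $\sigma^*_X$ is a genuine action together with (\ref{dualactionev}) and the snake identities in $\cB$ --- note that the order reversal $(X\otimes Y)^*\cong Y^*\otimes X^*$ plays no role in that step (it enters only for the coproduct identity), and Lemma \ref{rightdualslemma} is about left duals also being right duals, not about this reversal, so that citation is misplaced. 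Your plan for the $S^{-1}$ statements via right duals parallels part (b) of Theorem \ref{majidreconstruction} and is fine in outline.

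The gap is in the quasi-Hopf case, which is the case the lemma is stated for and exactly what the paper's (terse) proof is about. Your sketch never engages $\phi$, the elements $a,b$, or (\ref{antipodecyclecond}), yet for the identity $\Delta S=(S\otimes S)\Psi\Delta$ these cannot be avoided: the module structure on the dual of a tensor product --- equivalently the consistency of the compatibility $d_{X\otimes Y}=d_Y\otimes d_X$ you invoke --- is precisely where the coassociator and the $\ev_\cM,\coev_\cM$ containing $a$ and $b$ enter. Recall that for an ordinary (unbraided) quasi-Hopf algebra the canonical map $W^*\otimes V^*\to (V\otimes W)^*$ is \emph{not} a module map, and $\Delta S$ agrees with the flipped $(S\otimes S)\Delta$ only up to conjugation by Drinfeld's twist element; so the step ``these two transformations coincide'' cannot be a formality, and it is exactly at this point that the rigidity data of $\cM$, i.e.\ (\ref{quasiantipode}) and (\ref{antipodecyclecond}), must be fed into the computation. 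As written, your proposal is a valid alternative proof in the strict braided Hopf case and a reasonable blueprint in general, but it does not discharge the quasi-case content that constitutes the substance of the paper's argument.
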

\begin{proof}
It is an exercise to adapt the proof of \cite[Figure 9.14]{Maj1} to quasi-Hopf algebras. This proof uses (\ref{antipodecyclecond}) for $\phi^{-1}$.
\end{proof}

Using this Lemma, we can further observe conditions on $S^{-1}$ which are equivalent to the antipode axioms (\ref{quasiantipode}):
\begin{align}
m^2(\ide \otimes a\otimes S^{-1})\Psi^{-1}\Delta&=a\varepsilon,
&m^2(S^{-1}\otimes b\otimes \ide)\Psi^{-1}\Delta&=b\varepsilon.
\end{align}

We now turn to the proof of (b): In Lemma~\ref{rightdualslemma}, we saw that $\cB$ has right duals. Given that the antipode $S$ is invertible, we can use the left duals in $\cM$ to define right duals. The right dual $\leftexp{*}{F(X)}$ is defined to be $F(X)^*$ with (co)evaluation maps
\begin{align}
\ev'&=\ev(\ide\otimes \sigma)(\ide\otimes S^{-1}a \otimes \ide)\Psi^{-1},\\
\coev'&= \Psi(\sigma\otimes \ide)(S^{-1}b\otimes \ide)\coev.
\end{align}
It is not hard to check that the maps $\ev'_{F(X)}$, $\coev'_{F(X)}$ give the left dual $F(X)^*$ a right dual structure. The proofs use Lemma \ref{antipodelemma} and Lemma \ref{rightdualslemma}.

Conversely, given right duals $\leftexp{*}{X}$ in $\cM$ on the same objects, we view the natural transformation $d_X$ as $d_X\colon \leftexp{*}{F(X)}\to F(\leftexp{*}{X})$. We then apply reconstruction to the natural transformation
\[
(\ide\otimes \ev_{F(X)})(\ide\otimes d_X^{-1}\otimes \ide)(\ide\otimes \sigma_{\leftexp{*}{X}})(\Psi^{-1}\otimes d_X\otimes \ide_{F(X)})(\ide_{B}\otimes \coev_{F(X)}\otimes \ide_{F(X)}),
\]
to give a map $S'\colon B \to B$. If we define the \emph{right} dual action as
\begin{equation}
\leftexp{*}{\triangleright}:=d_X^{-1}\sigma_{\leftexp{*}{X}}(\ide_B\otimes d_X).
\end{equation}
Then we can derive the following property:
\begin{align}
\ev_{F(X)}(\leftexp{*}{\triangleright }\otimes \ide_{F(X)})&=\ev_{F(X)}(\ide_{F(X)^*}\otimes \triangleright)(\ide_{F(X)^*}\otimes S'\otimes \ide_{F(X)})(\Psi\otimes \ide_{F(X)}).
\end{align}
Further, the morphisms
\begin{align}
\coev^r_\cM&:=\Psi(\ide\otimes \leftexp{*}{\triangleright})(\ide\otimes b\otimes \ide)\coev_{F(X)},\\
\ev^r_\cM&:=\ev_{F(X)}(\leftexp{*}{\triangleright}\otimes \ide )(a\otimes \ide_{F(X)\otimes F(X)^*})\Psi^{-1}
\end{align}
are morphisms of left $B$-modules in $\cB$. 
We can now show that the map $S'$ is inverse to the antipode $S$ recovered from the left module structure in $\cM$. One checks that the right duality axioms correspond to the conditions (\ref{antipodecyclecond}) after application of $S$.

For the readers convenience, we include the definitions of the antipode (and its inverse) via reconstruction theory using graphical calculus in Figure~\ref{antipode}. This generalizes \cite[Figure  9.15]{Maj1}, where the definition of $m$, $\Delta$, $1$, $\varepsilon$ are as given there. Further, the duality structure on $\cM=\lmod{B}(\cB)$ is given in Figure \ref{leftmodduals}

\begin{figure}
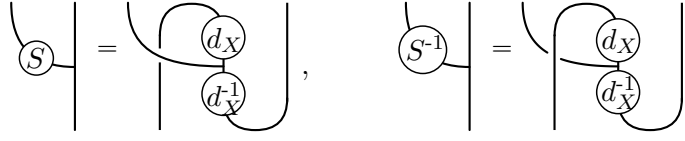

\[
\begin{array}{cc}
\vcenter{\hbox{\small\import{Graphics/}{graphantipode.pdf_tex}}},\qquad
&
\vcenter{\hbox{\small\import{Graphics/}{graphinvantipode.pdf_tex}}}
\end{array}
\]
\caption{Antipode reconstruction}
\label{antipode}
\end{figure}

\begin{figure}
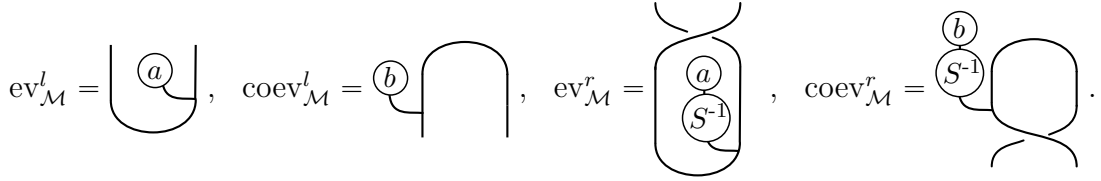

\[
\begin{array}{cccc}
\ev_\cM^l=\vcenter{\hbox{\small\import{Graphics/}{graphevl.pdf_tex}}}~,
&
\coev_\cM^l=\vcenter{\hbox{\small\import{Graphics/}{graphcoevl.pdf_tex}}}~,
&
\ev_\cM^r=\vcenter{\hbox{\small\import{Graphics/}{graphevr.pdf_tex}}},
&
\coev_\cM^r=\vcenter{\hbox{\small\import{Graphics/}{graphcoevr.pdf_tex}}}~.
\end{array}
\]
\caption{Duals of left modules}
\label{leftmodduals}
\end{figure}

To describe the structure on $B$ obtained from the braiding, and thus proof (c), we will need a general definition of an universal $R$-matrix in a braided monoidal category. For this, we have to consider an \emph{opposite} coproduct $\Delta^{\op{cop}}$.

\begin{definition}\label{oppositecoproduct}
The opposite coproduct $\Delta^{\op{cop}}$ is defined to be the morphism $B\to B\otimes B$ in $\cB$ corresponding to the natural transformation $\tau$ given by
\begin{equation}
\tau_{X,Y}=\Psi^{-1}_{F(X),F(Y)}\sigma_{Y\otimes X}(\ide\otimes \Psi_{F(X),F(Y)}).
\end{equation}
That is, the action of the opposite coproduct satisfies the identity
\begin{equation}
\begin{split}&(\triangleright_V\otimes \triangleright_W)(\ide_B \otimes \Psi\otimes\ide_{V\otimes W})(\Delta^{\op{cop}}\otimes \ide_{V\otimes W})\\&=(\triangleright_V\otimes \triangleright_W)(\ide_B \otimes \Psi^{-1}\otimes \ide_{V\otimes W})(\Psi^{-1}\Delta\otimes \ide_{V\otimes W}).\end{split}
\end{equation}
Dually, we define the \emph{opposite product} $m^{\op{op}}$.
\end{definition}

We express this diagram using graphical calculus in Figure~\ref{oppositecoproductpic} (the opposite coproduct $\Delta^{\op{cop}}$ is labeled with $\text{cop}$ in the diagram).
\begin{figure}
\begin{center}
\small\import{Graphics/}{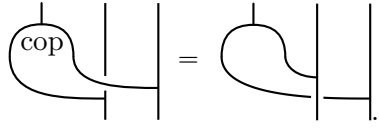}.
\end{center}
\caption{The opposite coproduct}
\label{oppositecoproductpic}
\end{figure}

\begin{remark}In the symmetric monoidal case, $\Delta^{\op{cop}}$ is the usual opposite coproduct $\Psi\Delta$, but this does not hold in a general braided monoidal category $\cB$. The two following lemmas, which will be needed in Section \ref{ydsection}, explain the relationship in the general case.
\end{remark}

\begin{lemma}Let $B$ be a quasi-bialgebra in $\cB$ with coassociator $\phi$. 
\begin{enumerate}
\item[(a)]Denote by $B^{\coop}$ the object $B$ equipped with the same product but opposite coproduct $\Delta^{\op{cop}}$. Then $B^{\coop}$ is a quasi-bialgebra object in $\cB$ (with coassociator $\phi^{-1}$). By definition,
\[
\lmod{B^{\coop}}(\cB)\cong \lmod{\leftexp{\op{cop}}{B}}(\overline{\cB}),
\]
is an isomorphism of monoidal categories. If $B$ is a quasi-Hopf algebra, then so is $B^{\coop}$ with the same antipode $S$ (and the same elements $a$, $b$ as $B$).
\item[(b)] Denote by $B^{\oop}$ the object $B$ with the same coproduct but opposite product $m^{\op{op}}$. Then $B^{\oop}$ is a quasi-bialgebra object in $\cB$ (with coassociator $\phi$). By definition,
\[
\lcomod{B^{\oop}}(\cB)\cong \lcomod{\leftexp{\op{op}}{B}}(\overline{\cB}).
\]
If $B$ is a Hopf algebra, then so is $B^{\oop}$ with the same antipode $S$ (and the same $a$, $b$ as $B$).
\end{enumerate}
\end{lemma}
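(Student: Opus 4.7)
The plan is to prove part (a) by first establishing the monoidal isomorphism $\lmod{B^{\coop}}(\cB)\cong\lmod{\leftexp{\op{cop}}{B}}(\overline{\cB})$ directly from the definitions, and then transporting the quasi-bialgebra structure (and quasi-Hopf structure, when it applies) along this isomorphism using braided reconstruction (Theorem~\ref{bialgebrareconstruction} and Theorem~\ref{majidreconstruction}). Part (b) will follow by the dual argument on comodule categories.

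For the isomorphism in (a), the defining identity of $\Delta^{\op{cop}}$ in Definition~\ref{oppositecoproduct} can be rewritten, by post-composing both sides with $\ide_B\otimes\Psi^{-1}\otimes\ide_{V\otimes W}$, as the statement that the $B$-action on $V\otimes W$ computed via $\Delta^{\op{cop}}$ and the braiding $\Psi$ of $\cB$ agrees with the action computed via the coproduct $\Psi^{-1}\Delta$ and the braiding $\Psi^{-1}$ of $\overline{\cB}$. This identifies the two monoidal products on the shared underlying category of $B$-modules, giving the equivalence on the nose. To see that $\Delta^{\op{cop}}$ is quasi-coassociative with coassociator $\phi^{-1}$, apply Theorem~\ref{bialgebrareconstruction} to the forgetful functor $\lmod{B^{\coop}}(\cB)\to\cB$: the associativity isomorphism of this monoidal category is that inherited from $\lmod{B}(\cB)$ after conjugation by braidings of $\cB$, so the 3-cycle condition (\ref{3cocycle}) for $\phi^{-1}$ follows from the condition for $\phi$ by reversing the orientation of each crossing, using naturality of $\Psi$. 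Alternatively, quasi-coassociativity and (\ref{3cocycle}) can be checked directly in graphical calculus by inserting the definition of $\Delta^{\op{cop}}$ and rewriting with naturality of the braiding.

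For the antipode claim in (a), I would invoke Lemma~\ref{antipodelemma}, which provides $\Delta S=(S\otimes S)\Psi\Delta$ and the inverse-braiding version for $S^{-1}$, together with the alternative antipode axioms involving $\Psi^{-1}\Delta$ displayed just after that lemma; substituting the definition of $\Delta^{\op{cop}}$ and reinterpreting the result in $\cB$ shows that $S$ is an antipode for $B^{\coop}$ with the same elements $a,b$ against the new coassociator $\phi^{-1}$ (where the compatibility condition (\ref{antipodecyclecond}) for $\phi^{-1}$ is precisely the condition available for $S$ in the original setting). Part (b) proceeds dually: the isomorphism $\lcomod{B^{\oop}}(\cB)\cong\lcomod{\leftexp{\op{op}}{B}}(\overline{\cB})$ is immediate from the definition of $m^{\op{op}}$ which is dual to Definition~\ref{oppositecoproduct}, and the comodule version of braided reconstruction (cf.\ \cite[3.2]{Maj6}) transports the quasi-bialgebra structure on $B$ to $B^{\oop}$; since only the product is reversed while the coproduct and coassociator are untouched, the same $\phi$, $a$, and $b$ continue to satisfy the axioms, and $S$ remains an antipode when $B$ is Hopf. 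The principal obstacle will be the careful bookkeeping of braidings in the coassociator computation, where the inversion of $\phi$ in (a) must be matched precisely with the reversal of each crossing, which is most transparently handled by drawing the relevant diagrams and applying naturality of $\Psi$ systematically.
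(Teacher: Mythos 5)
Your plan is essentially the paper's own proof carried out in detail: the paper disposes of this lemma with the single remark that it is an exercise in graphical calculus, and your strategy of unwinding the defining identity of $\Delta^{\op{cop}}$ (resp.\ $m^{\op{op}}$) to obtain the monoidal isomorphism on the nose and then verifying quasi-coassociativity, the 3-cycle condition for $\phi^{-1}$, and the antipode axioms diagrammatically via naturality of $\Psi$ and Lemma~\ref{antipodelemma} is exactly that exercise. One small caution: for $B^{\coop}$ the compatibilities (\ref{antipodecyclecond}) appear with $\phi$ and $\phi^{-1}$ interchanged relative to those for $B$, so they are not literally ``the condition available'' but need a short additional graphical derivation from the original axioms before your argument closes.
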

\begin{proof}
This is an exercise in graphical calculus.
\end{proof}

We need to introduce further structure via reconstruction to state the axioms for a universal $R$-matrix in this general setting. Similar to the definition of the opposite coproduct, we introduce \emph{twisted coassociators}. For any element $\sigma \in S_3$, we define $\phi^{\ov{\sigma}}$ by reconstruction corresponding to the transformation
$\Psi_\sigma^{-1} F(\alpha)\Psi_\sigma,$
where $\Psi_\sigma$ is the composition of $\Psi$s acting on two of the three tensor components according to a minimal expression of $\sigma$ as a product of transpositions of adjacent indices. For example, if $\sigma=(13)=(12)(23)(12)$, then $\Psi_\sigma=(\Psi\otimes \ide)(\ide\otimes \Psi)(\Psi\otimes \ide)$. Such an expression of $\sigma$ is not unique, but the resulting $\Psi_\sigma$ is independent of choice.

After these preliminary observations and notations, we turn back to part (c) of the proof of \ref{majidreconstruction}. The universal $R$-matrix $R$ corresponds to the natural transformation $\varrho$ given by
\[
\varrho_{X,Y}=\Psi^{-1}_{F(X),F(Y)}F(\Psi^{\cM}_{X,Y})\colon F^{\otimes 2}\to F^{\otimes 2}.
\]
It satisfies the following axioms, obtained from the braiding axioms (in $\cM$):
\begin{align}
m_{B\otimes B}(R\otimes \Delta)&=m_{B\otimes B}(\Delta^{\op{cop}}\otimes R),\label{braidingaxiom1}\\
(\ide\otimes \Delta)R&=m_{B^{\otimes 3}}^5((\phi^{-1})^{\ov{(123)}}\otimes (\ide\otimes 1\otimes \ide)R\otimes \phi^{\ov{(12)}}\otimes (R\otimes 1)\otimes \phi^{-1}),\label{braidingaxiom2}\\
(\Delta^{cop}\otimes \ide)R&=m_{B^{\otimes 3}}^5(\phi^{\ov{(13)}}\otimes (1\otimes R)\otimes (\phi^{-1})^{\ov{(123)}}\otimes ((\ide\otimes 1\otimes \ide)R)\otimes \phi^{\ov{(12)}}).\label{braidingaxiom3}
\end{align}

\begin{remark}
For the purpose of Sections~\ref{doublessection} and \ref{twistedchapter}, we either have $B=\Vect$ which is symmetric monoidal, or $\phi=1\otimes 1\otimes1$, so these axioms simplify. We include this generality for completeness.
\end{remark}

We are particularly interested in cases for which the functor $\cM\to \lmod{B}(\cB)$ is an equivalence. For this reason, we mention a version of the reconstruction theorem similar to Tannaka-Krein duality \cite[Theorem~2.1]{Maj7} (where comodules are used).

\begin{theorem} \label{finrec}
Let $\cM$ be a rigid monoidal category which is $k$-linear over $\fVect$ and equivalent to as small one. Then there is a finite-dimensional quasi-Hopf algebra $H$ over $k$ with invertible antipode such that $\cM\cong H\text{-} \Mod(\fVect)$.
\end{theorem}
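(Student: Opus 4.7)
The plan is to apply the braided reconstruction machinery developed in \ref{bialgebrareconstruction} and \ref{majidreconstruction} to the fiber functor $F\colon \cM \to \fVect$ witnessing the $k$-linearity of $\cM$ over $\fVect$, taking $\cB = \fVect$, which is rigid and symmetric monoidal. The statement then reduces to checking the input hypotheses of those theorems and extracting the finite-dimensionality of the reconstructed algebra.

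First I would verify the higher representability condition of Definition \ref{higherrep} for $F$. Since $F$ takes values in finite-dimensional vector spaces and $\cM$ is essentially small, one may form the vector space $H := \Nat(F, F) \cong \int_{X \in \cM} \End_k(F(X))$. Monoidality of $F$ and rigidity of $\cM$ then give the canonical identifications $\Nat(V \otimes F^{\otimes n}, F^{\otimes n}) \cong \Hom_k(V, H^{\otimes n})$ for all $n \geq 0$, as is standard in the $\fVect$-linear Tannakian setting (see \cite[Section~9.4.2]{Maj1} and \cite{Del}). The rigidity of $\cM$ is used to reduce natural transformations of $F^{\otimes n}$ to natural transformations of $F$ by inserting evaluation and coevaluation morphisms.

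With higher representability in hand, Theorem \ref{bialgebrareconstruction} endows $H$ with the structure of a quasi-bialgebra in $\fVect$ and supplies the factorization $\cM \to \lmod{H}(\fVect) \to \fVect$. Because $\fVect$ is rigid and each $F(X)$ is finite-dimensional, the canonical duality isomorphism $d_X \colon F(X)^* \to F(X^*)$ exists, making $F$ rigid, and Theorem \ref{majidreconstruction}(a) upgrades $H$ to a quasi-Hopf algebra. Since $\fVect$ is symmetric monoidal, Lemma \ref{rightdualslemma} implies left duals in $\fVect$ are also right duals, and rigidity of $\cM$ provides the analogous property on $\cM$; Theorem \ref{majidreconstruction}(b) then gives invertibility of the antipode.

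The remaining and main obstacle is to establish finite-dimensionality of $H$ and essential surjectivity of the reconstruction functor $\cM \to \lmod{H}(\fVect)$. Full faithfulness is essentially tautological from $H = \Nat(F,F)$ and rigidity: a morphism $F(X) \to F(Y)$ commuting with all $H$-actions is forced to lie in the image of $\Hom_\cM(X,Y)$ via the duality $d$. The finite-dimensionality assertion is more delicate and is where the hypothesis that $\cM$ is equivalent to a small (and in the intended reading, finite) rigid tensor category is essential: rigidity and essential smallness together force the image of $H$ in each finite-dimensional algebra $\End_k(F(X))$ to stabilize on a finite generating family of objects, so that $H$ is itself finite-dimensional. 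Essential surjectivity then follows because $H$-modules reconstructed from $\cM$ exhaust the finite-dimensional $H$-modules once $H$ is finite-dimensional and $\cM$ is closed under subquotients. Precisely controlling this finiteness step, following the template of \cite[Theorem~2.1]{Maj7}, is the technical heart of the argument.
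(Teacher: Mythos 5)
The paper does not actually prove Theorem \ref{finrec}: it quotes the result from \cite[Theorem~2.1]{Maj7} (where the reconstruction is carried out with comodules) and merely remarks that it can be viewed as a special case of Theorems \ref{bialgebrareconstruction} and \ref{majidreconstruction}. Your plan of fleshing out that remark is therefore the natural one, but as written it has a genuine gap exactly where you locate the ``technical heart''.

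The problem is the finiteness step, and it occurs earlier than you place it. Since the base category is $\cB=\fVect$, higher representability in the sense of Definition~\ref{higherrep} demands that the representing object already be a \emph{finite-dimensional} vector space; so you may not invoke Theorems \ref{bialgebrareconstruction} and \ref{majidreconstruction} before finite-dimensionality of $H$ is established, yet your first paragraph does exactly that. Moreover the identification $\Nat(V\otimes F^{\otimes n},F^{\otimes n})\cong\Hom_k(V,H^{\otimes n})$ with $H=\Nat(F,F)$ the end is not automatic (ends do not commute with tensor powers in general), and the argument you later offer for $\dim_k H<\infty$ --- that rigidity and essential smallness force the images of $H$ in the algebras $\End_k(F(X))$ to ``stabilize on a finite generating family'' --- is not a proof and fails at this level of generality: the category of finite-dimensional $k$-representations of $\mZ$ is rigid, $k$-linear over $\fVect$ and essentially small, yet it admits no finite generating family and $\Nat(F,F)$ is infinite-dimensional. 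Whatever finiteness hypothesis the statement implicitly carries (this is precisely why \cite{Maj7} works with the coend \emph{coalgebra} and comodules, where no such finiteness is required), it must enter your argument explicitly; essential smallness alone does not supply it. The essential-surjectivity step has the same character: it appeals to closure of $\cM$ under subquotients, which is not among the stated hypotheses. So the step you flag as the heart of the matter is exactly what remains unproven.
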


This theorem can be viewed as a special case of Theorems~\ref{bialgebrareconstruction} and \ref{majidreconstruction}. As before, if $\cM$ is braided, then $H$ is quasitriangular. The following result may be useful when addressing the question whether $\cM \cong \lmod{B}(\cB)$:

\begin{lemma}If the functor $F\colon \cM \to \cB$ is faithful, then the functor $\cM\to \lmod{B}(\cB)$ is fully faithful.
\end{lemma}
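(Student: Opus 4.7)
The plan is to set up the factorization cleanly and then handle faithfulness and fullness separately. Write $\tilde F\colon \cM \to \lmod{B}(\cB)$ for the factorization of $F$ provided by Theorem~\ref{bialgebrareconstruction}; by construction $F = U\tilde F$, where $U\colon \lmod{B}(\cB) \to \cB$ is the (always faithful) forgetful functor. On objects, $\tilde F X$ is the object $FX$ equipped with the $B$-action $\sigma_X$ coming from the universal natural transformation $\sigma\colon B\otimes F \Rightarrow F$; on morphisms, $\tilde F f = Ff$, which is automatically $B$-equivariant by naturality of $\sigma$ in its $\cM$-argument.

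Faithfulness of $\tilde F$ is then immediate. If $\tilde F f = \tilde F g$ for two parallel morphisms $f,g\colon X \to Y$ in $\cM$, applying $U$ gives $Ff = Fg$ in $\cB$, and the assumed faithfulness of $F$ forces $f = g$.

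For fullness, I need to produce, from an arbitrary $B$-linear morphism $\varphi\colon \tilde F X \to \tilde F Y$ (i.e.\ $\varphi\colon FX \to FY$ in $\cB$ satisfying $\varphi\,\sigma_X = \sigma_Y(\ide_B \otimes \varphi)$), a morphism $f\colon X \to Y$ in $\cM$ with $Ff = \varphi$. My approach is to exploit the universal nature of $B$ encoded in the representability condition (\ref{representability}): being $B$-equivariant is precisely the condition that $\varphi$ commutes with every natural transformation of $F$ represented by a morphism into $B$, and by higher representability the collection of such transformations is large enough to pin down $\varphi$ as coming from a morphism in $\cM$. Uniqueness of the resulting $f$ is automatic from faithfulness of $F$, so only existence has to be established.

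The main obstacle I anticipate is the existence step in the fullness argument: turning the universal/representability-theoretic description of $B$ into an actual construction of $f$ from $\varphi$. In practice this typically requires a Yoneda-style reformulation, using that morphisms out of $X$ in $\cM$ can be detected by their interaction with the natural transformations classified by $B$, so that the $B$-equivariance of $\varphi$ suffices to descend it to $\cM$. Faithfulness of $F$ rules out any ambiguity, reducing the remaining content to the claim that the representability data is rich enough to characterise morphisms in $\cM$, which is where the real work lies.
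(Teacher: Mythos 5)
Your faithfulness argument is fine, but it is the trivial half: since the lift $\tilde F$ composed with the forgetful functor is $F$, faithfulness of $F$ immediately gives faithfulness of $\tilde F$. The entire content of the lemma is fullness, and that step is not actually proved in your text: you describe an intended strategy (a ``Yoneda-style reformulation'' showing the representability data is ``rich enough to characterise morphisms in $\cM$'') and then explicitly defer it as ``where the real work lies''. Nothing in the proposal constructs, from a $B$-linear morphism $\varphi\colon F(X)\to F(Y)$, a morphism $f\colon X\to Y$ in $\cM$ with $F(f)=\varphi$. Observe that $B$-equivariance of $\varphi$ only expresses compatibility with the universal action $\sigma$, equivalently with every natural transformation $V\otimes F\to F$ classified by a morphism $V\to B$; you never explain why commuting with these transformations forces $\varphi$ to be of the form $F(f)$, and faithfulness of $F$ alone certainly does not yield this.

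That this gap is substantive, and not a routine verification, can be seen by stripping away the ambient reconstruction hypotheses: for a discrete category $\cM$ with two objects $X,Y$ and $F(X)=F(Y)=k$ in $\cB=\Vect$, the representing algebra is $B=k\times k$ acting on the two copies of $k$ through the two projections, so the only $B$-linear map $\tilde F(X)\to \tilde F(Y)$ is $0$, which is not of the form $F(f)$ since $\Hom_\cM(X,Y)=\emptyset$; the lift is faithful but not full. Hence any correct proof of the lemma must genuinely exploit the universal/representability properties of $B$ in the paper's setting (monoidal structure, higher representability), and that is exactly the argument you have left as a placeholder. The paper itself states this lemma without proof, so there is no written argument to compare against; as it stands, your proposal establishes only faithfulness, and the fullness claim remains unproven.
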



\subsection{Yetter-Drinfeld and Hopf Modules}\label{ydsection}

For the reminder of this section, let $\cM=\rmod{B}(\cB)$ where $B$ is a quasi-bialgebra object in $\cB$.
We want to reinterpret the categories $\cZ(\cM)$ and $\cH(\cM)$ in more familiar terms leading to the definition of Yetter-Drinfeld and Hopf modules. 

Note that the forgetful functor $F\colon \cM\to \overline{\cB}$ always has a section $\triv$ given by the functor mapping an object $X$ in $\cB$ to the trivial module $X^{\triv}$ on it (with action given by the counit of $\cB$). This functor is the identity on morphisms. In the following, we will often omit writing the functor $F$.

\begin{remark}
We are considering right $B$-modules in this section to make a description of the center in terms of right $B$-modules and left $C$-modules more convenient (for $C$ dually paired with $B$) in Section \ref{pairedydsect}.

A priori, it seems that we can choose whether to consider the forgetful functor $F$ as mapping to $\cB$ or $\overline{\cB}$. It turns out that it is necessary to use $\overline{\cB}$ for the description of the center in terms of YD -modules even though this choice may seem less natural.
\end{remark}

\begin{warning}\label{rightmodwarning}
The reconstruction theory in Section \ref{reconstruction} for quasi-Hopf algebras is \emph{not} symmetric with respect to switching to right modules. Right action by the coassociator $\phi$ gives the \emph{inverse} associativity isomorphism $\alpha^{-1}$ rather than $\alpha$. This happens because for right modules we read the action of a product of elements from left the right, while for left modules from right to left, and the 3-cycle condition (\ref{3cocycle}) is \emph{not} left-right-symmetric. Note that this problem does not occur in braided Hopf algebra reconstruction (the case $\phi=1$) as then all conditions are symmetric.

This has the following consequences for reconstruction of duals in $\rmod{B}(\cB)$:
\begin{align}
\ev_{\cM}^l&=\ev_{F(X)}(\ide_{F(X)^*} \otimes \triangleleft)(\ide_{F(X)^*\otimes F(X)} \otimes S^{-1} b),\\\coev_{\cM}^l&=(\triangleleft\otimes \ide_{F(X^*)})(\ide_{F(X)}\otimes S^{-1}a\otimes \ide_{F(X)^*})\coev_{F(X)},\\
\ev_\cM^r&=\ev_{F(X)}(\ide_{F(X)^*}\otimes \triangleleft)(\ide_{F(X)^*\otimes F(X)}\otimes b)\Psi^{-1},\\
\coev_\cM^r&=\Psi(\triangleleft \otimes \ide_{F(X)^*})(\ide_{F(X)}\otimes a\otimes \ide_{F(X)^*})\coev_{F(X)}.
\end{align}
\end{warning}
These left and right (co)evaluations in $\rmod{B}(\cB)$ are depictured in Figure \ref{rightmodduals}
\begin{figure}[htb]
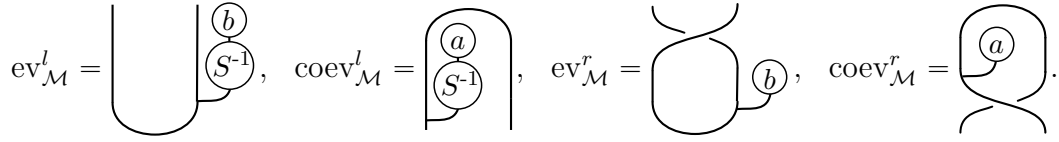

\[
\begin{array}{cccc}
\ev_\cM^l=\text{\begin{minipage}{0.13\linewidth}
\small\import{Graphics/}{graphrmodevl.pdf_tex}
\end{minipage}},
&
\coev_\cM^l=\text{\begin{minipage}{0.08\linewidth}
\text{\small\import{Graphics/}{graphrmodcoevl.pdf_tex}}
\end{minipage}},
&
\ev_\cM^r=\text{\begin{minipage}{0.125\linewidth}
\small\import{Graphics/}{graphrmodevr.pdf_tex}
\end{minipage}},
&
\coev_\cM^r=\text{\begin{minipage}{0.08\linewidth}
\text{\small\import{Graphics/}{graphrmodcoevr.pdf_tex}}
\end{minipage}}.
\end{array}
\]
\caption{Right module duals}
\label{rightmodduals}
\end{figure}

\begin{definition}\label{quasiyddef}Let $B,C$ be quasi-bialgebras in a braided monoidal category $\cB$ with a morphism $G\colon B\to C$ of quasi-bialgebras. 
\begin{enumerate}
\item[(a)]
Define the category of \emph{Yetter-Drinfeld modules} over $(B,C)$ in $\cB$, denoted $\leftsub{C^{\oop}}{\mathcal{YD}}^B(\cB)$, as having objects $V$ of $\cB$ with a right action $\triangleright$ of $B$, together with a map $\delta\colon V\to C\otimes V$ (called \emph{quasi}-coaction) which satisfies the rules given using graphical calculus in Figure~\ref{quasiydmods}.
\begin{figure}[htb]
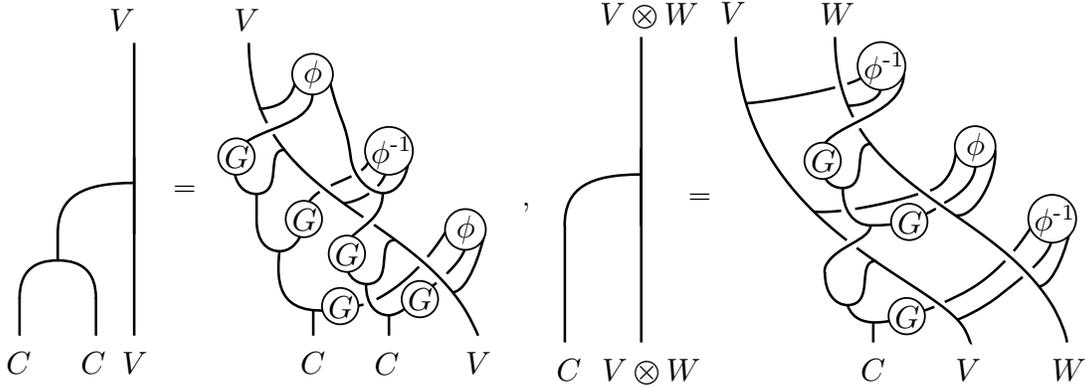

\[
\begin{array}{cccc}
\text{\begin{minipage}{0.43\linewidth}
\scalebox{1.20}[1.20]{\footnotesize\import{Graphics/}{quasiyd1.pdf_tex}}
\end{minipage}}
&,&
\text{\begin{minipage}{0.45\linewidth}
\scalebox{1.20}[1.20]{\footnotesize\import{Graphics/}{quasiyd2.pdf_tex}}
\end{minipage}}&
\end{array}
\]
\caption{Quasi-comodule conditions}
\label{quasiydmods}
\end{figure}

In the case where $B$ and $C$ are \emph{strict} bialgebras (i.e. with trivial coassociator $\phi$), these rules give that $\delta$ is a left $C^{\oop}$-coaction in $\cB$. Further, the two structures $\delta$ and $\triangleleft$ satisfy the \emph{Yetter-Drinfeld condition}
\begin{equation}\label{ydcondition}
(m \otimes \ide)(G\otimes \delta)\Psi^{-1}(\triangleleft \otimes \ide)(\ide\otimes\Delta)=(m \otimes \ide)(\ide\otimes G\otimes\triangleleft)(\ide\otimes \Psi \otimes \ide)(\delta\otimes \Delta).
\end{equation}
Morphisms in $\leftsub{C^{\oop}}{\mathcal{YD}}^B(\cB)$ are those commuting with the $B$-module and $C^{\oop}$-quasi-comodule\footnote{This definition is \emph{not} the same as the quasi-modules in \cite{BB}.} structure.
\item[(b)]Further, define the category of \emph{Hopf modules} over $(B,C)$ in $\cB$, denoted by $\leftsub{C^{\oop}}{\mathcal{H}}^B(\cB)$, which consists of objects of $\cB$ with a right action and a  map $\delta$ as above (satisfying the first rule in Figure \ref{quasiydmods}, such that the \emph{Hopf condition}
\begin{equation}\label{hopfcondition}
\delta\triangleleft=(m \otimes \ide)(\ide\otimes G\otimes\triangleleft)(\ide\otimes \Psi \otimes \ide)(\delta\otimes \Delta)
\end{equation}
is satisfied.
Again, morphisms commute with the action and quasi-coaction.
\end{enumerate}
\end{definition}

Note that in the case where $B$ and $C$ are strict bialgebras, the Hopf module condition can be reformulated by saying that $\delta$ is a morphism of right $B$-modules, where $C$ is a $B$-module via the action induced by $G\colon B \to C$. It is helpful to use graphical calculus to visualize the different conditions in Figure \ref{conditionspic}. The first picture displays the compatibility condition for YD-modules, the second one for Hopf modules. The reason why $C^{\oop}$ appears instead of $C$ is to make the category of YD-modules into a monoidal category (see e.g \cite[Lemma~3.3.2]{Bes} for a direct proof). This fails when using $C$. Note that the coassociator $\phi$ only appears in the comodule and monoidal rule for YD-modules but not in the compatibility condition.
\begin{figure}
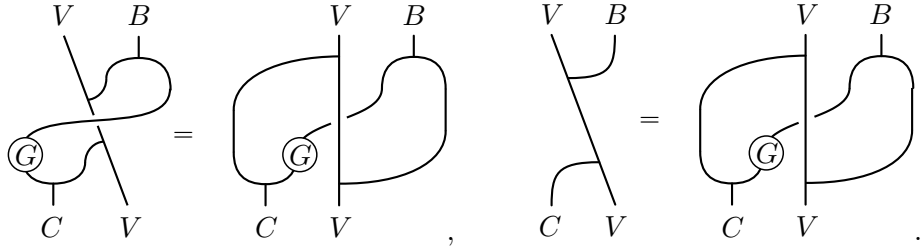

\begin{center}
\small\import{Graphics/}{ydcond3.pdf_tex},\hspace{1cm} \small\import{Graphics/}{hopfcond3.pdf_tex}.
\end{center}
\caption{Right module YD- and Hopf-compatibility conditions}
\label{conditionspic}
\end{figure} 

Considering left instead of right $B$-modules we obtain the category $\leftexpsub{B}{C}{\mathcal{YD}}(\cB)$ with compatibility conditions from Figure \ref{leftconditionspic}.
\begin{figure}
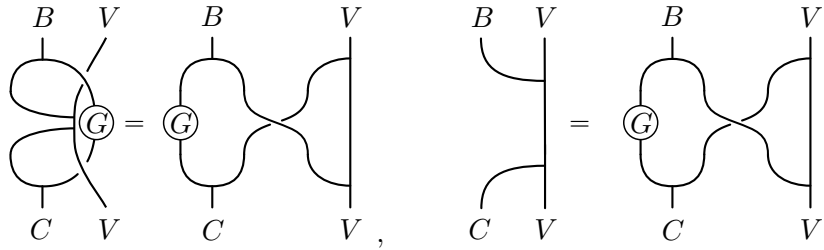

\begin{center}
\small\import{Graphics/}{ydcond.pdf_tex},\hspace{1cm} \small\import{Graphics/}{hopfcond.pdf_tex}
\end{center}
\caption{Left module YD- and Hopf-compatibility conditions}
\label{leftconditionspic}
\end{figure}

\begin{proposition}\label{ydbraidings}Let $B$ be a quasi-bialgebras in $\cB$.
\begin{enumerate}
\item[(a)]
The category $\leftsub{B^{\oop}}{\mathcal{YD}}^B(\cB)$ is monoidal (combining the monoidal structures of $\rmod{B}(\cB)$ and the second rule in Figure \ref{quasiydmods}) and \emph{pre}-braided\footnote{That is, the braiding is not necessarily a natural isomorphism.} with pre-braiding given by 
\begin{equation}
\leftsub{B^{\oop}}\Psi^B:=(\triangleleft \otimes \ide_V)(\ide_W\otimes \delta)\Psi^{-1}.
\end{equation}
The forgetful functor $\leftsub{B^{\oop}}{\mathcal{YD}}^B(\cB)\to \overline{\cB}$ is a functor of pre-braided monoidal categories.
\item[(b)]
The category $\leftexpsub{B}{B}{\mathcal{YD}}(\cB)$ is monoidal and pre-braided with pre-braiding
\begin{equation}
\leftexpsub{B}{B}{\Psi}:=(\triangleright \otimes \ide)(\ide\otimes \Psi)(\delta\otimes \ide).
\end{equation}
The forgetful functor $\leftexpsub{B}{B}{\mathcal{YD}}(\cB)\to \cB$ is one of pre-braided monoidal categories.
\end{enumerate}
\end{proposition}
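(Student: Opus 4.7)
The proof splits into three tasks, carried out via graphical calculus: define and verify the monoidal structure, show the candidate pre-braiding is a morphism in $\leftsub{B^{\oop}}{\mathcal{YD}}^B(\cB)$, and verify naturality together with the hexagon axioms. Parts (a) and (b) are formally dual under a left/right reflection of string diagrams, so I would write out the argument for (a) and indicate that (b) follows by the same pattern with the roles of action/coaction and the direction of the braiding swapped.

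For the monoidal structure, the underlying category $\rmod{B}(\cB)$ is already monoidal with diagonal action via $\Delta$ (associator induced by right action by $\phi^{-1}$, cf.\ Warning~\ref{rightmodwarning}). For YD-modules $V$ and $W$, the coaction on $V\otimes W$ is defined as the twisted codiagonal specified in the second diagram of Figure~\ref{quasiydmods}. I would check that the first diagram (quasi-coassociativity) for this new coaction is equivalent to the 3-cycle condition (\ref{3cocycle}) applied inside $B^{\oop}$, and that the YD-condition (\ref{ydcondition}) for $V\otimes W$ reduces to the individual YD-conditions of $V$ and $W$ after invoking the bialgebra axiom $\Delta m=(m\otimes m)(\ide\otimes \Psi\otimes \ide)(\Delta\otimes \Delta)$ and naturality of $\Psi$. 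That the associator from $\rmod{B}(\cB)$ remains a morphism of quasi-coactions uses (\ref{3cocycle}) once more.

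Next I would verify that $\Psi^{\mathrm{YD}}_{V,W}:=(\triangleleft_W\otimes\ide_V)(\ide_W\otimes\delta_V)\Psi^{-1}_{V,W}$ lives in $\leftsub{B^{\oop}}{\mathcal{YD}}^B(\cB)$. Compatibility with the $B$-action on $V\otimes W$ is precisely the YD-compatibility of $V$ (the right-hand half of Figure~\ref{conditionspic}) combined with naturality of $\Psi$. Compatibility with the codiagonal coaction uses quasi-coassociativity of $\delta_V$ and $\delta_W$ along with the way the braiding of $\cB$ intertwines the products of $B$ and $B^{\oop}$. Naturality in each argument is immediate from naturality of $\Psi$ and $\delta$.

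The hardest step is the hexagon axioms in the quasi-bialgebra setting. For $U,V,W\in\leftsub{B^{\oop}}{\mathcal{YD}}^B(\cB)$, expanding the two composites in each hexagon produces diagrams in which the coassociator $\phi$ appears both in the tensor product coaction on $V\otimes W$ (resp.\ $U\otimes V$) and in the monoidal associator inherited from $\rmod{B}(\cB)$; these have to be reconciled by pulling $\phi$-strings past coactions and braidings in a coordinated way, invoking (\ref{3cocycle}), the bialgebra compatibility, and the hexagons for $\Psi$ in $\cB$. Once these identities are established, the claim that the forgetful functor is pre-braided monoidal is tautological: the underlying morphism of $\Psi^{\mathrm{YD}}_{V,W}$ in $\ov{\cB}$ visibly reduces to $\Psi^{-1}_{V,W}$, which is the braiding of $\ov{\cB}$, once the action and coaction are forgotten.
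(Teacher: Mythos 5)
Your route is genuinely different from the paper's. The paper never verifies the Yetter--Drinfeld axioms directly: Proposition \ref{ydbraidings} is obtained inside the proof of Proposition \ref{equivalence}, by transporting the monoidal structure of Proposition \ref{monoidalcenter} and the braiding of Proposition \ref{braidedcenter} from the relative center $\cZ^{G}_{\cB}(\cM)$ to YD-modules along the correspondence $(V,c)\leftrightarrow (V,\delta)$, whose engine is the identity (\ref{mainidentity}); the prefix ``pre'' appears precisely because, for a quasi-bialgebra without antipode, the transformation $c(\delta)$ need not be invertible. A direct graphical verification such as yours is a legitimate alternative (for the monoidal structure the paper itself points to a direct proof in \cite{Bes}) and is more self-contained, at the price of redoing, object by object, computations the transport argument performs once at the categorical level.

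As written, though, your sketch has concrete gaps. (1) The closing claim is false: forgetting the action and coaction does not change the underlying morphism, and $(\triangleleft\otimes\ide_V)(\ide_W\otimes\delta)\Psi^{-1}$ is not $\Psi^{-1}$ in general, so the compatibility of the forgetful functor with the pre-braidings is not ``tautological''. What is true is that the functor is strict monoidal and that the pre-braiding against objects with trivial structure collapses to $\Psi^{-1}$ by the counit law (this is exactly the $(F,P)$-admissibility of Definition \ref{mixedrelativedef} that the paper's transport argument keeps track of); either way this requires an argument. (2) Your ingredient lists omit the identities that actually make the two hardest checks close: compatibility of the pre-braiding with the codiagonal coaction forces one to rewrite the coaction of $W$ applied to an acted element, i.e.\ it uses the YD-condition of $W$, not merely quasi-coassociativity and naturality of $\Psi$; and the hexagon for the braiding of $U$ with $V\otimes W$ hinges on the quasi-comodule rule for $\delta_U$ (the first diagram of Figure \ref{quasiydmods}), which you never invoke --- the 3-cycle condition (\ref{3cocycle}) and the bialgebra axiom alone do not suffice. (3) Deducing (b) from (a) ``by left/right reflection'' is not automatic in the quasi setting: Warning \ref{rightmodwarning} records exactly that left and right modules are not symmetric here (right action by $\phi$ yields $\alpha^{-1}$, and (\ref{3cocycle}) is not mirror-symmetric), and moreover (a) forgets to $\ov{\cB}$ while (b) forgets to $\cB$, so the mirror operation must also exchange $\Psi$ with $\Psi^{-1}$ and track $\phi$; this either needs to be made explicit or (b) should be verified directly.
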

\begin{proof}
These statements will all be proved in \ref{equivalence}.
Using graphical calculus (and \ref{mainidentity}), the braidings are given in Figure \ref{braidingsfigure}.\end{proof}
\begin{figure}
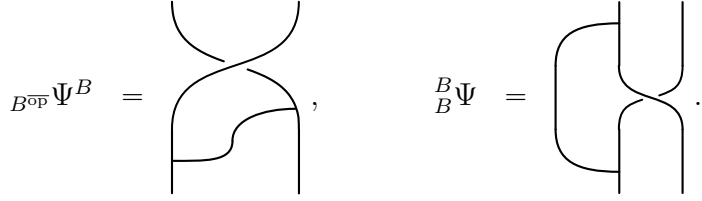

\[
\begin{array}{ccccccc}
\leftsub{B^{\oop}}{\Psi}^B&=&\vcenter{\hbox{\small\import{Graphics/}{ydbraiding3b.pdf_tex}}}~, &\qquad&\leftexpsub{B}{B}{\Psi}&=&\vcenter{\hbox{\small\import{Graphics/}{ydbraiding1.pdf_tex}}}~.
\end{array}
\]
\caption{The braidings of YD-modules}
\label{braidingsfigure}
\end{figure}

\begin{corollary}$~$
\begin{enumerate}
\item[(a)] The forgetful functor $\leftsub{B^{\oop}}{\mathcal{YD}}^B(\cB)\longrightarrow \rmod{B}(\cB)$ is a strict monoidal functor of pre-braided monoidal categories.
\item[(b)] The forgetful functor $\leftexpsub{B}{B}{\mathcal{YD}}(\cB)\longrightarrow\lmod{B}(\cB)$ is a strict monoidal functor of pre-braided monoidal categories.
\end{enumerate}
\end{corollary}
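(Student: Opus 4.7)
The plan is to observe that the corollary is a direct consequence of Proposition \ref{ydbraidings} together with the definition of the monoidal structure on Yetter-Drinfeld modules, so that essentially no new calculation is required.

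First, I would establish the existence of the functor on the nose. By Definition \ref{quasiyddef}(a), a Yetter-Drinfeld module in $\leftsub{B^{\oop}}{\mathcal{YD}}^B(\cB)$ has as part of its data an underlying right $B$-module $(V,\triangleleft)$, and morphisms are required to commute with the action (and the quasi-coaction), hence in particular descend to morphisms of right $B$-modules. This gives the forgetful functor in (a); the analogous statement for left modules gives the one in (b).

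Next, I would check strict monoidality. The monoidal product of two Yetter-Drinfeld modules $(V,\triangleleft_V,\delta_V)$, $(W,\triangleleft_W,\delta_W)$ is, by Proposition \ref{ydbraidings}(a), obtained by combining the standard tensor product in $\rmod{B}(\cB)$ with the coaction rule from Figure \ref{quasiydmods}. Thus the action on $V\otimes W$ is $(\triangleleft_V\otimes\triangleleft_W)(\ide_V\otimes\Psi_{B,W}\otimes\ide_B)(\ide_{V\otimes W}\otimes\Delta)$, which is by definition the tensor product action in $\rmod{B}(\cB)$. The unit object is $I$ with trivial action and coaction, which maps to the unit of $\rmod{B}(\cB)$. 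Hence the forgetful functor preserves tensor products and units on the nose, and the associativity and unit coherences are those of $\rmod{B}(\cB)$, so the functor is strict monoidal. Part (b) is analogous, invoking Proposition \ref{ydbraidings}(b).

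Finally, compatibility with the pre-braiding is inherited from Proposition \ref{ydbraidings} by factorization. The composite of the forgetful functor $\leftsub{B^{\oop}}{\mathcal{YD}}^B(\cB)\to\rmod{B}(\cB)$ with the forgetful functor $\rmod{B}(\cB)\to\overline{\cB}$ is precisely the functor of pre-braided monoidal categories from Proposition \ref{ydbraidings}(a). The pre-braiding $\leftsub{B^{\oop}}\Psi^B = (\triangleleft\otimes\ide)(\ide\otimes\delta)\Psi^{-1}$ is given by a formula that depends only on the action $\triangleleft$, the quasi-coaction $\delta$, and the braiding of $\cB$, so its underlying morphism in $\cB$ (and hence in $\rmod{B}(\cB)$, viewed via the further forgetful functor) is exactly the expression that Proposition \ref{ydbraidings}(a) records. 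Since the forgetful functor $\rmod{B}(\cB)\to\overline{\cB}$ is faithful, compatibility of the first functor with the pre-braiding data is equivalent to compatibility of the composite, which holds. Part (b) is entirely symmetric. The only subtle point—which I would remark on but not dwell upon—is that $\rmod{B}(\cB)$ itself carries no intrinsic pre-braiding unless $B$ is quasitriangular; the statement is to be read as asserting that the pre-braiding of the source is expressed in terms of data that descends to $\rmod{B}(\cB)$, and that the underlying functor is strict monoidal.
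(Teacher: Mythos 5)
Your proposal is correct and follows essentially the same route as the paper, which states this corollary without a separate proof precisely because it is immediate from Proposition \ref{ydbraidings} and the definition of the monoidal structure on Yetter--Drinfeld modules (the tensor product of YD-modules carries, by construction, the tensor product action of $\rmod{B}(\cB)$ respectively $\lmod{B}(\cB)$, so the forgetful functor is strict monoidal and the pre-braiding data descends as you describe). Only a trivial notational slip: in the tensor product action the braiding should swap $W$ past the first copy of $B$, i.e.\ be indexed $\Psi_{W,B}$ (or its inverse, matching the paper's convention of forgetting to $\overline{\cB}$) rather than $\Psi_{B,W}$, which does not affect the argument.
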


\begin{lemma}\label{centerinvertiblebetter}
Let $B,C$ be quasi-Hopf algebras in $\cB$ with a morphism $G\colon B\to C$ preserving the structure. For any YD-module $V$ over $(B,C)$ with quasi-coaction $\delta$, and any $C$-module $X$, the map
\[
c(\delta)_X\colon  V\otimes X\to X\otimes V, \quad c(\delta)_X:=(\triangleleft\otimes \ide)(\ide\otimes \delta)\Psi^{-1}_{X,V}
\]
has an inverse $c(\delta)_X^{-1}\colon  X\otimes V\to V\otimes X$ which equals $(\ide_V\otimes \triangleleft)(\Psi\otimes \ide_C)(\ide_X\otimes \delta')$,
for a \emph{right} $C$-quasi-coaction $\delta'$. We give the formula for $\delta'$ in Figure \ref{rightquasicoaction}.
\end{lemma}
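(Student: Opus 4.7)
The plan is to give $\delta'$ explicitly in the form of Figure~\ref{rightquasicoaction} and then verify the inverse property by graphical calculus. Morally, $\delta'$ is obtained from $\delta$ by braiding past $V$ and applying $S^{-1}$ to the outgoing $C$-leg, as suggested by the strict Hopf case where $\delta'(v) = v_{(0)} \otimes S^{-1}(v_{(-1)})$; in the quasi-Hopf setting one additionally inserts the elements $a,b$ of (\ref{quasiantipode}) and a copy of the coassociator $\phi$, dictated by the compatibility (\ref{antipodecyclecond}).

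First I would check $c(\delta)_X \circ c(\delta)_X^{-1} = \ide_{X \otimes V}$. Drawing both maps as string diagrams and stacking them, the braidings $\Psi^{-1}_{X,V}$ and $\Psi$ do not cancel directly since the $\delta$-leg of the upper copy sits between them; instead, I use naturality of the braiding to slide the coaction through $\Psi$, bringing $\delta'$ (from $c(\delta)_X^{-1}$) and $\delta$ (from $c(\delta)_X$) into adjacent position on the $V$-strand. Their two $C$-outputs both feed into $X$ through $\triangleleft$, and the quasi-coaction axiom of Figure~\ref{quasiydmods} lets me replace the composite $(\ide_C \otimes \delta)\delta$ by $(\Delta \otimes \ide_V)\delta$ up to reassociation by $\phi$. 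The local picture on $C$-strands then reduces precisely to the quasi-antipode identity $m^2(\ide \otimes b \otimes S)\Delta = b\varepsilon$, and the leftover $b$ is absorbed via (\ref{antipodecyclecond}) to return $\ide_{X \otimes V}$. The opposite composition $c(\delta)_X^{-1} \circ c(\delta)_X$ is handled symmetrically using the dual identity involving $a$ and $S^{-1}$.

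A separate verification is that $\delta'$ is actually a right $C$-quasi-coaction (the right-handed analogue of the first diagram in Figure~\ref{quasiydmods}). This follows from the quasi-coassociativity of $\delta$, the 3-cycle condition (\ref{3cocycle}) for $\phi$, and the anti-(co)algebra-morphism properties of $S$ recorded in Lemma~\ref{antipodelemma}.

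The main obstacle is bookkeeping: placing $\phi$ and the elements $a,b$ so that the quasi-antipode axiom can be invoked at the decisive step without leaving residual strands on either the $C$- or the $V$-side. In the strict Hopf case ($\phi = 1^{\otimes 3}$, $a = b = 1$) the argument collapses to a one-line graphical manipulation, and the body of the work at the general quasi-Hopf level is confirming that the reassociations produced by the intermediate applications of coherence match up exactly with the $\phi$-dependence built into $\delta'$ via (\ref{antipodecyclecond}).
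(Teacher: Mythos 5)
Your overall strategy -- take $\delta'$ to be a braided, $S^{-1}$-twisted copy of $\delta$ decorated by $\phi$ and the elements $a,b$, and then check both composites by graphical calculus using the quasi-comodule axiom of Figure~\ref{quasiydmods}, the quasi-antipode identities (\ref{quasiantipode}), the 3-cycle condition (\ref{3cocycle}) and the compatibility (\ref{antipodecyclecond}) -- is the same as the paper's. But there is a genuine gap: in the quasi-Hopf setting the substance of the lemma \emph{is} the precise formula for $\delta'$ (recorded in Figure~\ref{rightquasicoaction}), and you never produce it. You describe it only ``morally'' and then concede that placing $\phi$, $a$, $b$ so that the quasi-antipode axiom can be invoked at the decisive step is ``the main obstacle'' and ``the body of the work.'' That placement is exactly where a wrong guess fails: the naive strict-case formula $v^{(0)}\otimes S^{-1}(v^{(-1)})$ is not an inverse when $\phi\neq 1^{\otimes 3}$, and without the explicit $\phi$-decorated $\delta'$ the cancellations you assert (``the leftover $b$ is absorbed via (\ref{antipodecyclecond})'') cannot be verified. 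As written this is a plan for a proof, with the step that carries the quasi-Hopf difficulty deferred.

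For comparison, the paper's argument is the same kind of direct verification but with the mechanism made concrete. It first reduces the two-sided inverse claim to the universal identities $c(\delta)_C\,\delta' = 1\otimes\ide_V$ and $c(\delta)^{-1}_C\,\delta = \ide_C\otimes 1$, i.e.\ it combines the two actions on $X$ into one via the module axiom and unitality, so only maps $V\to C\otimes V$ need to be compared (your version with a general $X$ is equivalent but noisier). It then inserts a factorization of $1\otimes 1\otimes 1$ through $S$, $G$ and $\phi$ chosen so that the right-hand side of (\ref{3cocycle}) appears, applies the 3-cycle condition, recognizes $m_{B^{\otimes 3}}(S\otimes a\otimes\ide)(\Delta\otimes\ide_V)\delta=a$ via the quasi-comodule rule of Figure~\ref{quasiydmods}, and finishes with (\ref{antipodecyclecond}); the left-inverse check is analogous. (Whether the $a$- or $b$-version of (\ref{quasiantipode}) appears in which composite is a convention detail and not the issue.) Your additional verification that $\delta'$ is a right quasi-coaction is welcome but is not where the difficulty of the lemma lies.
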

\begin{figure}
\[
\vcenter{\hbox{\scalebox{1.20}[1.20]{\footnotesize\import{Graphics/}{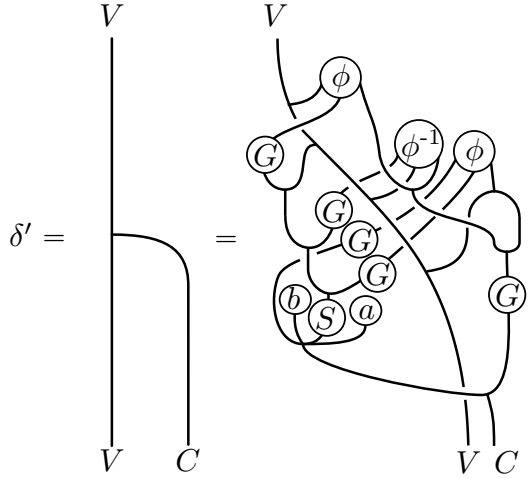}}}}
\]
\caption{The right quasi-coaction $\delta'$}
\label{rightquasicoaction}
\end{figure}

\begin{proof}
We first show that $c(\delta)^{-1}$ is a right inverse to $c(\delta)$. For this, it suffices to show that $c(\delta)_C \delta'=1\otimes \ide_V$. Insert $1\otimes 1\otimes 1=(\ide\otimes S \otimes \ide)((\Delta G)\otimes \ide_{B}\otimes G)\phi$ into the right hand side of this equation $\delta'$ so that the right hand side of (\ref{3cocycle}) appears. Next, apply the 3-cycle condition and note that, using Figure \ref{quasiydmods}, $m_{B^{\otimes 3}}(S\otimes a \otimes \ide)(\Delta\otimes \ide_V)\delta=a$ appears. The expression then simplifies to $1 \otimes \ide_V$ using (\ref{antipodecyclecond}).

To show that $c(\delta)^{-1}$ is a left inverse, it suffices to show that $c(\delta)^{-1}_C\delta=\ide_C\otimes 1$. This is done in a similar way to the first part (using $3$-cycle manipulations and the antipode axioms).
\end{proof}

\begin{proposition}\label{equivalence}
Let $\cB$ be a braided monoidal category, $B$ a quasi-Hopf algebra in $\cB$. Further assume that
\[
G \colon \rmod{C}(\cB)\to \rmod{B}(\cB)=\cM
\]
be a monoidal functor factoring through the forgetful functor $F$. Then the category $\cZ^{G}_\cB(\cM)$ is isomorphic as a monoidal category to the category of YD-modules $\leftsub{C^{\oop}}{\mathcal{YD}}^B(\cB)$, and the category $\cH^{G}_\cB(\cM)$ is isomorphic to the category of Hopf modules $\leftsub{C^{\oop}}{\cH}^B(\cB)$.
If $G=\ide_\cM$, then the isomorphism is one of braided monoidal categories.
\end{proposition}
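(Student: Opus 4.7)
The plan is to establish a pair of mutually inverse functors between the center and the relevant category of modules, exploiting the admissibility condition to reduce the data of the centralizing isomorphism to a single coaction morphism.

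First, I would set up the correspondence. Given $(V,c) \in \cZ^G_\cB(\cM)$, extract a quasi-coaction by evaluating $c$ on the regular right $C$-module and precomposing with the unit, namely $\delta := c_C(\ide_V \otimes 1_C)\colon V \to C \otimes V$. Conversely, given a right $B$-module $V$ with quasi-coaction $\delta$, define the centralizing isomorphism via the formula from Lemma~\ref{centerinvertiblebetter}, $c_X := (\triangleleft \otimes \ide_V)(\ide_X \otimes \delta)\Psi^{-1}_{X,V}$. The $(F,P)$-admissibility condition is crucial here: it forces $c_{P(Z)}$ to coincide with the braiding $\Psi$ (up to structure isomorphisms) on trivial $C$-modules, so that naturality of $c$ with respect to the action morphism $\sigma_X\colon X \otimes C \to X$ (viewed as a map from $X \otimes C$ with trivial action on the first factor) pins down $c_X$ in terms of $c_C$, hence in terms of $\delta$. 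This simultaneously shows the two assignments are inverse to each other, and that $c_X$ is invertible by Lemmas~\ref{centerinvertiblebetter} and~\ref{cinversebyduals}.

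Next I would verify that the algebraic axioms on the two sides correspond. The monoidal compatibility (\ref{monadicityc}) of $c$, evaluated on $X = C$ and $Y = C$ with the multiplication $m\colon C\otimes C \to C$ as a map of $C$-modules, translates under naturality into the quasi-comodule coassociativity (the first rule of Figure~\ref{quasiydmods}), with the coassociator $\phi$ entering through the associativity isomorphism $\alpha$ of $\cM = \rmod{B}(\cB)$ (cf.\ Warning~\ref{rightmodwarning}); the counitary axiom similarly follows from unitality of $c$. The $B$-equivariance of $c$ as an $\cM$-bimodule morphism — i.e., naturality of $c$ with respect to the right $B$-action on $V$ and the induced $B$-action on $G(X)$ — unpacks, using graphical calculus, into the Yetter-Drinfeld condition (\ref{ydcondition}). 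For the Hopf center $\cH^G_\cB(\cM)=\leftexp{G}{\cZ}^{GPF}_\cB(\cM)$, the right-hand functor is $GPF$ rather than $G$, so the right $\cM$-action on $\cV$ is pulled back along $PF$; this replaces the $C$-module structure on the right with the trivial one, and the same compatibility analysis now yields the Hopf condition (\ref{hopfcondition}) instead of (\ref{ydcondition}).

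For the monoidal structure, I would compare the centralizing isomorphism on a tensor product $V \otimes W$ given by Proposition~\ref{monoidalcenter} with the quasi-coaction on $V \otimes W$ coming from the monoidal structure on $\leftsub{C^{\oop}}{\cal YD}^B(\cB)$ of Proposition~\ref{ydbraidings}. After evaluating both at $X = C$ and using admissibility, they are seen to agree on generators, and by uniqueness (again via admissibility and naturality) they agree on all $C$-modules. In the special case $G = \ide_\cM$ with $C = B$, the braiding $\Psi_{(V,c^V),(W,c^W)} = c^V_W$ of Proposition~\ref{braidedcenter} becomes, under our dictionary, exactly the map $(\triangleleft \otimes \ide_V)(\ide_W \otimes \delta^V)\Psi^{-1}_{W,V}$, which is the pre-braiding of $\leftsub{B^{\oop}}{\cal YD}^B(\cB)$ from Proposition~\ref{ydbraidings}(a); it is a genuine isomorphism because Lemma~\ref{centerinvertiblebetter} supplies the inverse.

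The main obstacle is not conceptual but bookkeeping: tracking the coassociator $\phi$, the associators $\alpha,\chi,\xi,\zeta$, and the $a,b$ elements of (\ref{quasiantipode}) through the translation between (\ref{monadicityc}) and the quasi-coaction / YD axioms on the right-module side. Warning~\ref{rightmodwarning} shows that passing from left to right modules reverses the role of $\phi$, and the Yetter-Drinfeld axioms in Figure~\ref{quasiydmods} are designed to absorb precisely these insertions. I would therefore first carry out the verification in the strict case ($\phi = 1\otimes 1\otimes 1$, $a = b = 1$) via graphical calculus, where every identity reduces to a short diagrammatic manipulation using naturality of the braiding and the axioms of dually-structured Hopf algebras; then reintroduce $\phi$ and the $(F,P)$-admissibility data, using the 3-cycle condition (\ref{3cocycle}) and the antipode compatibilities (\ref{antipodecyclecond}) to close up each diagram. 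The same argument, with $C^{\oop}$-coaction replaced by a $C$-action on the right and the Hopf rather than YD compatibility, handles the Hopf center.
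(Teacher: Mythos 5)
Your proposal follows essentially the same route as the paper's proof: extract the quasi-coaction by evaluating the centralizing isomorphism at the regular module and inserting the unit, reconstruct $c$ from $\delta$ as $(\triangleleft\otimes\ide)(\ide\otimes\delta)\Psi^{-1}$, use $(F,P)$-admissibility together with naturality of $c$ at the action morphism $X^{\triv}\otimes C^{\op{reg}}\to X$ to pin $c$ down on all modules (this is exactly the paper's key identity (\ref{mainidentity}) and its use in showing the two assignments are mutually inverse), obtain invertibility from Lemma \ref{centerinvertiblebetter}, read off the YD- resp.\ Hopf-compatibility from the fact that the components of $c$ are morphisms in $\cM$ (with the $GPF$-side trivialized in the Hopf case), and transport the monoidal and braided structures through the same dictionary, matching Propositions \ref{monoidalcenter}, \ref{braidedcenter} and \ref{ydbraidings}.

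One step is mis-stated, though the repair is immediate: you claim the quasi-comodule coassociativity follows from (\ref{monadicityc}) at $X=Y=C$ together with naturality at the multiplication $m\colon C\otimes C\to C$ ``as a map of $C$-modules.'' With the tensor-product module structure on $C\otimes C$ (the structure that actually appears in $c_{C\otimes C}$), $m$ is not a module morphism, so naturality cannot be applied to it; with the structure $C^{\triv}\otimes C^{\op{reg}}$ it is a module map, but naturality there only reproduces the key identity expressing $c_C$ through $\delta$, not coassociativity. The morphism you need is the comultiplication $\Delta_C\colon C^{\op{reg}}\to C^{\op{reg}}\otimes C^{\op{reg}}$: naturality of $c$ at $\Delta_C$, composed with $\ide_V\otimes 1$, identifies $(\Delta_C\otimes\ide_V)\delta$ with $c_{C\otimes C}(\ide_V\otimes 1\otimes 1)$, and expanding $c_{C\otimes C}$ by (\ref{monadicityc}) — where the structure isomorphisms contribute the $\phi$-insertions of Figure \ref{quasiydmods}, cf.\ Warning \ref{rightmodwarning} — yields the quasi-comodule rule; the counit axiom comes from unitality. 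A smaller point: Lemma \ref{cinversebyduals} is not needed for invertibility (and its right-dual hypothesis on $\cM$ is not assumed here); Lemma \ref{centerinvertiblebetter}, which is where the quasi-Hopf antipode enters, is what the paper invokes and suffices.
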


\begin{proof}
As $G$ factors through $F$, $G$ is strict monoidal. Further observe that by the universal property of reconstruction (\ref{universalrec}) the functor $G$ is equivalent to a morphism of quasi-bialgebras $B\to C$ which we also denote by $G$.

First, consider the case of $\cZ^{G}_\cB(\cM)$. We recall that all objects in $\cZ^{G}_\cB(\cM)$ are $(F,\triv)$-admissible. To fix notation, we write $\Nat^\otimes(V\otimes G,G\otimes V)$ for monoidal transformations satisfying (\ref{monadicityc}) which are not necessarily invertible. Using $\Nat$ instead of $\Isom$ in the notation of (\ref{badnotation}), we can define a functor
\[
\delta(-)\colon \Nat_\cB^\otimes (\cV\otimes G, G\otimes \cV) \to \leftsub{C^{\oop}}{\mathcal{YD}}^B(\cB)
\]
by mapping $(V,c)$ to $V$ with the quasi-coaction $\delta(V,c) \colon V\to C\otimes V$ defined as $c_B(\ide_V\otimes 1)$. On morphisms, $\delta$ is just the identity. Conversely, define $c(\delta)$ for a coaction $\delta$ on $V$ to be the natural transformation $V\otimes G \to G\otimes V$ defined by $c(\delta)_V:= (\triangleleft \otimes \ide_V)(\ide_{G(X)}\otimes G\otimes \ide_V)(\ide_{G(X)}\otimes \delta)\Psi^{-1}_{V,G(X)} \colon V\otimes G(X)\to G(X)\otimes V$ for any object $X$ of $\cM$, using the inverse braiding $\Psi^{-1}$ in the base category $\cB$. 

We have to show that the functors $\delta$ are well-defined (i.e. map to the categories claimed). For this, we first verify that under the functor $\delta(-)$, the requirement that $c$ is $\otimes$-compatible gives that $\delta(c)$ is a $C$-quasi-comodule and vice versa. Further we have to show that the Yetter-Drinfeld condition (\ref{ydcondition}) for $\delta(V,c)$ and the $B$-action on $V$ corresponds precisely the condition that $c_B(\ide\otimes 1)$ is a morphism of $B$-comodules. The key observation to prove this is the identity
\begin{equation}\label{mainidentity}
c_B=(m_B\otimes \ide_V)(G\otimes \ide_B\otimes \ide_V)(\ide \otimes \delta(c))\Psi^{-1}_{V,B}.
\end{equation}
Note that this identity crucially depends on $(F,P)$-admissibility. 
It is computed by observing that $\ide_V=\triangleleft (\ide_V\otimes 1)$, where we view $\triangleleft$ as a morphism of $B$-modules $V^{\triv}\otimes B\to V$. The equality arises if we apply naturality of $c$ to the regular action $\triangleleft\colon B^{\triv}\otimes B\to B$. Note that all the associativity isomorphisms (which enter the picture as right action by $\phi$) vanish because they act on $B^{\triv}$. In order to prove the observation above, we use \ref{monoidalcenter} and rewrite it using (\ref{mainidentity}).

The mappings $\delta$ and $c$ are mutually inverse: It is clear that $\delta((V,c(\delta)))=\delta$ as $\Psi(\ide\otimes 1)=1\otimes \ide$. To verify that $c(\delta(V,c))=c$ we use that the action $\triangleleft$ is a morphism of right $B$-modules $V^{\triv}\otimes B\to V$, where $B$ has the regular action. Applying naturality of $c$, monadicity of $c$, and the assumption that $(V,c)$ is admissible over $\overline{\cB}$ implies that $c(\delta(V,c))=c$. This establishes that $\delta(-)$, with inverse $c(-)$, form an isomorphism of categories. 

For $c(\delta)$ to give an object of the center, it needs to be invertible. This is not true for general quasi-bialgebras, but can be assured using the assumption that the antipode $S$ exists. For this, we use Lemma~\ref{centerinvertiblebetter}. Hence $\cZ_\cB^G\cong \leftsub{C^{\oop}}{\mathcal{YD}}^B(\cB)$ is an isomorphism of categories via the mutually inverse functors $\delta(-)$ and $c(-)$.

Looking at the Hopf-center $\cH^G_{\cB}(\cM)$, we note that the one can run a analogous argument to establish an isomorphism with the category of Hopf modules $\leftsub{C^{\oop}}{\cB}$. In this case, the compatibility condition obtained is (\ref{hopfcondition}).

Next, restricting to the case where $G=\ide_{\cM}$, we note that the monoidal structure and braiding of $\leftsub{C^{\oop}}{\mathcal{YD}}^B(\cB)$ are precisely the ones induced by the monoidal structure of $\cZ^{G}_\cB(\cM)$ thus making the isomorphism $\delta$ an isomorphism of braided monoidal categories. Monadicity of the equivalence can also be show for general $G$.

If $B$ is only a quasi-bialgebra, the equivalence does not hold as stated, as $\otimes$-compatible natural transformations are not necessarily invertible. We however still obtain a pre-braiding on the category of YD-modules. This establishes Proposition \ref{ydbraidings} of which we had postponed the proof. The key observation for translating properties and constructions in $\cZ^G_{\cB}(\cM)$ to the description in terms of Yetter-Drinfeld modules is (\ref{mainidentity}). Using this, the monoidal structure in the Drinfeld center (cf. \ref{monoidalcenter}) translates precisely to the claimed monoidal structure on the category of Yetter-Drinfeld modules (which comes from the coaction on $B$ and the second axiom in Figure \ref{quasiydmods}). This completes the proof of the isomorphism of braided monoidal categories $\cZ^G_{\cB}(\cM)\cong \leftsub{C^{\oop}}{\mathcal{YD}}^B(\cB)$.
\end{proof}

The case we will be most interested in is $G=\ide_\cM$. In this case the category $\cZ_{\cB}(\cM)$ is equivalent to the category of YD-modules over $B$ in $\cB$, and the category $\cH_{\cB}(\cM)$ is equivalent to the category of Hopf modules over $B$ in $\cB$.

Similarly, for left $B$-modules, the categories $\cZ_\cB(\lmod{B}(\cB))$ and $\leftexpsub{B}{B}{\mathcal{YD}}(\cB)$, and the corresponding Hopf-versions, are isomorphic as braided monoidal categories. Consider the isomorphism of categories
\[
\rmod{B}(\cB)\to \lmod{B^{\coop}}(\cB), \quad (V,\triangleleft) \mapsto (V,\triangleright:=\triangleleft\Psi^{-1}(S^{-1}\otimes \ide_V)).
\]
As the center constructions are stable under isomorphism, we find that
\begin{equation}
\cZ_\cB(\lmod{B}(\cB))\cong \cZ_\cB(\rmod{B^{\coop}}(\cB)),
\end{equation}
or the corresponding equivalence for the Hopf centers. 

\begin{remark}
It is important to note for later applications that Proposition~\ref{equivalence} does not rely on $\cB$ being rigid. This will enable us to work with the category of countably infinite-dimensional vector spaces later.
\end{remark}

A fundamental theorem, proved in this general form in \cite{BD}, states that provided that in the category $\cB$ equalizers split, there is an equivalence of categories $
\cH_\cB(\rmod{B}(\cB)) \cong \cB$,
for $B$ a Hopf algebra object in $\cB$. We can recover part of this statement in the quasi-case:

\begin{theorem}
Assume that $\cB$ has split antipodes and let $B$ be a quasi-Hopf algebra in $\cB$. Then there exists a functor
\begin{align*}
\Res\colon \cH_\cB(\rmod{B})\stackrel{\sim}{\longrightarrow}\cB,\qquad 
V\mapsto V^B,
\end{align*}
with right inverse given by the fully faithful functor
\begin{align*}
\Ind\colon &\cB\rightarrow\cH_\cB(\rmod{B}), &V\mapsto &(B\otimes V, (\triangleleft \otimes \ide_V)(\ide_B\otimes \Psi),\delta \otimes \ide_V),&g\mapsto \ide_B\otimes g.
\end{align*}
\end{theorem}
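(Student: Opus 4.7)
\emph{Step 1: $\Ind$ is well-defined.} For $V\in \cB$, I would verify directly that $(B\otimes V,\; (m\otimes\ide_V)(\ide_B\otimes\Psi_{V,B}),\; \Delta\otimes\ide_V)$ satisfies the Hopf module compatibility~(\ref{hopfcondition}). The $V$-tensorand passes through inertly, so the compatibility collapses to the bialgebra identity $\Delta m=(m\otimes m)(\ide\otimes\Psi\otimes\ide)(\Delta\otimes\Delta)$ for $B$, tensored with $\ide_V$. The quasi-module and quasi-comodule axioms (Figure~\ref{quasiydmods}) reduce similarly to those of $B$ itself. Functoriality of $g\mapsto \ide_B\otimes g$ is immediate, so $\Ind$ is a well-defined functor $\cB\to \cH_\cB(\rmod{B})$.

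\emph{Step 2: construction of $\Res$.} Following the strict-Hopf template, where $\triangleleft\,(S\otimes\ide_V)\,\delta$ projects onto coinvariants, I would build for each Hopf module $(V,\triangleleft,\delta)$ an endomorphism $E_V\colon V\to V$ from $\delta$, $S$, the quasi-antipode element $a\colon I\to B$, and $\triangleleft$, together with $\phi$-corrections absorbing the coassociator that enters via the quasi-comodule axiom. Proving $E_V\circ E_V = E_V$ would proceed by: pushing one copy of $\delta$ past $\triangleleft$ via the Hopf condition (\ref{hopfcondition}); rebracketing the middle strands using the $3$-cycle identity (\ref{3cocycle}) together with the antipode-coassociator compatibility (\ref{antipodecyclecond}); and collapsing the doubled pair of $B$-strands through the quasi-antipode axiom $m^{2}(S\otimes a\otimes \ide)\Delta = a\varepsilon$ from (\ref{quasiantipode}). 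The hypothesis on $\cB$ (that the idempotents at hand split) then yields a factorization $E_V=\iota_V\pi_V$ with $\pi_V\iota_V = \ide_{V^B}$, and I set $\Res(V):=V^B$. For a morphism $f\colon V\to W$ of Hopf modules, the identity $fE_V = E_Wf$ is immediate from $f$ commuting with $\triangleleft$ and $\delta$, so $f$ descends to a morphism $f^B\colon V^B\to W^B$.

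\emph{Step 3: $\Res\,\Ind\cong \ide_{\cB}$ and $\Ind$ fully faithful.} On $\Ind(V)=B\otimes V$ the $V$-leg is inert under $E_{\Ind(V)}$, while the $B$-leg contracts via (\ref{quasiantipode}) to $a\varepsilon\colon B\to B$. Hence $E_{\Ind(V)} = (a\varepsilon)\otimes\ide_V$, with canonical splitting $\pi=\varepsilon\otimes\ide_V\colon B\otimes V\to V$ and $\iota = a\otimes\ide_V\colon V\to B\otimes V$ (using $\varepsilon a=\ide_I$, which is part of the quasi-Hopf data). This exhibits $\Res\,\Ind(V)\cong V$ naturally in $V$. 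For full faithfulness, any Hopf-module morphism $g\colon \Ind(V)\to \Ind(W)$ commutes with $E_{\Ind(-)}$, hence descends to $g^B = \pi_W\, g\, \iota_V\colon V\to W$. Conversely, since $g$ is a morphism of left $B$-quasi-comodules for the free coaction $\Delta\otimes\ide$, it is determined by its restriction along $\iota_V$: tracing through the splitting one recovers $g$ from $g^B$, giving the bijection $\Hom_{\cH_\cB}(\Ind V,\Ind W)\cong \Hom_\cB(V,W)$.

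\emph{Expected main obstacle.} The delicate point is the explicit construction of $E_V$ and the proof of its idempotency in Step~2. In the strict-Hopf case this is textbook; in the quasi-Hopf setting, every rebracketing introduces a factor of $\phi^{\pm 1}$ and every use of $S$ must be balanced against the elements $a,b$ via (\ref{antipodecyclecond}). Producing a genuinely idempotent $E_V$ requires placing these corrections precisely where (\ref{3cocycle}) and (\ref{quasiantipode}) can absorb them. I would conduct the whole calculation in graphical calculus, in line with the methodology of Section~\ref{catback}, so that parenthesizations remain explicit throughout.
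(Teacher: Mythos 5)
Your plan follows the same route as the paper's proof: check that $\Ind$ lands in Hopf modules by graphical calculus; build an idempotent on each Hopf module out of $\delta$, the antipode and the element $a$; split it using the hypothesis on $\cB$; define $\Res$ on morphisms by $\pi_W f\iota_V$, using $fe_V=e_Wf$ (which, as you say, is immediate from $f$ commuting with the action and coaction); and finish with $\Res\Ind\cong\ide_\cB$ and full faithfulness of $\Ind$, the latter by the free-comodule argument (the paper, following \cite{BD}, shows every $f\colon\Ind(V)\to\Ind(W)$ has the form $\ide_B\otimes f'$ with $f'=(\varepsilon\otimes\ide_V)f(1\otimes\ide_V)$, which is what your ``determined by its restriction'' remark amounts to).

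The one substantive item you leave open is exactly the crux, and as it stands it is a gap: you never write down the idempotent $E_V$, and your Step 3 claims (the formula $E_{\Ind(V)}=(a\varepsilon)\otimes\ide_V$ and the splitting $\iota=a\otimes\ide_V$) cannot be checked without it. The paper's choice is
\[
e_V:=\triangleleft(\triangleleft \otimes a^{-1})(\ide_V\otimes S^{-1})\Psi^{-1}(\ide_B\otimes\triangleleft)(\delta\otimes a),
\]
that is, the strict-case projector formed with the \emph{inverse} antipode and conjugated by the action of $a$; contrary to your expectation, no explicit $\phi$-insertions are needed in the formula, and idempotency is proved from the Hopf condition (\ref{hopfcondition}) together with the $S^{-1}$-form of the quasi-antipode axiom (\ref{quasiantipode}) recorded after Lemma~\ref{antipodelemma}, rather than by direct manipulation of (\ref{3cocycle}) and (\ref{antipodecyclecond}). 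With this $e_V$, the idempotent on $\Ind(V)=B\otimes V$ collapses to $1\varepsilon\otimes\ide_V$, so the paper's splitting is $\iota=1\otimes\ide_V$, $\pi=\varepsilon\otimes\ide_V$ (consistent with the fullness formula above), not the $a$-shifted one you propose. So: right strategy, same skeleton as the paper, but the construction and verification of the idempotent must actually be carried out before Steps 2 and 3 count as proved.
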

\begin{proof}
First check that the functor $\Ind$ gives Hopf modules as stated, which is an easy exercise in graphical calculus. Next, it is possible to see that the functor $\Ind$ is fully faithful. To prove fullness, use that each morphism $f\colon \Ind(V)\to \Ind(W)$ is of the form $\ide_B\otimes f'$, where $f'=(\varepsilon \otimes \ide_V)f(1\otimes \ide_V)$ (using an analogous computation as in \cite{BD}).

Next, given a Hopf module $(V, \triangleleft, \delta)$, we consider the morphism
\begin{equation}
e_V:=\triangleleft(\triangleleft \otimes a^{-1})(\ide_V\otimes S^{-1})\Psi^{-1}(\ide_B\otimes\triangleleft)(\delta\otimes a).
\end{equation}
Using the antipode axiom and the condition (\ref{hopfcondition}), we can show that $e_V$ is an idempotent in $\cB$. By assumption on $\cB$, it splits as $e_V=\iota_V\pi_V$, where $\iota_V\colon V^B\to V$ and $\pi_V \colon V\to V^B$ for an object $V^B$ of $\cB$, s.t. $\pi_V\iota_V=\ide_{V^B}$. We can now define the functor $\Res$ using a choice of such a splitting. On morphisms, we map $f\colon V\to W$ to $\pi_Wf\iota_V$. To show this gives a functor, we use the identity that $fe_V=e_Wf$ for any morphism of Hopf modules. This follows directly from $f$ commuting with $\delta$ and $\triangleright$.

It is easy to check that $\Res\Ind\cong \ide_{\cB}$ directly. Observe that in this case, $\iota=(1\otimes \ide_V)$, and $\pi=(\varepsilon \otimes \ide_V)$ for the object $\Ind(V)=B\otimes V$.
\end{proof}

Note that if both $\cM$ and its strictification (from \ref{strictification}) are higher representable, then we can conclude that $\cH_\cB(\cM)\cong \cB$ is an equivalence of categories using the proof of \cite{BD}.

\begin{remark}The construction of $\delta'$ in Lemma~\ref{centerinvertiblebetter} gives a functor
\[
\Theta \colon \leftsub{C^{\oop}}{\mathcal{YD}}^B(\cB)\to \mathcal{YD}^B_C(\cB),
\]
to the category of right YD-modules, which is the identity on morphisms. This functor is part of an equivalence of categories. This symmetry is not valid for Hopf modules. In fact, for $(V,c)\in \cH^G_\cB(\rmod{B}(\cB))$, the pair $(V,c)$ is \emph{not} a right Hopf module over $(B,C)$. 
\end{remark}

Recall that, given $\cB$ is rigid, then it has simultaneous left and right duals (cf. Proposition~\ref{rightdualslemma}). Further, $\cM$ has left left and right duals if and only if $B$ is a quasi-Hopf algebra with invertible antipode (cf. \ref{rightmodwarning}). We reinterpret the results of Proposition~\ref{generalduals} under the equivalence of Proposition~\ref{equivalence} showing that in this case the categories of YD-module have duals.

\begin{corollary}\label{quasiydduals}
Let $B$, $C$ be quasi-Hopf algebras, $G\colon B\to C$. Then the category $\leftsub{C^{\oop}}{\mathcal{YD}}^B(\cB)$ is rigid. The left dual action is given by
\begin{align}
\triangleleft^*&=(\ev_V\otimes\ide_{V^*})(\ide_{V^*}\otimes \triangleleft\otimes \ide_{V^*})(\ide_{V^*\otimes V}\otimes \Psi_{V^*,B})(\ide_{V^*}\otimes \coev_V\otimes S^{-1}),
\end{align}
and the dual quasi-coaction $\delta^*$ is defined, using graphical calculus where the right quasi-coaction $\delta'$ is denoted by \begin{minipage}{0.5cm}\scalebox{0.5}[0.5]{\small
\begingroup%
  \makeatletter%
  \providecommand\color[2][]{%
    \errmessage{(Inkscape) Color is used for the text in Inkscape, but the package 'color.sty' is not loaded}%
    \renewcommand\color[2][]{}%
  }%
  \providecommand\transparent[1]{%
    \errmessage{(Inkscape) Transparency is used (non-zero) for the text in Inkscape, but the package 'transparent.sty' is not loaded}%
    \renewcommand\transparent[1]{}%
  }%
  \providecommand\rotatebox[2]{#2}%
  \ifx\svgwidth\undefined%
    \setlength{\unitlength}{24.80667668bp}%
    \ifx\svgscale\undefined%
      \relax%
    \else%
      \setlength{\unitlength}{\unitlength * \real{\svgscale}}%
    \fi%
  \else%
    \setlength{\unitlength}{\svgwidth}%
  \fi%
  \global\let\svgwidth\undefined%
  \global\let\svgscale\undefined%
  \makeatother%
  \begin{picture}(1,1.00109966)%
    \put(0,0){\includegraphics[width=\unitlength]{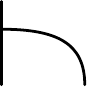}}%
  \end{picture}%
\endgroup%
}\end{minipage}, as depictured in Figure~\ref{yddualseqn}
\end{corollary}

\begin{figure}[htb]
\[
\vcenter{\hbox{\scalebox{1.2}[1.2]{\footnotesize\import{Graphics/}{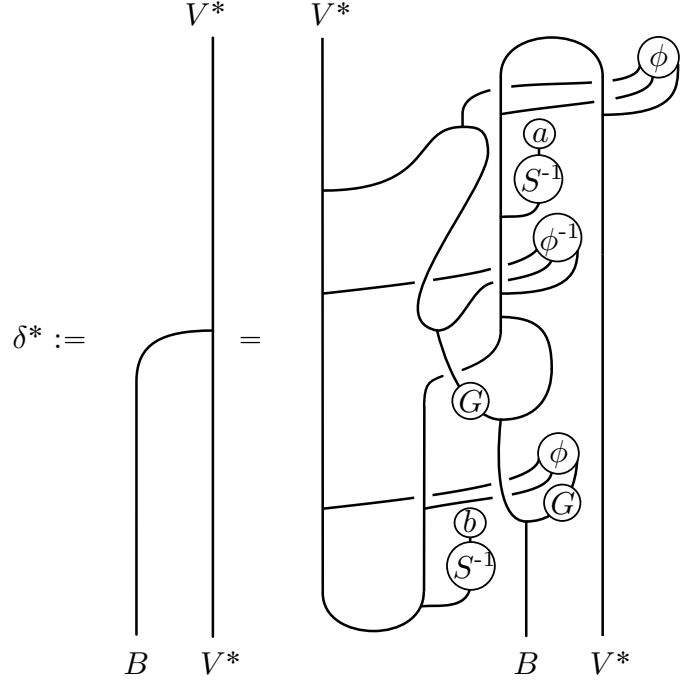}}}}.
\]
\caption{Dual quasi-coaction}
\label{yddualseqn}
\end{figure}

\begin{proof}
The formula for $\delta^*$ follows by translating Proposition \ref{generalduals} under (\ref{mainidentity}). We use $\delta'$ to express the inverse commutativity isomorphism in terms of YD-modules. One can then use the functor $\Theta$ to compute this from the data of the quasi-coaction $\delta$. The dual right $B$-action on $V^*$ is the usual dual action in $\cM=\rmod{B}(\cB)$.
\end{proof}

Finally, it is now a direct corollary that the category $\leftsub{B^{\oop}}{\mathcal{YD}}^B(\cB)$ acts on the category $\leftsub{B^{\oop}}{\mathcal{H}}^B(\cB)$ on the left, using Proposition \ref{equivalence}
 to translate Theorem \ref{cataction} to the module-(quasi-)comodule description of this section.


\subsection{Quasitriangularity}\label{quasitriangularity}

Note that for any monoidal category $\cM$ over $\cB$, there always exists a forgetful functor $F\colon \cZ_\cB(\cM)\to \cM$. In this section, we want to study the situation when this functor has a right inverse, i.e. a fully faithful monoidal functor $R\colon \cM\to \cZ_\cB(\cM)$ such that there exists an natural isomorphism $\gamma\colon FR\stackrel{\sim}{\Longrightarrow}\ide_{\cM}$. This will induce a braiding on the category $\cM$ coming from the braiding of $\cZ_\cB(\cM)$. Note that the natural isomorphism $\gamma$ is required to be compatible with the monadicity transformation $\mu^F$, $\mu^R$. That means that the diagram
\begin{equation}\label{gammamonoidal}
\begin{diagram}
FR(X\otimes Y)&\rTo^{F(\mu^R_{X,Y})}&F(R(X)\otimes R(Y))&\rTo^{\mu^F_{R(X),R(Y)}}&FR(X)\otimes FR(Y)\\
&\rdTo^{\gamma_{X\otimes Y}}&&\ldTo^{\gamma_X\otimes \gamma_Y}&\\
&&X\otimes Y&&
\end{diagram}
\end{equation}
commutes. Moreover, the functor $RF\colon \cB\to \cB$ is required to be a functor of monoidal categories. This means that the diagram
\begin{equation}\label{gammabraided}
\begin{diagram}
RF(A\otimes B)&\rTo^{R(\mu^F_{A,B})}&R(F(A)\otimes F(B))&\rTo^{\mu^R_{F(A),F(B)}}&RF(A)\otimes RF(B)\\
\dTo^{RF(\Psi^\cB_{A,B})}&&&&\dTo^{\Psi^\cB_{RF(A),RF(B)}}\\
RF(B\otimes A)&\rTo^{R(\mu^F_{B,A})}&R(F(B)\otimes F(A))&\rTo^{\mu^R_{F(B),F(A)}}&RF(B)\otimes RF(A)
\end{diagram}
\end{equation}
is required to commute. We say that $RF$ \emph{preserves} the braiding $\Psi^\cB$ in this case.

\begin{definition}\label{quasitriangular}
Let $\cM$ be a monoidal category and $\cB$ braided monoidal. We call a monoidal functor $R\colon \cM\to \cB$ which is a right inverse to $F$, with the data of $\gamma$ satisfying the compatibilities (\ref{gammamonoidal}) and (\ref{gammabraided}) above a \emph{quasitriangular structure} on $\cM$. This implies that $R$ is fully faithful.
\end{definition}

\begin{lemma}\label{quasitriangularbraiding}
A quasitriangular structure $R\colon \cM\to \cB$ gives a braiding on the category $\cB\stackrel{F}{\to}\cM$, such that the functors $R$ and $F$ preserve the braidings.
\end{lemma}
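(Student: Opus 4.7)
The plan is to transport the braiding on $\cZ_\cB(\cM)$ along $R$ to obtain a braiding on $\cM$. Explicitly, for objects $X, Y$ of $\cM$, define
\[
\Psi^{\cM}_{X,Y} := (\gamma_Y \otimes \gamma_X) \circ \mu^F_{R(Y), R(X)} \circ F(\Psi^{\cZ}_{R(X), R(Y)}) \circ (\mu^F_{R(X), R(Y)})^{-1} \circ (\gamma_X^{-1} \otimes \gamma_Y^{-1}),
\]
where $\Psi^{\cZ}$ is the braiding on $\cZ_\cB(\cM)$ from Proposition \ref{braidedcenter} and $F\colon \cZ_\cB(\cM)\to \cM$ is the forgetful functor. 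This is automatically a natural isomorphism: each ingredient is natural and invertible, so only bookkeeping is required to combine naturality of $\Psi^{\cZ}$ (in the center) with that of $\gamma$ and $\mu^F$.

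The key step is to verify the two hexagon identities. The plan is to reduce them to the hexagon identities for $\Psi^{\cZ}$, which hold by Proposition \ref{braidedcenter}. The point of the compatibility (\ref{gammamonoidal}) is precisely that the two ways of identifying $FR(X\otimes Y)$ with $X\otimes Y$ — via $\gamma_{X\otimes Y}$ directly, or via $F(\mu^R_{X,Y})$ followed by $\mu^F$ and $\gamma_X\otimes \gamma_Y$ — coincide. Feeding this coincidence into the definition of $\Psi^{\cM}$ at $(X\otimes Y, Z)$ allows the $F$-image of the hexagon for $\Psi^{\cZ}_{R(X)\otimes R(Y), R(Z)}$ to descend to the hexagon for $\Psi^{\cM}_{X\otimes Y, Z}$, and similarly for the second hexagon. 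This is the main obstacle: the algebraic content is mild, but the bookkeeping of $\mu^F$, $\mu^R$ and $\gamma$ across threefold tensor products must be organised carefully; the commutativity of (\ref{gammamonoidal}) is what makes this routine rather than ad hoc.

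Finally, both $R$ and $F$ are braided. For $R$, applying $R$ to the defining equation of $\Psi^\cM_{X,Y}$ and using that $\gamma$ witnesses $FR\cong \ide_\cM$ (together with the standard zigzag between $\mu^R$ and $\mu^F$ implied by $R$ and $F$ being monoidal with $\gamma$ monoidal) gives
\[
\mu^R_{Y,X}\circ R(\Psi^{\cM}_{X,Y})\circ (\mu^R_{X,Y})^{-1}=\Psi^{\cZ}_{R(X),R(Y)},
\]
so $R$ preserves braidings; full faithfulness of $R$ also shows $\Psi^\cM$ is uniquely characterised this way. For the fiber functor $F$ from $\cM$ to $\cB$ (equivalently, after precomposing with the section $P$ from Setting \ref{FPsetting}), the braiding compatibility is exactly the content of diagram (\ref{gammabraided}): that condition says $RF$ preserves the braiding of $\cB$, and since $R$ is fully faithful and braided, this forces $F$ itself to intertwine $\Psi^\cM$ and $\Psi^\cB$ up to $\mu^F$. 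Admissibility of objects in $\cZ_\cB(\cM)$ ensures there is no obstruction from the sub-braiding involving $P$-images.
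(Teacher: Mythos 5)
Your definition of $\Psi^{\cM}$ is correct and in fact identical to the paper's: by (\ref{gammamonoidal}), conjugating $F(\Psi^{\cB}_{R(X),R(Y)})$ by $\mu^F_{R(X),R(Y)}$ and $\gamma_X\otimes\gamma_Y$ is the same morphism as the paper's conjugation by $F(\mu^R_{X,Y})$ and $\gamma_{X\otimes Y}$, and your reduction of the hexagons to those of $\cB$ (the paper leaves this implicit) is reasonable. The gap is in the preservation statements. For $R$: applying $R$ to the defining formula produces the term $RF(\Psi^{\cB}_{R(X),R(Y)})$, and there is no ``standard zigzag'' between $\mu^R$ and $\mu^F$ that disposes of it -- $\gamma$ only gives $FR\cong\ide_{\cM}$, not $RF\cong\ide_{\cB}$, and $R$, $F$ are not adjoint, so monoidality of $R$, $F$ and of $\gamma$ alone cannot convert $RF$ of a braiding into the braiding at other objects. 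That conversion is precisely what axiom (\ref{gammabraided}) (the requirement that $RF$ preserve $\Psi^{\cB}$) provides, and the paper's computation for the $R$-step uses it together with naturality of $\Psi^{\cB}$ at $R(\gamma_X)$, $R(\gamma_Y)$, naturality of $\mu^R$, and (\ref{gammamonoidal}). (An alternative that avoids (\ref{gammabraided}) here would use fullness of $R$: write $(\mu^R_{Y,X})^{-1}\Psi^{\cB}_{R(X),R(Y)}\mu^R_{X,Y}=R(g)$ and identify $g=\Psi^{\cM}_{X,Y}$ by naturality of $\gamma$; but that is not the argument you describe -- you invoke full faithfulness only for uniqueness.)

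Your last paragraph also addresses the wrong functor. The $F$ in the Lemma is the functor $\cB\to\cM$ belonging to the quasitriangular structure of Definition \ref{quasitriangular} (in the intended application, the forgetful functor $\cZ_\cB(\cM)\to\cM$), the one of which $R$ is a right inverse; it is not the fiber functor $\cM\to\cB$ of Setting \ref{FPsetting}, and neither $P$ nor $(F,P)$-admissibility plays any role in this Lemma. What must be shown is that $F(\Psi^{\cB}_{A,B})$ corresponds to $\Psi^{\cM}_{F(A),F(B)}$ via $\mu^F$ for $A,B\in\cB$; the paper proves this directly from the definition of $\Psi^{\cM}$ at $F(A),F(B)$, the image under $F$ of (\ref{gammabraided}), naturality of $\gamma$, and (\ref{gammamonoidal}). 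Your route (``$RF$ preserves $\Psi^{\cB}$, $R$ is fully faithful and braided, hence $F$ intertwines'') can be made to work -- faithfulness of $R$ reduces the $F$-statement exactly to (\ref{gammabraided}) once $R$-preservation is known -- but as written it rests on the unestablished $R$-step above and on the misidentification of $F$, so the proposal as it stands has a genuine hole precisely where (\ref{gammabraided}) must enter.
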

\begin{proof}
For any two objects $X$ and $Y$ of $\cM$, define the braiding by the composition
\begin{equation}
\Psi^{\cM}_{X,Y}:=\gamma_{Y\otimes X}F((\mu^R_{Y,X})^{-1})F(\Psi^{\cB}_{R(X),R(Y)})F(\mu^R_{X,Y})\gamma^{-1}_{X\otimes Y}.
\end{equation}
The functor $F$ preserves this braiding. To see this, combine three commutative squares: The one defining $\Psi^\cM$ applied to $F(A)$ , $F(B)$, $F$ applied to (\ref{gammabraided}), and naturality of $\gamma$ in $F(\Psi^\cB_{A,B})$. This gives
\begin{align*}
&\gamma_{F(B\otimes A)}FR(\mu^F_{B,A})F(\mu^R_{F(B),F(A)})^{-1}F(\mu^R_{F(B),F(A)})\gamma^{-1}_{F(B)\otimes F(A)}\Psi_{F(A),F(B)}^\cM\\
&=F(\Psi_{A,B}^\cB)\gamma_{F(A\otimes B)}FR(\mu^F_{A,B})F(\mu^R_{F(A),F(B)})^{-1}F(\mu^R_{F(A),F(B)})\gamma^{-1}_{F(A)\otimes F(B)}.
\end{align*}
We can simplify to
\begin{align*}
&\gamma_{F(A\otimes B)}FR(\mu^F_{A,B})\gamma^{-1}_{F(A)\otimes F(B)}\\
&=\mu^F_{A,B}\gamma_{F(A)\otimes F(B)}F(\mu^R_{F(A),F(B)})^{-1}F(\mu^R_{F(A),F(B)})\gamma^{-1}_{F(A)\otimes F(B)}\\
&=\mu^F_{A,B}(\gamma_{F(A)}\otimes \gamma_{F(B)})(\gamma^{-1}_{F(A)}\otimes \gamma^{-1}_{F(B)})=\mu^F_{A,B},
\end{align*}
by first applying naturality of $\gamma$ in $\mu_{A,B}^F$, and then applying (\ref{gammamonoidal}) twice. Hence $F(\Psi_{A,B}^\cB)\mu_{A,B}^F=\mu_{B,A}^F\Psi_{F(A),F(B)}^\cM$
as required.

The proof that $R$ preserves the braiding starts by composing the diagrams of $R$ applied to the definition of $\Psi^\cM_{X,Y}$ with (\ref{gammabraided}), and naturality of $\Psi^\cB$ applied to $R(\gamma_X)$, $R(\gamma_Y)$. This gives the equality
\begin{align*}
&\Psi^B_{R(X),R(Y)}(R(\gamma_{X})\otimes R(\gamma_Y))\mu^R_{FR(X),FR(Y)}R(\mu^F_{R(X),R(Y)})RF(\mu_{X,Y}^R)R(\gamma_{X\otimes Y}^{-1})\\
&=(R(\gamma_{Y})\otimes R(\gamma_X))\mu^R_{FR(Y),FR(X)}R(\mu^F_{R(Y),R(X)})RF(\mu_{Y,X}^R)R(\gamma_{Y\otimes X}^{-1})R(\Psi_{X,Y}^\cM).
\end{align*}
Applying first naturality of $\mu^R$, and then (\ref{gammamonoidal}) we can simplify to
\begin{align*}
&(R(\gamma_{X})\otimes R(\gamma_Y))\mu^R_{FR(X),FR(Y)}R(\mu^F_{R(X),R(Y)})RF(\mu_{X,Y}^R)R(\gamma_{X\otimes Y}^{-1})\\
&=\mu^R_{X,Y}R(\gamma_{X}\otimes \gamma_Y)R(\mu^F_{R(X),R(Y)})RF(\mu_{X,Y}^R)R(\gamma_{X\otimes Y}^{-1})\\
&=\mu^R_{X,Y}R(\gamma_{X\otimes Y})R(\gamma_{X\otimes Y})^{-1}=\mu^R_{X,Y}.
\end{align*}
This proves $\mu^R_{Y,X}R(\Psi_{X,Y}^\cM)=\Psi^B_{R(X),R(Y)}\mu_{X,Y}$ as claimed.
\end{proof}

\begin{theorem}
Let $\cM$ be a monoidal category over a braided monoidal category $\cB$ (with functors $F,P$ as in \ref{FPsetting}). Then $\cM$ is braided monoidal (and $F$, $P$ preserve the braidings) if and only if $\cM$ has a quasitriangular structure over $\cZ_\cB(\cM)$.
\end{theorem}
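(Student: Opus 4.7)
The plan is to view a quasitriangular structure on $\cM$ over $\cZ_\cB(\cM)$ as a coherent family of lifts $X \mapsto (X, \Psi^\cM_{X,-})$ to half-braidings in the Drinfeld centre. Write $U\colon \cZ_\cB(\cM)\to\cM$ for the forgetful functor; by Proposition \ref{braidedcenter}, the codomain is already braided.

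For the direction $(\Leftarrow)$, given a quasitriangular structure $R\colon \cM\to\cZ_\cB(\cM)$ with $\gamma\colon UR\stackrel{\sim}{\Longrightarrow}\ide_\cM$, I would apply Lemma \ref{quasitriangularbraiding} with the pair $(\cB, F)$ of that lemma replaced by $(\cZ_\cB(\cM), U)$ to produce a braiding $\Psi^\cM$ on $\cM$ for which both $R$ and $U$ preserve braidings. Explicitly, up to conjugation by $\gamma$, $\Psi^\cM_{X,Y}$ is the centralizing isomorphism $c^{R(X)}_{Y}$ of $R(X)\in\cZ_\cB(\cM)$ evaluated at $Y$. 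To conclude that $F\colon\cM\to\cB$ preserves the braiding, I would invoke the $(F,P)$-admissibility of $R(X)$, which is built into membership of $\cZ_\cB(\cM)$ by Definition \ref{mixedrelativedef}: it asserts that $F(c^{R(X)}_{P(A)})$ equals $\Psi^\cB_{F(X),A}$ modulo $\tau$ and the monadicity of $F$, so that $F(\Psi^\cM_{X, P(A)}) = \Psi^\cB_{F(X), A}$. Preservation by $P\colon\cB\to\cM$ follows because $R(P(A))$ is necessarily the trivial admissible lift whose half-braiding is dictated by $\Psi^\cB$, combined with the monoidal naturality of $\tau\colon FP\cong \ide_\cB$.

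For the direction $(\Rightarrow)$, assuming $\cM$ is braided and $F,P$ preserve braidings, I would define $R\colon \cM\to \cZ_\cB(\cM)$ on objects by $R(X) := (X, c^X)$ with $c^X_Y := \Psi^\cM_{X,Y}$, on morphisms by $R(f) := f$, and set $\mu^R_{X,Y} := \ide_{X\otimes Y}$ and $\gamma := \ide$. The monoidality (\ref{monadicityc}) of $c^X$ is one hexagonal axiom for $\Psi^\cM$; the compatibility of $\mu^R = \ide$ with the monoidal structure on $\cZ_\cB(\cM)$ from Proposition \ref{monoidalcenter} is the other hexagonal axiom, which identifies the centralizing isomorphism assigned to $R(X)\otimes R(Y)$ with $c^{X\otimes Y}$. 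Crucially, the $(F,P)$-admissibility of $R(X)$ is exactly the hypothesis that $F$ preserves the braiding, applied to the pair $(X, P(A))$ together with $\tau_A\colon FP(A)\stackrel{\sim}{\to}A$, so that $R$ lands in $\cZ_\cB(\cM)$. The coherences (\ref{gammamonoidal}) and (\ref{gammabraided}) for $\gamma = \ide$ reduce respectively to a triviality and to the assumption that $P$ preserves the braiding, so that $RU$ acts as the identity braided endofunctor of $\cZ_\cB(\cM)$. Fully faithfulness of $R$ is immediate from $R(f)=f$.

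The main obstacle is the bookkeeping needed to verify that the $(F,P)$-admissibility condition on $R(X)$ is \emph{precisely} the data expressing that $F$ preserves the braiding on pairs whose second argument lies in $P(\cB)$, and that this restricted preservation, together with the monoidal naturality of $\tau$ and the hypothesis on $P$, captures the full content of $F$ and $P$ being braided monoidal. The matching is the conceptual heart of the statement: the admissibility condition tests the half-braiding exactly on objects coming from $\cB$, which is why one needs braiding preservation for \emph{both} $F$ and $P$ — $P$ supplies the objects on which admissibility is evaluated, and $F$ transports the result back to $\cB$. Once this translation is set up, both directions reduce to routine verifications of the hexagonal axioms and the coherence data on the Drinfeld centre.
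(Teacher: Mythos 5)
Your overall route coincides with the paper's own (very terse) proof: for the converse you take the canonical lift $R(X)=(X,\Psi^\cM_{X,-})$, the identity on morphisms, with identity coherences, and for the forward direction you apply Lemma \ref{quasitriangularbraiding} with the braided category taken to be $\cZ_\cB(\cM)$ and the forgetful functor (your $U$) in the role of $F$. Your identification of $(F,P)$-admissibility of $R(X)$ with braiding-preservation by $F$ on pairs $(X,P(A))$, via the monoidal naturality of $\tau$, is the right reading of Definition \ref{mixedrelativedef}, and your matching of the two hexagon axioms with (\ref{monadicityc}) and with $\mu^R=\ide$ being a map of centre objects for the monoidal structure of Proposition \ref{monoidalcenter} is also correct.

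Two of your supporting claims, however, do not hold as stated. First, your argument that $P$ preserves the braiding rests on the assertion that $R(P(A))$ is ``necessarily the trivial admissible lift whose half-braiding is dictated by $\Psi^\cB$''. Admissibility does not determine the lift: it only prescribes $F\bigl(c^{R(P(A))}_{P(B)}\bigr)$ up to $\tau$ and the monoidal structure maps, i.e. the half-braiding \emph{after} applying $F$ and only at objects in the image of $P$. What you actually obtain is an identity in $\cB$ after applying $F$; to promote it to $P(\Psi^\cB_{A,B})=\Psi^\cM_{P(A),P(B)}$ in $\cM$ you would need faithfulness of $F$, or you must read ``preserve the braidings'' in the weaker, underlying sense that admissibility encodes. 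The same caveat applies to your claim for $F$: you only establish preservation on pairs whose second argument lies in the image of $P$, which is all that admissibility can give (and, in examples such as $\lmod{B}(\cB)$ with a nontrivial $R$-matrix, all that is true). Second, in the converse direction your claim that (\ref{gammabraided}) with $\gamma=\ide$ ``reduces to the assumption that $P$ preserves the braiding, so that $RU$ acts as the identity braided endofunctor'' is incorrect: for the canonical lift one has $RU(V,c)=(V,\Psi^\cM_{V,-})$, which differs from $(V,c)$ in general, and the braided-functor condition on $RU$ compares the half-braiding $c_W$ of an arbitrary centre object with $\Psi^\cM_{V,W}$; it neither involves $P$ nor follows from $P$ being braided. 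This coherence therefore needs a separate treatment (the paper's proof is silent on it), and it should not be presented as a consequence of the hypothesis on $P$.
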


\begin{proof}
Applying Lemma~\ref{quasitriangularbraiding} to $\cZ_\cB(\cM)$ shows that if such a quasitriangular structure exists, then $\cM$ is braided. It is clear that the functors $F,P$ preserve the braidings. For the converse, we use the functor $R\colon \cM\to \cZ_{\cB}(\cM)$ defined by mapping an object $V$ of $\cM$ to the pair $(V, \Psi^\cM_{V,-})$, where $\Psi^\cM$ is the given braiding in $\cM$, and the identity on morphisms.
\end{proof}

We can recover the usual definitions of quasitriangularity (including \emph{co}-quasi\-tri\-an\-gu\-larity, and \emph{weak} quasitriangularity (due to Majid \cite{Maj2}), see Section~\ref{BCquasitriangular}) from this more general definition using reconstruction theory. These results can be given in the general setting over a braided monoidal category $\cB$. To do this, we assume that the functors $F$ and $R$ are \emph{strictly} monoidal and inverse to each other (i.e. $\gamma=\ide_{\cM}$, $\mu^F=\ide_{\cM}$ and $\mu^R=\ide_{\cB}$).

\begin{proposition}
A quasi-bialgebra $B$ in $\cB$, is quasitriangular, i.e. an $R$-matrix satisfying (\ref{braidingaxiom1})-(\ref{braidingaxiom3}) exists, if and only if the category $\cM=\lmod{B}(\cB)$ possesses a quasitriangular structure $R\colon \cM\to \cZ_\cB(\cM)$. 

If we identify $\cZ_\cB(\cM)$ with the category of YD-modules over $B$, the functor $R$ can be identified with the functor mapping a $B$-module $(V, \triangleright)$ to the YD-module on $V$ with action $\triangleright$ and quasi-coaction $\Psi(\triangleright \otimes \ide_B)(\ide_B\otimes \Psi)(R\otimes \ide)$.
\end{proposition}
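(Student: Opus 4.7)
The plan is to deduce the statement by combining the preceding Theorem with the reconstruction theorem, and then to unwind the equivalence of Proposition \ref{equivalence} in order to identify the functor $R$ explicitly. First I apply the preceding Theorem to $\cM = \lmod{B}(\cB)$, with $F$ the forgetful functor and $P = \triv$: a quasitriangular structure $R \colon \cM \to \cZ_\cB(\cM)$ exists if and only if $\cM$ carries a braided monoidal structure in such a way that $F$ and $P$ preserve the braidings. By Theorem \ref{majidreconstruction}(c), such a braiding on $\cM$ is in turn equivalent to the existence of a universal $R$-matrix on $B$ satisfying (\ref{braidingaxiom1})--(\ref{braidingaxiom3}). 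The condition that $F$ preserves the braiding is precisely the reconstruction normalization $F(\Psi^\cM_{X,Y}) = \Psi_{F(X), F(Y)} \varrho_{X,Y}$ defining the $R$-matrix, so no additional compatibility has to be checked; this yields the first assertion.

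For the explicit description of $R$: by Lemma \ref{quasitriangularbraiding}, the section recovered from a braiding $\Psi^\cM$ on $\cM$ sends $(V, \triangleright)$ to the pair $(V, \Psi^\cM_{V, -}) \in \cZ_\cB(\cM)$. Translating through the equivalence of Proposition \ref{equivalence}, which sends $(V, c)$ to the left Yetter--Drinfeld module with quasi-coaction $\delta = c_B(\ide_V \otimes 1)$, this becomes $\delta = \Psi^\cM_{V, B}(\ide_V \otimes 1)$. Substituting the reconstruction formula $F(\Psi^\cM_{V, B}) = \Psi_{V, B} \varrho_{V, B}$, together with the description $\varrho_{V, B} = (\triangleright \otimes \triangleright)(\ide_B \otimes \Psi_{B, V} \otimes \ide_B)(R \otimes \ide_{V \otimes B})$ of the $R$-matrix action on $F^{\otimes 2}$, and using the unit axiom $m(\ide_B \otimes 1) = \ide_B$ to collapse the right-most action, I obtain precisely $\delta = \Psi(\triangleright \otimes \ide_B)(\ide_B \otimes \Psi)(R \otimes \ide_V)$ as claimed.

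The main hurdle is one of bookkeeping rather than of mathematics: one must carefully match conventions between the two roles of $R$ (as an element $I \to B \otimes B$ and as the natural transformation $\varrho = \Psi^{-1} F(\Psi^\cM)$), and verify that the formula lines up with the left-module YD conventions of Proposition \ref{ydbraidings}(b), whose induced pre-braiding one may then cross-check agrees with $\Psi^\cM_{V, W}$ on tensor products. All structural requirements --- monoidality of $R$, the datum of $\gamma = \ide$, and the compatibilities (\ref{gammamonoidal}) and (\ref{gammabraided}) --- are already packaged by the preceding Theorem once the braiding on $\cM$ is fixed, and admissibility of $(V, \Psi^\cM_{V, -})$ over $\cB$ follows directly from the hypothesis that $F$ and $P$ preserve the braiding, so no separate verification is needed there either.
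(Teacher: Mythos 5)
Your proposal is correct and takes essentially the same route as the paper: combine the quasitriangularity theorem of Section~\ref{quasitriangularity} with reconstruction (higher representability of the forgetful functor) to pass between a braiding on $\lmod{B}(\cB)$ and a universal $R$-matrix, and then read off the quasi-coaction by evaluating the centralizing isomorphism $\Psi^\cM_{V,-}$ at the regular module $B$ against the unit $1\colon I\to B$, which is exactly the paper's computation. The one imprecise step is your claim that ``$F$ preserves the braiding'' is \emph{precisely} the reconstruction identity $F(\Psi^\cM_{X,Y})=\Psi_{F(X),F(Y)}\varrho_{X,Y}$: that identity merely defines $\varrho$, and the forgetful functor does not preserve the braiding in the naive sense when $R$ is nontrivial; what is actually needed, both for the hypotheses of the quasitriangularity theorem and for $(F,P)$-admissibility of $(V,\Psi^\cM_{V,-})$, is that braiding against trivial modules $P(X)$ reduces to the underlying braiding of $\cB$, which follows from the counit property $(\varepsilon\otimes\ide)R=1$ of the universal $R$-matrix (a standard consequence of the axioms (\ref{braidingaxiom2})--(\ref{braidingaxiom3})). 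This is a one-line verification, so the slip is cosmetic rather than substantive, and the paper itself glosses the same point; your convention bookkeeping (left-module YD formulas, the role of $\mu^R=\ide$) otherwise matches the paper's argument.
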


\begin{proof}
Recall that by Lemma \ref{quasitriangularbraiding} the existence of the functor $R$ implies that $\lmod{B}(\cB)$ is braided, with braiding induced by $R$. By higher representability \ref{higherrep} this gives the existence of a universal $R$-matrix $R\colon I \to B\otimes B$ satisfying the axioms (\ref{braidingaxiom1})-(\ref{braidingaxiom3}). Moreover, as the functor $R$ preserves the braiding by construction, this gives the equality
\begin{equation}
\Psi(\triangleright_V\otimes \triangleright_W)(\ide\otimes \Psi\otimes \ide)(R\otimes \ide_{V\otimes W})=(\triangleright_W\otimes \ide)(\ide\otimes\Psi)(\delta_V\otimes \ide),
\end{equation}
as we assume $\mu^R=\ide$. From this, we conclude that the quasi-coaction needs to be as stated by apply naturality of the braiding to $1\to B$, where $B$ has the regular action. The YD-condition corresponds to the $R$-matrix axiom that conjugation by $R$ transforms the coproduct into the opposite coproduct $\Delta^{\op{cop}}$ (see \ref{braidingaxiom1}). The other axioms correspond to the quasi-coaction condition and the fact that the functor $R$ is monoidal. Note that we need left module versions of these axioms here. These can however easily obtained from \ref{monoidalcenter} and (\ref{monadicityc}) using braided left module reconstruction as in \ref{majidreconstruction}. Conversely, given a universal $R$-matrix, one can run the argument backwards and check that the functors are obtained from the axioms (\ref{braidingaxiom1})-(\ref{braidingaxiom3}).
\end{proof}


\section{Braided Drinfeld and Heisenberg Doubles}\label{doubles}

In this section, we use reconstruction theory to obtain general definitions of the braided Drinfeld and Heisenberg doubles. In the case of the braided Drinfeld double, this resembles the double bosonization in \cite{Maj2}. For this, we will now leave the generality of quasi-Hopf algebras restricting to strict Hopf algebras. The main reason for this is that the picture becomes more symmetric, as the dual of a Hopf algebra is a Hopf algebra (which is \emph{not} true for quasi-Hopf algebras). Recall the definition of dually paired Hopf algebras from \ref{duallypaired}. We note that, the restriction to Hopf algebras leads to several simplifications. For instance, as mentioned before, the quasi-coaction $\delta\colon B\to B\otimes V$ of the definition of Yetter-Drinfeld modules now becomes a $B^{\oop}$-coaction. We will summarize the simpler formulae in \ref{strictsection}. Next, we use the categorical definition of quasitriangularity from Section~\ref{quasitriangularity} to discuss the notion of weak quasitriangularity in the setting of dually paired bialgebras in $\cB$. 

Once this preliminary work has been done, we will take $\cB$ to be $\lmod{H}$ or $\lcomod{H}$ where $H$ is an ordinary Hopf algebra in $\Vect$ which is either quasitriangular or weak quasitriangular (but possibly infinite-dimensional). We then define braided Drinfeld and Heisenberg doubles via reconstruction theory and give explicit presentations using modified Sweedler's notation. We also interpret the  analogues of the BGG-category $\cO$ in this general context. 

Finally, we explain how the categorical action from \ref{cataction} gives that the braided Heisenberg double is a 2-cocycle twist of the braided Drinfeld double. This generalizes an earlier result of \cite{Lu} and is the main result of this section.


\subsection{Simplifications for Strict Hopf Algebras}\label{strictsection}

As mentioned above, for $B$ a Hopf algebra in $\cB$, the category $\leftsub{B^{\oop}}{\cal{YD}}^B(\cB)$ consists of simultaneous $B$-modules and $B^{\oop}$-comodules in $\cB$ which satisfy (\ref{ydcondition}). Further, the inverse braiding can now be given in Figure \ref{braidingsimpler}. Furthermore, we can introduce an equivalent form of the YD-conditions, given that the antipode is invertible:
\begin{figure}
\[
\begin{array}{ccccccc}
\leftsub{B^{\oop}}{\Psi}^B&=&\vcenter{\hbox{\small\import{Graphics/}{ydbraiding3b.pdf_tex}}}, &\qquad&(\leftsub{B^{\oop}}{\Psi}^B)^{-1}&=&\vcenter{\hbox{\small\import{Graphics/}{ydbraiding4b.pdf_tex}}}.
\end{array}
\]
\caption{Braiding and inverse braiding for strict Hopf algebras}
\label{braidingsimpler}
\end{figure}

\begin{lemma}\label{altconds}
The YD-condition for $\leftsub{B^{\oop}}{\mathcal{YD}}^B(\cB)$ is equivalent to either of the conditions of Figure \ref{altydconds}.
\end{lemma}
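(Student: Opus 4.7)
The plan is to verify the three YD-conditions are pairwise equivalent by graphical calculus moves in the strict braided monoidal category $\cB$. The key algebraic inputs are all standard braided Hopf algebra facts: the antipode axioms $m(S\otimes \ide)\Delta = \eta\varepsilon = m(\ide\otimes S)\Delta$, the analogous axioms for the invertible $S^{-1}$, associativity of $m$ and coassociativity of $\Delta$, naturality of the braiding $\Psi$ with respect to every structure map, and Lemma \ref{antipodelemma} governing the interaction of $S$ and $S^{\pm 1}$ with $m$ and $\Delta$ in the presence of the braiding.

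To pass from the original YD-condition \eqref{ydcondition} to the first alternative in Figure \ref{altydconds}, I would pre- or post-compose one side with the identity written in the form $m(S\otimes \ide)\Delta = \eta\varepsilon$ inserted along the $B$-strand adjacent to the coaction $\delta$. Coassociativity then lets me split off a cancelling $(S,\ide)$-pair and slide it, via naturality of $\Psi$, past the crossing produced by the $B^{\oop}$-coaction. Applying the antipode axiom in the new position absorbs the extra $S$ and rearranges the action/coaction strands into the desired form. The reverse implication is obtained by reading the same diagrammatic reduction backward, using $S^{-1}$ to re-introduce the cancelling pair.

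The second alternative condition is obtained from the first (or directly from \eqref{ydcondition}) by the same manoeuvre, with the roles of $S$ and $S^{-1}$ interchanged and the braiding crossings replaced by their inverses; here one uses the part of Lemma \ref{antipodelemma} that gives $\Delta S^{-1} = (S^{-1}\otimes S^{-1})\Psi^{-1}\Delta$ and $S^{-1}m = m\Psi^{-1}(S^{-1}\otimes S^{-1})$ to keep the braiding orientations consistent.

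The main obstacle is bookkeeping rather than structural: one must be careful that each slide of a strand through a crossing is accounted for by the correct $\Psi$ versus $\Psi^{-1}$, and that the use of $B^{\oop}$ (rather than $B$) for the coaction, together with the invertible antipode, is matched on both sides of each manipulation. Once the diagrammatic layout is fixed, every implication collapses to a single use of an antipode axiom after a coassociativity move, so no deeper ingredient beyond those already established in the excerpt is required.
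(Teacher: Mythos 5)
Your proposal is correct and takes essentially the same route as the paper: the paper's proof is exactly the graphical-calculus antipode-cancellation you describe, resting on the single braided identity $m(S^{-1}\otimes \ide)\Psi^{-1}\Delta=1\varepsilon$, which is precisely what your combination of the antipode axioms with Lemma~\ref{antipodelemma} yields after sliding the cancelling pair through the relevant crossing. The only caution, which you already flag yourself, is that the cancellation must be performed in this braided form (with $\Psi^{-1}$ and $S^{-1}$) rather than with the plain $m(S\otimes\ide)\Delta=1\varepsilon$, since the strands to be cancelled sit on opposite sides of an inverse crossing coming from the $B^{\oop}$-coaction.
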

\begin{figure}
\[
\begin{array}{ccc}
\vcenter{\hbox{\small\import{Graphics/}{altydcond3.pdf_tex}}},&\text{~and~}&\vcenter{\hbox{\small\import{Graphics/}{altydcond4.pdf_tex}}}.
\end{array}
\]
\caption{Equivalent YD-conditions for strict Hopf algebras}
\label{altydconds}
\end{figure}

\begin{proof}
This is an exercise in graphical calculus
using that $m(S^{-1}\otimes \ide)\Psi^{-1}\Delta=1\varepsilon$.
\end{proof}

We interpret this relation as enabling us to permute the action past the coaction and vice versa (similar formulas can be given for quasi-Hopf algebras, but obstructions caused by the 3-cycle occur). For strict Hopf algebras, we can also give simpler formulae relating the (left) dual YD-modules $V^*$ to the given structures on $V$.

\begin{corollary}
Let $B$ be a Hopf algebra with invertible antipode in $\cB$, then the category $\leftsub{B^{\oop}}{\mathcal{YD}}^B(\cB)$ is rigid. Here, the dual action and coaction are given by
\begin{align}
\delta^*&=(\ev_V\otimes S\otimes \ide_{V^*})(\ide_{V^*}\otimes \Psi_{B,V}\otimes \ide_{V^*})(\ide_{V^*}\otimes \delta \otimes \ide_{V^*})(\ide_{V^*}\otimes \coev_V)\\
\triangleleft^*&=(\ev_V\otimes\ide_{V^*})(\ide_{V^*}\otimes \triangleleft\otimes \ide_{V^*})(\ide_{V^*\otimes V}\otimes \Psi_{V^*,B})(\ide_{V^*}\otimes \coev_V\otimes S^{-1}).
\end{align}
\end{corollary}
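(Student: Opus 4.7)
The plan is to deduce the stated formulas by specializing Corollary \ref{quasiydduals} to the strict Hopf setting, where $\phi = 1\otimes 1\otimes 1$ and $a = b = 1$. Under this specialization the formula for $\triangleleft^*$ is immediate: Proposition \ref{generalduals} shows that the underlying $B$-action on the dual in the center coincides with the dual right $B$-action on $V^*$ in $\cM = \rmod{B}(\cB)$, and the formula in Warning \ref{rightmodwarning} for $\ev_{\cM}^l$ together with $b = 1$ yields exactly the displayed expression (with $S^{-1}$, as is standard for left duals of right modules).

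For $\delta^*$ the approach is twofold. First, I would show that in the strict case the right quasi-coaction $\delta'$ of Lemma \ref{centerinvertiblebetter} depicted in Figure \ref{rightquasicoaction} reduces to the honest right $B$-coaction
\[
\delta' \;=\; \Psi_{V,B}\,(\ide_V \otimes S)\,\delta,
\]
by trivialising all $\phi$-boxes and collapsing the resulting antipode loops via $m(S\otimes\ide)\Delta = 1\varepsilon = m(\ide\otimes S)\Delta$. Second, I would plug this simpler $\delta'$ into Figure \ref{yddualseqn}. After trivialising the coassociator contributions and applying the rigidity (zig-zag) identities for $(\ev_V, \coev_V)$ in $\cB$, the double antipode introduced by substituting $\delta'$ cancels against one of the antipodes in the figure, and the formula collapses to the displayed expression for $\delta^*$.

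As an independent check one can verify directly using graphical calculus that $(V^*, \triangleleft^*, \delta^*)$ is a YD-module over $(B^{\oop},B)$ and that $\ev_V$, $\coev_V$ are morphisms in $\leftsub{B^{\oop}}{\mathcal{YD}}^B(\cB)$; the required manipulations are the alternative YD-compatibilities of Lemma \ref{altconds} combined with the antialgebra/anticoalgebra properties of $S$ from Lemma \ref{antipodelemma}. This confirms that the constructed object really is the categorical left dual rather than some other YD-module on the same underlying $V^*$.

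The main obstacle is purely bookkeeping in the graphical calculus: tracking where $S$ versus $S^{-1}$ appears, and which strand carries $\Psi$ versus $\Psi^{-1}$. The quasi-case of Corollary \ref{quasiydduals} already hides most of the hard work inside Figures \ref{rightquasicoaction} and \ref{yddualseqn}, so the real content here is verifying that, with $\phi$ trivial, the two figures telescope to the clean strands-and-caps expressions above. Once that telescoping is done, all compatibilities follow formally from rigidity of $\cB$ and the strict Hopf axioms on $B$.
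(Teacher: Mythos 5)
Your proposal is correct and follows essentially the same route as the paper: the paper's proof is precisely the specialization of Corollary~\ref{quasiydduals} to $\phi=1\otimes 1\otimes 1$ and $a=b=1$, with the telescoping of Figures~\ref{rightquasicoaction} and~\ref{yddualseqn} that you describe being exactly what "immediate consequence" hides. (Only a bookkeeping slip to fix: your intermediate formula for $\delta'$ should read $\delta'=\Psi_{B,V}(S\otimes\ide_V)\delta$ so that the composite typechecks as a right coaction $V\to V\otimes B$.)
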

\begin{proof}
This is an immediate consequence of Corollary~\ref{quasiydduals}, setting $\phi=1\otimes 1\otimes 1$ and $\alpha=\beta=1$. 
\end{proof}

In the setting of strict Hopf algebras in $\cB$, the R-matrix axioms simplify to the ones in Figure~\ref{quasitriangularaxioms}. See \cite[Figure~9.18]{Maj1} for a proof using graphical calculus. Conversely, given such a universal $R$-matrix for $B$, the category $\lmod{B}(\cB)$ (or $\rmod{B}(\cB)$) is braided monoidal (this is a special case of \ref{majidreconstruction}(c)).

\begin{figure}
\[
\text{\begin{minipage}{0.9\linewidth}\small\import{Graphics/}{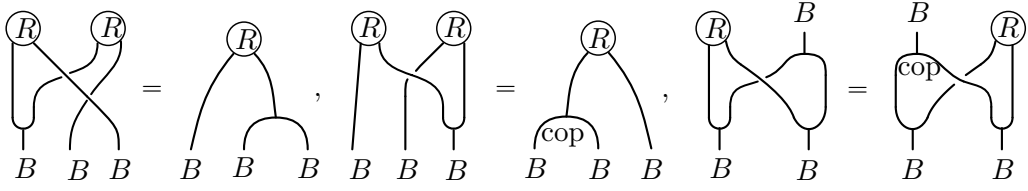}\end{minipage}}
\]
\caption{R-matrix axioms for braided Hopf algebras}
\label{quasitriangularaxioms}
\end{figure}

Note that, unless we are in the symmetric monoidal case, the convolution inverse $R^{-1}$ does not give an $R$-matrix up to application of the braiding. It is however possible to define $R^{\op{op}}$ by applying reconstruction theory to $\overline{\cM}$ (that is, $\cM$ with opposite braiding). The axioms for a dual $R$-matrix (giving a braiding on $\lcomod{B}(\cB)$ can be obtained by rotating this picture in the horizontal axis.

\subsection{Yetter-Drinfeld Modules over Dually Paired Hopf Algebras}\label{pairedydsect}

Given dually paired Hopf algebras $C$, $B$ (see Section \ref{duallypaired}), we can embed the category $\leftsub{B^{\oop}}{\mathcal{YD}}^B(\cB)=\cZ_\cB(\rmod{B}(\cB))$ into a larger category of Yetter-Drinfeld modules over $(C,B)$ by applying the functor
\[
\leftsub{B^{\oop}}{\Phi}\colon \lcomod{B^{\oop}}(\cB)\to \lmod{B}(\cB),\quad (V,\delta)\mapsto (V,\triangleright_\delta),
\]
where $\triangleright_\delta=(\ev\otimes \ide_V)(\ide_B\otimes \delta)$, to the left comodule structure of the YD-module. The resulting category is denoted by $\leftexp{C}{\mathcal{YD}}^B(\cB)$ and consists of objects $V$ of $\cB$ with a left $C$-action $\triangleright$ and a right $B$-action $\triangleleft$ compatible via the YD-condition
\begin{equation}
\begin{split}&(\ev\otimes \triangleleft)(\ide\otimes \Psi_{V,B}\otimes \ide)(\ide\otimes \triangleright \otimes \ide)(\Delta_C\otimes \ide\otimes \Delta_B)\\&=(\triangleright \otimes \ev)(\ide\otimes \Psi_{C,V}\otimes \ide)(\ide\otimes \triangleleft \otimes \ide)(\Delta_C\otimes \ide\otimes \Delta_B)\end{split}.
\end{equation}
Morphisms are required to commute with both the left $C$- and the right $B$-action. Note that the monoidal structure is given by the usual monoidal structures for left $C$- and right $B$-modules in $\cB$. Note that the larger category is not braided in general any more (for this, we require that a coevaluation map $I\to B\otimes C$ exists in $\cB$).

We can also embed the category of Hopf modules $\leftsub{B^{\oop}}{\mathcal{H}}^B(\cB)=\cH(\rmod{B}(\cB))$ into a larger category $\leftexp{C}{\mathcal{H}}^B(\cB)$ by again applying the functor $\leftsub{B^{\oop}}{\Phi}$ to the left comodule structure. The resulting category consists of objects in $\cB$ with a left $C$-module and a right $B$-module structure such that
\begin{equation}
\triangleright(\ide_{B}\otimes \triangleleft)=(\ev\otimes \triangleleft)(\ide_C\otimes \Psi_{V,B}\otimes \ide_B)(\ide_C\otimes \triangleright \otimes \ide_{B\otimes B})(\Delta_C\otimes \ide_V \otimes \Delta_B).
\end{equation}
Again, morphisms are required to commute with both the left and right action. We add the compatibility conditions in the language of graphical calculus in Figure~\ref{duallypairedconds}. Note that the YD-condition given by the first diagram is precisely the one used in \cite[A.2]{Maj2}.

\begin{figure}
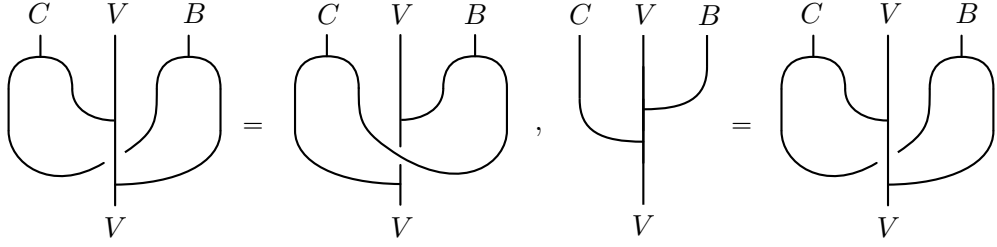

\[
\begin{array}{ccc}
\vcenter{\hbox{\small\import{Graphics/}{ydcond4.pdf_tex}}} & , &\vcenter{\hbox{\small\import{Graphics/}{hopfcond4.pdf_tex}}}
\end{array}
\]
\caption{YD- and Hopf-compatibility conditions for dually paired Hopf algebras}
\label{duallypairedconds}
\end{figure}

\begin{proposition}\label{duallypairedaction}
The monoidal category $\leftexp{C}{\mathcal{YD}}^B(\cB)$ acts on the category $\leftexp{C}{\mathcal{H}}^B(\cB)$, where $V\triangleright W:=V\otimes W$ with left $C$-action on $V\otimes W$ given by $\Delta_C$ and right $B$-action on $V\otimes W$ by $\Delta_B$.
\end{proposition}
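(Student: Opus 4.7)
The plan is to verify directly that, for $V\in\leftexp{C}{\mathcal{YD}}^B(\cB)$ and $W\in\leftexp{C}{\mathcal{H}}^B(\cB)$, the object $V\otimes W$ equipped with the diagonal $C$-action $\triangleright_{V\otimes W}=(\triangleright_V\otimes\triangleright_W)(\ide_C\otimes\Psi_{C,V}\otimes\ide_W)(\Delta_C\otimes\ide_{V\otimes W})$ and the diagonal $B$-action $\triangleleft_{V\otimes W}=(\triangleleft_V\otimes\triangleleft_W)(\ide_V\otimes\Psi_{W,B}\otimes\ide_B)(\ide_{V\otimes W}\otimes\Delta_B)$ satisfies the Hopf compatibility condition pictured in Figure~\ref{duallypairedconds}. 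Once the object-level claim is settled, the action coherences (associativity with the monoidal structure of $\leftexp{C}{\mathcal{YD}}^B(\cB)$, unitality, and compatibility of morphisms) will fall out from coassociativity of $\Delta_C,\Delta_B$ together with naturality of the braiding, exactly as in the standard check that diagonal modules form a monoidal category.

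The core of the argument is the verification of the Hopf condition on $V\otimes W$, which I would carry out in graphical calculus. Starting from the right-hand side of the compatibility square, I would first apply $\Delta_C$ and $\Delta_B$ and then repeatedly use coassociativity so that the pairing $\ev\colon C\otimes B\to I$ threads between the two tensor factors in a controlled way. The $W$-strand can then be collapsed using the Hopf condition for $W$ (i.e.\ the second diagram of Figure~\ref{duallypairedconds} applied to $W$), leaving on the $V$-strand a configuration of one $C$-input and one $B$-input which is precisely the left-hand side of the YD condition for $V$. Substituting the YD condition reassembles the diagonal actions, and a final application of coassociativity and the algebra/coalgebra compatibility delivers the left-hand side of the Hopf condition for $V\otimes W$.

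For functoriality and the action axioms, the verification is formal. If $f\colon V\to V'$ in $\leftexp{C}{\mathcal{YD}}^B(\cB)$ and $g\colon W\to W'$ in $\leftexp{C}{\mathcal{H}}^B(\cB)$ are morphisms, then $f\otimes g$ intertwines diagonal actions because $f,g$ individually intertwine the $C$- and $B$-actions. The structural isomorphism $(V\otimes V')\triangleright W\cong V\triangleright(V'\triangleright W)$ is the associator of $\cB$; that this is a map of Hopf modules reduces to coassociativity of $\Delta_C$ and $\Delta_B$, which is exactly the same check one makes to see that $\leftexp{C}{\mathcal{YD}}^B(\cB)$ is monoidal. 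Unitality with respect to the unit object $I$ (with trivial $C$- and $B$-actions via $\varepsilon_C$ and $\varepsilon_B$) is immediate.

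The main obstacle is the graphical bookkeeping in the Hopf-compatibility check, since both $\Delta_C$ and $\Delta_B$ appear on each side and the pairing strands must be routed correctly through the braiding. This is however a direct analogue of the categorical action Theorem~\ref{cataction} in the setting of Section~\ref{pairedydsect}; once the diagram is drawn, the identification of the two key sub-diagrams (the Hopf condition for $W$ and the YD condition for $V$) is unambiguous. Indeed, one can also view this proposition as the transport, via the functor $\leftsub{B^{\oop}}{\Phi}$ of Section~\ref{duallypaired}, of the categorical action $\cZ_{\cB}(\cM)\times\cH_{\cB}(\cM)\to\cH_{\cB}(\cM)$ of Theorem~\ref{cataction} together with the identifications of Proposition~\ref{equivalence}; this gives an alternative, conceptual proof whenever the pairing is perfect, and the direct verification above covers the remaining generality.
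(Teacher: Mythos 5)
Your proposal is correct and follows essentially the same route as the paper, whose proof simply states that the Hopf compatibility of $V\otimes W$ with the diagonal actions is "easy to check using graphical calculus" and that $f\otimes g$ intertwines the tensor-product actions because $f$ and $g$ do so individually. Your expanded diagram-chase (collapsing the $W$-strand via its Hopf condition and then invoking the YD condition on the $V$-strand) is just a more detailed version of that same verification, and your remark on the conceptual route via $\leftsub{B^{\oop}}{\Phi}$ and Theorem~\ref{cataction} is a reasonable supplement rather than a departure.
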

\begin{proof}
It is easy to check using graphical calculus that $V\triangleright W$ is a Hopf module over $(C,B)$. It is also clear that a pair of morphisms $f\colon V\to V'$, $g\colon W\to W'$ induces a morphism $f\otimes g\colon V\triangleright W\to V'\triangleright W'$ as both $f$, $g$ commute with the respective $C$, $B$ actions, so their tensor product will commute with the tensor product actions.
\end{proof}

Finally, we can also compute the center of monoidal categories of comodules in terms of Yetter-Drinfeld modules.

\begin{proposition}\label{comodulecenter}
For a Hopf algebra object $B$ in $\cB$, there are isomorphisms of categories
\begin{align*}
\cZ_\cB(\lcomod{B}(\cB))&\cong \leftsub{B}{\cal{YD}}^{B^{\coop}}(\cB),\\
\cH_\cB(\lcomod{B}(\cB))&\cong \leftsub{B}{\cal{H}}^{B^{\coop}}(\cB).
\end{align*}
\end{proposition}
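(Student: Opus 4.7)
The plan is to adapt the proof of Proposition~\ref{equivalence} from the right-module to the left-comodule setting. Let $\cM = \lcomod{B}(\cB)$ with forgetful functor $F\colon \cM \to \cB$ and left inverse $P\colon \cB \to \cM$ given by $Y \mapsto (Y, 1 \otimes \ide_Y)$ (the trivial coaction induced by the unit $1\colon I\to B$). An object $(V,c)$ of $\cZ_\cB(\cM)$ then consists of a left $B$-comodule $(V, \rho)$ together with a monoidal natural isomorphism $c\colon V \otimes F \Rightarrow F \otimes V$ which is $(F,P)$-admissible in the sense of Definition~\ref{mixedrelativedef}.

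The central step is to apply the comodule version of Majid's reconstruction theorem (cf.\ the remarks in Section~\ref{reconstruction} and the references \cite{Maj5,Maj6}) to extract from $c$ a second structural map on $V$. Under the dual higher representability condition for the comodule category, $c$ corresponds to a morphism $\delta\colon V \to V \otimes B$ equipping $V$ with a \emph{right} coaction for $B^{\coop}$; the opposite coproduct $\Delta^{\op{cop}}$ appears because the coassociativity for the regular left coaction on $B$, when commuted past $V$ using the braiding of $\cB$, produces $\Delta^{\op{cop}}$ rather than $\Delta$. The monoidal compatibility (\ref{monadicityc}) of $c$ becomes the $B^{\coop}$-coaction axiom for $\delta$, while $(F,P)$-admissibility forces $c_X$ to be recoverable from $\rho$, $\delta$, the regular coaction on $B$, and $\Psi^{\pm 1}$ in $\cB$. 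This last identity is the comodule analogue of the key formula (\ref{mainidentity}) and, as in the module case, it is what makes the assignment $(V,c) \mapsto (V,\rho,\delta)$ injective on objects.

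To complete the construction, I would verify that naturality of $c$ in the $B$-comodule structure on $X$ forces the Yetter--Drinfeld type compatibility relating $\rho$ and $\delta$ which characterizes $\leftsub{B}{\cal{YD}}^{B^{\coop}}(\cB)$; replacing the YD-compatibility with the Hopf-module compatibility analogous to (\ref{hopfcondition}) yields the second statement. The inverse assignment is defined by the formula arising from the identity mentioned above, and the two constructions are mutually inverse by admissibility, exactly as in the proof of Proposition~\ref{equivalence}. Monoidality of the equivalence and, for the Drinfeld center case, compatibility of the braiding with the one from Proposition~\ref{braidedcenter} follow once one identifies the tensor product in $\leftsub{B}{\cal{YD}}^{B^{\coop}}(\cB)$ with the diagonal coaction via $\Delta$ on the left and $\Delta^{\op{cop}}$ on the right.

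The main obstacle is the comodule reconstruction step: unlike the module case treated explicitly in the paper, this requires dualizing the higher representability assumption and tracking the appearance of $B^{\coop}$ (rather than $B$) through the braiding conventions. Once the comodule analogue of~(\ref{mainidentity}) is in place, the rest is bookkeeping in graphical calculus parallel to Proposition~\ref{equivalence}. An alternative route worth considering is to exhibit an equivalence of monoidal categories $\lcomod{B}(\cB) \simeq \lcomod{B^{\coop}}(\ov{\cB})$ via the lemma following Definition~\ref{oppositecoproduct} and reduce to a version of Proposition~\ref{equivalence} for the category $\ov{\cB}$; however, this obscures where $B^{\coop}$ enters and does not shorten the verification of the braided monoidal structure, so I would favor the direct approach.
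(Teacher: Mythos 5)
There is a genuine gap: you extract the wrong kind of structure from the centralizing isomorphism. In the paper's conventions (Definition~\ref{quasiyddef}), an object of $\leftsub{B}{\cal{YD}}^{B^{\coop}}(\cB)$ is a left $B$-comodule equipped with a compatible \emph{right $B^{\coop}$-action}; the superscript denotes a module structure, not a comodule structure. Your proposal instead claims that $c$ corresponds to a right $B^{\coop}$-\emph{coaction} $\delta\colon V\to V\otimes B$, so even if every verification you sketch went through, the construction would not land in the stated target category. The correct dualization of Proposition~\ref{equivalence} goes the other way: for $\cM=\lcomod{B}(\cB)$ the ``extra'' datum encoded by $c$ is an action, obtained by evaluating $c$ on the \emph{coregular} comodule $B$ and composing with the counit, $\triangleleft:=(\varepsilon\otimes\ide_V)c_B\colon V\otimes B\to V$. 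Monoidality of $c$ (condition~(\ref{monadicityc})) makes this a right $B^{\coop}$-action, naturality of $c$ at the coaction maps $X\to B\otimes X^{\triv}$ together with admissibility lets one recover $c$ from $\triangleleft$ and the comodule structures, and the YD-condition (respectively the Hopf condition) characterizes which such actions arise from the Drinfeld (respectively Hopf) center; invertibility of the reconstructed $c$ in the converse direction is where the Hopf algebra hypothesis (antipode) enters.

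A second, related problem is your appeal to a ``comodule version of Majid's reconstruction theorem'' and a ``dual higher representability condition.'' No such hypotheses appear in the proposition, and none are needed: the correspondence is entirely elementary because the universal object used is the coregular comodule $B$ itself, exactly as in Proposition~\ref{equivalence} the regular module $B$ and the unit $1\colon I\to B$ are used to define $\delta(V,c)=c_B(\ide_V\otimes 1)$. Importing representability assumptions both weakens the statement and signals that the mechanism of the proof has been misidentified. The appearance of $B^{\coop}$ is also not a by-product of coassociativity of the regular coaction commuted past $V$; it comes from the monoidal rule for $c$ together with the paper's definition of the opposite coproduct $\Delta^{\op{cop}}$ (Definition~\ref{oppositecoproduct}), which in a genuinely braided $\cB$ is not $\Psi\Delta$.
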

\begin{proof}
For $(V,c)\in \cZ_\cB(\lcomod{B}(\cB))$, consider the map $\triangleright :=(\varepsilon\otimes \ide_V)c_{B}$. Using monadicity of $c$ and the monoidal structure on the center, we find that $\triangleright$ is a right $B^{\coop}$-module. Dually to the proof of $\ref{equivalence}$ (with the simplification that $\phi=1\otimes 1\otimes 1$, we find that the datum of $\triangleright$ allows to recover $c$, and that conversely any right $B^{\coop}$-module that satisfies the YD-condition (\ref{ydcondition}) gives an element of the center. The proof for the Hopf center is again analogous.
\end{proof}

\begin{corollary}\label{centerequivalence}
There exists an isomorphism of braided monoidal categories 
\[
\cZ_\cB(\lcomod{B}(\cB))\cong \overline{\cZ_\cB(\lmod{B}(\cB))}.
\]
\end{corollary}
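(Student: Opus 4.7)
The plan is to combine Proposition \ref{comodulecenter} with the left-module version of Proposition \ref{equivalence} (from the discussion following its proof) and then to compare the braidings. Specifically, Proposition \ref{comodulecenter} gives, after observing that the dualization of the argument from Proposition \ref{equivalence} also yields a braided isomorphism, an isomorphism of braided monoidal categories $\cZ_\cB(\lcomod{B}(\cB)) \cong \leftsub{B}{\cal{YD}}^{B^{\coop}}(\cB)$, while the left-module variant of Proposition \ref{equivalence} gives $\cZ_\cB(\lmod{B}(\cB)) \cong \leftexpsub{B}{B}{\mathcal{YD}}(\cB)$ as braided monoidal categories. The claim therefore reduces to producing a braided monoidal isomorphism $\leftsub{B}{\cal{YD}}^{B^{\coop}}(\cB) \cong \overline{\leftexpsub{B}{B}{\mathcal{YD}}(\cB)}$.

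To produce this isomorphism, I would define a functor $\Xi$ which is the identity on morphisms and on underlying objects (together with their left $B$-coactions), and which converts the right $B^{\coop}$-action $\triangleleft$ to a left $B$-action via the formula $\triangleright := \triangleleft \circ (\ide_V \otimes S)\circ \Psi_{B,V}$. That $\triangleright$ is a genuine left action of $B$ is a short graphical computation using the right-action axiom for $\triangleleft$, naturality of $\Psi$, and the antialgebra morphism property of $S$ from Lemma \ref{antipodelemma}; the inverse conversion uses $S^{-1}$ and $\Psi^{-1}$. The Yetter-Drinfeld condition of Figure \ref{duallypairedconds} (with $C=B$) translates under $\Xi$ into the $(B,B)$-compatibility of Figure \ref{leftconditionspic}, which is a routine manipulation employing the alternative YD identities of Figure \ref{altydconds} and the simplifications of Section \ref{strictsection}. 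Monoidality of $\Xi$ follows because the tensor products on both sides are defined by diagonal action and coaction through $\Delta$, and the conversion formula commutes with $\Delta$ up to braiding in a way that matches the coproduct on $B^{\coop}$ needed in $\leftsub{B}{\cal{YD}}^{B^{\coop}}(\cB)$.

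The main obstacle -- and the central content of the corollary -- is checking that $\Xi$ reverses the braiding. By Proposition \ref{ydbraidings}(b), the braiding of $\leftexpsub{B}{B}{\mathcal{YD}}(\cB)$ is
\[
\Psi^{\mathrm{YD}}_{V,W} = (\triangleright_W\otimes \ide_V)(\ide_B\otimes \Psi_{V,W})(\delta_V\otimes \ide_W).
\]
Dualizing the derivation of the braiding in Proposition \ref{ydbraidings}(a) to the $\lcomod$ setting yields a corresponding explicit formula for the braiding on $\leftsub{B}{\cal{YD}}^{B^{\coop}}(\cB)$ involving $\triangleleft$, $\delta$, and the \emph{inverse} braiding $\Psi^{-1}$ of $\cB$. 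The essential verification is then to substitute the conversion $\triangleright = \triangleleft(\ide\otimes S)\Psi$ into $\Psi^{\mathrm{YD}}_{V,W}$ and to simplify the resulting expression using the antipode axiom $m(\ide\otimes S)\Delta=1\varepsilon$, naturality of $\Psi$, and the YD condition itself; the outcome is the inverse of the braiding on $\leftsub{B}{\cal{YD}}^{B^{\coop}}(\cB)$, establishing the required braided isomorphism. Thanks to the simplifications of Section \ref{strictsection} -- in particular the absence of coassociators for strict Hopf algebras -- this reduces to a compact diagrammatic identity.
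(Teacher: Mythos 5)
Your proposal is correct and follows essentially the same route as the paper: reduce via Proposition \ref{comodulecenter} and the left-module form of Proposition \ref{equivalence} to an isomorphism $\leftsub{B}{\cal{YD}}^{B^{\coop}}(\cB)\cong\overline{\leftexpsub{B}{B}{\mathcal{YD}}(\cB)}$, implemented by converting the right $B^{\coop}$-action to a left $B$-action with the antipode and braiding, and verified using the alternative YD-conditions of Lemma \ref{altconds}. Your conversion $\triangleright=\triangleleft(\ide_V\otimes S)\Psi_{B,V}$ agrees with the paper's $\triangleright=\triangleleft\,\Psi(S\otimes\ide_V)$ by naturality of $\Psi$, and your explicit braiding comparison just spells out the paper's remark that the braiding corresponds to the opposite one.
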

\begin{proof}
This can be proved by showing that the monoidal categories $\leftsub{B}{\cal{YD}}^{B^{\coop}}(\cB)$ and $\leftexpsub{B}{B}{\cal{YD}}(\cB)$ are isomorphic. To do this, we recall that $\rmod{B^{\coop}}(\cB)\cong \rmod{\leftexp{\op{cop}}{B}}(\cB)$ by definition of the opposite coproduct. The latter category is equivalent to $\lmod{B}(\cB)$ by the functor
\[
(V,\triangleleft)\mapsto(V,\triangleright:=\triangleleft\Psi(S\otimes \ide_V).
\]
We use this functor to translate the datum of the right $B^{\coop}$-module of an element of $\leftsub{B}{\cal{YD}}^{B^{\coop}}(\cB)$ to a left $B$-module structure. It is an exercise to check, using the alternative YD-conditions of \ref{altconds}, that the resulting left module is YD-compatible with the comodule structure on $V$. Note that the braiding corresponds to the opposite braiding. As all translations used are isomorphisms (which are the identity on morphisms), this gives an equivalence of the centers as stated.
\end{proof}


\subsection{Weak Quasitriangularity}\label{BCquasitriangular}

We now apply the general categorical viewpoint on quasitriangularity from Section~\ref{quasitriangularity} to a setting of dually paired Hopf algebras $B,C$ in $\cB$. The aim is to reinterpret Majid's concept of weak quasitriangularity in this context as it is a crucial feature used in defining braided Drinfeld doubles of braided Hopf algebras in comodule categories as algebras. In fact, it is needed in order to include quantum groups as an example of braided Drinfeld doubles (as done in \cite{Maj2}).

The aim is to rewrite the datum of a dual R-matrix via applying evaluation to the datum of morphisms $R\colon B^{\oop}\to C$. In Figure \ref{weakqtaxioms2} we define two ways of doing this. Here, $R^{\oop}$ is the dual $R$-matrix obtain by reconstruction such that the condition of Figure \ref{opprmatrix} holds. We will in general need to remember the datum of both morphisms $R$ and $\ov{R}$.
It is furthermore required that $\ev$ is a pairing satisfying
\begin{align}\label{oppairing}
\begin{split}\ev(m_B\otimes \ide_{B})&=\ev(\ide_C\otimes \ev\otimes \ide_B)(\ide_{C\otimes C}\otimes \Delta_C),\\
\ev(\ide_C\otimes m^{\op{op}}_B)&=(\ev\otimes \ev)(\ide_C\otimes \Psi_{C,B}\otimes \ide_B)(\Delta_C\otimes \ide_{B\otimes B})\end{split}
\end{align}
The reason for this is that we want to be able to write the braiding of $B^{\oop}$-comodules in terms of the induced $H$-module structure. In Figure \ref{weakqtrequirements}, there are two ways of doing this, using the $H$-module structure on either tensorand of the braiding.
\begin{figure}
\[
\vcenter{\hbox{\small\import{Graphics/}{weakqt1.pdf_tex}}}~~,\qquad \vcenter{\hbox{\small\import{Graphics/}{weakqt2.pdf_tex}}}~~.
\]
\caption{A weak quasitriangular structure for dually paired Hopf algebras}
\label{weakqtaxioms2}
\end{figure}
\begin{figure}
\[
\Psi=\vcenter{\hbox{\small\import{Graphics/}{dualrmatrixbraiding.pdf_tex}}}=\vcenter{\hbox{\small\import{Graphics/}{dualrmatrixopbraiding.pdf_tex}}}~~.
\]
\caption{The opposite R-matrix}
\label{opprmatrix}
\end{figure}

\begin{figure}
\begin{align*}
\Psi=\vcenter{\hbox{\small\import{Graphics/}{dualrmatrixbraiding.pdf_tex}}}&=\vcenter{\hbox{\small\import{Graphics/}{dualrmatrixbraiding2.pdf_tex}}}=\vcenter{\hbox{\small\import{Graphics/}{dualrmatrixbraiding3.pdf_tex}}}~~,
&\Psi=\vcenter{\hbox{\small\import{Graphics/}{dualrmatrixopbraiding.pdf_tex}}}&=\vcenter{\hbox{\small\import{Graphics/}{dualrmatrixopbraiding2.pdf_tex}}}=\vcenter{\hbox{\small\import{Graphics/}{dualrmatrixopbraiding3.pdf_tex}}}~~.
\end{align*}
\caption{Braidings for weak quasitriangular structures}
\label{weakqtrequirements}
\end{figure}

Being able to rewrite the dual R-matrix in this way is essential if we want to apply module reconstruction to braided categories of \emph{co}modules, for the reason that we need to express all formulas in terms of the induced action (from a coaction) via $\ev$. This is used in the construction of the quantum groups $\Ug$ as braided Drinfeld doubles (cf. \ref{quantumgroups}).

Our definition of weak quasitriangularity resembles the idea of the definition in \cite{Maj2}. It also gives an intermediate notion between the stronger requirement of $C$ having a universal R-matrix and the weaker assumption of $B^{\oop}$ having a dual R-matrix.

\begin{lemma}\label{Rfunctorlemma}
Given a pairing $\ev$ as in (\ref{oppairing}) and let $R\colon B^{\oop}\to C$ be a morphism of bialgebras in $\cB$. Then for any given left $B^{\oop}$-comodule $(V,\delta)$ in $\cB$,
\begin{enumerate}
\item[(i)] $\triangleright :=(\ev\otimes \ide_V)(R\otimes \delta)$ is a left $B^{\oop}$-module in $\cB$, and
\item[(ii)] $\triangleleft :=(\ev\otimes \ide_V)(R\otimes \Psi)(\delta\otimes \ide_B)$ is a right $B^{\oop\coop}$-module in $\cB$.
\end{enumerate}
\end{lemma}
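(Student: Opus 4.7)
The plan is to verify the module axioms directly in graphical calculus, using three ingredients: $R$ being a morphism of bialgebras (so an algebra and coalgebra map), the pairing compatibility (\ref{oppairing}), and the coaction axioms (coassociativity and counitarity) for $\delta$. Both parts are structurally parallel: one is interpreting an evaluation-pairing composed with a coaction as an action, a standard move in reconstruction-style arguments. No rigidity is needed.

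For (i), the unit axiom $\triangleright(1_{B^{\oop}}\otimes\ide_V)=\ide_V$ reduces immediately to $(\varepsilon_{B^{\oop}}\otimes\ide_V)\delta=\ide_V$ by unfolding, since $R(1_{B^{\oop}})=1_C$ (as $R$ preserves units) and $\ev(1_C\otimes\ide_B)=\varepsilon_B$ (the unit-counit compatibility of the pairing). For associativity $\triangleright(m_{B^{\oop}}\otimes\ide_V)=\triangleright(\ide_{B^{\oop}}\otimes\triangleright)$, I would expand the right-hand side, invoke coassociativity of $\delta$ to push one coaction inside the other, and then apply the first identity in (\ref{oppairing}) to fuse the two evaluations into a single one against a multiplication on the $B$-side. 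The left-hand side, after using that $R$ is an algebra map ($R\,m_{B^{\oop}}=m_C(R\otimes R)$), matches this same expression, completing the argument.

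For (ii), the unit axiom is checked analogously: the braiding with the unit is trivial, so $\triangleleft(\ide_V\otimes 1_B)$ collapses to $(\varepsilon_{B^{\oop}}\otimes\ide_V)\delta=\ide_V$ after using $\ev(R(-)\otimes 1)=\varepsilon_C(R(-))=\varepsilon_{B^{\oop}}(-)$. For the right-module associativity $\triangleleft(\triangleleft\otimes\ide_B)=\triangleleft(\ide_V\otimes m^{\op{op}}_B)$ (recall $m_{B^{\oop\coop}}=m^{\op{op}}_B$), unfold the iterated action; two braidings $\Psi$ appear, which by naturality can be threaded past the coaction iterates. After applying coassociativity of $\delta$, the two evaluations can be combined using the second identity in (\ref{oppairing}), which is precisely the compatibility of $\ev$ with the opposite product $m^{\op{op}}_B$. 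This produces $\triangleleft$ applied to $\ide_V\otimes m^{\op{op}}_B$, as required, and explains the appearance of the $\coop$ twist in $B^{\oop\coop}$.

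The main obstacle is the careful bookkeeping of braidings in (ii): when composing $\triangleleft$ with itself, several copies of $\Psi$ are produced, and one must invoke naturality together with the hexagon axioms to merge them into a single structure compatible with $m^{\op{op}}_B$ on the $B$-side rather than $m_B$. This is precisely what forces the target to be $B^{\oop\coop}$ and is the only step that is more than bookkeeping; drawing the diagram makes the cancellations transparent.
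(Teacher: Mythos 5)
Your proposal is correct and matches the paper's approach: the paper simply notes that the lemma is checked directly using (\ref{oppairing}) and the definitions of the (co)opposite product, which is exactly the verification you carry out (coassociativity of $\delta$, $R$ a bialgebra map, and the two pairing identities, with braiding bookkeeping only in part (ii)). Only a cosmetic remark: the product being fused in part (i) sits on the $C$-side of $\ev$ (as $m_C(R\otimes R)$) against $\Delta_B$ on the comodule leg, and the $\coop$ in $B^{\oop\coop}$ does not affect the module axiom itself (the algebra structures of $B^{\oop}$ and $B^{\oop\coop}$ coincide); it only matters for the bialgebra structure used later.
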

\begin{proof}
This is not hard to check, using (\ref{oppairing}) and the definition of the (co)opposite product. Note that $B^{\oop\coop}=(B^{\oop})^{\coop}$ and hence is a bialgebra in $\cB$.
\end{proof}

\begin{proposition}\label{weakqtstructuresprop}
Let $R\colon B^{\oop}\to C$ be a morphism of Hopf algebras with an evaluation as in (\ref{oppairing}) such that the condition 
\begin{equation}\label{weakqtaxiom}
\begin{split}
&(m^{op}_C\otimes \ev(R\otimes \ide_B))(\ide_C\otimes \Psi_{C,B}\otimes\ide_B)(\Delta_C\otimes \Delta_B)\\&=(\ev(R\otimes \ide_B)\otimes m_B)(\ide_C\otimes \Psi_{C,B}\otimes\ide_B)(\Delta_C\otimes \Delta_B)
\end{split}
\end{equation}
holds. Then there exists two quasitriangular structures on the category of comodules $\lcomod{B^{\oop}}(\cB)$ in $\cZ_\cB(\lmod{B^{\oop}}(\cB))$, both with respect to the forgetful functor.
\begin{enumerate}
\item[(i)]  The functor mapping $(V,\delta)$ to $(V, \delta, \triangleright:=(\ev\otimes \ide_V)(R\otimes \delta))$ as in \ref{Rfunctorlemma}(i).
\item[(ii)] The functor mapping $(V, \delta)$ to $(V,\delta, \triangleright:=(\ev\otimes \ide_V)(R\otimes \Psi)(\delta\otimes\ide_B)\Psi(S\otimes \ide_V))$, which is the composition of the functor in \ref{Rfunctorlemma}(ii) with the equivalence \ref{centerequivalence}.
\end{enumerate}
\end{proposition}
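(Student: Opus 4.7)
The strategy is to pass to the Yetter-Drinfeld description of the Drinfeld center and check YD-compatibility and monoidality for each functor; compatibility with the forgetful functor is built in.

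By Proposition~\ref{equivalence} in its left-module form (Proposition~\ref{ydbraidings}(b)), $\cZ_\cB(\lmod{B^{\oop}}(\cB))$ is isomorphic as a braided monoidal category to $\leftexpsub{B^{\oop}}{B^{\oop}}{\cal{YD}}(\cB)$: objects are $V\in \cB$ equipped with a left $B^{\oop}$-action $\triangleright$ and a left $B^{\oop}$-coaction $\delta$ satisfying the YD-compatibility. Both functors in the statement must therefore be shown to lift a $B^{\oop}$-comodule to such a YD-module, and to be monoidal.

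For (i), the fact that $\triangleright := (\ev\otimes \ide_V)(R\otimes \delta)$ is a left $B^{\oop}$-action is the content of Lemma~\ref{Rfunctorlemma}(i). The YD-axiom is an equality of two morphisms $B^{\oop}\otimes V \to B^{\oop}\otimes V$; after expanding each side using the definition of $\triangleright$, the coassociativity of $\delta$, and the bialgebra property of $R\colon B^{\oop}\to C$, both sides reduce to equation (\ref{weakqtaxiom}) applied between an $R$-image strand and a coaction strand, the inner $B$ and $C$ strands being contracted by $\ev$. Intuitively, (\ref{weakqtaxiom}) is the categorical incarnation of the R-matrix identity $\Delta^{\op{cop}}(x)R = R\Delta(x)$ pulled across the pairing, and is exactly what is needed to move the induced action past the coaction. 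Monoidality then follows from (\ref{oppairing}): it converts the tensor-product coaction $\delta_{V\otimes W} = (m_{B^{\oop}}\otimes \ide)(\ide \otimes \Psi \otimes \ide)(\delta_V\otimes \delta_W)$ into $\Delta_{B^{\oop}}$ after pre-composition with $R$, so that $\triangleright_{V\otimes W}$ equals the tensor-product action built from $\triangleright_V$ and $\triangleright_W$. Units and morphisms are immediate.

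For (ii), Lemma~\ref{Rfunctorlemma}(ii) first yields a right $B^{\oop\coop}$-module structure $\triangleleft := (\ev\otimes \ide_V)(R\otimes \Psi)(\delta\otimes \ide_B)$. Corollary~\ref{centerequivalence} supplies an equivalence of centers whose underlying change of module structure is $\triangleright := \triangleleft\circ \Psi(S\otimes \ide_V)$ (applied here with $B$ replaced by $B^{\oop}$), yielding precisely the stated formula. The YD-axiom and monoidality for (ii) are then obtained by transporting those of (i) across this equivalence, the antipode identity $m(S\otimes \ide)\Psi^{-1}\Delta = 1\varepsilon$ handling the change of conventions. For both functors, composition with the forgetful $F\colon \cZ_\cB(\lmod{B^{\oop}}(\cB))\to \lmod{B^{\oop}}(\cB)$ recovers the module-inducing functor of Lemma~\ref{Rfunctorlemma}, so Definition~\ref{quasitriangular} is satisfied. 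The main obstacle is the YD-verification in (i): one must identify precisely how (\ref{weakqtaxiom}) applies when an action strand feeding through $R$ into $\delta$ must be matched against a coaction strand feeding through $R$ into $\triangleright$; the graphical manipulation is delicate because $\ev$ enters twice and must be reorganised using (\ref{oppairing}) on both the $B$- and $C$-sides before (\ref{weakqtaxiom}) can be invoked.
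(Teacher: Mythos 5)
Your treatment of part (i) is essentially the paper's argument carried out objectwise: you use Lemma~\ref{Rfunctorlemma}(i) for the action, identify (\ref{weakqtaxiom}) as the ingredient giving the YD-compatibility, and use (\ref{oppairing}) together with the coalgebra-map property of $R$ for monoidality. The paper packages the same content one level up: it shows that (\ref{weakqtaxiom}) makes the braiding maps of Figure~\ref{weakqtrequirements} into morphisms of comodules, so that $\lcomod{B^{\oop}}(\cB)$ is braided, and then reads off \emph{both} quasitriangular structures from the two expressions of that one braiding (the first giving (i), the second giving a structure in $\ov{\cZ_\cB(\lcomod{B^{\oop}}(\cB))}$ which Corollary~\ref{centerequivalence} converts into (ii)). Since the YD-condition for the induced structure and the comodule-morphism property of the half-braiding are the same fact under Proposition~\ref{equivalence}, your part (i) is a legitimate variant.

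The genuine gap is in part (ii). You claim its YD-compatibility and monoidality are ``obtained by transporting those of (i)'' across Corollary~\ref{centerequivalence}. That transport does not exist: the structure in (ii) is not the image of the structure in (i) under any equivalence; it is the image under \ref{centerequivalence} of the \emph{different} functor of Lemma~\ref{Rfunctorlemma}(ii), whose target-worthiness is a separate statement. Concretely, you must first verify that $(V,\delta,\triangleleft)$ with $\triangleleft=(\ev\otimes\ide_V)(R\otimes\Psi)(\delta\otimes\ide_B)$ defines an object of $\cZ_\cB(\lcomod{B^{\oop}}(\cB))$, i.e.\ satisfies the compatibility of Proposition~\ref{comodulecenter}, again using (\ref{weakqtaxiom}); only then does \ref{centerequivalence} produce the functor in (ii). In the paper this extra check is absorbed into the single braiding-level verification (both expressions in Figure~\ref{weakqtrequirements} are comodule morphisms), which is exactly what your objectwise route forfeits. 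A secondary slip: your final appeal to Definition~\ref{quasitriangular} via the forgetful functor $\cZ_\cB(\lmod{B^{\oop}}(\cB))\to\lmod{B^{\oop}}(\cB)$ does not typecheck, since the definition requires $FR\cong\ide$ on the source of $R$; the relevant forgetful functor is the one forgetting the action down to $\lcomod{B^{\oop}}(\cB)$, of which your functors are sections on the nose (as you in fact say earlier when you note the compatibility is ``built in'').
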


\begin{proof}
In Lemma~\ref{Rfunctorlemma}, we checked that the stated compositions of maps indeed give left $B^{\oop}$-modules (respectively right $B^{\oop \coop}$-modules). The condition (\ref{weakqtaxiom}) is precisely what is required for the resulting map $\Psi$ as in Figure \ref{weakqtrequirements} to be morphisms of comodules. Hence, $\lcomod{B^{\oop}}(\cB)$ is braided. This gives the quasitriangular structure of (i) as described and a quasitriangular structure of $\lcomod{B^{\oop}}(\cB)$ in $\ov{\cZ_\cB(\lcomod{B^{\oop}}(\cB))}$. By the equivalence \ref{centerequivalence}, this corresponds to the quasitriangular structure (ii) as stated.
\end{proof}

\begin{definition}
A \emph{weak quasitriangular pair} for dually paired Hopf algebras $C,B$ is the datum of two morphisms of Hopf algebras $R,\ov{R}\colon B^{\oop}\to C$ in $\cB$ which satisfy (\ref{weakqtaxiom}), s.t. with respect to a pairing $\ev$ of $B^{\oop}$ and $C$ as in (\ref{oppairing}), the axioms from Figure \ref{weakqtaxioms2} hold.
\end{definition}

Note that such maps $R, \ov{R}$ are automatically convolution invertible with convolution inverses given by $R^{-1}:=RS$ and $\ov{R}^{-1}:=\ov{R}S$. The existence of quasitriangular structures as in \ref{weakqtstructuresprop} does \emph{not} imply the existence of the maps $R$, $\ov{R}$. In fact, it only implies the existence of dual universal R-matrices. In the following, we will describe how one can obtain the maps $R$, $\ov{R}$ via reconstruction theory under certain representability conditions.

\begin{lemma}
If the category $\lcomod{B^{\oop}}(\cB)$ can be (higher) represented in the sense of Section \ref{reconstruction} as modules over a Hopf algebra $C$ in $\cB$, then there exists a weak quasitriangular pair $R$, $\ov{R}$ for the dually paired Hopf algebras $C,B$.
\end{lemma}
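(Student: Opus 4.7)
The plan is to extract the weak quasitriangular pair from the universal $R$-matrix of $C$ by combining the reconstruction theorem with the pairing $\ev\colon C\otimes B\to I$. By higher representability, the forgetful functor $F\colon \lcomod{B^{\oop}}(\cB)\to\cB$ factors through $\lmod{C}(\cB)$, and the induced $C$-action on a comodule $(V,\delta)$ is $\triangleright=(\ev\otimes\ide_V)(\ide_C\otimes\delta)$. Moreover, $\lcomod{B^{\oop}}(\cB)$ inherits a pre-braiding from the braiding of $\cB$ applied through the coactions (the opposite $R$-matrix of Figure~\ref{opprmatrix}), and via the equivalence with $\lmod{C}(\cB)$ and Theorem~\ref{majidreconstruction}(c), this produces a universal $R$-matrix $R_C\colon I\to C\otimes C$ satisfying axioms (\ref{braidingaxiom1})--(\ref{braidingaxiom3}).

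Next, I would construct the two morphisms $R,\ov R\colon B^{\oop}\to C$ via reconstruction. View $B^{\oop}$ as the left regular comodule over itself with $\delta=\Delta$. For any $V\in\lcomod{B^{\oop}}(\cB)$, composing the braiding $\Psi^{\lcomod}_{B^{\oop},V}\colon B^{\oop}\otimes V\to V\otimes B^{\oop}$ with $\ide_V\otimes\varepsilon$ yields a map $B^{\oop}\otimes F(V)\to F(V)$, natural in $V$. By higher representability of $F$ by $C$, this natural transformation corresponds to a unique morphism $R\colon B^{\oop}\to C$ in $\cB$. Dually, composing $\Psi^{\lcomod}_{V,B^{\oop}}\colon V\otimes B^{\oop}\to B^{\oop}\otimes V$ with $\varepsilon\otimes\ide_V$ gives a natural transformation $F\otimes B^{\oop}\to F$, which reconstructs as a morphism $\ov R\colon B^{\oop}\to C$. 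Equivalently, $R$ and $\ov R$ are the two ``partial contractions'' of $R_C\colon I\to C\otimes C$ against $B^{\oop}$ via $\ev$.

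To verify the axioms, I would compare the two expressions for $\Psi^{\lcomod}$: one using the opposite $R$-matrix of $B^{\oop}$ directly on the coactions, the other via $R_C$ acting on the induced $C$-modules. The identifications in Figure~\ref{weakqtaxioms2} are then precisely the equalities of these two descriptions, after using the counit to extract $R$ and $\ov R$ respectively. That $R,\ov R$ are morphisms of Hopf algebras follows from the hexagonal axioms (\ref{braidingaxiom2})--(\ref{braidingaxiom3}) for $R_C$: these axioms translate via the pairing $\ev$ into multiplicativity and comultiplicativity of $R,\ov R$, while compatibility with the antipode comes from convolution invertibility (using $R^{-1}=RS$, $\ov R^{-1}=\ov R S$). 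Finally, equation (\ref{weakqtaxiom}) is the translation of the quasitriangularity axiom (\ref{braidingaxiom1}), which states $(\Delta^{\op{cop}}\otimes R_C)m = (R_C\otimes \Delta)m$, after pairing one copy of $C$ against $B$.

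The main obstacle will be the careful translation of the $R$-matrix axioms for $R_C$ through the pairing $\ev$ into the required identities for $R,\ov R$ as morphisms $B^{\oop}\to C$, especially tracking the role of the braiding $\Psi$ in $\cB$ (which permutes the factors in a nontrivial way when interchanging comodule and module descriptions). This is most transparently handled via graphical calculus, matching the two depictions of $\Psi^{\lcomod}$ strand by strand, and invoking the naturality of $R_C$ together with the compatibility~(\ref{oppairing}) of $\ev$ with the (co)products.
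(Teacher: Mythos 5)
Your central device---using level-one reconstruction, $\Nat(B^{\oop}\otimes F,F)\cong\Hom_\cB(B^{\oop},C)$, on natural transformations obtained by feeding the coregular comodule $(B^{\oop})^{\op{coreg}}$ into the comodule braiding and killing the $B$-output with $\varepsilon$---is essentially what the paper does: its proof reconstructs $\ov{R}$ from the transformation $(R^{\oop}\otimes\ide_V)(\ide_B\otimes\delta)$ and $R$ from $(R\otimes\ide_V)(S^{-1}\otimes\delta)$, which is your two extractions rewritten through the dual $R$-matrix (note that one of your extractions actually produces $RS=R^{-1}$ rather than $R$, which is exactly why the paper inserts $S^{-1}$ and records $\ev(RS\otimes\ide)=R$ as the identity required in Figure~\ref{weakqtaxioms2}; also a transformation of the shape $F\otimes B^{\oop}\to F$ must first be moved to the form $B^{\oop}\otimes F\to F$ by a braiding in $\cB$ before representability applies).

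Two points, however, are genuine gaps. First, the detour through a universal $R$-matrix $R_C\colon I\to C\otimes C$ is not how the paper argues, and your claim that $R$ and $\ov{R}$ are ``the two partial contractions of $R_C$ against $B^{\oop}$ via $\ev$'' fails in a genuinely braided $\cB$: as the proof of the proposition following this lemma makes explicit, $\ov{R}$ is a contraction of $R_C^{\oop}$ and $R$ requires $\widetilde{R}=R_C^{\oop\oop}$, and these are independent data obtained by separate reconstructions---only in the symmetric case can they be recovered from $R_C$ itself. The detour is also unnecessary (the whole point of weak quasitriangularity is to avoid demanding an honest $R$-matrix on $C$), although under the full higher-representability hypothesis it is not outright false. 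Second, you take the pairing $\ev\colon C\otimes B\to I$ and the formula $\triangleright=(\ev\otimes\ide_V)(\ide_C\otimes\delta)$ as given, but the hypothesis of the lemma only supplies representability of $\lcomod{B^{\oop}}(\cB)$ by $C$; the pairing must itself be constructed. The paper does this first: it applies the induced functor to $(B^{\oop})^{\op{coreg}}$ to obtain $\gamma\colon C\otimes B\to B$, sets $\ev:=\varepsilon\gamma$, uses that any coaction $\delta$ is a comodule morphism $V\to (B^{\oop})^{\op{coreg}}\otimes V^{\triv}$ to deduce that the induced $C$-action is $\ev$ composed with $\delta$, and then verifies (\ref{oppairing}). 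Without this step the axioms of Figure~\ref{weakqtaxioms2} and condition (\ref{weakqtaxiom}) are not even formulated, so your verification stage has nothing to pair against.
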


\begin{proof}
Consider the image of $(B^{\oop})^{\op{coreg}}$, $B^{\oop}$ with coregular coaction given by the coproduct, under $E\colon \lcomod{B^{\oop}}(\cB)\to \lmod{C}(\cB)$. This gives a map $\gamma\colon C\otimes B \to B$. Note that the functor $E$ factors through the forgetful functor to $\cB$ by assumption. We define
\[
\ev:=\varepsilon\gamma\colon C\otimes B\to I.
\]
Now let $\delta$ be any $B^{\oop}$-comodule structure on an object $V$ of $\cB$. Then $\delta$ is a morphism of $B^{\oop}$-comodules $V\to (B^{\oop})^{\op{coreg}}\otimes V^{\triv}$. This follows simply from the comodule condition. Hence, $\delta$ is a morphism of $C$-modules w.r.t. the image under $E$ and we derive that for the $C$-action (denoted by $\triangleright$) on $V$ under $E$ we have
\begin{equation}
\triangleright=(\varepsilon\otimes \ide_V)\delta \gamma =(\varepsilon\otimes \ide_V)(\triangleright \otimes \ide_V)(\ide_C\otimes \delta)=(\ev\otimes \ide_V)(\ide_C\otimes\delta).
\end{equation}
Hence, the functor $E$ is induced by the map $\ev$. From this we derive directly that $\ev$ satisfies the axioms (\ref{oppairing}).

To find $\ov{R}$, we apply reconstruction (of $C$-modules) to the natural transformation
\[
\ov{R}\colon B^{\oop}\otimes F\to F, \quad \ov{R}_{(V,\delta)}=(R^{\oop}\otimes \ide_V)(\ide_B\otimes \delta).
\]
The morphism $R$ can be obtained by applying reconstruction to
\[
R\colon B^{\oop}\otimes F\to F, \quad R_{(V,\delta)}=(R\otimes \ide_V)(S^{-1}\otimes \delta).
\]
hence $\ev(RS\otimes \ide)=R$ as required in Figure \ref{weakqtaxioms2}.
\end{proof}

The following proposition relates the different concepts for quasitriangularity.

\begin{proposition} Let $B,C$ be dually paired Hopf algebras in $\cB$.
\begin{itemize}
\item[(a)]
A universal $R$-matrix for $C$ induces a weak quasitriangular structure for $B,C$, which induces a dual $R$-matrix on $B$.
\item[(b)]
Let $C$ be the left dual of $B$ in the sense that a coevaluation map $\coev\colon I \to B\otimes C$ exists, satisfying that the usual duality relations hold. Then the three concepts in (a) coincide.
\end{itemize}
\end{proposition}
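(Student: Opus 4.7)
For part (a), I would proceed in two stages corresponding to the two implications. Given a universal $R$-matrix $R\colon I\to C\otimes C$ for $C$, I would first construct candidates
\[
R\;:=\;(\ide_C\otimes \ev)(R\otimes \ide_B),\qquad \ov{R}\;:=\;(\ide_C\otimes \ev)(R^{\op{op}}\otimes \ide_B)
\]
as morphisms $B^{\oop}\to C$. That these are morphisms of Hopf algebras translates, after pairing with $\ev$, directly into the R-matrix axioms of Figure~\ref{quasitriangularaxioms}: compatibility with the product on $B^{\oop}$ uses the coproduct axiom for $R$, compatibility with the coproduct on $B^{\oop}$ uses the product axiom, and the identities $R^{-1}=RS$, $\ov{R}^{-1}=\ov{R}S$ follow from the antipode identities already built into \ref{quasitriangularaxioms}. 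Verifying the pictorial axioms of Figure~\ref{weakqtaxioms2} reduces to the defining braiding property of $R$ (Figure~\ref{opprmatrix}), rewritten via the pairing~(\ref{oppairing}).

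The second stage is to show that a weak quasitriangular pair $(R,\ov{R})$ induces a dual $R$-matrix on $B$. The natural candidate is
\[
r\;:=\;\ev(R\otimes \ide_B)\colon B^{\oop}\otimes B\to I,
\]
and likewise $\ov{r}:=\ev(\ov{R}\otimes \ide_B)$. Proposition~\ref{weakqtstructuresprop} already supplies the braiding on $\lcomod{B^{\oop}}(\cB)$ constructed from this data; by dual reconstruction (the comodule version of Theorem~\ref{majidreconstruction}(c) noted at the end of Section~\ref{strictsection}), this braiding corresponds exactly to a dual $R$-matrix on $B^{\oop}$, which is a dual $R$-matrix for $B$. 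Alternatively, axioms dual to Figure~\ref{quasitriangularaxioms} for $r$ can be verified by translating each axiom of $R\colon B^{\oop}\to C$ through the pairing~(\ref{oppairing}).

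For part (b), the assumption of a coevaluation $\coev\colon I\to B\otimes C$ satisfying the usual snake identities makes the pairing $\ev$ perfect and all three constructions reversible. Given a weak quasitriangular pair, recover a universal $R$-matrix for $C$ by
\[
R\;:=\;(R\otimes \ide_C)\Psi^{-1}\coev\in C\otimes C,
\]
using the snake identity to check that this inverts the formula of part~(a). Given a dual $R$-matrix $r\colon B\otimes B\to I$, recover $R\colon B^{\oop}\to C$ as $R:=(\ide_C\otimes r)(\coev\otimes \ide_B)$, and define $\ov{R}$ analogously from $\ov{r}$. The snake identities together with the duality formulas in Figure~\ref{dualityaxiomshopf} show that the three round-trip compositions are the identity. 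Because each construction is manifestly functorial in the datum, the three concepts are in bijection, hence coincide.

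The main obstacle in the whole argument is bookkeeping: tracking the interaction of the braiding $\Psi$ and the opposite product/coproduct conventions through the pairing, especially because Figure~\ref{weakqtaxioms2} uses both $R$ and $\ov{R}$ (so one cannot simplify by symmetry), and because $\cB$ is not symmetric in general. This is best handled entirely in graphical calculus, using the pairing-axiom~(\ref{oppairing}) and the coevaluation snake identities to slide the R-matrix past the braiding; no genuinely new categorical input is needed beyond material already developed in Sections~\ref{duallypaired}, \ref{strictsection} and \ref{quasitriangularity}.
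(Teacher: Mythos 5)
Your construction of the map $R\colon B^{\oop}\to C$ in part (a) is where the argument breaks down. The paper only builds $\ov{R}:=(\ide_C\otimes \ev)(R^{\oop}\otimes \ide_B)$ from the \emph{opposite} R-matrix (as you do), but for the second map it does \emph{not} pair with the universal $R$-matrix of $C$ itself: it defines the convolution inverse $R^{-1}:=(\ide_C\otimes\ev)(\widetilde{R}\otimes\ide_B)$, where $\widetilde{R}=R^{\oop\oop}$ is a further datum produced by higher representability (reconstruction applied with the braiding reversed twice). In a genuinely braided, non-symmetric $\cB$ neither $R^{\oop}$ nor $\widetilde{R}$ is expressible in terms of the universal $R$-matrix $R$ — the paper's remark after the proof notes this simplification is available only in the symmetric case — so your candidate $R:=(\ide_C\otimes\ev)(R\otimes\ide_B)$ is simply the wrong ingredient: the pair $(R,\ov{R})$ so defined will not satisfy the compatibility of Figure~\ref{weakqtaxioms2}, because pairing a coaction against the first leg of the universal $R$-matrix does not reproduce the braiding of Figure~\ref{weakqtrequirements} (an uncancelled braiding of $\cB$ intervenes). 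Even in the symmetric case your formula identifies the \emph{convolution inverse}: there $\widetilde{R}=R$, so $(\ide_C\otimes\ev)(R\otimes\ide_B)$ is $R^{-1}=RS$, not $R$ — compare the translation rules in the proof of Proposition~\ref{presentationprop2}, where the map $R$ corresponds to the second leg of the \emph{inverse} universal R-matrix. The second half of (a) is fine and agrees with the paper (both appeal to Proposition~\ref{weakqtstructuresprop}).

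For part (b) you take a more computational route than the paper, which simply observes that a coevaluation makes $\lcomod{B^{\oop}}(\cB)$ and $\lmod{B}(\cB)$ canonically equivalent, so the three structures are identified by transporting the braiding and applying reconstruction. Your explicit inverse formulas (e.g.\ $R:=(\ide_C\otimes r)(\coev\otimes\ide_B)$) are in principle workable, but the snake identities only give you that the round trips are identities; they do not by themselves verify that the recovered elements satisfy the universal (respectively dual) R-matrix hexagon axioms in $\cB$, which is exactly the part the categorical equivalence delivers for free. If you keep your route, that verification needs to be done, and your first formula also has a typing slip: after $\Psi^{-1}$ the $B$-leg sits in the second factor, so it should read $(\ide_C\otimes R)$ rather than $(R\otimes\ide_C)$.
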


\begin{proof}
This is due to \cite{Maj2}.
To prove part (a), given a universal $R$-matrix for $C$, define
\begin{align}
\ov{R} & := (\ide_C\otimes \ev)(R^{\oop}\otimes \ide_B).
\end{align}
To obtain $R$, we need to find a morphism $\widetilde{R}\colon I\to B\otimes B$ such that
\begin{equation}
(\triangleright_V\otimes \triangleright_W)(\ide\otimes \Psi_{C,B}\otimes \ide)(\widetilde{R}\otimes \ide_{V\otimes W})=\Psi_{F(W),F(V)}F(\Psi^{\cM}_{V,W})\Psi^{-1}_{F(W),F(V)}\Psi^{-1}_{F(V),F(W)}.
\end{equation}
This can be thought of as $R^{\oop\oop}$ and exists by higher representability. Now define
\begin{align}
R^{-1} & := (\ide_C\otimes \ev)(\widetilde{R}\otimes \ide_B).
\end{align}
It was already observed in Proposition \ref{weakqtstructuresprop} that a weak quasitriangular structure induced a dual R-matrix.

Part (b) follows from the observation that given a coevaluation map, the categories $\lcomod{B^{\oop}}(\cB)$ and $\lmod{B}(\cB)$ are canonically equivalent.
\end{proof}

Note that in a symmetric monoidal category this theory simplifies as $R^{\oop}$, $\widetilde{R}$ can be obtained from $R$ using the symmetry.

We will later observe that in the case of the quantum group, a weak quasitriangular structure exists for the Hopf algebra $\mC \mZ^n$. 
This is essential in the interpretation of $\Ug$ as a \emph{double bosonization} in \cite{Maj2}.

\begin{remark}
If $\cB=\Vect$, then the functor $E\colon \lcomod{B^{\oop}}\to \lmod{C}$ is fully faithful if the pairing $\ev$ is perfect. Clearly, $E$ is faithful as it is the identity on morphisms. Assume the pairing is perfect and consider a linear map $f\colon V\to W$. The expression\footnote{Using modified Sweedler's notation, cf. \ref{presentations}.}
\begin{align*}
\ev(g\otimes v^{(-1)})\otimes f(v^{(0)})&-\ev(g\otimes (fv)^{(-1)})\otimes (fv)^{(0)}, &\forall v\in g\in C.
\end{align*}
is zero for all if $E(f)$ is a morphism of $C$-modules. But if that means that the difference $v^{(-1)}\otimes f(v^{(0)})-(fv)^{(-1)}\otimes (fv)^{(0)}$ lies in the right radical of $\ev$. If $\ev$ is perfect these terms have to be zero and hence $f$ is a morphism of $B^{\oop}$-comodules. This shows $E$ is full.
\end{remark}


\subsection{Definition of Braided Drinfeld and Heisenberg Doubles}\label{doublessection}

Assume that $B$ and $C$ are perfectly dually paired Hopf algebras in $\cB$. Using the description of YD-modules over ($B$, $C$) from Section~\ref{pairedydsect}, we have obtained a fully faithful functor
\[
\cZ_\cB(\rmod{B}(\cB))\hookrightarrow \leftexp{C}{\mathcal{YD}}^B(\cB).
\]
In the case where $\cB=\lmod{H}$ for a quasitriangular Hopf algebra $H$ (or $\cB=\lcomod{A}$ for $A$ and $H$ perfectly dually paired Hopf algebras in $\Vect$ with a weak quasitriangular structure), we can now define braided Drinfeld and Heisenberg doubles via reconstruction theory (see Section~\ref{reconstruction}). The advantage of this approach is that we can extract all formulae from braided diagrams and the structure maps of the (Hopf) algebras thus defined will satisfy all the axioms by general theory and no explicit checks have to be carried out. For the braided Drinfeld double this repeats Majid's construction of the \emph{double bosonization} in \cite{Maj2} using left $H$-modules instead of right ones. In \cite{Maj2} the computations for the Hopf algebra structure are also carried out explicitly.

\begin{remark}
In the following, we will work over $\Vect$, which is the symmetric monoidal category of countably infinite-dimensional vector spaces. We want to work with an infinite-dimensional Hopf algebras $B$. Now $B$ does not necessarily have a perfectly dually paired Hopf algebra $C$ in the sense of \ref{duallypaired}. In fact, the maximal subalgebra of the vector space dual $B^*=\Hom_k(B,k)$ which is a Hopf algebra with the dual structure from $B$ is
\[
B^\circ=\lbrace f\in B^*\mid \exists I\triangleleft B, \dim B/I<\infty\rbrace,
\]
where $I$ are Hopf ideals. However, $C=B^\circ$ is not necessarily perfectly paired with $B$. Using finite-dimensional representations, we can describe $B^\circ$ as the Hopf algebra of \emph{matrix coefficients} (see e.g. \cite{BG}). From this description we obtain the condition that $B^\circ$ and $B$ are perfectly paired (with respect to the natural pairing) if and only if no element of $B$ acts by zero on all finite-dimensional modules of $B$. As we will often work with positively graded Hopf algebras (for example, studying Nichols algebras), we will include the following Lemma:
\end{remark}

\begin{lemma}
If $B$ is a positively graded Hopf algebra with finite-dimensional graded pieces, then $B^\circ$ and $B$ are perfectly paired.
\end{lemma}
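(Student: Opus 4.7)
The plan is to verify the criterion stated in the preceding paragraph: I want to show that for every nonzero $b \in B$ there is a finite-dimensional left $B$-module on which $b$ acts nontrivially. Once this is established, no nonzero element of $B$ can lie in the radical of the evaluation pairing $B^\circ \otimes B \to k$, while the radical in $B^\circ$ is trivial automatically (a functional that vanishes on all of $B$ is the zero functional). Hence the pairing is perfect.

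The key construction uses the grading to truncate. Given nonzero $b \in B$, write $b = b_0 + b_1 + \ldots + b_N$ with $b_n \in B_n$ and $b_N \neq 0$, and set
\[
I := \bigoplus_{n > N} B_n.
\]
Since $B_i \cdot B_j \subseteq B_{i+j}$, this $I$ is a two-sided ideal (note that $I$ need not be a coideal, but this is irrelevant: I only need a finite-dimensional module, not a Hopf quotient). The quotient $V := B/I$ is a cyclic left $B$-module, and as a vector space it is isomorphic to $\bigoplus_{n=0}^{N} B_n$, which is finite-dimensional by the hypothesis that the graded pieces are finite-dimensional. Because $1 \in B_0 \subseteq B \setminus I$, the action of $b$ on the class $[1] \in V$ satisfies $b \cdot [1] = [b]$, which is nonzero since $b \notin I$. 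This exhibits $V$ as the required witness.

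Taken together, as $b$ ranges over the nonzero elements of $B$, the modules $V = B/B_{>N(b)}$ collectively separate all points, so the map $B \to (B^\circ)^*$ induced by the pairing is injective, completing the proof. There is no real obstacle here; the only subtlety worth flagging is that one might be tempted to try to make $B/I$ into a Hopf algebra quotient, which fails because $I$ is not generally a coideal. The proof avoids this by remembering that the criterion in the preceding paragraph speaks only of finite-dimensional modules, not of finite-dimensional Hopf quotients.
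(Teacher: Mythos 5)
Your proof is correct and follows essentially the same route as the paper: truncate the grading above the top degree of $b$, observe that the quotient $B/\bigoplus_{n>N}B_n$ is a finite-dimensional left $B$-module, and note that $b$ acts nontrivially on the class of $1$, which verifies the criterion stated just before the lemma. The extra remarks (that the ideal need not be a coideal, and that the radical on the $B^\circ$ side is automatically trivial) are accurate but not needed beyond what the paper's own argument uses.
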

\begin{proof}
Write $B=\oplus_{n\geq 0}B_n$ and consider the finite-dimensional module $B_{<k} :=B/{\oplus_{n\geq k}B_n}$. For each $b\in B$, we find that $b\in \oplus_{n\leq k}B_n$ for some $k$. Then $0\neq b\cdot 1\in B_{<k+1}$, so we have found a finite-dimensional module on which $b$ acts non-trivially.
\end{proof}

\begin{lemma}\label{representabilitylemma}
For $C$ and $B$ dually (not necessarily perfectly) paired bialgebras in $\cB$, the forgetful functor $\leftexp{C}{\mathcal{YD}}^B(\cB)\to \Vect$ is higher representable on the vector space $C\otimes H\otimes B$. The same holds true for the functor $\leftexp{C}{\cH}^B(\cB)\to \Vect$.
\end{lemma}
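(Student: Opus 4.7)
The plan is to verify the representability condition directly: for each $n \geq 1$ and each vector space $V$, I will exhibit a natural bijection
\[
\Nat\bigl(V\otimes F^{\otimes n}, F^{\otimes n}\bigr) \cong \Hom_{\Vect}\bigl(V, (C\otimes H\otimes B)^{\otimes n}\bigr),
\]
where $F$ is the forgetful functor from $\leftexp{C}{\mathcal{YD}}^B(\cB)$ (respectively $\leftexp{C}{\mathcal{H}}^B(\cB)$) to $\Vect$. By Definition~\ref{higherrep}, this bijection must match a morphism $\beta\colon V\to (C\otimes H\otimes B)^{\otimes n}$ with the transformation $\theta^n_V(\beta)$ built from the triple action and the braiding on $\cB$. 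The structural idea is that any object of $\leftexp{C}{\mathcal{YD}}^B(\cB)$ carries three mutually compatible actions (left $C$, right $B$, and left $H$ coming from $\cB=\lmod{H}$), and a natural transformation out of $F$ must respect all three.

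First I would construct the map from right to left. Given $\phi\colon V\to C\otimes H\otimes B$, written schematically as $\phi(v)=\sum c_i\otimes h_i\otimes b_i$, define
\[
\eta^\phi_X(v\otimes x)=\sum c_i\triangleright\bigl(h_i\triangleright(x\triangleleft b_i)\bigr).
\]
Naturality in $X$ is automatic, since any morphism in $\leftexp{C}{\mathcal{YD}}^B(\cB)$ is $H$-, $C$- and $B$-equivariant. This construction is visibly linear in $\phi$.

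Next, for the inverse direction, I would endow $C\otimes H\otimes B$ with a canonical YD-module structure which serves as a universal object. The left $C$-action is given by left multiplication in the first tensorand, the right $B$-action by right multiplication in the third, and the $H$-action is the diagonal action using the coproduct of $H$ and the $H$-actions on $C$ and $B$ that make them objects of $\cB$. Verifying that this triple of actions satisfies the YD-compatibility (Figure~\ref{duallypairedconds}, left diagram) reduces, via the Hopf axioms of $C$ and $B$ and the compatibility of $\ev$ with the $H$-structure, to an identity that can be checked directly in graphical calculus. A natural transformation $\eta$ then yields a linear map $\phi_\eta\colon V\to C\otimes H\otimes B$ by setting $\phi_\eta(v):=\eta_{C\otimes H\otimes B}(v\otimes (1_C\otimes 1_H\otimes 1_B))$. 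The fact that $\eta^{\phi_\eta}=\eta$ follows because every element $x\in X$ of a YD-module is the image of $1_C\otimes 1_H\otimes 1_B$ under the canonical module map $C\otimes H\otimes B\to X$, $c\otimes h\otimes b\mapsto c\triangleright(h\triangleright(x\triangleleft b))$, and this map is a morphism in $\leftexp{C}{\mathcal{YD}}^B(\cB)$. The converse direction $\phi_{\eta^\phi}=\phi$ is immediate.

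For higher representability, I would iterate the argument on $F^{\otimes n}$: the tensor product in $\leftexp{C}{\mathcal{YD}}^B(\cB)$ uses the coproducts of $C$ and $B$ (together with the $H$-coproduct and the braiding of $\cB$), and evaluating a natural transformation $V\otimes F^{\otimes n}\to F^{\otimes n}$ on the $n$-fold universal object $(C\otimes H\otimes B)^{\otimes n}$ yields a linear map $V\to (C\otimes H\otimes B)^{\otimes n}$ corresponding precisely to $\theta^n_V$. For the Hopf module case, the argument is identical: the representing vector space $C\otimes H\otimes B$ is unchanged, and the Hopf compatibility (Figure~\ref{duallypairedconds}, right diagram) replaces the YD-compatibility, affecting only which transformations arise but not the universal representability data. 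The main obstacle is the bookkeeping in verifying that the canonical YD-structure on $C\otimes H\otimes B$ actually satisfies the YD-condition, since this mixes the three actions, the $H$-coproduct, and the pairing $\ev$; this is best done in graphical calculus by tracing through the Hopf algebra identities of $C$, $B$, and $H$ separately.
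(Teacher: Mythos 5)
Your forward construction $\phi\mapsto\eta^\phi$ is fine, but the inverse direction rests on a claim that is false: the ``canonical'' structure you put on $C\otimes H\otimes B$ (left multiplication of $C$ on the first tensorand, right multiplication of $B$ on the third, and an $H$-action built from the coproduct of $H$) is not an object of $\leftexp{C}{\mathcal{YD}}^B(\cB)$, nor of $\leftexp{C}{\cH}^B(\cB)$. With this structure the $C$- and $B$-actions strictly commute, since they touch disjoint tensor factors, whereas the compatibility conditions of Figure~\ref{duallypairedconds} impose nontrivial operator identities between them. Already in Example~\ref{weylalgebras} ($C=k[\partial]$, $B=k[x]$, $H=k$) the Hopf compatibility forces $\partial\triangleright(w\triangleleft x)=(\partial\triangleright w)\triangleleft x+\ev(\partial\otimes x)\,w$ on every Hopf module, i.e.\ the Weyl relation, which strictly commuting actions violate; in the YD case the induced cross relation (\ref{drincross}) (e.g.\ (\ref{primitivedrincross}), which is a nonzero operator whenever the $R$-matrix is nontrivial, or the case $C=k[G]$, $B=kG$ with $G$ nonabelian) fails for the same reason. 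So the identity you defer to ``bookkeeping in graphical calculus'' is simply not true. For the same reason your evaluation maps $c\otimes h\otimes b\mapsto c\triangleright(h\triangleright(x\triangleleft b))$ are not morphisms in the category for this naive structure: right $B$-equivariance requires moving $\triangleleft b'$ past the $C$- and $H$-actions, which is exactly what the bosonization relations (\ref{boso1})--(\ref{boso2}) and the cross relations govern. The structure that would repair both claims is precisely the regular representation of $\Drin_H(C,B)$, resp.\ $\Heis_H(C,B)$, so your route is circular: it presupposes the algebra structure (and its PBW identification with $C\otimes H\otimes B$) that reconstruction from this very lemma is supposed to deliver.

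The paper's (admittedly terse) argument avoids any universal object inside the category: an object of $\leftexp{C}{\mathcal{YD}}^B(\cB)$ or $\leftexp{C}{\cH}^B(\cB)$ is by definition a vector space equipped with action maps of $C$, $H$ and $B$, and every axiom (module axioms, $H$-equivariance, YD- or Hopf-compatibility) is a condition expressed through these actions; hence a natural transformation $V\otimes F^{\otimes n}\to F^{\otimes n}$ amounts to a family of operators given by words in $C$, $H$, $B$ applied naturally to all objects, and such families are classified by morphisms $V\to(C\otimes H\otimes B)^{\otimes n}$ matched with $\theta_V^n$, the words being straightened into the PBW form $chb$ by the relations. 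If you want to keep an evaluation-at-the-identity strategy, you would first have to construct the doubles and their regular modules by hand and prove the PBW decomposition, which is considerably more than the lemma requires and runs against the reconstruction approach it is meant to feed.
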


\begin{proof}
The structure of an object of the categories of YD- or Hopf-modules over $B,C$ can be encoded by the action maps of $B$, $C$ and $H$. The compatibility conditions and all other axioms are expressed by conditions involving the action of these vector spaces on modules. This basic observation can be used to show (higher) representability.
\end{proof}

\begin{definition}\label{doubledefinition}$~$\nopagebreak
\begin{enumerate}
\item[(i)] The \emph{braided Drinfeld double} $\Drin_H(C,B)$ is the Hopf algebra obtained from $\leftexp{C}{\mathcal{YD}}^B(\lmod{H})$ by reconstruction on $C\otimes H\otimes B$. If $A,H$ have a weak quasitriangular structure, denote the algebra obtained by reconstruction on $C\otimes H\otimes B$ from $\leftexp{C}{\mathcal{YD}}^B(\lcomod{A^{\oop}})$ by $\Drin_A(C,B)$.
\item[(ii)] The \emph{braided Heisenberg double} $\Heis_H(C,B)$ is the algebra obtained by reconstruction on $C\otimes H\otimes B$ from $\leftexp{C}{\mathcal{H}}^B(\cB)$. In the weak quasitriangular case, denote the resulting algebra by $\Heis_A(C,B)$.
\end{enumerate}
\end{definition}

Note that a PBW-decomposition is given by construction for these algebras. Moreover, we have that
\begin{align*}
\lmod{\Drin_H(C,B)}&\cong \leftexp{C}{\mathcal{YD}}^B(\lmod{H}),\\
\lmod{\Heis_H(C,B)}&\cong \leftexp{C}{\mathcal{H}}^B(\lmod{H}),
\end{align*}
where the first equivalence is one of monoidal categories. Note that in the case $\cB=\lcomod{A}$, such equivalences do \emph{not} hold if $A$, $H$ are infinite-dimensional. The reason is that in $\leftexp{C}{\mathcal{YD}}^B(\lcomod{A^{\oop}})$ the $H$-actions are induced by $A$-coactions. If the pairing of $A$ and $H$ is perfect, then they correspond to all locally finite (integrable) modules. But modules in $\Drin_A(C,B)$ can be more general (compare this to the requirement of studying weight modules of the quantum group).

\begin{remark}
We choose the notation $\Drin_H(C,B)$ indicating both $B$ and $C$. This is because for given $B$ we can consider different dually paired Hopf algebras (which may not be perfectly paired). This gives a more flexible definition allowing the treatment of algebras which have no perfectly paired dual Hopf algebra.
\end{remark}

Before providing examples, we will write out explicit presentations for the abstractly defined doubles.


\subsection{Explicit Presentations}\label{presentations}

In the following, we will use Sweedler's notation to write down presentations for the algebras just defined. For this, we denote the coproducts by $\Delta(x)=x_{(1)}\otimes x_{(2)}$ (for $x$ an element of $H$, $B$ or $C$). Note that in the case of $B$ and $C$ these are coproducts in the braided monoidal category $\lmod{H}$ which are often referred to as \emph{braided} coproducts. We write $x^{(-1)}\otimes x^{(0)}$ for left coactions. In this notation, summation over tensors is omitted. We maintain to use the notation $\triangleright$ and $\triangleleft$ for actions. When clarification is needed, we denote the products in $H$, $B$ or $C$ by $\cdot$ with a lower index indicating the algebra. We denote the $R$-matrix of $H$ by $R=R^{(1)}\otimes R^{(2)}$ and its convolution inverse by $R^{-1}=R^{-(1)}\otimes R^{-(2)}$.

\begin{proposition}\label{presentationprop1}
The algebra $\Drin_H(C,B)$ is generated by the subalgebras $H$, $B^{op}$ (meaning that $bb'=b'\cdot_B b$) and $C$ subject to the following relations:
\begin{align}
hc&=(h_{(1)}\triangleright c)h_{(2)}, &(\Leftrightarrow  ch&=h_{(2)}(S^{-1}h_{(1)}\triangleright c))\label{boso1}\\
hb&=(h_{(2)}\triangleright b)h_{(1)}, &(\Leftrightarrow bh&=h_{(1)}(Sh_{(2)}\triangleright b))\label{boso2}
\end{align}
\begin{align}
b_{(2)}R^{(1)}c_{(2)}\ev(c_{(1)}\otimes (R^{(2)}\triangleright b_{(1)}))&= c_{(1)}R^{(2)}b_{(1)}\ev((R^{(1)}\triangleright c_{(2)})\otimes b_{(2)}).\label{drincross}
\end{align}
The coproducts are given by
\begin{align}
\Delta(h)&=h_{(1)}\otimes h_{(2)},
&\Delta(b)&=(R^{(2)}\triangleright b_{(1)})\otimes b_{(2)}R^{(1)},
&\Delta(c)&=c_{(1)}R^{(2)}\otimes R^{(1)}\triangleright c_{(2)}.
\end{align}
The counit is simply given by $\varepsilon(chb)=\varepsilon(c)\varepsilon(h)\varepsilon(b)$. The antipode and inverse antipode are the anti-algebra morphisms given by:
\begin{align}
S(h)&=S(h),&S^{-1}(h)&=S^{-1}(h),\\
S(b)&=R^{-(2)}(R^{-(1)}\triangleright Sb),&S^{-1}(b)&=(R^{-(1)}\triangleright S^{-1}b)R^{-(2)},\\
S(c)&=R^{-(1)}(R^{-(2)}\triangleright Sc),&S^{-1}(c)&=(R^{-(2)}\triangleright S^{-1}c)R^{-(1)}.
\end{align}
The algebra $\Heis_H(C,B)$ has the same algebra bosonization relations (\ref{boso1})-(\ref{boso2}) as $\Drin_H(C,B)$, but relation (\ref{drincross}) (referred to as the cross relation) is replaced by
\begin{equation}
cb=b_{(2)}R^{(1)}c_{(2)}\ev(c_{(1)}\otimes (R^{(2)}\triangleright b_{(1)})).\label{heiscross}
\end{equation}
\end{proposition}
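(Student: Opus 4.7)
The plan is to apply the reconstruction machinery of Section \ref{reconstruction} to the two categories in turn and then translate each categorical axiom into an explicit algebraic identity on the representing object $C \otimes H \otimes B$, using the fact that in $\lmod{H}$ the braiding is given by $\Psi_{X,Y}(x \otimes y) = R^{(2)} \triangleright y \otimes R^{(1)} \triangleright x$. Concretely, by Lemma \ref{representabilitylemma} the forgetful functors of $\leftexp{C}{\mathcal{YD}}^B(\lmod{H})$ and $\leftexp{C}{\mathcal{H}}^B(\lmod{H})$ are higher representable on $C \otimes H \otimes B$, so Theorems \ref{bialgebrareconstruction} and \ref{majidreconstruction} automatically endow it with a Hopf algebra (resp.\ algebra) structure acting on every object $V$ via $\sigma_V = \triangleright_C \circ (\ide \otimes \triangleright_H) \circ (\ide \otimes \ide \otimes \triangleleft_B)$. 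This immediately embeds $C$, $H$ and $B^{\oop}$ as subalgebras (with $B^{\oop}$ appearing because a right action, when rewritten as a left action, composes in reverse order), and every further relation/costructure will be extracted by pulling the categorical axiom through $\sigma_V$.

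The bosonization relations (\ref{boso1}) and (\ref{boso2}) come directly from the fact that the $C$- and $B$-actions on a YD- or Hopf-module are morphisms in $\lmod{H}$, i.e.\ $H$-equivariant: the identity $h \triangleright (c \triangleright v) = (h_{(1)} \triangleright c) \triangleright (h_{(2)} \triangleright v)$ reads, after translating via $\sigma_V$, as the algebra relation $hc = (h_{(1)} \triangleright c)h_{(2)}$, and similarly for $b$. The cross relation (\ref{drincross}) is obtained by spelling out the YD-condition
\[
(\ev \otimes \triangleleft)(\ide \otimes \Psi_{V,B} \otimes \ide)(\ide \otimes \triangleright \otimes \ide)(\Delta_C \otimes \ide \otimes \Delta_B) = (\triangleright \otimes \ev)(\ide \otimes \Psi_{C,V} \otimes \ide)(\ide \otimes \triangleleft \otimes \ide)(\Delta_C \otimes \ide \otimes \Delta_B)
\]
of Section \ref{pairedydsect} with the $\lmod{H}$-braiding inserted; the two sides become $c$ acting then $b$ versus $b$ acting then $c$, and each $\Psi$ contributes one factor of $R$ in the appropriate tensor slot. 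For the Heisenberg case, the Hopf condition $\triangleright(\ide_B \otimes \triangleleft) = (\ev \otimes \triangleleft)(\ide_C \otimes \Psi_{V,B} \otimes \ide_B)(\ide_C \otimes \triangleright \otimes \ide_{B \otimes B})(\Delta_C \otimes \ide_V \otimes \Delta_B)$ is a direct rewrite of the composite $c$-then-$b$ action, and reading it off gives the one-sided formula (\ref{heiscross}).

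For the coproduct, counit and antipode on $\Drin_H(C,B)$, apply Theorem \ref{majidreconstruction}(a) using the monoidal and rigid structure of $\leftexp{C}{\mathcal{YD}}^B(\lmod{H})$. The tensor product of YD-modules uses the braided coproducts $\Delta_B$, $\Delta_C$ in $\lmod{H}$ precomposed with $\Psi_{V,B}$, respectively $\Psi_{C,V}$; writing these braidings in terms of $R$ produces exactly $\Delta(b) = (R^{(2)} \triangleright b_{(1)}) \otimes b_{(2)} R^{(1)}$ and $\Delta(c) = c_{(1)} R^{(2)} \otimes R^{(1)} \triangleright c_{(2)}$, while $\Delta(h) = h_{(1)} \otimes h_{(2)}$ because $H$ acts on tensor products through its ordinary coproduct in $\Vect$. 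The counit is forced by the left-unit constraint. The antipode formulas come from the reconstruction recipe (\ref{antipotetrans}) applied to left duals in $\leftexp{C}{\mathcal{YD}}^B(\lmod{H})$: dualizing swaps $R$ for $R^{-1}$ (since the braiding must be inverted when passing to the dual), giving the $S$- and $S^{-1}$-formulas stated.

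The main obstacle will be the bookkeeping: keeping track of which Sweedler indices, braiding legs and $R$-factors end up where, and verifying that the same explicit formula for the multiplication and comultiplication is obtained regardless of which generic module $V$ is used to read the natural transformation. No check of associativity, coassociativity or the antipode axioms is needed, since these are guaranteed by Theorems \ref{bialgebrareconstruction} and \ref{majidreconstruction}; only the translation between the abstract categorical maps and their Sweedler-style coordinate expressions in $\lmod{H}$ has to be carried out carefully.
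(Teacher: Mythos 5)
Your proposal is correct and follows essentially the same route as the paper: the paper's own (very terse) proof also just invokes reconstruction on $C\otimes H\otimes B$ with the PBW-ordered action $(c\otimes h\otimes b)\triangleright v = c\triangleright(h\triangleright(v\triangleleft b))$, reads the bosonization and cross relations off the $H$-equivariance and YD/Hopf compatibilities, takes the costructure from the monoidal structure on Yetter--Drinfeld modules, and characterizes the antipode through the dual module via $\ev(b\triangleright f\otimes v)=\ev(f\otimes S(b)\triangleright v)$. Your outline simply spells out the same translation in more detail, with the remaining work being the Sweedler/R-matrix bookkeeping that the paper likewise leaves to the reader.
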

\begin{proof}
These formulas are obtained using reconstruction in $\Vect$. For $h\in H$, $b\in B$ and $c\in C$, we define the action by
\begin{equation}
(c\otimes h\otimes b)\triangleright v:=b\triangleright(h\triangleright(v\triangleleft b)).
\end{equation}
The formulae for the antipode are required by defining e.g. $Sb$ to be the element of $\Drin_H(C,B)$ which satisfies $\ev(b\triangleright f\otimes v)=(f\otimes S(b)\triangleright v)$ for all $f\in V^\circ$ and $v\in V$ where $V^\circ$ is the finite dual of the space $V$.
\end{proof}

\begin{lemma}
The cross relation (\ref{drincross}) in $\Drin_H(C,B)$ is equivalent to each of the following relations:
\begin{align}
\begin{split}cb=&R^{-(2)}_1b_{(2)}R^{(2)}(R_2^{-(2)}\triangleright c_{(2)})\ev(R_2^{-(1)}R_3^{-(1)}\triangleright c_{(3)}\otimes R^{-(1)}_1\triangleright S^{-1}(b_{(3)}))\\&\ev(R^{-(2)}_3\triangleright c_{(1)}\otimes R^{(1)}\triangleright b_{(1)}),\end{split}\\
\begin{split}bc=&(R_1^{-(1)}\triangleright c_{(2)})R_1^{(2)}b_{(2)}R_2^{(1)}\ev(R_2^{-(1)}R_1^{(1)}\triangleright c_{(3)}\otimes b_{(3)})\\&\ev(R_1^{-(2)}R_2^{-(2)}\triangleright S^{-1}(c_{(1)})\otimes R^{(2)}_2\triangleright b_{(1)}).\end{split}
\end{align}
\end{lemma}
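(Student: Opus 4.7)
The plan is to derive both explicit formulas from the cross relation (\ref{drincross}) by a direct manipulation in $\Drin_H(C,B)$, using the antipodes of $B$ and $C$ to ``invert'' the pairings and the bosonization relations (\ref{boso1})--(\ref{boso2}) together with the $R$-matrix coproduct axioms $(\Delta\otimes\ide)R = R_{13}R_{23}$, $(\ide\otimes\Delta)R = R_{13}R_{12}$ to collect $H$-factors on one side. The conceptual content is that (\ref{drincross}) is the algebraic shadow of the YD-compatibility square in Figure~\ref{duallypairedconds} applied to the regular modules of $B$ and $C$, transported through the reconstruction of Lemma~\ref{representabilitylemma}; each of the two claimed alternatives corresponds to rearranging that same square with cap/cup insertions built from the antipodes.

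First I would derive the formula for $cb$. Apply (\ref{drincross}) with $c$ replaced by the middle leg $c_{(2)}$ of the iterated coproduct $\Delta^2 c = c_{(1)}\otimes c_{(2)}\otimes c_{(3)}$, and similarly with $b$ replaced by $b_{(2)}$. Premultiply both sides by $c_{(1)}$ and postmultiply by $b_{(3)}$, using (\ref{boso1})--(\ref{boso2}) to push $H$-factors past the $B$- and $C$-generators. On the side derived from the right-hand side of (\ref{drincross}), the antipode axioms $c_{(1)}c_{(2,1)}S(c_{(2,2)})=\varepsilon(c_{(2)})c_{(1)}=c$ (in $C$) and $S^{-1}(b_{(2,1)})b_{(2,2)}b_{(3)}=b$ (in $B^{\op}$) collapse the Sweedler components, leaving a single $cb$. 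On the side derived from the left-hand side, the surviving components $c_{(1)},c_{(3)},b_{(1)},b_{(3)}$ migrate into the pairings with appropriate $S^{\pm 1}$, and the three labels $R_1,R_2,R_3$ arise from expanding $\Delta R$ and $\Delta^{\op} R$ to produce the correct number of $R$-factors appearing in the first displayed formula. The formula for $bc$ is derived symmetrically by swapping the roles of the two sides of (\ref{drincross}) and of $B$ and $C$, collapsing now with $b_{(1)}S^{-1}(b_{(2)})$ and $c_{(1)}S(c_{(2)})$.

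The equivalence of all three relations is then automatic: since the antipodes of $B$ and $C$ are assumed invertible in Section~\ref{duallypaired}, each collapse step is reversible, so (\ref{drincross}) can be recovered from either of the alternative forms by undoing the antipode cups. The main obstacle is entirely notational: managing three copies of $R$ and $R^{-1}$, nine Sweedler indices across the iterated coproducts of $b$ and $c$, and verifying that each $H$-factor migrates correctly through the $B$- and $C$-factors via (\ref{boso1})--(\ref{boso2}). This is most cleanly carried out in the graphical calculus of $\lmod{H}$, where the YD-square of Figure~\ref{duallypairedconds} is decorated with $\ev$/$\coev$ caps built from $S$ and $S^{-1}$, and the naturality of the braiding reduces the whole verification to a sequence of local diagram moves.
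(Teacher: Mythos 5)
Your overall reading of the statement is the right one, but your proposed execution differs from the paper's actual proof, which is a one-liner: the paper obtains both alternative relations by applying reconstruction (on $C\otimes H\otimes B$, as in Lemma~\ref{representabilitylemma}) to the two antipode-rearranged YD-conditions of Lemma~\ref{altconds}, after converting the $B^{\oop}$-coaction into a $C$-action via the functor $\leftsub{B^{\oop}}{\Phi}$. In other words, no in-algebra Sweedler manipulation is performed at all: the equivalence of the three cross relations is exactly the reconstruction image of the already-proved graphical equivalence (which rests on $m(S^{-1}\otimes\ide)\Psi^{-1}\Delta=1\varepsilon$), and each braiding crossing in those diagrams becomes an $R$-matrix action under reconstruction --- this, rather than only the axioms $(\Delta\otimes\ide)R$, $(\ide\otimes\Delta)R$, is where the labels $R_1,R_2,R_3$ come from. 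Your closing paragraph, where you propose decorating the compatibility square of Figure~\ref{duallypairedconds} with antipode caps and working graphically, is essentially this argument, so the conceptual core of your proposal matches the paper; what your direct route would buy, if completed, is an independent verification of the explicit coefficients that the paper never writes out.

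The one concrete caution concerns your middle paragraph: as literally stated, applying (\ref{drincross}) to the legs $c_{(2)}$, $b_{(2)}$ and then pre/post-multiplying by $c_{(1)}$ and $b_{(3)}$ does not collapse to $cb$, because the plain antipode identities you invoke (e.g.\ $c_{(1)}c_{(2,1)}S(c_{(2,2)})=c$) never come into play in that configuration; the inverse antipodes must instead enter \emph{through the pairing}, i.e.\ one convolution-inverts the functionals $\ev(c_{(1)}\otimes -)$ and $\ev(-\otimes b_{(1)})$ by pairing further legs against $S^{\pm1}$ of legs of the other algebra --- which is precisely why $S^{-1}(b_{(3)})$ and $S^{-1}(c_{(1)})$ sit inside the $\ev$'s of the target formulas. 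With that correction (or by simply following the paper's route through Lemma~\ref{altconds}), your argument for the equivalence, including the reversibility coming from invertibility of the antipodes, goes through.
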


\begin{proof}
Apply reconstruction to Lemma~\ref{altconds} after reinterpreting the $B^{\oop}$-coaction as a $C$-action using the functor $\leftsub{B^{\oop}}{\Phi}$.
\end{proof}

Using this Lemma, we can write down general product formulas in PBW form in $\Drin_H(C,B)$:
\begin{align}
\begin{split}
chb\cdot c'h'b'=&c(h_{(1)}R_1^{-(1)}\triangleright c'_{(2)})h_{(2)}R_1^{(2)}R_2^{(1)}h'_{(1)}b'(S(h'_{(2)})R_3^{-(1)}\triangleright b_{(2)})
\\&\ev(R_2^{-(1)}R_1^{(1)}\triangleright c'_{(3)}\otimes b_{(3)})\ev(R_1^{-(2)}R_2^{-(2)}\triangleright S^{-1}(c'_{(1)})\otimes R^{-(2)}_3R^{(2)}_2\triangleright b_{(1)})
\end{split}
\end{align}

The general product formula  for $\Heis_H(C,B)$ in PBW form is:
\begin{equation}
\begin{split}
chb\cdot c'h'b'=&c(h_{(1)}R_1^{-(1)}\triangleright c'_{(2)})h_{(2)}R^{(1)}h'_{(1)}b'(S(h'_{(2)})R_2^{-(1)}\triangleright b_{(2)})\\
&\ev(R_1^{-(2)}\triangleright S^{-1}(c'_{(1)})\otimes R^{(2)}R_2^{-(2)}\triangleright b_{(1)}).
\end{split}
\end{equation}

\begin{example}\label{weylalgebras}
Let $X=\mA^n$ and $B=\mC[x_1,\ldots,x_n]$ its coordinate ring. We denote its restricted dual (as a Hopf algebra) by $C=\Theta_{X}=\mC[\partial_1,\ldots,\partial_n]$, where $\ev(\partial_i,x_j)=\delta_{i,j}$. Both $B$ and $C$ are primitively generated Hopf algebras over $k$ and perfectly paired via $\ev$. One easily sees that $\Heis(\cO_X)=\cD_X=A_n$ is the ring of differential operators on $X$, the \emph{nth Weyl algebra}. The Drinfeld double is simply $\Drin(\cO_X)=\mC[x_1,\dots,x_n,\partial_1,\dots,\partial_n]$ which can be identified with $\cO_{T^*X}$, the ring of functions on the tangent space. Hence there is an action of $\lmod{\cO_{T^*X}}$ on $\lmod{\cD_X}$.
\end{example}

We have an analogue of Proposition~\ref{presentationprop1} in the case where $\cB=\lcomod{A}$. Recall that in this case, we have convolution invertible morphisms of Hopf algebras $R, \overline{R}\colon A^{\oop}\to H$ such that
\begin{align}
R(a\otimes a')&=\ev(R^{-1}(a')\otimes a)=\ev(\overline{R}(a)\otimes a')\\
R^{-1}(a\otimes a')&=\ev(\ov{R}^{-1}(a)\otimes a')=\ev(R(a')\otimes a).
\end{align}

\begin{proposition}\label{presentationprop2}
The Hopf algebra $\Drin_A(C,B)$ is generated by the subalgebras $H$, $B^{\op{op}}$ (opposite product in $\Vect$) and $C$ with the same bosonization relations (\ref{boso1})-(\ref{boso2}) as $\Drin_H(C,B)$ and cross relation
\begin{equation}
b_{(2)}\overline{R}({b_{(1)}}^{(-1)})c_{(2)}\ev(c_{(1)}\otimes {b_{(1)}}^{(0)})=c_{(1)}R^{-1}({c_{(2)}}^{(-1)})b_{(1)}\ev({c_{(2)}}^{(0)}\otimes b_{(2)}).
\end{equation}
The coproducts are given by 
\begin{align}
\Delta(h)&=h_{(1)}\otimes h_{(2)},
&\Delta(b)&={b_{(1)}}^{(0)} \otimes b_{(2)}\overline{R}({b_{(1)}}^{(-1)}),&\Delta(c)&=c_{(1)}R^{-1}({c_{(2)}}^{(-1)})\otimes {c_{(2)}}^{(0)}.
\end{align}
The unit and counit are as before and the formulas for the antipode and inverse antipode are given by
\begin{align}
S(h)&=S(h),&S^{-1}(h)&=S^{-1}(h),\\
S(b)&=R(b^{(-1)})S(b^{(0)}),&S^{-1}(b)&=S^{-1}(b^{(0)})R(b^{(-1)}),\\
S(c)&=\ov{R}^{-1}(c^{(-1)})S(c^{(0)}),&S^{-1}(c)&=S^{-1}(c^{(0)})\ov{R}^{(-1)}(c^{(-1)})
\end{align}
The cross-relation of $\Heis_A(C,B)$ is given by
\begin{equation}
cb=b_{(2)}\ov{R}({b_{(1)}}^{(-1)})c_{(2)}\ev({c_{(1)}}\otimes {b_{(1)}}^{(0)}).
\end{equation}
\end{proposition}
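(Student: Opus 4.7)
The plan is to apply braided reconstruction theory in direct analogy with the proof of Proposition~\ref{presentationprop1}, working now in the braided monoidal category $\cB = \lcomod{A^{\oop}}$ whose braiding is determined by the weak quasitriangular pair $(R,\ov{R})$ as in Proposition~\ref{weakqtstructuresprop}. Higher representability of the forgetful functor on $C \otimes H \otimes B$ is guaranteed by Lemma~\ref{representabilitylemma}, so the abstract algebra structures on $\Drin_A(C,B)$ and $\Heis_A(C,B)$ are recovered from their tautological actions on this vector space. The bosonization relations (\ref{boso1})--(\ref{boso2}) are inherited verbatim from Proposition~\ref{presentationprop1}, because they encode only how the $H$-action (induced from the $A^{\oop}$-coactions on $B$ and $C$ via the pairing) interacts with the multiplications, and this piece of structure is unaffected by the change of base category.

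Next I would derive the coproducts, cross-relations, and antipode formulas by translating each appearance of the $R$-matrix $R^{(1)} \otimes R^{(2)}$ of $H$ in Proposition~\ref{presentationprop1} into the appropriate combination of the morphisms $R, \ov{R}\colon A^{\oop} \to H$ and the coactions of $A^{\oop}$ on $B$ and $C$. The key dictionary comes from unraveling the braiding: for an $A^{\oop}$-comodule $V$ the braiding on $V \otimes B$ replaces a factor $R^{(2)}\triangleright b \otimes R^{(1)}$ by $b^{(0)} \otimes \ov{R}(b^{(-1)})$, and symmetrically on $C \otimes V$ a factor $R^{(1)} \otimes R^{(2)}\triangleright c$ becomes $R^{-1}(c^{(-1)}) \otimes c^{(0)}$, by the defining axioms in Figure~\ref{weakqtaxioms2}. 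Substituting these into the Proposition~\ref{presentationprop1} formulas yields $\Delta(b) = b_{(1)}^{(0)} \otimes b_{(2)}\ov{R}(b_{(1)}^{(-1)})$ and $\Delta(c) = c_{(1)} R^{-1}(c_{(2)}^{(-1)}) \otimes c_{(2)}^{(0)}$, and the corresponding rewrite of the YD-compatibility condition in $\leftexp{C}{\mathcal{YD}}^B(\lcomod{A^{\oop}})$ yields the stated cross-relation of $\Drin_A(C,B)$; replacing the YD-condition with the Hopf-module condition of (\ref{hopfcondition}) produces the Heisenberg cross-relation. The antipode formulas then follow from the rigidity of these module categories (Corollary~\ref{quasiydduals}, specialized to the Hopf case) by the same translation, with the convolution inverses $R^{-1} = R \circ S_A$ and $\ov{R}^{-1} = \ov{R}\circ S_A$ appearing wherever $R^{-(1)} \otimes R^{-(2)}$ appeared previously.

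The main obstacle will be the bookkeeping of $R$ versus $\ov{R}$ and of their convolution inverses, together with the Sweedler indices of the iterated $A^{\oop}$-coactions. The axioms in Figure~\ref{weakqtaxioms2} force a specific assignment of $R$ (rather than $\ov{R}$) to each braiding crossing according to the direction of the strand and the side on which the coaction acts, and since in general $R \neq \ov{R}$ these cannot be interchanged; one must therefore carefully track, at each crossing in the graphical-calculus version of the proof of Proposition~\ref{presentationprop1}, which of the four morphisms $R, \ov{R}, R^{-1}, \ov{R}^{-1}$ is selected. Once this bookkeeping is done, no algebra axioms need to be checked by hand, as the general reconstruction machinery of Theorems~\ref{bialgebrareconstruction} and \ref{majidreconstruction} guarantees associativity, the bialgebra condition, and the antipode axioms automatically from the monoidal (resp.\ module-category) structure of $\leftexp{C}{\mathcal{YD}}^B(\lcomod{A^{\oop}})$ and $\leftexp{C}{\mathcal{H}}^B(\lcomod{A^{\oop}})$.
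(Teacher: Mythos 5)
Your plan is essentially the paper's proof: the paper establishes Proposition \ref{presentationprop2} precisely by taking the formulas of Proposition \ref{presentationprop1} and substituting, at every occurrence of a leg of the $R$-matrix of $H$, the corresponding expression in $R$, $\ov{R}$, their convolution inverses and the $A^{\oop}$-coaction, the substitution rules being exactly the consequences of the weak quasitriangular structure you describe; no further verification is needed because reconstruction already guarantees the axioms.

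One entry of your dictionary is mis-stated, and since the whole proof consists of this bookkeeping it is worth fixing: which of $\ov{R}$, $R^{-1}$ appears is governed by which leg of the $R$-matrix acts on the comodule element, not by whether that element lies in $B$ or in $C$. The correct rules (as in the paper) are $R^{(1)}\otimes R^{(2)}\triangleright v=\ov{R}(v^{(-1)})\otimes v^{(0)}$ and $(R^{(1)}\triangleright v)\otimes R^{(2)}=v^{(0)}\otimes R^{-1}(v^{(-1)})$ for any $A^{\oop}$-comodule $V$; your second entry, written for the pattern $R^{(1)}\otimes R^{(2)}\triangleright c$, would by the first rule give $\ov{R}(c^{(-1)})\otimes c^{(0)}$, whereas the output $R^{-1}(c^{(-1)})\otimes c^{(0)}$ you want (and which does produce the stated $\Delta(c)$ and cross relation) belongs to the pattern in which $R^{(1)}$ acts and $R^{(2)}$ is the free leg. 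In addition, for the antipode formulas you need the two analogous rules for the inverse $R$-matrix, $R^{-(1)}\otimes R^{-(2)}\triangleright v=\ov{R}^{-1}(v^{(-1)})\otimes v^{(0)}$ and $(R^{-(1)}\triangleright v)\otimes R^{-(2)}=v^{(0)}\otimes R(v^{(-1)})$, which you only allude to via $R^{-1}=RS$, $\ov{R}^{-1}=\ov{R}S$. With these four rules stated correctly, your argument is exactly the one in the paper.
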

\begin{proof}
All expressions for $\Drin_H(C,B)$ (or $\Heis_H(C,B)$) can be translated into expressions for $\Drin_A(C,B)$ (or $\Heis_A(C,B)$) by using the following rules which are derived from the weak quasitriangular structure on $A$, $H$:
\begin{align}
R^{(1)}\otimes R^{(2)}\triangleright v&=\ov{R}(v^{(-1)})\otimes v^{(0)},\\
(R^{(1)}\triangleright v)\otimes R^{(2)}&=v^{(0)}\otimes R^{-1}(v^{(-1)}),\\
R^{-(1)}\otimes R^{-(2)}\triangleright v&=\ov{R}^{-1}(v^{(-1)})\otimes v^{(0)},\\
(R^{-(1)}\triangleright v)\otimes R^{-(2)}&=v^{(0)}\otimes R(v^{(-1)}).\qedhere
\end{align}
\end{proof}

\begin{example}\label{classicalexample}
Let us consider the case where $H=k$ and $B$ is a finite-dimensional Hopf algebra over $k$ to discover the classical notions. In this case, $\Drin(H):=\Drin_k(H^*,H)$ is given on $H^*\otimes H^{op}$ with product determined by the relation
\begin{equation}
cb=b_{(2)}c_{(2)}\ev(c_{(1)}\otimes b_{(1)})\ev(c_{(3)}\otimes S^{-1}(b_{(3)})).
\end{equation}
The coproduct, counit and antipode are simply the corresponding tensor product structures on $H^*\otimes H$. The $R$-matrix on $\Drin(H)$ is given by the coevaluation map of $H$.
Note that there are two conventional differences of this definition to the definition of $\Drin(H)$ usually found in the literature (cf. e.g. \cite[7.1]{Maj1}). At first, $H$ and $H^*$ are \emph{categorically} dually paired. That is $\ev(aa'\otimes bb')=\ev(a\otimes b')\ev(a'\otimes b)$. Second, the left modules are left-right Yetter-Drinfeld modules (that is, YD-compatible \emph{right}-$B$ action with a \emph{left} $B$-coaction). 

For this reason, we will often consider the Drinfeld double
$\Drin(H^{\op{op}})$ which is by definition $\Drin_k(H^{*\op{cop}},H^{\op{op}})$. 
This Hopf algebra has the property that
\[
\lmod{\Drin(H^{\op{op}})}=\leftexpsub{H}{H}{\cal{YD}}.
\]
It has $H$, $H^*$ as subalgebras such that
\begin{equation}
cb=b_{(2)}c_{(2)}\ev(c_{(3)}\otimes b_{(1)})\ev(c_{(3)}\otimes S(b_{(1)})).
\end{equation}
The coproducts are given by $\Delta(b)=b_{(1)}\otimes b_{(2)}$ and $\Delta(c)=c_{(2)}\otimes c_{(1)}$. The universal R-matrix for $\Drin(H)$ is $\tau\coev_H$, where $\tau$ is the symmetric braiding in $\Vect$. We will write $\coev_H=e_\alpha\otimes f_\alpha\in H\otimes H^*$. The algebra $\Drin(H^{\op{op}})$ recovers the classical Drinfeld double of $H$ (as found in the literature, cf. e.g. \cite{Kas}) and has the same categorical interpretation.

For example, consider the Drinfeld double $\Drin(G^{\op{op}}):=\Drin(k[G]^{\op{cop}},kG^{\op{op}})$, for $G$ a finite group. It is generated by $kG$ and the algebra of $k$-valued functions $k[G]$ (basis $\delta_h(g)=\delta_{h,g}$). Note that the coproduct is
$\Delta(\delta_h)=\sum_{ab=g}{\delta_a\otimes \delta_b}$.
The relations in this algebra are $g\delta_h=\delta_{ghg^{-1}}g$.
\end{example}

\begin{example}\label{primitivelygen}
A vast class of examples of braided Hopf algebras is given by \emph{Nichols algebras}. These play an important part in the classification of \emph{Pointed Hopf algebras} (see e.g. \cite{AS} for a survey). An important feature is that they are primitively generated. That is, $B$ is generated by elements $b\in B$ such that $\Delta(b)=b\otimes 1+1\otimes b$, and the same is true for $C$. For such braided Hopf algebras, we obtain simpler formulae as the cross relations will turn out to be commutator relations. Denote the space of primitive elements of $B$ by $P(B)$ and similarly the space of primitive elements in $C$ by $P(C)$. Then $\Drin_H(C,B)$ is generated by $H$, $P(B)$ and $P(C)$ with respect to the bosonization relations (\ref{boso1}) and (\ref{boso2}) and the cross relation
\begin{equation}\label{primitivedrincross}
[b,c]=R^{(2)}\ev(R^{(1)}\triangleright c\otimes b)-R^{-(1)}\ev(R^{-(2)}\triangleright c\otimes b),
\end{equation}
for $b\in P(B)$, $c\in P(C)$.
The coproducts are given on the generators by
\begin{align}
\Delta(h)&=h_{(1)}\otimes h_{(2)},\\
\Delta(b)&=1\otimes b+(R^{(2)}\triangleright b)\otimes R^{(1)},\\
\Delta(c)&=c\otimes 1+R^{(2)}\otimes R^{(1)}\triangleright c.
\end{align}
The condition (\ref{heiscross}) is equivalent to the commutator relation
\begin{equation}
[c,b]=R^{(1)}\ev(c\otimes R^{(2)}\triangleright b),
\end{equation}
for $c\in P(C)$ and $b\in P(B)$.
Working over $\Drin(H)$ for an ordinary Hopf algebra $H$, we can view $B\in \Hopf(\lmod{\Drin(H)})$ as a YD-module over $H$ (an object of $\cZ(\rmod{H})$. Then the relation (\ref{heiscross}) is equivalent to
\begin{equation}\label{restrictedprimitive}
[c,b]=b^{(-1)}\ev(c\otimes b^{(0)}).
\end{equation}
To compare the definition of the braided Heisenberg double from \cite[Section~5]{BB}, we have to apply algebra reconstruction on $C\otimes H\otimes B$ (rather than $C\otimes \Drin(H)\otimes B$).
That is, we consider the subalgebra generated by $H$, $B$ and $C$ of $C\otimes \Drin(H) \otimes B$. We include this \emph{restricted} version of the braided Heisenberg double.
\end{example}

\begin{lemma}\label{restrictedheis}
Let $B\in \Hopf(\cZ(\rmod{H}))$, then the \emph{restricted} braided Heisenberg double $\overline{\Heis}_H(C,B)$ of $H$ over $B$ is the algebra generated by $H$, $B^{op}$ and $C$ subject to the relations
\begin{align}
hb&=(h_{(2)}\triangleright b)h_{(1)},\\
hc&=(h_{(1)}\triangleright c)h_{(2)},\\
cb&=b_{(2)}{b_{(1)}}^{(-1)}c_{(2)}\ev(c_{(1)}\otimes {b_{(1)}}^{(0)}).
\end{align}
It is the subalgebra of $\Heis_{\Drin(H)}(C,B)$ generated by $H$, $B^{op}$ and $C$. 
\end{lemma}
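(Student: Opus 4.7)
The plan is to verify the claimed presentation by substituting the explicit form of the universal $R$-matrix of $\Drin(H)$ into the general cross relation (\ref{heiscross}) for the braided Heisenberg double $\Heis_{\Drin(H)}(C,B)$ and then to use the PBW decomposition supplied by Definition~\ref{doubledefinition} to identify the subalgebra generated by $H$, $B^{\op{op}}$ and $C$.

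First, I would recall from Example~\ref{classicalexample} that the universal $R$-matrix on $\Drin(H)$ is $R = \sum_\alpha e_\alpha \otimes f_\alpha \in H \otimes H^* \subset \Drin(H) \otimes \Drin(H)$, where $\{e_\alpha\}$ and $\{f_\alpha\}$ are dual bases, and that under the identification $\cZ(\rmod{H}) \cong \leftsub{H^{\oop}}{\cal{YD}}^H \hookrightarrow \leftexp{H^*}{\cal{YD}}^H$, the action of $f \in H^*$ on a YD-module $V$ is given by the coaction, $f \triangleright v = f(v^{(-1)})\, v^{(0)}$. Substituting into (\ref{heiscross}) and using the dual-basis identity $\sum_\alpha f_\alpha(h)\, e_\alpha = h$ yields
\begin{align*}
cb &= b_{(2)}\, R^{(1)}\, c_{(2)}\, \ev\!\bigl(c_{(1)} \otimes R^{(2)} \triangleright b_{(1)}\bigr)
 = \sum_\alpha b_{(2)}\, e_\alpha\, c_{(2)}\, f_\alpha\!\bigl(b_{(1)}{}^{(-1)}\bigr) \ev\!\bigl(c_{(1)} \otimes b_{(1)}{}^{(0)}\bigr) \\
 &= b_{(2)}\, b_{(1)}{}^{(-1)}\, c_{(2)}\, \ev\!\bigl(c_{(1)} \otimes b_{(1)}{}^{(0)}\bigr),
\end{align*}
which is exactly the stated cross relation. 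The two bosonization relations (\ref{boso1}) and (\ref{boso2}) in $\Heis_{\Drin(H)}(C,B)$ already only involve the $H$-part of $\Drin(H)$ acting on $b$ and $c$, and so carry over verbatim to the subalgebra.

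Second, I would invoke the PBW decomposition $\Heis_{\Drin(H)}(C,B) \cong C \otimes \Drin(H) \otimes B \cong C \otimes H \otimes H^* \otimes B$ from Definition~\ref{doubledefinition}. The computation above shows that every product of generators $h \in H$, $b \in B^{\op{op}}$, $c \in C$ can be rewritten as a linear combination of elements in the subspace $C \otimes H \otimes B$ (no $H^*$ factor appears), so the subalgebra generated by $H$, $B^{\op{op}}$ and $C$ is contained in, and spans, this subspace.

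Finally, I would let $\overline{\Heis}_H(C,B)$ denote the abstract algebra defined by the generators and relations in the statement, and construct the canonical algebra homomorphism $\overline{\Heis}_H(C,B) \to \Heis_{\Drin(H)}(C,B)$ sending generators to generators. The relations are satisfied by the previous paragraph, so this is well defined, and its image is precisely the subalgebra above. Surjectivity onto $C \otimes H \otimes B$ follows from the relations, which allow any monomial in the generators to be reordered into the normal form $c\,h\,b$; injectivity follows because the PBW basis of the image (inherited from the PBW decomposition of $\Heis_{\Drin(H)}(C,B)$) provides a linearly independent spanning set. The main obstacle is really conceptual bookkeeping: keeping straight the convention by which $H^* \subset \Drin(H)$ acts on an object of the relative Drinfeld center through the coaction, and matching this with the appearance of $R^{(1)}$ and $R^{(2)} \triangleright b$ in (\ref{heiscross}) --- once this is set up correctly, the rest is a direct identification.
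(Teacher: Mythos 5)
Your proof is correct and follows essentially the paper's own route: the paper obtains the stated relations by exactly this specialization of (\ref{heiscross}) to the $R$-matrix $e_\alpha\otimes f_\alpha$ of $\Drin(H)$, with the $H^*$-leg acting through the coaction, and introduces $\overline{\Heis}_H(C,B)$ precisely as reconstruction on $C\otimes H\otimes B$, i.e.\ the subalgebra of $\Heis_{\Drin(H)}(C,B)$ generated by $H$, $B^{\op{op}}$ and $C$. One minor bookkeeping remark: the three listed relations straighten words most directly into $B\cdot H\cdot C$ order (rewriting $bc$ into $c\,h\,b$ order requires inverting the cross relation), but either normal form works with your PBW linear-independence argument, so this does not affect the conclusion.
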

Note that $H$ is assumed to be finite-dimensional for $\Drin(H)$ to be quasitriangular. We observe that there is a restriction functor $\lmod{\Heis_{\Drin(H)}(C,B)}\to \lmod{\overline{\Heis}_H(C,B)}$. 

Now the braided Heisenberg double defined in \cite[Section~5]{BB} is isomorphic to $\ov{\Heis}_H(C^{\op{cop}}, B^{\op{op}})$ where the (co)opposites are taking in $\Vect$, and $B,C$ are Nichols algebras. The cross relation for this algebra is (\ref{restrictedprimitive}).

\begin{corollary}\label{restrictedaction}
The coproduct on $\Drin_{\Drin(H)}(C,B)$ induces a left action of the monoidal category $\lmod{\Drin_{\Drin(H)}(C,B)}$ on $\lmod{\overline{\Heis}_H(C,B)}$.
\end{corollary}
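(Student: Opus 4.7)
The plan is to realize $\overline{\Heis}_H(C,B)$ as a left $\Drin_{\Drin(H)}(C,B)$-comodule algebra, which will immediately yield the asserted action on module categories via the standard construction: for $V\in \lmod{\Drin_{\Drin(H)}(C,B)}$ and $W\in \lmod{\overline{\Heis}_H(C,B)}$, the tensor product $V\otimes W$ acquires an $\overline{\Heis}_H(C,B)$-module structure through the comodule structure map $\overline{\Phi}$, by $x\cdot(v\otimes w) = \overline{\Phi}(x)(v\otimes w)$. To build $\overline{\Phi}$, I would first use Theorem \ref{cataction} together with Propositions \ref{equivalence} and \ref{duallypairedaction} to identify the action of $\lmod{\Drin_{\Drin(H)}(C,B)}$ on $\lmod{\Heis_{\Drin(H)}(C,B)}$ with a left $\Drin_{\Drin(H)}(C,B)$-comodule algebra structure $\Phi\colon \Heis_{\Drin(H)}(C,B) \to \Drin_{\Drin(H)}(C,B)\otimes \Heis_{\Drin(H)}(C,B)$, given on generators $h\in H$, $b\in B$, $c\in C$ by the same expressions as the coproduct $\Delta$ of the Hopf algebra $\Drin_{\Drin(H)}(C,B)$ recorded in Proposition \ref{presentationprop1}.

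The key observation is then a direct inspection of these formulas. Since $\Drin(H)$ is quasitriangular with canonical $R$-matrix $R=e_\alpha\otimes f_\alpha$ satisfying $R^{(1)}=e_\alpha\in H$ and $R^{(2)}=f_\alpha\in H^*$, the second tensorand of $\Phi$ on each generator lies inside the subalgebra $\overline{\Heis}_H(C,B)=\langle H,B,C\rangle$: indeed $\Phi(h)\in \Drin_{\Drin(H)}(C,B)\otimes H$, $\Phi(b)\in \Drin_{\Drin(H)}(C,B)\otimes (B\cdot H)$, and $\Phi(c)\in \Drin_{\Drin(H)}(C,B)\otimes C$. By Lemma \ref{restrictedheis}, $\overline{\Heis}_H(C,B)\hookrightarrow \Heis_{\Drin(H)}(C,B)$ is a subalgebra inclusion, so $\Drin_{\Drin(H)}(C,B)\otimes \overline{\Heis}_H(C,B)$ is a subalgebra of $\Drin_{\Drin(H)}(C,B)\otimes \Heis_{\Drin(H)}(C,B)$. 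Therefore the restriction of the algebra morphism $\Phi$ to $\overline{\Heis}_H(C,B)$ factors uniquely through an algebra morphism
\[
\overline{\Phi}\colon \overline{\Heis}_H(C,B) \longrightarrow \Drin_{\Drin(H)}(C,B)\otimes \overline{\Heis}_H(C,B).
\]

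Coassociativity and counitality of $\overline{\Phi}$ are inherited from the corresponding axioms for $\Delta$ on $\Drin_{\Drin(H)}(C,B)$, so $\overline{\Heis}_H(C,B)$ acquires the structure of a left $\Drin_{\Drin(H)}(C,B)$-comodule algebra. This endows $\lmod{\overline{\Heis}_H(C,B)}$ with the required left action of $\lmod{\Drin_{\Drin(H)}(C,B)}$, natural in both arguments. The main (if mild) obstacle in this outline is the closure check for the second tensorand of $\Phi$ on generators; this exploits the specific form of the $\Drin(H)$-$R$-matrix and is precisely what singles out $\overline{\Heis}_H(C,B)$—rather than $\Heis_{\Drin(H)}(C,B)$—as the correct intermediate object for this restricted action.
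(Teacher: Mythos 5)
Your proposal is correct and takes essentially the same route as the paper: the paper's proof likewise reduces everything to checking on the generators $h$, $b$, $c$ that the coproduct of $\Drin_{\Drin(H)}(C,B)$, restricted to the subalgebra $\overline{\Heis}_H(C,B)\subset\Heis_{\Drin(H)}(C,B)$, lands in $\Drin_{\Drin(H)}(C,B)\otimes\overline{\Heis}_H(C,B)$, the second legs being $h_{(2)}$, $b_{(2)}{b_{(1)}}^{(-1)}\in B\cdot H$ and $e_\alpha\triangleright c_{(2)}\in C$. The comodule-algebra packaging you add is exactly the mechanism the paper already established in Section \ref{cocycletwists}, so there is no substantive difference.
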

\begin{proof}
For this to hold, we need to ensure that the restriction of the coproduct $\Delta$ viewed as an algebra map $\ov{\Heis}_{H}(C,B)\to \Drin_{\Drin(H)}(C,B)\otimes \Heis_{\Drin(H)}(C,B)$ maps to $\Drin_{\Drin(H)}(C,B)\otimes \overline{\Heis}_H(C,B)$. This can be checked on generators. Clearly $\Delta(h)\in H\otimes H$ satisfies this. Further, $\Delta(b)={b_{(1)}}^{(0)}\otimes b_{(2)}{b_{(1)}}^{(-1)}$ and $\Delta(c)=c_{(1)}f_\alpha\otimes  e_\alpha\triangleright c_{(2)}$ both lie in the subspace $(C\otimes H^*\otimes H\otimes B)\otimes (C\otimes H\otimes B)$. 
\end{proof}

\begin{example}\label{demonstrationexpl}
In \ref{weylalgebras}, the Drinfeld double of $\cO_X$ collapses to simply be the tensor product Hopf algebra of $\cO_X$ and its dual. We will now consider the symmetric algebra as a Hopf algebra object in $\leftexpsub{C_2}{C_2}{\cal{YD}}$ for the group $C_2$ with two elements. We can also include the alternating algebra in the same setting as an example.

Let $V$ be a finite-dimensional vector space. Consider the braidings $\tau\colon V\otimes V\to V\otimes V$, where $\tau(v\otimes w)=w\otimes v$, and $-\tau$. These can naturally be realized as braidings coming from $C_2$-YD module structures on $V$, corresponding to $\delta(v)=s\otimes v$ where $C_2=\lbrace 1,s\rbrace$ with the trivial action for $\tau$, respectively the sign representation $sv=-v$ for $-\tau$. Using this, we have that
\begin{align*}
S(V)&=\bigslant{T(V)}{(v\otimes w-w\otimes v)}=\bigslant{T(V)}{\ker(\ide+\tau)},\\
\Lambda(V)&=\bigslant{T(V)}{(v\otimes w+w\otimes v)}=\bigslant{T(V)}{\ker(\ide-\tau)}
\end{align*}
are braided Hopf algebras in $\leftexpsub{C_2}{C_2}{\cal{YD}}=\lmod{\Drin(C_2)}$. As they are Nichols algebras, they are perfectly paired with $S(V^*)$ (respectively $\Lambda(V^*)$).

The restricted Heisenberg double $\ov{\Heis}_{kC_2}(S(V^*),S(V))$ is now $A_n\otimes kC_2$, but $\Drin_{\Drin(C_2)}(S(V^*),S(V))$ has the non-trivial commutator relation
\begin{equation}
[f,v]=(s-\delta_1-\delta_s)\ev(f\otimes v).
\end{equation}
Hence viewing $S(V)$ over $\Drin(C_2)$ causes the resulting Drinfeld double to be non-commutative.

For the exterior algebra, we have that $\Heis_{kC_2}(\Lambda(V^*),\Lambda(V))$ is generated by $v\in V$, $f\in V^*$ and $1,s$ with additional relations
\begin{equation}
sv=-vs, \quad sf=-fs, \quad [f,v]=\ev(f\otimes v).
\end{equation}
In $\Drin_{\Drin(C_2)}(\Lambda(V^*),\Lambda(V))$ the commutator relation is
\begin{equation}
[f,v]=(s-\delta_1+\delta_s)\ev(f\otimes v).
\end{equation}
\end{example}

\begin{example}[The quantum groups]\label{quantumgroups}
Majid constructs Lusztig's version of $\Ug$ (for generic $q$) as braided Drinfeld doubles in \cite{Maj2}. We repeat the construction with the conventions of this paper. For this, a (symmetric) Cartan datum $\cdot$ gives a paring of two lattice group algebras $A=k\mZ[I]=k[g_i^{\pm 1}]$ and $H=k[K_i^{\pm 1}]$ via $\langle K_i,g_j\rangle=q^{2\tfrac{i\cdot j}{i\cdot i}}$ (where $k=\mC(q)$). The dual R-matrix $R(g_i,g_j)=q^{i\cdot j}$ is given by a weak quasitriangular structure with $R(g_i)=K^{-\tfrac{i\cdot i}{2}}$ (and $\ov{R}=R^{-1}$). We can then define an $A$-comodule $F=k\langle F_1,\ldots,F_n\rangle$ by $F_i\mapsto g_i^{-1}\otimes F_i$ and denote its dual by $E=k\langle E_1,\ldots,E_n\rangle$. The tensor algebras (in $\lcomod{A}$) $B=T(F)$, $C=T(E)$ are dually paired via
\begin{equation}
\ev(E_i,F_i)=\frac{\delta_{i,j}}{(q_i^{-1}-q_i)},
\end{equation}
where $q_i:=q^{\tfrac{i\cdot i}{2}}$. This pairing extends uniquely to a Hopf algebra pairing of the tensor algebras and it is a result by Lusztig (see \cite{Lus}) that then left and right radical of this pairing are precisely the quantum Serre relations. Hence the quotients are $U_q(\fr{n}_+)$ (of $C$) and $U_q(\fr{n}_-)$ (of $B$), which are Nichols algebras. Now translating (\ref{primitivedrincross}) under \ref{presentationprop2} we obtain the correct relation
\begin{equation}
[E_i,F_j]=\frac{K_i^{\tfrac{i\cdot i}{2}}-K_i^{-\tfrac{i\cdot i}{2}}}{(q_i-q_i^{-1})}\delta_{i,j}.
\end{equation}
Further, the bosonization relations (\ref{boso1}) and (\ref{boso2}) give
\begin{equation}
K_iE_j=q^{2\tfrac{i\cdot j}{i\cdot i}}E_jK_i, \quad K_iF_j=q^{-2\tfrac{i\cdot j}{i\cdot i}}F_jK_i.
\end{equation}
The coproducts are given by
\begin{align}
\Delta(F_i)&=F_i\otimes K^{-\tfrac{i\cdot i}{2}}+1\otimes F_i,
&\Delta(F_i)&=E_i\otimes 1+K^{\tfrac{i\cdot i}{2}}\otimes E_i.
\end{align}
Hence the resulting Hopf algebra is $\Ug$.

The cross relation for the Heisenberg double $D_q(\g):=\Heis_A(U_q(\fr{n}_+),U_q(\fr{n}_-))$ is given by
\begin{equation}
[E_i,F_j]=\frac{K_i^{-\tfrac{i\cdot i}{2}}}{q_i^{-1}-q_i}\delta_{i,j}.
\end{equation}
We will look at the action of $\lmod{\Uq}$ on $\lmod{D_q(\fr{sl}_2)}$ in Section \ref{quantumgroupaction}.
\end{example}


\subsection{R-Matrices for the Drinfeld Double}\label{rmatricessect}

While the Hopf algebra structure of $\Drin_H(C,B)$ can be defined for general infinite-dimensional Hopf algebras with a choice of a dually paired Hopf algebra, viewing the braided structure of $\lmod{\Drin_H(C,B)}$, and hence the $R$-matrix, at this level of generality is problematic as they require the existence of a coevaluation map $\coev\colon I\to B\otimes C$ in $\cB=\lmod{H}$. In the infinite-dimensional case, given a perfect pairing, this map may still exist as a formal power series, given that there is an orthonormal bases of the dual Hopf algebra $C$ to a basis of $B$. 

Recall the braidings from Figure \ref{braidingsimpler}. Given the existence of a coevaluation map $\coev\colon I\to B\otimes C$, the category $\leftexp{C}{\mathcal{YD}}^B(\cB)$ is braided via the braiding
\begin{align}
\leftexp{C}{\Psi}^B&:=(\triangleleft \otimes \triangleright)(\ide\otimes \coev\otimes \ide)\Psi^{-1},\\
(\leftexp{C}{\Psi}^B)^{-1}&:=\Psi(\triangleleft \otimes \triangleright)(\ide\otimes \ide_B\otimes S\otimes \ide)(\ide\otimes \coev\otimes \ide).
\end{align}

Given that $B$ and $C$ are not necessarily finite-dimensional Hopf algebras with a perfect duality pairing $\ev\colon C\otimes B\to k$ such that there is a basis $\lbrace b_i\rbrace_i$ of $B$ with an orthogonal basis $\lbrace c_i \rbrace_i$ of $C$, we can consider the formal power series
$\coev_B:=\sum_i{b_i\otimes c_i}$.
This is \emph{not} a morphism $k\to B\otimes C$, but satisfies the duality axioms of the coevaluation map. Using this formal sum, we can write an $R$-matrix for $\Drin_H(C,B)$ as the power series (omitting summation)
\begin{equation}
R_{\Drin_H(C,B)}:=c_iR^{-(2)}\otimes b_iR^{-(1)}, \quad R_{\Drin_H(C,B)}^{-1}:=R^{(2)}c_i\otimes R^{(1)}S(b_i).
\end{equation}
If $\cB=\lcomod{A}$, the situation is even more problematic. Expressing the braiding
\begin{equation}
\Psi_{V,W}(v,w)=\ev(R^{-1}(v^{(-1)})\otimes w^{(-1)})w^{(0)}\otimes v^{(0)}
\end{equation}
as an $R$-matrix, which is necessary in algebra reconstruction, involves a coevaluation map which will be an infinite sum unless $H$, $A$ are finite-dimensional. We denote it by $\coev_H=h_j\otimes a_j$. Then
\begin{align}
R=a_j\otimes R^{-1}(h_j)=\ov{R}(h_j)\otimes a_j,\qquad R^{-1}=\ov{R}^{-1}(h_j)\otimes a_j=a_j\otimes R(h_j).
\end{align}
Further, the $R$-matrix on $\Drin_A(C,B)$ is given by
\begin{equation}
R_{\Drin_A(C,B)}:=c_iR(h_j)\otimes b_ia_j , \quad R_{\Drin_A(C,B)}^{-1}:=a_jb_i \otimes R^{-1}(h_j)S(c_i).
\end{equation}
To overcome the problematic of the braiding involving formal infinite sums, we will describe the properly braided subcategory of modules over the braided Drinfeld doubles corresponding to the center $\cZ_\cB(\rmod{B}(\cB))$ in Section \ref{Osection}.

\begin{example}
Consider $\Uq$. We can compute that
$\ev(f^n\otimes e^n)=\tfrac{[n]_q!}{(q^{-1}-q)^n}$,
where $[n]_q=1+q^{-2}+\ldots+q^{2(n-1)}$ and $[n]_q!=[1]_q\cdot\ldots\cdot[n]_q$. Hence
\begin{equation}
\coev_B=\sum_{n\geq 0}\frac{(q^{-1}-q)^n}{[n]_q!}F^n\otimes E^n.
\end{equation}
However, writing the required coevaluation map for $I\to H\otimes A$ is problematic. A solution is to introduce the element $q^{h\otimes h}\in (A\otimes A)^*$. This element satisfies
\begin{equation}
\langle q^{h\otimes h}\otimes (g^n\otimes g^m) \rangle=q^{2nm}.
\end{equation}
This can be generalized to any Cartan datum. See \cite{Maj2} for more details.
\end{example}

\begin{example}
The small quantum groups $u_q(\g)$ can also be realized as examples of braided Drinfeld doubles. For this, we assume that $q$ is a primitive $r$th root of unity, where $r$ is odd and $\op{char}k$ does not divide $r$. As underlying Hopf algebras, we take the group algebras $A=H=k(C_r)^n$. 
\end{example}


\subsection{The Category \texorpdfstring{$\cO$}{\emph{O}}}\label{Osection}

The $R$-matrices $R_{\Drin_H(C,B)}$ and $R_{\Drin_A(C,B)}$ from Section~\ref{rmatricessect} will induce the braiding on the subcategories $\cZ_{\lmod{H}}(\rmod{B}(\lmod{H}))$ respectively its subcategory $\cZ_{\lcomod{A}}(\rmod{B}(\lcomod{A}))$. For perfectly paired $B$ and $C$, these Drinfeld centers are \emph{almost} analogues of the BGG-category $\cO$ for the quantum groups. To support this interpretation, recall that for infinite-dimensional dually paired bialgebras $B$ and $C$ in $\lmod{H}$, the essential image of the fully faithful functor
\[
\leftsub{B^{\oop}}{\Psi}\colon\lcomod{B^{\oop}}(\lmod{H})\to \lmod{C}(\lmod{H}) 
\]
consists of those $C$-modules $V$ such that for each $v\in V$, $C/\Ann(v)$ is finite-di\-men\-sional. Such modules are called \emph{locally finite} or \emph{admissible}. Hence, the full subcategory $\cZ_{\lmod{H}}(\rmod{B}(\lmod{H}))\subset \lmod{\Drin_H(C,B)}$ consists of those YD-modules which are locally finite for the $C$-action, but not necessarily for the $B$-action. For example, one can define Verma modules in $\cZ_{\lmod{H}}(\rmod{B}(\lmod{H}))$.

When working in $\cB=\lcomod{A}$, one also has to restrict to $\Drin_A(C,B)$-modules for which the $H$-action is induced by an $A$-coaction, i.e. the $H$-action has to be locally finite too. Using these observations, we conclude the following lemma:

\begin{lemma}\label{catOlemma}
For the quantum group $\Ug$ at $q$ not a root of unity, the subcategory of
\[
\cZ_{\lcomod{k\mZ^n}}(\rmod{U_q(\mathfrak{n}^+)}(\lcomod{k\mZ^n}))
\]
on finitely generated modules is the BGG-category $\cO$ (denoted by $\cO_q(\g)$) for quantum groups as defined in \cite[3.1]{AM}.
\end{lemma}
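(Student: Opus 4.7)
The plan is to unfold the Drinfeld center via the equivalences established earlier in the paper and then recognize the resulting description as category~$\cO_q(\g)$. The argument proceeds in three steps.

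First, I would apply Proposition~\ref{equivalence} with $\cB = \lcomod{k\mZ^n}$ and $B = U_q(\mathfrak{n}^+)$. Since $B$ is a (strict) Hopf algebra rather than just a quasi-Hopf algebra, this yields an isomorphism of braided monoidal categories
\[
\cZ_{\lcomod{k\mZ^n}}(\rmod{U_q(\mathfrak{n}^+)}(\lcomod{k\mZ^n})) \;\cong\; \leftsub{U_q(\mathfrak{n}^+)^{\oop}}{\mathcal{YD}}^{U_q(\mathfrak{n}^+)}(\lcomod{k\mZ^n}).
\]
Second, $B = U_q(\mathfrak{n}^+)$ is positively graded with finite-dimensional graded pieces and is thus perfectly paired with $C := U_q(\mathfrak{n}^-)$ via the Lusztig pairing of Example~\ref{quantumgroups}. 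The fully faithful embedding $\leftsub{B^{\oop}}\Phi$ of Section~\ref{pairedydsect} then realizes the above Yetter-Drinfeld category as a full subcategory of $\leftexp{U_q(\mathfrak{n}^-)}{\mathcal{YD}}^{U_q(\mathfrak{n}^+)}(\lcomod{k\mZ^n})$, which by Definition~\ref{doubledefinition} and the computation of Example~\ref{quantumgroups} (with the evident relabelling $B\leftrightarrow C$) is $\lmod{\Ug}$. The passage from the $C$-module description to modules over the Drinfeld double uses reconstruction on $C\otimes H\otimes B$ as in Lemma~\ref{representabilitylemma}.

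Third, I would identify the three defining conditions of $\cO_q(\g)$ as follows. A $k\mZ^n$-comodule structure is precisely a $\mZ^n$-grading on the underlying vector space, and through the weak quasitriangular structure $\ov R(g_i)=K_i^{-i\cdot i/2}$ recalled in Example~\ref{quantumgroups}, the induced action of the Cartan generators $K_i$ on a weight-$\lambda$ vector is scalar multiplication by $q^{2(i\cdot\lambda)/(i\cdot i)}$; this reproduces the weight-module axiom of~$\cO$. The discussion immediately preceding the Lemma identifies the essential image of $\leftsub{B^{\oop}}\Phi$ with the full subcategory of $\Ug$-modules on which the $C = U_q(\mathfrak{n}^-)$-action (equivalently, the nilpotent part whose local nilpotency is required in $\cO_q(\g)$) is locally finite. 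Finally, finite generation as a $\Ug$-module is built into the statement of the Lemma by restriction.

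The main obstacle is the bookkeeping of conventions: one must carefully verify that the Lusztig pairing used in Example~\ref{quantumgroups} identifies the restricted dual $U_q(\mathfrak{n}^+)^\circ$ with $U_q(\mathfrak{n}^-)$ as Hopf algebras in $\lcomod{k\mZ^n}$, and that the weight-grading coming from the $k\mZ^n$-coaction (through $\ov R$) aligns with the weight-lattice convention of~\cite{AM}. Once those identifications are in place, the three conditions above agree exactly with the defining axioms of $\cO_q(\g)$ from \cite[3.1]{AM}, and the claimed description of the Drinfeld center follows from the first two steps.
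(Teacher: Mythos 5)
Your overall strategy is the same as the paper's: unwind the center via Proposition~\ref{equivalence} into Yetter--Drinfeld data, use the perfect Lusztig pairing to turn the $B^{\oop}$-coaction into a locally finite action of the dual, read the $k\mZ^n$-coaction as the weight decomposition, and then match the result against the three axioms of $\cO_q(\g)$, with finite generation imposed by hand; the paper's proof (including the converse via perfectness of the pairing) is exactly this comparison.

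There is, however, a genuine flaw in your identification step. In Example~\ref{quantumgroups} the double realizing $\Ug$ is $\Drin_A(C,B)$ with $C=U_q(\fr{n}_+)$ (the $E_i$, i.e.\ the side whose action is induced by the coaction) and $B=U_q(\fr{n}_-)$ (the $F_i$, the algebra over which Verma modules are free and which acts without any finiteness restriction). Correspondingly, in the center the $B^{\oop}$-coaction induces a locally finite action of $C=U_q(\fr{n}_+)$, and this is exactly axiom (III) of $\cO_q(\g)$ as in \cite[3.1]{AM} -- this is how the paper's proof argues, and it is why the Vermas of Section~\ref{Osection}, being free over $U_q(\fr{n}_-)$, can live in the center at all. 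You instead set $B=U_q(\fr{n}_+)$, $C=U_q(\fr{n}_-)$ and then assert that local finiteness of the $U_q(\fr{n}_-)$-action is ``the nilpotent part whose local nilpotency is required in $\cO_q(\g)$.'' That claim is false: category $\cO$ requires local finiteness for $U_q(\fr{n}_+)$ only, and its objects are typically not locally finite over $U_q(\fr{n}_-)$. Your ``evident relabelling $B\leftrightarrow C$'' does not repair this, since relabelling does not change which action comes from a coaction and is therefore locally finite; with your assignment you obtain the lowest-weight analogue of $\cO$, which is identified with $\cO_q(\g)$ only after a Cartan-involution-type isomorphism of $\Ug$, not on the nose as the Lemma asserts and as the paper's proof establishes (the displayed $\rmod{U_q(\mathfrak{n}^+)}$ in the statement must be read so that the unrestricted right action is by the negative part, consistently with Example~\ref{quantumgroups} and the Verma construction). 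A smaller inaccuracy: $\leftexp{C}{\cal{YD}}^B(\lcomod{k\mZ^n})$ is not all of $\lmod{\Ug}$ -- the paper stresses that for infinite-dimensional $A$, $H$ one only gets the modules whose torus action is integrable -- though this is harmless here because the weight axiom is imposed anyway.
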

\begin{proof}
An object $V$ in $\cO_q(\g)$ satisfy three properties:
\begin{enumerate}
\item[(I)]  $V$ is finitely generated over $\Ug$ 
\item[(II)]  $V$ is a weight module, i.e. there exists a direct sum decomposition
\[
V=\oplus_{\lambda\in \mZ^n}{V_\lambda},\qquad V_\lambda=\lbrace v\in V\mid K_i\triangleright v=q^{2\lambda_i} \rbrace.
\]
\item[(III)] $V$ is locally finite as a $U_q(\fr{n}_+)$-module.
\end{enumerate}
If $V$ comes from an object in $\cZ_{\lcomod{k\mZ^n}}(\rmod{U_q(\mathfrak{n}^+)}(\lcomod{k\mZ^n}))$, then (II) is automatically fulfilled because the $H$-action is induced by an $A=k\mZ^n$-coaction. Further, (III) is fulfilled as the $C$-action is induced by an $B^{\oop}$-coaction and hence locally finite. Further, any such module can be obtained from an object of the center as the pairing of $B$ and $C$ is perfect. Thus, if we restrict to finitely generated modules, we recover $\cO_q(\g)$ as a subcategory of the center.
\end{proof}

Hence we can define the category $\cO$ for a braided Drinfeld (or Heisenberg) double as the subcategory of finitely generated modules in $\cZ_{\cB}(\rmod{B}(\cB))$ (respectively $\cH_{\cB}(\rmod{B}(\cB))$). We denote this category by $\cZ^{\op{fg}}_{\cB}(\rmod{B}(\cB))$ (respectively $\cH^{\op{fg}}_{\cB}(\rmod{B}(\cB))$).

\begin{definition}\label{categoryOdef}$~$\nopagebreak
\begin{enumerate}
\item[(a)]
For $\cB=\lmod{H}$, we define the category $\cO^{\Drin}_H(C,B)$ (or $\cO^{\Heis}_H(C,B)$) as the full subcategory of $\lmod{\Drin_H(C,B)}$ (respectively, $\lmod{\Heis_H(C,B)}$) of objects which are
\begin{enumerate}
\item[(I)]   finitely generated as $\Drin_H(C,B)$-modules (resp. $\Heis_H(C,B)$-modules),
\item[(II)]  semisimple as $H$-modules,
\item[(III)] locally finite as $C$-modules.
\end{enumerate}
\item[(b)]
Working with $\cB=\lcomod{A^{\oop}}$, we define the category $\cO^{\Drin}_A(C,B)$ (respectively, $\cO^{\Heis}_A(C,B)$) as the full subcategory of $\lmod{\Drin_A(C,B)}$ (respectively, $\lmod{\Heis_A(C,B)}$) on objects satisfying (I), (III) and
\begin{enumerate}
\item[(II)'] The $H$-module structure is induced by an $A^{\oop}$-comodule structure.
\end{enumerate}
\end{enumerate}
\end{definition}

In the special setting of Section~\ref{rmatricessect} where we have the existence of orthonormal bases $\lbrace e_\alpha \rbrace$ of $B$ and $\lbrace f_\alpha \rbrace$ of $C$, we were able to define a formal power series $\coev=e_\alpha\otimes f_\alpha$ serving as formal coevaluation map. Given this structure, we can link the categories $\cO^{\Drin}_H(C,B)$ (and the other versions) with $\cZ_\cB^{\op{fg}}(\rmod{B}(\cB))$, generalizing Lemma~\ref{catOlemma}.

\begin{theorem}\label{catOthm}
Let $C$, $B$ be perfectly paired Hopf algebras in $\lmod{H}$ such that $\coev$ exists as a formal power series. Then there exist isomorphisms of categories
\begin{align*}
\cO^{\Drin}_H(C,B)&\cong \cZ_{\lmod{H}}^{\op{fg},H-\op{ss}}(\rmod{B}(\lmod{H})),\\
\cO^{\Heis}_H(C,B)&\cong \cH_{\lmod{H}}^{\op{fg},H-\op{ss}}(\rmod{B}(\lmod{H})).
\end{align*}
If $C$, $B$ are such objects in $\lcomod{A^{\oop}}$ where $A,H$ have a weak quasitriangular structure, then
\begin{align*}
\cO^{\Drin}_A(C,B)&\cong \cZ_{\lcomod{A^{\oop}}}^{\op{fg},A-\op{ss}}(\rmod{B}(\lcomod{A^{\oop}})),\\
\cO^{\Heis}_A(C,B)&\cong \cH_{\lcomod{A^{\oop}}}^{\op{fg},A-\op{ss}}(\rmod{B}(\lcomod{A^{\oop}})).
\end{align*}
\end{theorem}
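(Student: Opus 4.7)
My plan is to combine Proposition~\ref{equivalence} with the pairing-induced embedding $\leftsub{B^{\oop}}{\Phi}$ from Section~\ref{duallypaired}, generalizing the argument sketched for the quantum-group case in Lemma~\ref{catOlemma}. First, by Proposition~\ref{equivalence}, the (Hopf) center on the right-hand side is isomorphic to the category of (Hopf) YD modules over $B$ in $\cB$, whose objects carry a compatible right $B$-action and left $B^{\oop}$-coaction. Second, the pairing-induced functor $\leftsub{B^{\oop}}{\Phi}\colon (V,\delta)\mapsto (V,\triangleright_\delta)$, with $\triangleright_\delta:=(\ev\otimes\ide)(\ide\otimes\delta)$, embeds the resulting category fully faithfully into $\leftexp{C}{\mathcal{YD}}^B(\cB)\cong\lmod{\Drin_H(C,B)}$ (and analogously for Hopf modules / Hopf centers).

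Next I would identify the essential image of $\leftsub{B^{\oop}}{\Phi}$ with the subcategory cut out by condition~(III) of Definition~\ref{categoryOdef}. Here the perfect-pairing assumption and the existence of a formal coevaluation $\coev=\sum e_\alpha\otimes f_\alpha$ from Section~\ref{rmatricessect} are crucial: for any locally finite $C$-module one can reconstruct the coaction as $\delta(v):=\sum e_\alpha\otimes(f_\alpha\triangleright v)$, a sum which reduces to a finite one on each $v$ by local finiteness. Standard dual-bases bookkeeping then shows that this process is inverse to $\leftsub{B^{\oop}}{\Phi}$ on the subcategory of locally finite objects, so that the image of $\leftsub{B^{\oop}}{\Phi}$ is exactly the subcategory carved out by~(III).

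The remaining two conditions match directly. Condition~(II) is the decoration $H$-$\op{ss}$ on the center and is transported through the equivalences tautologically, since the underlying $H$-module structure is never altered by any of the functors involved. Condition~(I), finite generation as a $\Drin_H(C,B)$-module, coincides with finite generation as an object of the center, because the $C$-action is determined by the coaction through $\ev$; thus any $H$-and-$B$-generating set for an object of the center is automatically a $\Drin_H(C,B)$-generating set, and conversely. Replacing Proposition~\ref{equivalence} by its Hopf-module variant yields the Heisenberg case verbatim. For part~(b), working with $\cB=\lcomod{A^{\oop}}$ and the weak quasitriangular structure from Section~\ref{BCquasitriangular}, condition~(II)' is precisely the condition for the underlying object to lie in $\cB$; the rest of the argument is identical, with the weakly quasitriangular evaluations playing the role that the $H$-action did above.

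The main obstacle I anticipate is the careful verification of the finite-generation matching: one must confirm that the generation-by-$H\otimes B$ inherent in the center description (with the coaction regarded as additional structure rather than an action) really does coincide with generation by all of $\Drin_H(C,B)$, and that no auxiliary hypothesis on $C$ (such as finite generation as an algebra) is needed. A subsidiary technical point, specifically in part~(b), is handling the case of infinite-dimensional $A$ and $H$: modules over $\Drin_A(C,B)$ on which the $H$-action is not induced by an $A$-coaction must be correctly excluded by~(II)', and one must check that the reconstruction theorem of Section~\ref{reconstruction} only captures those objects whose $H$-structure comes from $\cB$, consistent with Lemma~\ref{catOlemma}.
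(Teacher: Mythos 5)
Your proposal is correct and follows essentially the same route as the paper: generalize the argument of Lemma~\ref{catOlemma}, use Proposition~\ref{equivalence} together with the pairing-induced embedding $\leftsub{B^{\oop}}{\Phi}$, and recover the coaction from a locally finite $C$-module via the formal coevaluation $\delta(v)=e_\alpha\otimes(f_\alpha\triangleright v)$, with the $H$-semisimplicity (resp.\ condition~(II)') and finite-generation conditions matched on both sides. In fact your write-up spells out the essential-image identification and the finite-generation bookkeeping in more detail than the paper's brief proof does.
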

\begin{proof}
The proof works as the proof of Lemma~\ref{catOlemma} in this more general setting. Semisimplicity is not automatic any more, so it needs to be imposed in general (hence the superscripts $H-\op{ss}$ or $A-\op{ss}$). Note that $\coev=e_\alpha\otimes f_\alpha$ can be used to define a $B^{\oop}$-comodule, given a locally finite $C$-module $V$ via $\delta(v) := e_\alpha\otimes f_\alpha\triangleright v$, for all $v\in V$. This expression is always well-defined if $V$ is locally finite (i.e. only finite sums of tensors occur).
\end{proof}

We would like to obtain an action of the category $\cO$ for the Braided Drinfeld double on the category $\cO$ for the braided Heisenberg double. The problem with this is that the action does not necessarily preserve finite-generation of the module. In generality, we therefore need to restrict to finite-dimensional modules.

\begin{corollary}\label{fdcatoaction}
Under the assumptions on $C$, $B$ as in \ref{catOthm}, there are actions
\begin{align*}
\triangleright: \lmod{\Drin_H(C,B)}^{\op{fd}}\otimes \cO^{\Heis}_{H}(C,B)&\longrightarrow \cO^{\Heis}_{H}(C,B),\\
\triangleright: \lmod{\Drin_A(C,B)}^{\op{fd}}\otimes \cO^{\Heis}_{A}(C,B)&\longrightarrow \cO^{\Heis}_{A}(C,B).
\end{align*}
\end{corollary}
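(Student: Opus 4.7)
The plan is to invoke the categorical action of Theorem~\ref{cataction}, which, via the equivalences of Proposition~\ref{equivalence} and Section~\ref{pairedydsect}, yields a well-defined action $\lmod{\Drin_H(C,B)} \otimes \lmod{\Heis_H(C,B)} \to \lmod{\Heis_H(C,B)}$. Under the Hopf module description of $\leftexp{C}{\mathcal{H}}^B(\lmod{H})$, this action sends $(V, W) \mapsto V \otimes W$ with $C$-, $H$-, and $B$-actions given respectively by $\Delta_C$, $\Delta_H$, and $\Delta_B$. Equivalently, $\Heis_H(C,B)$ carries a left $\Drin_H(C,B)$-comodule algebra structure $\Delta \colon \Heis_H(C,B) \to \Drin_H(C,B) \otimes \Heis_H(C,B)$ extending these coproducts, and $V \otimes W$ becomes an $\Heis_H(C,B)$-module via $x \cdot (v \otimes w) = \sum x_{[1]} v \otimes x_{[2]} w$ where $\Delta(x) = \sum x_{[1]} \otimes x_{[2]}$. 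The task is to verify that, for $V$ finite-dimensional, $V \otimes W$ still satisfies conditions (I)--(III) of Definition~\ref{categoryOdef}.

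Conditions (II) and (III) are routine. The $H$-action on $V \otimes W$ via $\Delta_H$ preserves $H$-semisimplicity; in the $A$-variant, the $A^{\oop}$-comodule structure is inherited from $\Delta_A$. For (III), the inclusion $C \cdot (v \otimes w) \subseteq V \otimes (C \cdot w)$, together with finite-dimensionality of $V$ and local $C$-finiteness of $w$ (from (III) for $W$), shows that $v \otimes w$ is $C$-finite, and hence so is every element of $V \otimes W$ by linearity.

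The main obstacle is condition (I). Since $V \otimes -$ is $k$-flat and preserves direct sums, a surjection $\Heis_H(C,B)^n \twoheadrightarrow W$ (which exists by (I) for $W$) induces a surjection $V \otimes \Heis_H(C,B)^n \twoheadrightarrow V \otimes W$, reducing the problem to showing $V \otimes \Heis_H(C,B)$ is finitely generated. For this, I propose the $k$-linear map
\[
\psi \colon \Heis_H(C,B) \otimes V \stackrel{\sim}{\longrightarrow} V \otimes \Heis_H(C,B), \qquad y \otimes v \longmapsto \sum y_{[1]} v \otimes y_{[2]},
\]
with candidate inverse $\psi^{-1}(v \otimes y) = \sum y_{[2]} \otimes S_{\Drin}^{-1}(y_{[1]}) v$. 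The verification that these are mutually inverse is a direct computation using coassociativity of $\Delta$, counitality $\sum \varepsilon(y_{[1]}) y_{[2]} = y$, and the antipode identities $\sum S_{\Drin}^{-1}(z_{(2)}) z_{(1)} = \varepsilon(z) = \sum z_{(2)} S_{\Drin}^{-1}(z_{(1)})$, both of which follow from the standard antipode axioms for $\Drin_H(C,B)$ by applying the invertible antipode $S_{\Drin}^{-1}$. Since $\psi(y \otimes v_i) = y \cdot (v_i \otimes 1)$ for the diagonal action, surjectivity of $\psi$ shows that the finite set $\{v_i \otimes 1\}_{i=1}^{\dim V}$ generates $V \otimes \Heis_H(C,B)$ as an $\Heis_H(C,B)$-module, where $\{v_i\}$ is a basis of $V$. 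The $A$-variant is entirely parallel, using Proposition~\ref{presentationprop2} in place of Proposition~\ref{presentationprop1}.
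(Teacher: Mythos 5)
Your route is the one the paper intends: the corollary is stated without proof, the preceding paragraph flags finite generation as the only obstruction, and your action is precisely the comodule-algebra action of Corollary \ref{duallypairedaction} (equivalently \ref{twistaction}/\ref{heistwist}), i.e. the coproduct of $\Drin_H(C,B)$ read as an algebra map $\Heis_H(C,B)\to\Drin_H(C,B)\otimes\Heis_H(C,B)$. Your verification of condition (I) is correct and is surely the intended argument: $\psi(y\otimes v)=y_{[1]}v\otimes y_{[2]}$ is bijective with the stated inverse, since the antipode of $\Drin_H(C,B)$ is invertible and the identities $S^{-1}(z_{(2)})z_{(1)}=\varepsilon(z)=z_{(2)}S^{-1}(z_{(1)})$ hold, so $V\otimes\Heis_H(C,B)$ is generated by $\{v_i\otimes 1\}$ and finite generation passes to $V\otimes W$. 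Condition (III) is also fine, once one notes from Proposition \ref{presentationprop1} (resp.\ \ref{presentationprop2}) that the second tensor leg of $\Delta(c)$ lies in $C$, so that $c\cdot(v\otimes w)\in V\otimes\operatorname{span}(C\cdot w)$, which is finite-dimensional.

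The gap is condition (II) (resp.\ (II)'), which you declare routine. Nothing in the hypothesis ``$V$ a finite-dimensional $\Drin_H(C,B)$-module'' forces the restriction of $V$ to $H$ to be semisimple, and even when both restrictions are semisimple, tensor products of semisimple modules over a general Hopf algebra $H$ need not be semisimple. Concretely, take $C=B=k$ (perfectly paired, with trivial coevaluation), so $\Drin_H(k,k)=\Heis_H(k,k)=H$; for $H$ finite-dimensional, quasitriangular but not semisimple (e.g.\ Sweedler's four-dimensional Hopf algebra), $V=H$ and $W=k$ give $V\triangleright W\cong H$, which fails (II). In the $A$-variant the problem is sharper: a finite-dimensional $\Drin_A(C,B)$-module carries no given $A^{\oop}$-comodule structure at all (cf.\ the remark after Definition \ref{doubledefinition}), so ``inherited from $\Delta_A$'' does not parse; one needs the $H$-action on $V$ to be induced by an $A^{\oop}$-coaction, which is an extra hypothesis. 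To close this you must either restrict $V$ (to modules whose $H$-restriction is semisimple, with $H$ such that tensor products of semisimples remain semisimple, as in the motivating example $H=k[K_i^{\pm1}]$ of Section \ref{quantumgroupaction}, where finite-dimensional modules are weight modules), or state explicitly that the corollary is read with these implicit restrictions. This imprecision is inherited from the paper's own unproved statement, but as a free-standing argument your step for (II) is the one that would fail.
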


Similar to the study of the category $\cO$ in other contexts, we can introduce \emph{standard} (or \emph{Verma}) modules in the category $\cO$ for braided Drinfeld and Heisenberg doubles. For this, a general theory as in \cite{GGOR} could be adapted. We will not introduce such a theory in general here, but provide a definition of a Verma module.

\begin{definition}
Let $B,C$ be braided Hopf algebras in $\cB$ and let  $S$ be a simple $H$-module (respectively $A^{\oop}$-comodule). Then we can define a $C^{\op{op}}\rtimes H$-module as $S$ with trivial $C$ action via the antipode of $C$ and the given $H$-action (if $S$ is an $A$-comodule, we view $S$ as an $H$-module via the pairing). The \emph{Verma module} of $M(S)$ is defined as
$M(S):=B\otimes_{C^{\op{op}}\rtimes H}S$. 
This can either be done as a module over $\Drin_{\cB}(C,B)$ or $\Heis_{\cB}(C,B)$ (for $\cB=\lmod{H}$ or $\lcomod{A^{\oop}}$).
\end{definition}

It is not guaranteed that all $M(S)$ are in the respective category $\cO$. However, under additional assumptions, such that the $B$ and $C$ are positively graded, by a inner grading as in \cite{GGOR}, the theory developed therein ensure that they are. In general, we can say that $M(S)$ are in the finitely-generated subcategory of $\cZ_\cB(\rmod{B}(\cB))$ (respectively $\cZ_\cB(\rmod{B}(\cB))$ in the Hopf case). One of the main uses is that all simple objects in $\cO$ appear as quotients of such Verma modules.


\subsection{Cocycle Twists}\label{cocycletwists}

In this section, we observe that the left action of the category $\cZ_\cB(\rmod{B}(\cB))$ on $\cH_\cB(\rmod{B}(\cB))$ extends to an action of $\lmod{\Drin_H(C,B)}$ on $\lmod{\Heis_H(C,B)}$. This action implies that $\Heis_H(C,B)$ is a right cocycle twist of the Hopf algebra $\Drin_H(C,B)$ generalizing an earlier result of \cite{Lu} to the braided case. In particular, $\Heis_H(C,B)$ is a left $\Drin_H(C,B)$-comodule coalgebra.

\begin{definition}
Let $B\in \BiAlg(\cB)$, for $\cB$ a braided monoidal category. A \emph{right 2-cocycle of $B$} is a morphism $\sigma \colon B\otimes B\to I$ such that the morphism $m_\sigma:=m^2_{B\otimes B}(\Delta\otimes \Delta\otimes \sigma)\colon B\otimes B\to B$ gives $B$ an algebra structure which is denoted by $B_\sigma$ and called the \emph{right cocycle twist of $B$} by $\sigma$.
\end{definition}

It is easy to check that a morphism $\sigma\colon B\otimes B\to I$ is a right 2-cocycle if and only if it satisfies the condition
\begin{align}\label{2cocycle}
\begin{split}
&(\sigma \otimes \sigma)(m \otimes \Psi\otimes \ide_B)(\ide_B\otimes \Psi\otimes \Psi)(\Delta\otimes \Delta\otimes \ide_B)\\&=(\sigma\otimes \sigma)(\ide_B \otimes m \otimes \ide_{B\otimes B})(\ide_{B\otimes B}\otimes \Psi\otimes \ide_B)(\ide_B\otimes \Delta\otimes \Delta)
\end{split}
\end{align}
on $B\otimes B\otimes B$, as well as the normalization conditions
$\sigma(\ide\otimes 1)=\sigma(1\otimes \ide)=\varepsilon$.

\begin{lemma}\label{coactionlemma}
The coproduct of $B$ can be viewed as a morphism $\Delta\colon B_\sigma\to B\otimes B_\sigma$ making $B_\sigma$ a left $B$-comodule algebra in $\cB$.
\end{lemma}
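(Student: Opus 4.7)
The plan is to verify the two conditions defining a left $B$-comodule algebra structure on $B_\sigma$: (i) $\Delta\colon B_\sigma \to B \otimes B_\sigma$ is a left $B$-coaction, and (ii) the twisted multiplication $m_\sigma$ is a morphism of left $B$-comodules, i.e.\
\begin{equation*}
\Delta \circ m_\sigma = (m \otimes m_\sigma)(\ide_B \otimes \Psi_{B,B} \otimes \ide_B)(\Delta \otimes \Delta).
\end{equation*}

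Condition (i) is immediate from the bialgebra structure of $B$: coassociativity $(\ide \otimes \Delta)\Delta = (\Delta \otimes \ide)\Delta$ and counitarity $(\varepsilon \otimes \ide)\Delta = \ide$ hold at the level of the underlying object $B$, and neither axiom refers to the algebra structure, so both continue to hold when the codomain is regarded as $B \otimes B_\sigma$.

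For condition (ii), I would first unpack $m_\sigma = m^2_{B \otimes B}(\Delta \otimes \Delta \otimes \sigma)$ and then push $\Delta$ through $m^2_{B \otimes B}$ using the bialgebra compatibility for $B \otimes B$, which is itself a bialgebra in $\cB$ (so $\Delta_{B \otimes B}$ is an algebra morphism). After re-bracketing the resulting fourfold tensor via coassociativity of $\Delta$, the tensor factor carrying $\sigma$ gets absorbed into the second copy of $m$, recognizable as $m_\sigma$ itself, which yields the right-hand side. Drawing both sides in graphical calculus makes the equality visually transparent.

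The main obstacle is purely bookkeeping: tracking the braidings as $\Delta$ is pushed through $m^2_{B \otimes B}$, since the bialgebra identity for $B \otimes B$ involves iterated braidings. Crucially, the cocycle condition (\ref{2cocycle}) itself plays \emph{no} role here; only coassociativity of $\Delta$ and the bialgebra condition on $B$ are needed, because $\sigma$ lands in the unit object $I$ and hence can be slid freely past braidings without any compatibility obstruction. For this reason the lemma should be viewed as a structural remark preparing the way for the subsequent twisting result rather than an application of the cocycle axiom.
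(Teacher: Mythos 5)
Your verification is correct and is exactly the argument the paper intends — the paper's proof is literally ``an exercise in graphical calculus,'' and your steps (coaction axioms are free since they ignore the algebra structure; multiplicativity of $\Delta$ for $m_\sigma$ follows from the bialgebra axiom, coassociativity, and naturality of the braiding, with $\sigma$ sliding freely since it lands in $I$) are that exercise spelled out. Your observation that the cocycle identity (\ref{2cocycle}) enters only through guaranteeing that $m_\sigma$ is associative (i.e.\ that $B_\sigma$ is an algebra at all, which is built into the definition) and not in the comodule-algebra compatibility is also accurate.
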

\begin{proof}
This is an exercise in graphical calculus.
\end{proof}

\begin{corollary}\label{twistaction}
If $\sigma$ is a right 2-cocycle, then $\Delta$ gives a left action of $\lmod{B}(\cB)$ on $\lmod{B_\sigma}(\cB)$ where the $B_\sigma$-action on $V\triangleright W$ (defined on the object $V\otimes W$) for a $V$-module $V$ and a $B_\sigma$-module $W$ is given by
\begin{equation}
\triangleright_{V\otimes W}=(\triangleright_V\otimes \triangleright_W)(\ide_B\otimes \Psi_{B, V}\otimes \ide_W)(\Delta\otimes V\otimes W).
\end{equation}
That is, given by the coproduct of $B$.
\end{corollary}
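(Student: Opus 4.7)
My plan is to deduce this corollary as a formal consequence of Lemma \ref{coactionlemma}, which already establishes that $\Delta\colon B_\sigma \to B \otimes B_\sigma$ makes $B_\sigma$ into a left $B$-comodule algebra in $\cB$. The general principle I will invoke is: whenever $A$ is a bialgebra in $\cB$ and $C$ is a left $A$-comodule algebra with coaction $\delta\colon C \to A \otimes C$, there is a canonical left action of the monoidal category $\lmod{A}(\cB)$ on $\lmod{C}(\cB)$ defined on underlying objects by $V \triangleright W := V \otimes W$, with $C$-module structure
\[
\triangleright_{V\otimes W} := (\triangleright_V \otimes \triangleright_W)(\ide_A \otimes \Psi_{A,V} \otimes \ide_W)(\delta \otimes \ide_V \otimes \ide_W).
\]
Specializing to $A = B$, $C = B_\sigma$, and $\delta = \Delta$ recovers exactly the formula in the statement.

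The first step is to verify that $\triangleright_{V\otimes W}$ is a genuine $B_\sigma$-action. Unpacking the compatibility
\[
\triangleright_{V\otimes W}(m_\sigma \otimes \ide_{V\otimes W}) = \triangleright_{V\otimes W}(\ide_{B_\sigma} \otimes \triangleright_{V\otimes W})
\]
using naturality of $\Psi$, the module axioms for $\triangleright_V$ and $\triangleright_W$, and the definition of $m_{B\otimes B_\sigma}$ as a braided tensor-product multiplication, the desired identity reduces precisely to the statement that $\Delta\colon B_\sigma \to B \otimes B_\sigma$ is an algebra homomorphism, which is Lemma \ref{coactionlemma}. The unit axiom follows by inserting $1_{B_\sigma}$, applying $(\varepsilon \otimes \ide)\Delta = \ide$ (which holds for $\Delta$ on $B$ and hence on $B_\sigma$ since they share the same coproduct), and using unitality of $\triangleright_V$ and $\triangleright_W$.

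The second step is to verify the monoidal compatibilities that turn this into an action of $\lmod{B}(\cB)$. The associativity isomorphism $(V_1 \otimes V_2) \triangleright W \cong V_1 \triangleright (V_2 \triangleright W)$ is the underlying associator of $\cB$, and its compatibility as an intertwiner of $B_\sigma$-modules is precisely coassociativity of $\Delta$ (together with the hexagon axiom for $\Psi$, to commute the action of $B$ past both $V_1$ and $V_2$). The unit $I \in \lmod{B}(\cB)$ with the trivial action $\varepsilon$ acts as the identity on any $W \in \lmod{B_\sigma}(\cB)$, once again by the counit identity $(\varepsilon \otimes \ide)\Delta = \ide$ on $B$.

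The one place requiring genuine care, rather than routine checking, is the bookkeeping of the braiding $\Psi$: in a non-symmetric $\cB$ one must ensure that every tensor-product algebra and every tensor-product module action is defined with $\Psi$ (never $\Psi^{-1}$) in a consistent convention, and that the braided coproduct $\Delta$ in Lemma \ref{coactionlemma} is being fed into the same convention. Once the conventions are fixed, the identities above are either standard naturality statements or direct consequences of Lemma \ref{coactionlemma}; I would present these checks succinctly using graphical calculus in $\cB$, exactly as throughout Section \ref{catback}. Finally, applying this principle to the Hopf algebras $B = \Drin_H(C,B)$ and $B_\sigma = \Heis_H(C,B)$ (once the latter is exhibited as a twist, which is the content of the surrounding discussion in Section \ref{cocycletwists}) recovers Theorem \ref{cataction} at the level of algebras.
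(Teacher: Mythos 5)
Your proposal is correct and follows the same route the paper intends: Corollary \ref{twistaction} is stated without separate proof precisely because it is the standard consequence of Lemma \ref{coactionlemma} (a left $B$-comodule algebra structure on $B_\sigma$ yields a module-category action of $\lmod{B}(\cB)$ on $\lmod{B_\sigma}(\cB)$ via the coaction), and your verification of the action axiom, unitality, and the monoidal coherences is exactly the routine graphical-calculus check the paper leaves implicit.
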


We are looking for a converse of the statement in Corollary~\ref{twistaction}. Indeed, the following holds:

\begin{proposition}\label{conversetwist}$~$\nopagebreak
\begin{enumerate}
\item[(i)]
Let $D\in \BiAlg(\cB)$ and $H\in \Alg(\cB)$ such that $\cD=\lmod{D}(\cB)$ acts on $\cH=\lmod{H}(\cB)$ on the left. Assume further that the square
\begin{equation}
\begin{diagram}
\lmod{D}(\cB)\otimes \lmod{H}(\cB)&\rTo^{\triangleright}& \lmod{H}(\cB)\\
\dTo^{F_{\cD}\otimes F_{\cH}}&&\dTo^{F_{\cH}}\\
\Vect\otimes \Vect &\rTo^{\otimes}&\Vect
\end{diagram}
\end{equation}
commutes and that $\Nat(V,F_\cB\otimes F_\cH)$ is reconstructible by $D\otimes H$ (cf. \ref{higherrep}). Then the action is given by a morphism $\delta\colon H\to D\otimes H$ which makes $H$ an algebra object in $\lcomod{D}(\cB)$.
\item[(ii)]
If $F(H)=F(D)$ in $\cB$ and the coaction $\delta$ is given by the coproduct $\Delta$ of $D$, then the product on $H$ is a right cocycle twist by the 2-cocycle $\varepsilon_Dm_H$.
\end{enumerate}
\end{proposition}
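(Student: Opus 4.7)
For part (i), the plan is to invoke reconstruction theory in the spirit of Section~\ref{reconstruction}. Commutativity of the given square means that for any $V \in \cD$, $W \in \cH$ the object $V \triangleright W$ has underlying object $F_{\cD}(V) \otimes F_{\cH}(W)$ in $\cB$, and the $H$-module structure on $V \triangleright W$ assembles into a natural transformation $H \otimes F_{\cD} \otimes F_{\cH} \Rightarrow F_{\cD} \otimes F_{\cH}$ in the two variables $V, W$. By the hypothesis that $\Nat(V, F_{\cD} \otimes F_{\cH})$ is reconstructible by $D \otimes H$, this natural transformation is represented by a unique morphism $\delta\colon H \to D \otimes H$ in $\cB$, characterised by
\begin{equation*}
\triangleright_{V \triangleright W} \;=\; (\triangleright_V \otimes \triangleright_W)(\mathrm{id}_D \otimes \Psi_{H, V} \otimes \mathrm{id}_W)(\delta \otimes \mathrm{id}_{V \otimes W}).
\end{equation*}

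The next step is to translate each axiom of a monoidal left action into a $D$-comodule-algebra axiom on $\delta$. The unit axiom $I \triangleright W = W$ yields $(\varepsilon_D \otimes \mathrm{id}_H)\delta = \mathrm{id}_H$; the associator $(V_1 \otimes V_2) \triangleright W \cong V_1 \triangleright (V_2 \triangleright W)$, combined with uniqueness of the reconstructed morphism applied over $D^{\otimes 2} \otimes H$, yields coassociativity $(\Delta_D \otimes \mathrm{id}_H)\delta = (\mathrm{id}_D \otimes \delta)\delta$. Moreover, since $\triangleright_{V \triangleright W}$ is genuinely an $H$-action (unital and multiplicative in $H$), reconstruction on $H \otimes H$ converts this into $\delta \circ m_H = m_{D \otimes H}(\delta \otimes \delta)$ and $\delta(1_H) = 1_D \otimes 1_H$. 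Together these exhibit $H$ as an algebra in $\lcomod{D}(\cB)$.

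For part (ii), under $F(H) = F(D)$ and $\delta = \Delta_D$, the multiplicativity identity from (i) reads
\begin{equation*}
\Delta_D \circ m_H \;=\; (m_D \otimes m_H)(\mathrm{id} \otimes \Psi_{D, D} \otimes \mathrm{id})(\Delta_D \otimes \Delta_D).
\end{equation*}
Apply $(\mathrm{id}_D \otimes \varepsilon_D)$ to both sides. The left-hand side collapses to $m_H$ by the counit axiom of $D$. On the right, $\varepsilon_D$ meets $m_H$ to produce the scalar-valued morphism $\sigma := \varepsilon_D \circ m_H\colon D \otimes D \to I$, and we are left with $(m_D \otimes \sigma)(\mathrm{id} \otimes \Psi_{D, D} \otimes \mathrm{id})(\Delta_D \otimes \Delta_D) = m_\sigma$. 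Hence $m_H = m_\sigma$. The $2$-cocycle identity $(\ref{2cocycle})$ for $\sigma$ then drops out from associativity of $m_H$ rewritten via this identity, using coassociativity of $\Delta_D$ and the hexagon axioms for $\Psi$; the normalization $\sigma(1 \otimes -) = \sigma(- \otimes 1) = \varepsilon_D$ follows from $\delta(1_H) = 1_D \otimes 1_H$ together with $\delta = \Delta_D$.

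The main obstacle is the reconstruction step in (i): one has to verify that the higher-representability hypothesis, though stated for $\Nat(V, F_{\cD} \otimes F_{\cH})$, provides enough control to read off coassociativity and multiplicativity from the action coherences, i.e.\ that the induced natural transformations on $D^{\otimes 2} \otimes H$ and on $H \otimes H$ are represented in the compatible way required to transport the associator and the product of $H$ across reconstruction. Once this setup is secured, the remainder is routine braided-categorical bookkeeping with counits, comultiplications, and naturality of $\Psi$.
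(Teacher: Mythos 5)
Your argument is correct and follows essentially the same route as the paper: part (i) is the same reconstruction step (representing the action by $\delta\colon H\to D\otimes H$ and translating the action/module axioms into the comodule-algebra axioms), and part (ii) is exactly the paper's computation of applying $(\ide_D\otimes\varepsilon_D)$ to the multiplicativity identity $\delta m_H=m_{D\otimes H}(\delta\otimes\delta)$ with $\delta=\Delta_D$ to get $m_H=m_\sigma$ for $\sigma=\varepsilon_D m_H$. Your closing derivation of the cocycle identity (\ref{2cocycle}) from associativity is fine but not needed, since by the paper's definition $\sigma$ is a right 2-cocycle as soon as $m_\sigma=m_H$ is an algebra structure.
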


\begin{proof}
Part (i) is an argument in reconstruction theory. By representability assumptions, the action $\triangleright$ is given by a map $\delta\colon H\to D\otimes H$. The requirement that $\triangleright$ is an action in the category $\cH$ translates to the property that $H$ is an algebra object in $\lcomod{D}(\cB)$.

To prove part (ii), observe that
\begin{align*}
m_H&=(\ide_D\otimes \varepsilon)\Delta_D m_H=(\ide_D\otimes \varepsilon)\delta m_H\\
&=(\ide_D\otimes \varepsilon)(m_D\otimes m_H)(\ide_D\otimes \Psi_{D,D}\otimes \ide_H)(\Delta_D\otimes \delta),\\
&=(\ide_D\otimes \varepsilon)(m_D\otimes m_H)(\ide_D\otimes \Psi_{D,D}\otimes \ide_D)(\Delta_D\otimes \Delta)
\end{align*}
where we use that that $\delta \colon H\to D\otimes H$ is a morphism of algebras and that $\delta=\Delta$ as maps in $\cB$. Hence $H=D_\sigma$ for $\sigma=\varepsilon m_H$.
\end{proof}

Returning to the situation of Theorem~\ref{cataction}, we observed that the categorical action can be extended to an action of $\leftexp{C}{\mathcal{YD}}^B(\cB)$ on $\leftexp{C}{\mathcal{H}}^B(\cB)$ in Corollary \ref{duallypairedaction}. The action is given by the coproducts of $B$ and $C$. For $\cB=\lmod{H}$ this implies that the action of $\lmod{\Drin_H(C,B)}$ on $\lmod{\Heis_H(C,B)}$ is given by the coproduct of $\Drin_H(C,B)$ viewed as a morphism of algebras
\[
\Delta\colon \Heis_H(C,B)\to \Drin_H(C,B)\otimes \Heis_H(C,B).
\]
The same observation applies when working with $\lcomod{A^{\oop}}$ instead.

\begin{corollary}\label{heistwist}
The algebra $\Heis_H(C,B)$ is a right cocycle twist of the Hopf algebra $\Drin_H(C,B)$ via the 2-cocycle given by $\sigma(chb\otimes c'h'b')=\varepsilon(c)\varepsilon(h)\ev(S^{-1}c'\otimes b)\varepsilon(h')\varepsilon(b')$.
\end{corollary}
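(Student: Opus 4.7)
The plan is to deduce this corollary directly from Proposition \ref{conversetwist}(ii) applied to $D = \Drin_H(C,B)$ and (in the notation of that proposition, where the letter $H$ is overloaded) $\Heis_H(C,B)$, and then to compute the resulting cocycle $\varepsilon_{\Drin}\circ m_{\Heis}$ explicitly from the product formula in PBW form given in \ref{presentations}.

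First I would set up the input to \ref{conversetwist}. By Corollary \ref{duallypairedaction}, the monoidal category $\leftexp{C}{\mathcal{YD}}^B(\lmod{H})$ acts on $\leftexp{C}{\cH}^B(\lmod{H})$, and on underlying objects this action is simply $V\otimes W$ with the $C$- and $B$-module structures coming from the coproducts $\Delta_C$ and $\Delta_B$. Translating through the monoidal equivalences $\lmod{\Drin_H(C,B)}\cong \leftexp{C}{\mathcal{YD}}^B(\lmod{H})$ and $\lmod{\Heis_H(C,B)}\cong \leftexp{C}{\cH}^B(\lmod{H})$ established in Proposition \ref{equivalence} together with the embeddings of \ref{pairedydsect}, this becomes a left action of $\lmod{\Drin_H(C,B)}$ on $\lmod{\Heis_H(C,B)}$ that commutes with the forgetful functors to $\Vect$. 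By braided reconstruction (and the higher representability established in Lemma \ref{representabilitylemma}), this action corresponds to a coaction $\delta\colon \Heis_H(C,B)\to \Drin_H(C,B)\otimes \Heis_H(C,B)$, and by construction $\delta$ is precisely the coproduct $\Delta$ of $\Drin_H(C,B)$ regarded as a map to the tensor product of the two algebras (both of which have underlying vector space $C\otimes H\otimes B$). Part~(i) of Proposition \ref{conversetwist} then asserts that $\Heis_H(C,B)$ is an algebra in $\lcomod{\Drin_H(C,B)}$, and since the underlying objects agree and $\delta=\Delta$, part~(ii) identifies the product on $\Heis_H(C,B)$ as the right cocycle twist of $\Drin_H(C,B)$ by $\sigma=\varepsilon_{\Drin}\circ m_{\Heis}$.

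It remains to compute $\sigma(chb\otimes c'h'b')=\varepsilon\bigl(m_{\Heis}(chb\otimes c'h'b')\bigr)$ using the explicit PBW product formula for $\Heis_H(C,B)$ written just before Example~\ref{weylalgebras}. The key simplifications are that $\varepsilon$ is an algebra map on each of $C,H,B$ separately, that $\varepsilon_C(h\triangleright c)=\varepsilon_H(h)\varepsilon_C(c)$ since the counit in a braided Hopf algebra is a module map to the trivial module, that $\varepsilon(R^{(1)})R^{(2)}=1=\varepsilon(R^{(2)})R^{(1)}$ (and similarly for $R^{-1}$), and that $(\ide\otimes\varepsilon)\Delta=\ide$. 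Applying these to the Heisenberg product, the factors $\varepsilon(h_{(1)})\varepsilon(h_{(2)})$ collapse to $\varepsilon(h)$, the factors $\varepsilon(h'_{(1)})\varepsilon(Sh'_{(2)})$ collapse to $\varepsilon(h')$, all the $R$-matrix and inverse-$R$-matrix insertions disappear after pairing with $\varepsilon$, and the remaining scalar $\ev(R_1^{-(2)}\triangleright S^{-1}c'_{(1)}\otimes R^{(2)}R_2^{-(2)}\triangleright b_{(1)})$ combined with $\varepsilon(c'_{(2)})$ and $\varepsilon(b_{(2)})$ reduces to $\ev(S^{-1}c'\otimes b)$. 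This produces exactly
\[
\sigma(chb\otimes c'h'b')=\varepsilon(c)\varepsilon(h)\ev(S^{-1}c'\otimes b)\varepsilon(h')\varepsilon(b'),
\]
as claimed.

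The main obstacle is the verification in the first paragraph that the coaction $\delta$ produced via reconstruction really does agree \emph{as a map in $\cB$} with the coproduct $\Delta$ of $\Drin_H(C,B)$, and not merely with some twisted version of it; one must check this on each of the generating subalgebras $C$, $H$, $B$ using the explicit formulas for $\Delta$ from Proposition~\ref{presentationprop1} and the definitions of the module structure on a tensor product in $\leftexp{C}{\mathcal{YD}}^B$ and $\leftexp{C}{\cH}^B$. Once this identification is in place, the cocycle computation is purely mechanical.
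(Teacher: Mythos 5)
Your proposal is correct and follows essentially the same route as the paper: the paper also deduces the corollary from Corollary \ref{duallypairedaction} (the action on underlying objects being given by the coproducts of $C$ and $B$, hence by $\Delta$ of $\Drin_H(C,B)$ viewed as an algebra map $\Heis_H(C,B)\to \Drin_H(C,B)\otimes\Heis_H(C,B)$), then applies Proposition \ref{conversetwist}(ii) and reads off $\sigma=\varepsilon\circ m_{\Heis}$ from the PBW product formula. Your counit computation and your remark that the identification $\delta=\Delta$ should be verified on the generating subalgebras (as in Corollary \ref{restrictedaction}) match what the paper does.
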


\begin{remark}
Note that we can also define left 2-cocycles and a left twist (or cycles and cycle twists). A dual $R$-matrix for $B$ gives a left 2-cocycle on $B$. This cocycle was used in \cite{Lu} to twist the Drinfeld double. Note that we use a different cocycle here but they coincide in the case $H=k$.
\end{remark}


\subsection{The Braided Heisenberg Double of \texorpdfstring{$U_q(\fr{n}_+)$}{\emph{Uq(n+)}} for \texorpdfstring{$\fr{sl}_2$}{\emph{sl2}}}\label{quantumgroupaction}

In Example \ref{quantumgroups}, we introduced the braided Heisenberg double $D_q(\fr{g})$ of $U_q(\fr{n}_+)$. We now want to study the categorical action of $\lmod{D_q(\fr{sl}_2)}^{\op{fd}}$ on the category $\cO_q^{\Heis}(\fr{sl}_2):= \cO^{\Heis}_{k[g^{\pm 1}]}(U(\fr{sl}_2)^*,U(\fr{sl}_2))$ as an example. We first observe that $D_q(\fr{sl}_2)$ has no finite-dimensional modules using a standard argument from the theory of highest weight representations.

\begin{lemma}
In $D_q(\fr{sl}_2)$, the commutator relation
\begin{equation}\label{sl2commutator}
[E,F^m]=\frac{[m]_q K^{-1}}{q^{-1}-q}F^{m-1}.
\end{equation}
holds, where $[n]_q=1+q^{-2}+q^{-4}+\ldots + q^{-2(n-1)}$.
\end{lemma}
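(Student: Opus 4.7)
The plan is a straightforward induction on $m$, using the Leibniz-style identity
\[
[E,F^{m+1}]=[E,F^m]F+F^m[E,F],
\]
which holds in any associative algebra. The base case $m=1$ is exactly the cross relation of $\Heis_A(U_q(\fr{n}_+),U_q(\fr{n}_-))$ specialized to $\fr{sl}_2$ (so $q_i=q$, $K_i=K$), namely $[E,F]=K^{-1}/(q^{-1}-q)$, which matches the formula since $[1]_q=1$ and $F^0=1$.

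For the inductive step, I would assume $[E,F^m]=\tfrac{[m]_q K^{-1}}{q^{-1}-q}F^{m-1}$ and compute
\[
[E,F^{m+1}]=\frac{[m]_q K^{-1}}{q^{-1}-q}F^{m}+F^m\cdot\frac{K^{-1}}{q^{-1}-q}.
\]
The only nontrivial ingredient is the bosonization relation from Proposition~\ref{presentationprop2} (specialized to $\fr{sl}_2$), $KF=q^{-2}FK$, which iterates to $F^mK^{-1}=q^{-2m}K^{-1}F^m$. Substituting gives
\[
[E,F^{m+1}]=\frac{\bigl([m]_q+q^{-2m}\bigr)K^{-1}F^{m}}{q^{-1}-q}.
\]
The proof then concludes by the elementary identity
\[
[m]_q+q^{-2m}=1+q^{-2}+\cdots+q^{-2(m-1)}+q^{-2m}=[m+1]_q,
\]
which yields the claim for $m+1$.

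There is no real obstacle here: once one has the correct bosonization commutation between $K$ and $F$ and the definition of the $q$-integer $[m]_q=1+q^{-2}+\cdots+q^{-2(m-1)}$ used in this paper, the verification is a routine one-line arithmetic check. The only point requiring care is bookkeeping: keeping $K^{-1}$ consistently on the left of $F^{m-1}$ (matching the stated form of the identity) while applying $KF=q^{-2}FK$, so as not to introduce a spurious power of $q$.
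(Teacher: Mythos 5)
Your proof is correct and follows essentially the same route as the paper: induction on $m$ via the identity $[E,F^{m}]=[E,F^{m-1}]F+F^{m-1}[E,F]$, with the base case given by the Heisenberg cross relation $[E,F]=K^{-1}/(q^{-1}-q)$ and the step handled by the bosonization relation $KF=q^{-2}FK$ (equivalently $F^mK^{-1}=q^{-2m}K^{-1}F^m$) together with $[m]_q+q^{-2m}=[m+1]_q$. The paper's proof is just a terser statement of exactly this argument.
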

\begin{proof}
By induction on $m$, using that $[E,F^m]=[E,F^{m-1}]F+F^{m-1}[E,F]$.
\end{proof}

First, we note that in the $\fr{sl}_2$ case, the pairing $\langle g,K\rangle=q$ is perfect for generic $q$. Hence, the functor 
\[
\Phi\colon \lcomod{k[g^{\pm 1}]}\longrightarrow \lmod{k[K^{\pm 1}]}
\]
induced by the pairing is fully faithful. The essential image of $\Phi$ is the semisimple category generated by one-dimensional simples $k_{q_n}=kv_n$, where $K\triangleright v_n=q^n v_n$. Consider the Verma module $M(n):=M(k_{q^n})$ of weight $q^n$. More generally, write $M(\lambda)$ for the Verma associated to the simple module $k_\lambda$, on which $K$ acts by the scalar $\lambda\in k\setminus 0$.

\begin{lemma}\label{vermasactionsl2}
The Verma module $M(\lambda)$ has a $k$ basis given by $F^mv_n=F^m \triangleright v_n$ for $k\in \mN$. The action is given by
\begin{align}
K\triangleright F^m v_n&=\lambda q^{-2m}v_n,&E\triangleright F^m v_n&=\frac{\lambda^{-1}[m]_q }{q^{-1}-q}F^{m-1}v_n.
\end{align}
\end{lemma}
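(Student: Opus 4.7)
The plan is to follow the standard strategy for computing the action on a highest-weight module induced from a one-dimensional $C^{\op{op}}\rtimes H$-module, now in the setting of the braided Heisenberg double $D_q(\fr{sl}_2)$. First, I would establish the asserted basis. By Definition, $M(\lambda) = U_q(\fr{n}_-)\otimes_{C^{\op{op}}\rtimes H} k_\lambda$, and by the PBW decomposition for $\Heis_A(C,B)$ provided by the reconstruction on $C\otimes H\otimes B$ in Section~\ref{presentations}, the algebra $U_q(\fr{n}_-)$ has $\{F^m\}_{m\geq 0}$ as a $k$-basis. Tensoring with the generator $v_n\in k_\lambda$ yields the basis $\{F^mv_n\}_{m\geq 0}$.

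Next, I would compute the $K$-action. By the bosonization relation (\ref{boso2}) specialised to $\fr{sl}_2$, we have $KF=q^{-2}FK$, so inductively $KF^m=q^{-2m}F^mK$. Acting on $v_n$, on which $K$ acts by $\lambda$ by construction of the induced module, gives $K\triangleright F^mv_n = \lambda q^{-2m}F^mv_n$, matching the stated formula (reading the right-hand side as $\lambda q^{-2m}F^mv_n$).

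For the $E$-action, the key input is the commutator identity (\ref{sl2commutator}) proved in the previous lemma: $[E,F^m] = \tfrac{[m]_qK^{-1}}{q^{-1}-q}F^{m-1}$. Writing $EF^m = F^mE + [E,F^m]$ and applying to $v_n$, the first term vanishes since $E$ acts trivially on $k_\lambda$ by definition of the Verma module (the $C$-action on $S$ is declared trivial, so $E\triangleright v_n=0$). The remaining term yields
\[
E\triangleright F^mv_n = \tfrac{[m]_q}{q^{-1}-q}\,K^{-1}F^{m-1}v_n,
\]
and evaluating $K^{-1}$ on the weight vector $F^{m-1}v_n$ via the formula just established produces the claimed scalar multiple of $F^{m-1}v_n$. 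The only subtle point — and the one place where some care is required — is keeping track of the order of $K^{-1}$ and $F^{m-1}$ inside the bracket and reconciling the resulting power of $q$ with the stated formula; this amounts to checking the induction in (\ref{sl2commutator}) carefully using $KF=q^{-2}FK$. The rest is a direct computation with no conceptual obstacle.
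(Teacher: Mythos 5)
Your proof is correct and is exactly the argument the paper intends: the lemma is stated without proof precisely because the preceding commutator relation (\ref{sl2commutator}), the bosonization relation $KF=q^{-2}FK$ (giving the PBW basis $\{F^m v_n\}_{m\in\mN}$ and the $K$-eigenvalues, with the right-hand side of the $K$-formula to be read as $\lambda q^{-2m}F^mv_n$), and the vanishing of $E\triangleright v_n$ reduce everything to the two-line computation you give. The subtlety you flag resolves as follows: since $K\triangleright F^{m-1}v_n=\lambda q^{-2(m-1)}F^{m-1}v_n$, your computation yields $E\triangleright F^m v_n=\tfrac{\lambda^{-1}q^{2(m-1)}[m]_q}{q^{-1}-q}F^{m-1}v_n$ with the paper's convention $[m]_q=1+q^{-2}+\cdots+q^{-2(m-1)}$, so the formula as printed is consistent only if $[m]_q$ is there read with positive exponents $1+q^{2}+\cdots+q^{2(m-1)}$ — a conventions/typo slip of the same kind as the missing $F^m$ in the $K$-formula, not an error in your method, and harmless for the subsequent corollaries, which only use that this scalar is nonzero for all $m\geq 1$ and $\lambda\neq 0$.
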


\begin{corollary}
The Verma modules $M(\lambda)$ are simple for all $\lambda\in k\setminus 0$.
\end{corollary}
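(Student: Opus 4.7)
My plan is to show that any nonzero submodule $N\subseteq M(\lambda)$ must contain the highest-weight vector $v$, and then observe that $v$ generates all of $M(\lambda)$ under the $F$-action, forcing $N=M(\lambda)$.

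The first step is a weight-space argument. By Lemma~\ref{vermasactionsl2}, $M(\lambda)=\bigoplus_{m\geq 0}k\cdot F^m v$ is a decomposition into $K$-eigenspaces with distinct eigenvalues $\lambda q^{-2m}$ (using genericity of $q$ so that the values $\{q^{-2m}\}_{m\geq 0}$ are pairwise distinct, and $\lambda\neq 0$). A standard Vandermonde argument then shows that if $x=\sum_{m\in S}c_m F^m v\in N$ is nonzero with $S$ finite and all $c_m\neq 0$, then applying polynomials in $K$ one can isolate each summand, so $F^m v\in N$ for some $m$.

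The second step is to raise this weight vector to the top. From Lemma~\ref{vermasactionsl2},
\[
E\triangleright F^m v=\frac{\lambda^{-1}[m]_q}{q^{-1}-q}F^{m-1}v,
\]
and the scalar $\lambda^{-1}[m]_q/(q^{-1}-q)$ is nonzero for all $m\geq 1$, since $\lambda\neq 0$ and $[m]_q=(1-q^{-2m})/(1-q^{-2})\neq 0$ for generic $q$. Iterating, $E^m\triangleright F^m v$ is a nonzero scalar multiple of $v$, hence $v\in N$. Then $F^k\triangleright v=F^k v\in N$ for all $k\geq 0$ by the definition of the action, so $N=M(\lambda)$ as claimed.

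The argument is entirely routine once Lemma~\ref{vermasactionsl2} is in place; the only point requiring attention is genericity of $q$, which ensures both the distinctness of weights (used to extract pure weight vectors from $N$) and the non-vanishing of the quantum integers $[m]_q$ (used to ensure $E$ actually raises the weight). Both conditions are standing hypotheses in the setting of $D_q(\fr{sl}_2)$ as introduced in Example~\ref{quantumgroups}.
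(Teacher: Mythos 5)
Your proof is correct and follows essentially the same route as the paper: decompose $M(\lambda)$ into one-dimensional $K$-weight spaces with pairwise distinct weights $\lambda q^{-2m}$, extract a weight vector $F^m v$ from any nonzero submodule, and climb back to the highest-weight vector using the nonvanishing of the scalars $\lambda^{-1}[m]_q/(q^{-1}-q)$, which then generates everything under $F$. The only cosmetic difference is how the homogeneous vector is isolated — you use polynomials in $K$ (a Vandermonde argument), while the paper applies the largest power of $E$ not annihilating the element — but this does not change the substance of the argument.
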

\begin{proof}
Lemma \ref{vermasactionsl2} shows that $M(\lambda)$ decomposes as the sum of simple $k[K^{\pm 1}]$-modules as
\[
V=\bigoplus_{m\geq 0}k_{\lambda q^{-2m}}.
\]
Let $W\leq V$ be any submodule. Let $l$ be minimal such that there exists $w\in W$ with $w\in k_{\lambda q^{-m}}$. Such an element exists, as we can take any inhomogeneous element and produce a homogeneous element $w'\in W$ by using the biggest $k$ such that $E^k w'\neq 0$ which then must be homogeneous. This follows as the scalar $\frac{\lambda^{-1}[m]_q }{q^{-1}-q}\neq 0$ for all $m, \lambda$. Now observe that $w=\mu F^l v_n$ and hence $v_n\in W$ as it is proportional to $E^lF^lv_n$.
\end{proof}

\begin{corollary}
The algebra $D_q(\fr{sl}_2)$ has no finite-dimensional representations.
\end{corollary}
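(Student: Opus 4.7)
Proof plan. Assume for contradiction that $V$ is a nonzero finite-dimensional $D_q(\fr{sl}_2)$-module. The strategy is to construct inside $V$ an infinite $k$-linearly independent family of vectors, contradicting $\dim_k V<\infty$. Passing to $V\otimes_k\bar k$ if necessary, we may assume $k$ is algebraically closed, so that $K$ has a non-zero eigenvector in $V$.

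First I would locate a highest-weight vector. Since $K$ is invertible, pick a non-zero eigenvector $v\in V$ with $Kv=\lambda v$, $\lambda\in k^\times$. The bosonization relations $KE=q^2 EK$ and $KF=q^{-2}FK$ (\ref{boso1})--(\ref{boso2}) show that $E^nv$, when non-zero, is a $K$-eigenvector with eigenvalue $q^{2n}\lambda$. As $q$ is generic, the scalars $q^{2n}\lambda$ are pairwise distinct, so the non-zero elements of $\{E^nv\}_{n\geq 0}$ are linearly independent; by finite-dimensionality there is a minimal $N\geq 1$ with $E^Nv=0$. Set $w:=E^{N-1}v\neq 0$; then $Ew=0$ and $Kw=\mu w$ for $\mu:=q^{2(N-1)}\lambda\in k^\times$.

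Next I would propagate this vector under $F$ using the key commutator relation (\ref{sl2commutator}) from the lemma, $[E,F^m]=\tfrac{[m]_q K^{-1}}{q^{-1}-q}F^{m-1}$. Since $F$ scales $K$-eigenvalues by $q^{-2}$, one has $K^{-1}F^{m-1}w=\mu^{-1}q^{2(m-1)}F^{m-1}w$. Combined with $Ew=0$ this gives
\begin{equation}
EF^m w \;=\; [E,F^m]w \;=\; \frac{[m]_q\,\mu^{-1}q^{2(m-1)}}{q^{-1}-q}\,F^{m-1}w.
\end{equation}
The scalar in front is nonzero, since $q$ is not a root of unity (so $[m]_q\neq 0$) and $\mu\in k^\times$. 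I now argue by induction on $m$ that $F^m w\neq 0$: if $F^{m-1}w\neq 0$ but $F^m w=0$, the left-hand side of the displayed identity is $0$ while the right-hand side is a non-zero scalar times $F^{m-1}w\neq 0$, a contradiction.

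Finally, the family $\{F^m w\}_{m\geq 0}\subset V$ consists of $K$-eigenvectors with pairwise distinct eigenvalues $\mu q^{-2m}$ (again using that $q$ is generic), hence is linearly independent. This contradicts $\dim_k V<\infty$, so no such $V$ exists. The conceptual heart of the argument is the step forcing $F^m w\neq 0$ for all $m$; this is the only place where one genuinely uses the Heisenberg cross relation (rather than the Drinfeld-type one, which would have produced instead an $E\otimes 1$-summand and allowed finite-dimensional quotients such as the standard $(N{+}1)$-dimensional simple for $U_q(\fr{sl}_2)$). No further obstacles are anticipated: the argument is a direct analogue, for the Heisenberg side, of the simplicity proof for the Verma modules $M(\lambda)$ given just above the statement.
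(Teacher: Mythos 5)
Your proof is correct, and its skeleton matches the paper's: both arguments locate a highest-weight vector $w$ by driving a $K$-eigenvector with $E$ (the weights $q^{2n}\lambda$ are pairwise distinct for generic $q$, so some $E^N v=0$), and both ultimately hinge on the nonvanishing of the scalar produced by the commutator relation of Lemma~\ref{sl2commutator} acting on a highest-weight vector. Where you genuinely differ is in how the contradiction is extracted. The paper reduces to a finite-dimensional \emph{simple} module, uses the universal property of the Verma module to get a surjection $\pi\colon M(\lambda)\twoheadrightarrow V$, and then quotes the immediately preceding corollary that $M(\lambda)$ is simple, forcing $\ker\pi=0$ and hence $V\cong M(\lambda)$, which is infinite-dimensional. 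You instead rerun the underlying computation inside $V$ itself: induction shows $F^m w\neq 0$ for all $m$, and these vectors are independent since their $K$-eigenvalues $\mu q^{-2m}$ are pairwise distinct. Your route is somewhat more self-contained and careful: it needs neither the reduction to simple modules (implicit in the paper's ``let $V$ be a finite-dimensional simple representation'') nor the simplicity of the Vermas, and your base change to $\overline{k}$ justifies the existence of a $K$-eigenvector, which the paper's decomposition $V=\oplus_i k_{\lambda_i}$ takes for granted; the cost is only that you repeat in $V$ the calculation the paper packaged once in Lemma~\ref{vermasactionsl2} and the simplicity corollary. The discrepancy between your scalar $[m]_q\,\mu^{-1}q^{2(m-1)}/(q^{-1}-q)$ and the paper's $\lambda^{-1}[m]_q/(q^{-1}-q)$ is only a matter of which side of $F^{m-1}$ the factor $K^{-1}$ is written on in the commutator, and does not affect the nonvanishing that drives the argument.
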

\begin{proof}
Let $V$ be a finite-dimensional simple representation. In particular, $V=\oplus_{i=1}^nk_{\lambda_i}$ as a $k[K^{\pm 1}]$-module. Let $v$ be any weight vector (i.e. a vector in $k_{\lambda_i}$ for some $i$), then there exists a sufficiently large $j$ such that $E^jv \in k \langle v,E^1v,\ldots, E^{j-1}v \rangle$. But these are all vectors of different weight, so $E^jv=0$. We may assume $w:=E^{j-1}v\neq 0$ and hence $w$ is a highest weight vector (i.e. one annihilated by $E$) which generates $V$. Hence there exists a surjective morphism of $D_q(\fr{sl}_2)$-modules $\pi\colon M(\lambda)\twoheadrightarrow V$. But then $\ker \pi=0$ as $M(\lambda)$ has no non-trivial submodules.
\end{proof}

We now consider the action of $\lmod{\Uq}^{\op{fd}}$ on $\cO_q^{\Heis}(\fr{sl}_2)$ from Corollary \ref{fdcatoaction}. This gives an analogue of the Quantum Clebsch Gordan rule.

\begin{proposition}\label{clebschgordan}
The categorical action of the irreducible $\Uq$-module $L(n)$ of weight $q^n$ on the Verma $M(m)$ of the category $\cO_q^{\Heis}(\fr{sl}_2)$ decomposes as a sum of simples as
\begin{equation}\label{clebschgordanformula}
L(n)\triangleright M(m)=\bigoplus_{k=0}^n M(m+n-2k).
\end{equation}
\end{proposition}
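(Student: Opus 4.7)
The plan is to construct, for each $k\in\{0,\ldots,n\}$, a highest weight vector of weight $q^{m+n-2k}$ in $L(n)\triangleright M(m)$, embed the Verma $M(m+n-2k)$ accordingly using simplicity (the corollary following Lemma~\ref{vermasactionsl2}), and match weight-space dimensions to conclude that the resulting map from the right-hand side of (\ref{clebschgordanformula}) is an isomorphism.

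By Corollary~\ref{heistwist}, the $D_q(\fr{sl}_2)$-action on $L(n)\otimes M(m)$ is given by the coproduct of $\Uq=\Drin_A$ from Example~\ref{quantumgroups}, namely
\[
E\triangleright(w\otimes v)=Ew\otimes v+Kw\otimes Ev,\qquad F\triangleright(w\otimes v)=Fw\otimes K^{-1}v+w\otimes Fv,
\]
with $K$ diagonal; here $E,F$ act via $\Uq$ on the first factor and via $D_q(\fr{sl}_2)$ on the second. A basis for the weight $q^{m+n-2s}$ subspace of $L(n)\otimes M(m)$ is $\{w_i\otimes F^{s-i}v_m:0\leq i\leq\min(s,n)\}$, so it has dimension $\min(s,n)+1$, matching the total weight-space dimension of $\bigoplus_{k=0}^n M(m+n-2k)$ term by term.

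For each $s\leq n$ the restriction of $E\triangleright(-)$ between weight spaces of dimensions $s+1$ and $s$ has nontrivial kernel, so I pick a nonzero highest weight vector $u_s$ of weight $q^{m+n-2s}$. By simplicity of $M(m+n-2s)$, the universal map $\varphi_s\colon M(m+n-2s)\to L(n)\otimes M(m)$ sending the Verma generator to $u_s$ is injective.

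The main step will be to show that $\varphi:=\bigoplus_{k=0}^n\varphi_k$ is injective, equivalently that the weight $q^{m+n-2s}$ vectors $F^{s-k}u_k$, for $0\leq k\leq\min(s,n)$, are linearly independent in $L(n)\otimes M(m)$. Suppose $\sum_k c_k F^{s-k}u_k=0$. Inside each submodule $\varphi_k(M(m+n-2k))$, the relation $Eu_k=0$ combined with Lemma~\ref{sl2commutator} gives that $EF^j u_k$ is a nonzero scalar multiple of $F^{j-1}u_k$ for $j\geq 1$; hence $E^{s-k}F^{s-k}u_k$ is a nonzero multiple of $u_k$, and $E^sF^{s-k}u_k=E^k(E^{s-k}F^{s-k}u_k)=0$ for $k\geq 1$, whereas $E^sF^su_0$ is a nonzero multiple of $u_0$. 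Applying $E^s$ to the relation therefore forces $c_0=0$; iterating with $E^{s-1},E^{s-2},\ldots$ peels off the remaining $c_k$ in turn. Combined with the weight-space count of the second paragraph, $\varphi$ is an isomorphism, establishing (\ref{clebschgordanformula}).
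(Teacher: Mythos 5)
Your proof is correct and follows essentially the same route as the paper: produce a highest weight vector in each weight space $q^{m+n-2k}$, $0\le k\le n$, by counting weight-space dimensions, use simplicity of the Verma modules in $\cO_q^{\Heis}(\fr{sl}_2)$ to embed them, and conclude by matching dimensions weight space by weight space. The only deviation is that where the paper inductively splits each Verma off as a direct summand, you verify directness of the sum explicitly through the $E^s$-linear-independence computation based on Lemma~\ref{sl2commutator}, which is a harmless (and slightly more self-contained) variant of the same argument.
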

\begin{proof}
Denote the highest weight vector of $L(n)$ by $v$ and the one of $M(m)$ by $w$. Using the formulas for the coproduct on $\Uq$, which serves as the coaction $\delta$ giving the categorical action, one checks that $v\otimes w$ is a highest weight vector of weight $n+m$. Note that the weight space $(L(n)\triangleright M(m))_{n+m-2k}$ has dimension $k+1$, for $k=0,\ldots, n$. Further note that each highest weight vector in $L(n)\triangleright M(m)$ gives the corresponding Verma module as a direct summand. Hence, we can see inductively that weight space of weight $n+m-2k$ contains a highest weight vector for any $k=0,\ldots,n$.
\end{proof}

We observe that the category $\cO^{\Heis}_q(\fr{sl}_2)$ has the same integral weight lattice parametrizing Verma modules as for $\cO_q(\fr{sl})$ and that the action of $\lmod{\Uq}^{\op{fd}}$ resembles the one on $\cO_q(\fr{sl}_2)$ with the crucial difference being that the Heisenberg version of category $\cO$ has no finite-dimensional simples and hence all Vermas are simple. We expect the results of this section to generalize to the quantum groups of other semisimple Lie algebras $\g$.


\section{Twisted Drinfeld Doubles}\label{twistedchapter}

Section \ref{doubles} does not use the full generality of the definition of YD-modules in \ref{quasiyddef} as we are restricting to the case that $\phi=1\otimes 1\otimes 1$ is the trivial 3-cycle. The preparations done in Section \ref{ydsection} however allow us, more generally, to define the (braided) Drinfeld and Heisenberg double of a quasi-Hopf algebra. This provides a Heisenberg analogue to the Drinfeld double of a quasi-Hopf algebra introduced in \cite{Maj3}.

In this Section, we will first give a description in terms of generators and relations of the Drinfeld double of a quasi-Hopf algebra in $\Vect$. Next, we provide definitions of \emph{twisted} Drinfeld and Heisenberg doubles. There are two different points of view on twisting used here. One is the \emph{Drinfeld twist} of an ordinary Hopf algebra (see \ref{drinfeldtwist}), the other one considers a commutative Hopf algebra as a quasi-Hopf algebra with respect to any 3-cycle (see \ref{twisteddoublesdefn}). We close this section by discussing the example of the twisted Heisenberg double of a group in \ref{twistedgroups}, including an adaptation of the groupoid interpretation of \cite{Wil} for the twisted Heisenberg double.

\subsection{The Drinfeld and Heisenberg double of a Quasi-Hopf Algebra}\label{quasidoubles}

Even though every monoidal category is equivalent to a strict one (cf. \ref{strictification}) it is still important to keep track of the associativity isomorphisms $\alpha\colon \otimes(\otimes \times \ide)\rightarrow \otimes (\ide \times \otimes)$ in certain situations. For instance, the class of quasi-Hopf algebras is closed under \emph{Drinfeld twist} $\Delta_F: m^3_{H\otimes H}(F\otimes \Delta \otimes F^{-1})$ of the coproduct for an arbitrary invertible element $F \in H\otimes H$ (see e.g. \cite[2.4]{Maj1}). If $F$ is a 2-cocycle and $H$ a Hopf algebra, then the twist $H_F$ is again a Hopf algebra (see e.g. \cite{GM}). 

In this section, we provide formulas for the Drinfeld and Heisenberg double of a quasi-Hopf algebra $H$, with 3-cycle $\phi$. For this, we leave the generality of working with quasi-Hopf algebras in braided monoidal categories $\cB$. However, via reconstruction theory, one can also obtain formulas in this most general case, if $\cB=\lmod{H}$ for a quasitriangular finite-dimensional Hopf algebra $H$ (or even a weakly quasitriangular pair $A$, $H$ and $\cB= \lcomod{A}$. For this, we need the following definition:

\begin{definition}
A \emph{dual paired object} for a quasi-bialgebra $B\in \cB$ is an object $C$ of $\cB$ together with a morphism $\ev\colon C\otimes B\to I$ such that there exist maps $\Delta_C\colon C\to C\otimes C$, $m_C\colon C\otimes C\to C$, $1_C\colon I \to C$, $\varepsilon_C\colon C\to I$, such that
\begin{align}
\begin{split}
\ev(m_C\otimes B)&= \ev(\ide\otimes \ev\otimes \ide)(\ide_{C\otimes C}\otimes \Delta_B),\\
\ev(C\otimes m_B)&=\ev(\ide\otimes \ev\otimes \ide)(\Delta_C\otimes \ide_{B\otimes B}),\\
\ev(1_C\otimes \ide_B)&=\varepsilon_B,\qquad\ev(\ide_C\otimes 1_B)=\varepsilon_C.
\end{split}
\end{align}
Using graphical calculus, these are the same conditions as in Figure \ref{dualityaxiomshopf}. Note however that $C$ is \emph{not} a quasi-bialgebra as the product is only associative up to a 3-\emph{cocycle} (obtained by duality from $\phi$). We refer an object with the structure of $C$ as a \emph{dual quasi-bialgebra}. If moreover $B$ has the structure of a quasi-Hopf algebra, and there exists a morphism $S_C\colon C\to C$ such that
$\ev(S_C\otimes \ide_B)=\ev(\ide_C\otimes S_B)$.
In this case, we say that $C$ is a \emph{dual quasi-Hopf algebra}.
\end{definition}

\begin{definition}\label{defquasidoubles}
Let $B$ be a quasi-bialgebra in $\lmod{H}$, where $H$ is a finite-dimensional Hopf algebra, with a dually paired object $C$. The \emph{braided Drinfeld double} of $B$, $C$ over $H$, denoted by $\Drin_H(C,B)$, is defined as the quasi-Hopf algebra obtained by reconstruction of the category $\leftsub{B^{\oop}}{\cal{YD}}^B(\cB)$ on $C\otimes H \otimes B$. The \emph{braided Heisenberg double} $\Heis_H(C,B)$ is defined by reconstruction of $\leftsub{B^{\oop}}{\cal{YD}}^B(\cB)$ on $C\otimes H\otimes B$.

Similarly, we can define $\Drin_A(C,B)$ and $\Heis_{A}(C,B)$ if we are given a weak quasitriangular structure on dually paired Hopf algebras $A$ and $H$.
\end{definition}

We only give explicit presentations of $\Drin_H(C,B)$ and $\Heis_H(C,B)$ in the case where $\cB=\Vect$ to simplify the exposition. Formulas for the general case can be obtained in a similar way, but involve several occurrences of the R-matrix.

\begin{proposition}\label{presentationsquasidoubles}
Let $B$ be a quasi-bialgebra in $\Vect$ with 3-cycle $\phi$, and dually paired object $C$. Then $\Drin_H(C,B)$ is generated as an algebra by elements of $C$, $H$ and $B^{\op{op}}$ subject to the relations (\ref{boso1})-(\ref{drincross}) and for $c,d\in C$ the relation
\begin{align}\label{quasicproduct}
\begin{split}
cd=&\phi_2^{(3)}c_{(2)}\phi^{-(2)}d_{(3)}\phi_1^{(1)}\ev(c_{(1)}\otimes \phi_2^{(2)})\ev(c_{(3)}\otimes \phi^{-(3)})\ev(c_{(4)}\otimes \phi_1^{(3)})\ev(d_{(1)}\otimes \phi_2^{(1)})\\&\ev(d_{(2)}\otimes \phi^{-(1)})\ev(d_{(4)}\otimes \phi_1^{(2)}).\end{split}
\end{align}
The algebra $\Drin_H(C,B)$ is a quasi-bialgebra with coproducts given by
\begin{align}
\Delta(h)&=h_{(1)}\otimes h_{(2)},\qquad \Delta(b)=b_{(1)}\otimes b_{(2)},\\
\begin{split}\Delta(c)&=\phi_2^{-(2)}c_{(2)}\phi^{(1)}\phi_1^{-(1)}\otimes \phi_2^{-(3)}\phi^{(3)}c_{(4)}\phi_1^{-(2)}\ev(c_{(1)}\otimes \phi_2^{-(1)})\ev(c_{(3)}\otimes \phi^{(2)})\\&\quad\ev(c_{(5)}\otimes \phi_1^{-(3)}).\end{split}
\end{align}
With these formulas, $\Delta$ gives a quasi-coaction with respect to the 3-cycle $\phi$ of $B$.
The counit is given by $\varepsilon(chb)=\varepsilon(c)\varepsilon(h)\varepsilon(c)$. If $B$ is a quasi-Hopf algebra, a formula for the antipode can be obtained by combining $\delta'$ from Figure \ref{rightquasicoaction} and in Figure \ref{yddualseqn}.

The braided Heisenberg double $\Heis_H(C,B)$ of the quasi-bialgebra $B$ is the algebra generated by $C$, $H$ and $B^{\op{op}}$ subject to the relations (\ref{boso1}), (\ref{boso2}) and the cross relation (\ref{heiscross}), as well as the relation (\ref{quasicproduct}) from above.
\end{proposition}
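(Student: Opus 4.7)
The plan is to derive the presentation via reconstruction theory applied to the category $\leftsub{B^{\oop}}{\cal{YD}}^B(\Vect)$ on the vector space $C\otimes H\otimes B$, as asserted in Definition~\ref{defquasidoubles} and justified by Lemma~\ref{representabilitylemma}. Concretely, an element $chb$ acts on a YD-module $V$ by first applying the right $B$-action of $b$, then the left $H$-action of $h$, and finally the induced $C$-action $c\triangleright v := (\ev\otimes\ide_V)(c\otimes \delta(v))$, where $\delta$ is the quasi-coaction. Each relation in the presentation will arise by equating two natural transformations $C\otimes H\otimes B\otimes F\to F$ and invoking higher representability.

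First I would reconstruct the bosonization relations (\ref{boso1}) and (\ref{boso2}): they follow exactly as in Proposition~\ref{presentationprop1}, since $H$-equivariance of the $B$-action and $B^{\oop}$-quasi-coaction is unaffected by the presence of the coassociator $\phi$ (the $\phi$-conditions in Figure~\ref{quasiydmods} do not interfere with the $H$-module structure). The cross relation (\ref{drincross}) comes directly from translating the Yetter--Drinfeld condition (\ref{ydcondition}) in Figure~\ref{conditionspic} into the $C$-action via $\ev$, and this translation is again identical to the Hopf case because (\ref{ydcondition}) itself involves no $\phi$. Similarly, the Heisenberg cross relation (\ref{heiscross}) follows from the Hopf module version of the compatibility.

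The genuinely new input is the product relation (\ref{quasicproduct}) and the corresponding formula for $\Delta(c)$. For the product, I would compute $cd\triangleright v$ by applying the induced $C$-action twice, which invokes $(\ide_B\otimes \delta)\delta(v)$. The second rule in Figure~\ref{quasiydmods} identifies $(\ide_B\otimes \delta)\delta$ with $\phi\cdot((\Delta\otimes \ide_V)\delta(v))\cdot\phi^{-1}$ (with multiplication taken in $B\otimes B\otimes B$ acting on the three tensor factors appropriately). Pairing the resulting $B^{\otimes 3}$ factors against $c_{(1)}c_{(2)}\otimes d_{(1)}d_{(2)}\otimes \ldots$ through $\ev$ and separating by coassociativity of $\Delta_C$ produces precisely the six evaluations against the tensor components of $\phi_{1},\phi^{-1},\phi_{2}$ in (\ref{quasicproduct}). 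The formula for $\Delta(c)$ is obtained analogously by computing the quasi-coaction on a tensor product $V\otimes W$ of YD-modules, which by the same axiom in Figure~\ref{quasiydmods} is the conjugation by $\phi^{-1}$ on each side of the naive diagonal coaction, and then translating to the $C$-action via $\ev$ at the appropriate spots.

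The main obstacle will be bookkeeping of the five tensor components of $\phi$ and $\phi^{-1}$ together with the Sweedler indices of $c$, $d$ through the iterated quasi-coaction; in particular, one must verify that the resulting $\Delta$ satisfies the quasi-coassociativity with respect to the 3-cycle $\phi$ of $B$ (rather than a new 3-cycle), which amounts to applying the 3-cocycle condition (\ref{3cocycle}) once to collapse nested $\phi$-terms. The antipode and the verification of the antipode axioms (\ref{quasiantipode}) and (\ref{antipodecyclecond}) follow abstractly from rigidity of $\leftsub{B^{\oop}}{\cal{YD}}^B(\Vect)$ established in Corollary~\ref{quasiydduals}, with the explicit formula obtained by composing the right quasi-coaction $\delta'$ from Figure~\ref{rightquasicoaction} with the dual quasi-coaction from Figure~\ref{yddualseqn}; similarly, the counit and unit are reconstructed directly from the trivial YD-module structure on $k$. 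The Heisenberg case is handled by running the same reconstruction on Hopf modules, where only the Hopf compatibility (\ref{hopfcondition}) in place of (\ref{ydcondition}) changes the cross relation, leaving the new $cd$-relation (\ref{quasicproduct}) unchanged since it depends solely on the quasi-comodule axiom.
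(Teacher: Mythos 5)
Your proposal is correct and follows exactly the route the paper intends: the paper gives essentially no proof of Proposition~\ref{presentationsquasidoubles} (it only points to Majid's left-module version), and the intended argument is precisely the reconstruction computation on $C\otimes H\otimes B$ that you describe, mirroring the proofs of Propositions~\ref{presentationprop1} and~\ref{presentationprop2}, with the bosonization and cross relations unaffected by $\phi$ and the new relation (\ref{quasicproduct}) and the formula for $\Delta(c)$ coming from translating the quasi-comodule and tensor-product rules of Figure~\ref{quasiydmods} through $\ev$, the coassociator legs acting on the module reappearing as elements of $B^{\op{op}}$ in the double. The only caution is the deferred bookkeeping you already flag: the precise pattern of the three coassociators in (\ref{quasicproduct}) must be read off from the actual axioms in Figure~\ref{quasiydmods} rather than from the simplified conjugation identity you quote, but this affects only the computation, not the method.
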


The Drinfeld double of a quasi-Hopf algebra was already introduced -- using a left module version -- in \cite{Maj3}.

\subsection{Twisted Hopf Algebras}\label{drinfeldtwist}

In this section, we study the Drinfeld and Heisenberg double for two notions of twist of an ordinary Hopf algebra. The first one is often referred to as a \emph{Drinfeld twist}, see e.g. \cite[4.2]{Maj1} for details. The second one generalizes, in the sense of \cite{Maj3}, the construction of the twisted Drinfeld double of a group algebra $\Drin^\omega(G)$.

\begin{definition}
Let $B$ be an ordinary bialgebra and $F=F^{(1)}\otimes F^{(2)}\in B\otimes B$ an invertible element. Then the \emph{Drinfeld twist} $B_F$ of $B$ is the quasi-bialgebra defined on the algebra $B$ with the quasi-coalgebra structure
\begin{align}
\Delta_F(b)&=F\Delta(b)F^{-1},&
\phi&=F_{23}((\ide \otimes \Delta)F)((\Delta\otimes \ide)F^{-1})F^{-1}_{
12},&
\end{align}
where the counit is not changed. If moreover $B$ is a Hopf algebra, then the same antipode gives an antipode for $B_F$ with respect to $a=S(F^{-(1)})F^{-(2)}$, and $b=F^{(1)}S(F^{(2)})$.
If $B$ is quasitriangular, then so is $B_F$ with universal R-matrix $F_{21}RF^{-1}$.
\end{definition}

For a proof that $B_F$ is indeed a quasi-bialgebra (or Hopf algebra) see \cite[Theorem 2.4.2]{Maj1}. In fact, it is shown more generally that any Drinfeld twist of a quasi-Hopf algebra is again a quasi-Hopf algebra. If $F$ is a 2-cycle (satisfying a dual condition to (\ref{2cocycle})), then $B_F$ is again a Hopf algebra. The Drinfeld twist is also referred to as a \emph{gauge transformation} in the literature. It is a basic result that the categories $\lmod{B}$ and $\lmod{B_F}$ are equivalent as monoidal categories.

Another point of view on twisting is to view a bialgebra $B$ as a quasi-bialgebra $B^\phi$ with respect to some 3-cycle $\phi$ which commutes with the two-fold coproduct $(\Delta\otimes \ide)\Delta$. For example, if $B$ is commutative, $B^\phi$ is a quasi-bialgebra for any 3-cycle. This way, the usual notion of a \emph{twisted} Drinfeld double of a group algebra can be obtained (see \cite{DPR,Maj3}). We will consider this example together with the corresponding Heisenberg double in the following section. Using either versions of twist, we can apply Proposition \ref{presentationsquasidoubles} to compute the Drinfeld and Heisenberg doubles of the corresponding twisted Hopf algebras. For a general definition, let $C, B$ be dually paired Hopf algebra.

\begin{definition}\label{twisteddoublesdefn}
The \emph{twisted Drinfeld double} $\Drin^\omega(C,B)$, with respect to a  3-cycle on $B$ which commutes with the two-fold coproducts, is defined as the bialgebra (respectively, Hopf algebra) $\Drin_k(C,B^\omega)$ using the notation of Definition \ref{defquasidoubles}.

The \emph{twisted Heisenberg double} $\Heis^\omega(C,B)$ is defined to be $\Heis_k(C,B^\omega)$.
\end{definition}

Hence, we can give a presentation for the twisted double using Proposition \ref{presentationsquasidoubles}. More generally, the same construction works if we start with dually paired braided Hopf algebras in $\cB=\lmod{H}$ (or $\lcomod{A}$ in the weakly quasitriangular case) and $\omega$ a 3-cycle on $B$ which commutes with all two-fold coproducts in $B\otimes B\otimes B\in \Alg(\cB)$ giving a definition of $\Drin^\omega_H(C,B):= \Drin_H(C,B^\omega)$ and its Heisenberg analogue. We will not use this general case in the present section.

As a direct corollary, we have that the monoidal category $\lmod{\Drin^\omega_H(C,B)}$ acts on the category $\lmod{\Heis^\omega_H(C,B)}$.

\subsection{The Twisted Heisenberg Double of a Group}\label{twistedgroups}

We finish this exposition by showing, as an example, how the categorical action of the monoidal category of the Drinfeld center on the Hopf center can be used to obtain an action of the category of $G^{\op{ad}}$-equivariant $\omega$-twisted vector bundles on the category of $\omega$-twisted $G^{\op{reg}}$-equivariant vector bundles on $G$.

Let $G$ be a finite group. Then 3-cycles in the sense of (\ref{3cocycle}) on the Hopf algebra $k[G]$ of functions on $G$ correspond to 3-cocycles in the group cohomology of $G$. That is, for such $\omega$,
\begin{align}
\omega(h,k,l)\omega(g,hk,l)\omega(g,h,k)&=\omega(g,h,kl)\omega(gh,k,l),&\forall g,h,k,l\in G,
\end{align}
where $\omega$ is normalized such that $\omega(g,h,k)=1$ as soon as one of the group elements equals $1$.

As $k[G]$ is commutative, $k^\omega[G]$, as introduced in the previous section, is a quasi-Hopf algebra for any 3-cocycle $\omega$. In \cite{Maj3}, the Drinfeld center of $\lcomod{k^\omega[G]}$ is shown to be equivalent to the category of modules over the quasi-Hopf algebra  $\Drin^{\omega}(G)$, which we refer to as the \emph{twisted} Drinfeld double of the group $G$. There are two different points of view on its representation category. The first one is:

\begin{proposition}[\cite{Maj3}]
The category $\lmod{\Drin^{\omega}(G)}$ is equivalent to the category of $G$-graded vector spaces which are $G^{\op{ad}}$-equivariant with respect to twisted representations of $G$. The representations are twisted by the cocycle $\tau(\omega)\in Z^2(G,\leftexp{\op{ad}}{k[G]})$ valued in $k[G]$ with the left adjoint action of $G$, which is defined by
\begin{align}
\tau(\omega)(g,h)(k)&=\frac{\omega(g,h,h^{-1}g^{-1}kgh)\omega(k,g,h)}{\omega(g,g^{-1}kg,h)}.
\end{align}
That is, the action a homogeneous element $v$ of degree $d$ is given by
\begin{align}
g\triangleright(h\triangleright v)&=\tau(\omega)(g,h)(d)gh\triangleright v.
\end{align}
The equivariance condition corresponds to the YD-condition that the degree of $g\triangleright v$ is $gdg^{-1}$. We refer to vector spaces $V$ with this structure as $\omega$-twisted $G^{\op{ad}}$-equivariant vector bundles over $G$.
\end{proposition}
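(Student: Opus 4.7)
The plan is to apply two tools established earlier: the description of the Drinfeld center of a comodule category as a category of Yetter--Drinfeld modules (Proposition~\ref{comodulecenter}, in its quasi-Hopf version via Proposition~\ref{equivalence}), and the duality between the quasi-Hopf algebra $k^\omega[G]$ and the group algebra $kG$.

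First I would identify $\lcomod{k^\omega[G]}$ with the category of $G$-graded vector spaces: a left $k[G]$-coaction is the same datum as a decomposition $V=\bigoplus_{d\in G}V_d$, where the grading degree of $v$ corresponds to the support of its image under $\delta$. The quasi-coassociativity of the $\op{cop}$-version of the comultiplication is controlled by the 3-cycle $\omega$, so this is a category of $G$-graded vector spaces in $\Vect$ with the associator twisted by $\omega$. By the quasi-Hopf version of Proposition~\ref{comodulecenter}, we have
\[
\cZ_{\Vect}(\lcomod{k^\omega[G]})\;\cong\;\leftsub{k^\omega[G]}{\cal{YD}}^{k^\omega[G]^{\coop}}(\Vect),
\]
and reconstruction (Theorem~\ref{finrec} applied to this category) gives the quasi-Hopf algebra $\Drin^\omega(G)$. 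So an object of $\lmod{\Drin^\omega(G)}$ is a $G$-graded vector space together with a right $k^\omega[G]^{\coop}$-quasi-action; by finite-dimensional vector space duality this is the same as a left $kG$-action $g\otimes v\mapsto g\triangleright v$, but now the module axioms are \emph{quasi} because of $\omega$.

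Next I would translate the quasi-module axiom (the second diagram of Figure~\ref{quasiydmods}, applied on the right) under this duality. Writing out the composition that replaces associativity for the left $kG$-action on a homogeneous vector $v\in V_d$ yields an identity of the form
\[
g\triangleright(h\triangleright v)\;=\;\lambda(g,h,d)\,\cdot\,(gh\triangleright v),
\]
where $\lambda(g,h,d)$ is the scalar obtained by evaluating $\omega^{\pm 1}$ (inserted by the coassociator and the antipode elements $a,b$) against the three group elements produced by the dual coaction. Tracking the three instances of $\omega$ that appear, together with the degree-shift rule for intermediate terms, produces exactly
\[
\lambda(g,h,d)\;=\;\frac{\omega(g,h,h^{-1}g^{-1}dgh)\,\omega(d,g,h)}{\omega(g,g^{-1}dg,h)}\;=\;\tau(\omega)(g,h)(d).
\]
The Yetter--Drinfeld compatibility condition of Proposition~\ref{equivalence} between the $k^\omega[G]^{\coop}$-action and the $k[G]$-coaction translates under the same duality into the equivariance rule $\deg(g\triangleright v)=gdg^{-1}$, which is the $G^{\op{ad}}$-equivariance of the grading. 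That $\tau(\omega)\in Z^2(G,{}^{\op{ad}}k[G])$ is then a 2-cocycle for the left adjoint $G$-action on functions on $G$ follows automatically from associativity of the composition of three left $kG$-actions; equivalently, substituting the explicit formula for $\tau(\omega)$ into the 2-cocycle identity and clearing denominators produces exactly the 3-cocycle equation for $\omega$ evaluated at $(g,h,k,h^{-1}g^{-1}dgh)$ and permutations thereof.

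The main obstacle is the second step: controlling the several occurrences of $\omega^{\pm 1}$ that are introduced once by the quasi-coassociativity of $\delta$, once by the YD-compatibility, and once by the quasi-antipode compatibility (\ref{antipodecyclecond}), and then collecting them into the specific combinatorial expression $\tau(\omega)$. The cleanest route is to use the alternative form of the YD-condition from Lemma~\ref{altconds} in its quasi-Hopf analogue, which moves all obstructions to a single 3-cocycle insertion before the pairing with $kG$, so that only the three explicit factors above survive after specialising to $k^\omega[G]$. Once the formula for $\tau(\omega)$ is in hand, the remaining checks (2-cocycle property, composition of morphisms, monoidal structure) reduce to routine manipulations of $\omega$ using its 3-cocycle equation.
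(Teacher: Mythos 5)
The paper itself gives no proof of this proposition: it is quoted verbatim from \cite{Maj3}, and the identification of $\cZ(\lcomod{k^\omega[G]})$ with $\lmod{\Drin^\omega(G)}$ is explicitly attributed to that reference. So your proposal is not competing with an argument in the text, and its overall strategy (identify $\lcomod{k^\omega[G]}$ with $G$-graded vector spaces whose associator is twisted by $\omega$, compute the center as quasi-Yetter--Drinfeld modules, reconstruct, and then decode the action) is indeed the standard route and is consistent with the machinery of Sections~\ref{ydsection} and \ref{twistedchapter}. However, as a proof it has two genuine gaps. First, you lean on a ``quasi-Hopf version of Proposition~\ref{comodulecenter}'' and on a ``quasi-Hopf analogue of Lemma~\ref{altconds}'', but both of these are established in the paper only for strict Hopf algebras: the proof of Proposition~\ref{comodulecenter} explicitly uses $\phi=1\otimes1\otimes1$, Lemma~\ref{altconds} lives in Section~\ref{strictsection}, and the paper warns that the dual of a quasi-bialgebra is not a quasi-bialgebra, so the comodule-side statement does not dualize for free. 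The quasi results actually available (Proposition~\ref{equivalence}, Definition~\ref{defquasidoubles}) are formulated for module categories $\rmod{B}(\cB)$; transporting them to $\lcomod{k^\omega[G]}$ requires an argument (this is precisely what \cite{Maj3} supplies) rather than an appeal to a nonexistent lemma.

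Second, the heart of the proposition is the explicit formula for $\tau(\omega)$ and the equivariance rule, and at exactly that point your argument is an assertion (``tracking the three instances of $\omega$ \ldots produces exactly'') rather than a computation. Moreover the bookkeeping you describe is conceptually off: a module over the quasi-Hopf algebra $\Drin^\omega(G)$ is a strictly associative module, and the quasi-antipode elements $a,b$ and condition (\ref{antipodecyclecond}) play no role in producing the relation $g\triangleright(h\triangleright v)=\tau(\omega)(g,h)(d)\,gh\triangleright v$. The three $\omega$-factors arise from the coassociator insertions in the half-braiding coherence (\ref{monadicityc}) for the center of $\omega$-associated $G$-graded vector spaces, equivalently from the deformed product of the grouplike generators inside $\Drin^\omega(G)$ as in Proposition~\ref{presentationsquasidoubles}; the adjoint equivariance of the grading comes from naturality/the YD-condition, independently of $\omega$. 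To close the gap you would need to carry out this evaluation on homogeneous vectors explicitly (fixing degrees $d$, acting by $g$ then $h$, and recording which arguments of $\omega^{\pm1}$ occur), and only then is the 2-cocycle identity for $\tau(\omega)$ a routine consequence of the 3-cocycle equation as you say.
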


Modules over the twisted Heisenberg double are twisted representations of $G$ (with respect to the same 2-cocycle $\tau(\omega)$. The only difference is that the equivariance condition is that the degree of $g\triangleright v$ is $gd$ if $v$ has degree $d$. Hence we speak of $G^{\op{reg}}$-equivariant vector bundles over $G$. The categorical action of $G^{\op{ad}}$-equivariant $\omega$-twisted vector bundles on $G^{\op{reg}}$-equivariant ones is now a direct consequence of the categorical action discussed before.

Another point of view on the category $\lmod{\Drin^\omega(G)}$ is given in \cite{Wil} where representations of the twisted Drinfeld double $\Drin^\omega(G)$ are identified with $\tau(\omega)$-twisted representations of the action groupoid $\un{G}^{\op{ad}}$ of $G$ acting on itself by conjugation. The 2-cocycle $\tau(\omega)$ is obtained by the transgression map
\[
\tau \colon Z^3(G,U(1))\longrightarrow Z^2(\un{G}^{\op{ad}}).
\]
However, the same map also produces 2-cocycles for the action groupoid $\un{G}^{\op{reg}}$ of $G$ acting on itself by the regular action. Hence, we can identify the category $\lmod{\Heis^\omega(G)}$ with the category of $\tau(\omega)$-twisted representations over $\un{G}^{\op{reg}}$.





\bibliographystyle{elsart-num-sort}
\bibliography{biblio2}

\def\cprime{$'$} \def\cprime{$'$} \def\cprime{$'$}
\begin{thebibliography}{10}
\expandafter\ifx\csname url\endcsname\relax
  \def\url#1{\texttt{#1}}\fi
\expandafter\ifx\csname urlprefix\endcsname\relax\def\urlprefix{URL }\fi

\bibitem{AM}
H.~H. Andersen, V.~Mazorchuk, Category {$\mathcal{O}$} for quantum groups, J.
  Eur. Math. Soc. (JEMS) 17~(2) (2015) 405--431.

\bibitem{AS}
N.~Andruskiewitsch, H.-J. Schneider, Pointed {H}opf algebras, in: New
  directions in {H}opf algebras, vol.~43 of Math. Sci. Res. Inst. Publ.,
  Cambridge Univ. Press, Cambridge, 2002, pp. 1--68.

\bibitem{BEER}
L.~Bartholdi, B.~Enriquez, P.~Etingof, E.~Rains, Groups and {L}ie algebras
  corresponding to the {Y}ang-{B}axter equations, J. Algebra 305~(2) (2006)
  742--764.

\bibitem{BB}
Y.~Bazlov, A.~Berenstein, Braided doubles and rational {C}herednik algebras,
  Adv. Math. 220~(5) (2009) 1466--1530.

\bibitem{BFN}
D.~Ben-Zvi, J.~Francis, D.~Nadler, Integral transforms and {D}rinfeld centers
  in derived algebraic geometry, J. Amer. Math. Soc. 23~(4) (2010) 909--966.

\bibitem{BD}
Y.~Bespalov, B.~Drabant, Hopf (bi-)modules and crossed modules in braided
  monoidal categories, J. Pure Appl. Algebra 123~(1-3) (1998) 105--129.

\bibitem{Bes}
Y.~N. Bespalov, Crossed modules and quantum groups in braided categories, Appl.
  Categ. Structures 5~(2) (1997) 155--204.

\bibitem{BG}
K.~A. Brown, K.~R. Goodearl, Lectures on algebraic quantum groups, Advanced
  Courses in Mathematics. CRM Barcelona, Birkh\"auser Verlag, Basel, 2002.

\bibitem{Del}
P.~Deligne, Cat\'egories tannakiennes, in: The {G}rothendieck {F}estschrift,
  {V}ol.\ {II}, vol.~87 of Progr. Math., Birkh\"auser Boston, Boston, MA, 1990,
  pp. 111--195.

\bibitem{DPR}
R.~Dijkgraaf, V.~Pasquier, P.~Roche, Quasi {H}opf algebras, group cohomology
  and orbifold models, Nuclear Phys. B Proc. Suppl. 18B (1990) 60--72 (1991),
  recent advances in field theory (Annecy-le-Vieux, 1990).

\bibitem{Dri3}
V.~G. Drinfel{\cprime}d, Constant quasiclassical solutions of the
  {Y}ang-{B}axter quantum equation, Dokl. Akad. Nauk SSSR 273~(3) (1983)
  531--535.

\bibitem{Dri}
V.~G. Drinfel{\cprime}d, Quantum groups, Zap. Nauchn. Sem. Leningrad. Otdel.
  Mat. Inst. Steklov. (LOMI) 155~(Differentsialnaya Geometriya, Gruppy Li i
  Mekh. VIII) (1986) 18--49, 193.

\bibitem{Dri2}
V.~G. Drinfel{\cprime}d, Quasi-{H}opf algebras, Algebra i Analiz 1~(6) (1989)
  114--148.

\bibitem{EG}
P.~Etingof, V.~Ginzburg, Symplectic reflection algebras, {C}alogero-{M}oser
  space, and deformed {H}arish-{C}handra homomorphism, Invent. Math. 147~(2)
  (2002) 243--348.

\bibitem{GGOR}
V.~Ginzburg, N.~Guay, E.~Opdam, R.~Rouquier, On the category {$\mathcal{O}$}
  for rational {C}herednik algebras, Invent. Math. 154~(3) (2003) 617--651.

\bibitem{GM}
D.~Gurevich, S.~Majid, Braided groups of {H}opf algebras obtained by twisting,
  Pacific J. Math. 162~(1) (1994) 27--44.

\bibitem{Hae}
R.~H{\"a}ring-Oldenburg, Reconstruction of weak quasi {H}opf algebras, J.
  Algebra 194~(1) (1997) 14--35.

\bibitem{JS}
A.~Joyal, R.~Street, Braided tensor categories, Adv. Math. 102~(1) (1993)
  20--78.

\bibitem{Kas}
R.~M. Kashaev, The {H}eisenberg double and the pentagon relation, Algebra i
  Analiz 8~(4) (1996) 63--74.

\bibitem{K}
C.~Kassel, Quantum groups, vol. 155 of Graduate Texts in Mathematics,
  Springer-Verlag, New York, 1995.

\bibitem{Lau4}
R.~Laugwitz, {Koszulness of enveloping algebras associated to generalized
  {Y}ang-{B}axter equations}, ArXiv e-prints.

\bibitem{Lu}
J.-H. Lu, On the {D}rinfel\cprime d double and the {H}eisenberg double of a
  {H}opf algebra, Duke Math. J. 74~(3) (1994) 763--776.

\bibitem{Lur2}
J.~Lurie, Higher algebra, PreprintVersion of September 14, 2014,
  \url{http://www.math.harvard.edu/~lurie/papers/higheralgebra.pdf}.

\bibitem{Lus}
G.~Lusztig, Introduction to quantum groups, vol. 110 of Progress in
  Mathematics, Birkh\"auser Boston Inc., Boston, MA, 1993.

\bibitem{Mac}
S.~Mac~Lane, Categories for the working mathematician, vol.~5 of Graduate Texts
  in Mathematics, 2nd ed., Springer-Verlag, New York, 1998.

\bibitem{Maj7}
S.~Majid, Representations, duals and quantum doubles of monoidal categories,
  in: Proceedings of the {W}inter {S}chool on {G}eometry and {P}hysics
  ({S}rn\'\i, 1990), no.~26, 1991.

\bibitem{Maj5}
S.~Majid, Tannaka-{K}re\u\i n theorem for quasi-{H}opf algebras and other
  results, in: Deformation theory and quantum groups with applications to
  mathematical physics ({A}mherst, {MA}, 1990), vol. 134 of Contemp. Math.,
  Amer. Math. Soc., Providence, RI, 1992, pp. 219--232.

\bibitem{Maj8}
S.~Majid, Braided groups, J. Pure Appl. Algebra 86~(2) (1993) 187--221.

\bibitem{Maj6}
S.~Majid, Algebras and {H}opf algebras in braided categories, in: Advances in
  {H}opf algebras ({C}hicago, {IL}, 1992), vol. 158 of Lecture Notes in Pure
  and Appl. Math., Dekker, New York, 1994, pp. 55--105.

\bibitem{Maj4}
S.~Majid, Cross product quantisation, nonabelian cohomology and twisting of
  {H}opf algebras, in: Generalized symmetries in physics ({C}lausthal, 1993),
  World Sci. Publ., River Edge, NJ, 1994, pp. 13--41.

\bibitem{Maj1}
S.~Majid, Foundations of quantum group theory, Cambridge University Press,
  Cambridge, 1995.

\bibitem{Maj3}
S.~Majid, Quantum double for quasi-{H}opf algebras, Lett. Math. Phys. 45~(1)
  (1998) 1--9.

\bibitem{Maj2}
S.~Majid, Double-bosonization of braided groups and the construction of
  {$U_q(\mathfrak{g})$}, Math. Proc. Cambridge Philos. Soc. 125~(1) (1999)
  151--192.

\bibitem{Swe}
M.~E. Sweedler, Hopf algebras, Mathematics Lecture Note Series, W. A. Benjamin,
  Inc., New York, 1969.

\bibitem{Wil}
S.~Willerton, The twisted {D}rinfeld double of a finite group via gerbes and
  finite groupoids, Algebr. Geom. Topol. 8~(3) (2008) 1419--1457.

\end{thebibliography}





\end{document}